\date{}
\theoremstyle{plain}
\newtheorem{theor}{Theorem}[section]
\newtheorem{claim}[theor]{Claim}
\newtheorem{prop}[theor]{Proposition}
\newtheorem{cor}[theor]{Corollary}
\newtheorem{lemma}[theor]{Lemma}
\newtheorem{rem}[theor]{Remark}
\theoremstyle{remark}
\def\R{{\mathbb R}}
\def\N{{\mathbb N}}
\def\Z{{\mathbb Z}}
\def\C{\mathbb{C}}
\newcommand{\lc}{\lceil}
\newcommand{\rc}{\rceil}
\def\la{\left\langle}
\def\r{\right}
\def\ra{\r\rangle}
\def\Re{\mbox{Re}}
\def\Im{\mbox{Im}}
\def\LL{L}
\def\Prob{{\mathbb P}}
\def\Exp{{\mathbb E}}
\def\Event{{\mathcal E}}
\def\cf{\mathcal{Q}}
\newcommand{\p}{\mathbb{P}}
\def\col{C}
\def\row{R}
\def\cproj{{\rm P}}
\def\spn{{\rm span}\,}
\def\supp{{\rm supp }}
\def\idmat{{\rm Id}}
\def\d{{\rm dist}}
\def\Net{{\mathcal N}}
\newcommand{\lam}{\lambda}
\newcommand{\eps}{\varepsilon}
\def\MSet{{\mathcal M}}
\def\kset{{\mathcal A}}
\def\lset{{\mathcal L}}
\def\ls{{\mathcal L} {\mathcal S}}
\def\lr{{\mathcal L} {\mathcal R}}
\def\hset{{\mathcal H}}
\def\kapset{{\mathcal K}}
\def\rhoset{{\mathcal P}}
\def\est{{\rm SB}}
\def\triv{{\rm TE}}
\def\RSet{{\mathcal R}}
\def\of{{\eta}}
\def\wset{{\mathcal W}}
\def\wbl{\wset_{b}^1}
\def\wbg{\wset_{b}^2}
\def\hset{{\mathcal H}}
\def\RR{Q}
\newcommand{\Mc}{\mathcal{M}_{n,d}}
\newcommand{\hq}{h_{q}}
\def\cards{{\rm cs}}
\def\cardr{{\rm cr}}
\def\heis{{\rm hs}}
\def\heir{{\rm hr}}
\def\Om0{\Omega_0}
\def\omep{\Omega_{2m,\eps}}
\def\xyzv{V}
\def\nx{\| x \|}
\newcommand{\st}{\mathcal{T}}
\newcommand{\il}{I^\ell}
\newcommand{\ir}{I^r}
\def\f{{\mathcal F}}
\def\ii{{\mathcal I}}
\def\iii{\mbox{\bf i}}
\def\nn{n_3}
\def\ww{z}
\def\WW{W}
\def\BB{\mathcal{B}}
\def\ko{\ell_0}
\def\aaa{a_3}
\def\KK{K}
\def\CC{\mathcal{K}}
\def\D{{\mathcal{D}_{n,d}}}
\def\nrangr{G}
\def\outnbr{{\mathcal N}_{\nrangr}^{out}}
\def\outnbrpr{{\mathcal N}_{\nrangr'}^{out}}
\def\innbr{{\mathcal N}_{\nrangr}^{in}}
\def\innbrpr{{\mathcal N}_{\nrangr'}^{in}}
\def\edg{{E}_{\nrangr}}
\def\edgpr{{E}_{\nrangr'}}
\def\inedg{{E}_{\nrangr}^{in}}
\def\outedg{{E}_{\nrangr}^{out}}
\title{Structure of eigenvectors of random regular digraphs}
\author{
Alexander E. Litvak
\and
Anna Lytova
\and
Konstantin Tikhomirov
\and
Nicole Tomczak-Jaegermann
\and
Pierre Youssef
}
\newcommand\address{\noindent\leavevmode

\medskip
\noindent
Alexander E. Litvak
and Nicole Tomczak-Jaegermann,\\
Dept.~of Math.~and Stat.~Sciences,\\
University of Alberta, \\
Edmonton, AB, Canada, T6G 2G1.\\
\texttt{\small
e-mails:  aelitvak@gmail.com \, \, and \, \,
nicole.tomczak@ualberta.ca}\\

\medskip

\noindent
Anna Lytova,\\
Faculty of Math., Physics, and Comp. Science,\\
University of Opole,\\
plac Kopernika 11A, 45-040,\\
Opole, Poland.\\
\texttt{\small
e-mail:
alytova@math.uni.opole.pl
}\\

\medskip

\noindent
 Konstantin Tikhomirov,\\
Dept.~of Math.,
Princeton University,\\
Fine Hall, Washington road,\\
Princeton, NJ 08544.\\
\texttt{\small
e-mail:   kt12@math.princeton.edu}\\

\medskip

\noindent
Pierre Youssef,\\
Universit\'e Paris Diderot,\\
Laboratoire de Probabilit\'es, Statistiques et Mod\'elisation,\\
75013 Paris, France.\\
\texttt{\small
e-mail:  youssef@lpsm.paris}
}
\begin{document}

\maketitle

\abstract{
Let $d$ and $n$ be integers satisfying
  $C\leq d\leq \exp(c\sqrt{\ln n})$ for some universal constants
  $c, C>0$, and let $z\in \C$.  Denote by $M$ the adjacency
  matrix of a random $d$-regular directed graph on $n$ vertices.  In
  this paper, we study the structure of the kernel of submatrices of
  $M-z\,\idmat$, formed by removing a subset of rows.  We show that
  with large probability the kernel consists of two non-intersecting
  types of vectors, which we call {\it very steep} and {\it gradual
    with many levels}.  As a corollary, we show, in particular, that
  every eigenvector of $M$, except for constant multiples of
  $(1,1,\dots,1)$, possesses a weak delocalization property:
  its level sets have cardinality less
  than $Cn\ln^2 d/\ln n$.   For a large constant $d$
  this  provides a   principally
new structural information on eigenvectors, implying that
the number of their level sets grows to infinity with $n$.
As a key technical ingredient of our proofs we introduce a  decomposition of $\C^n$
into vectors of different degrees of ``structuredness,"
which is  an alternative to the decomposition based on the {\it least common denominator}
in the regime when the underlying random matrix is very sparse.
}

\smallskip

\noindent
{\small \bf AMS 2010 Classification:}
{\small
primary: 60B20, 15B52, 46B06, 05C80;
secondary: 46B09, 60C05
}

\noindent
{\small \bf Keywords: }
{\small
Delocalization of eigenvectors,
Littlewood--Offord theory,
random graphs, random matrices, regular graphs, sparse matrices,
structure of the kernel.}

\tableofcontents

\section{Introduction}
\label{s:intro}

Fix a large integer $n$ and an integer $d$ in the range $\{3,4,\dots,n-3\}$.
Denote by $\Mc$ the collection of all $n\times n$ matrices with entries taking values in $\{0,1\}$
such that in any row and any column there are exactly $d$ ones.
Every matrix from this set can be viewed as the adjacency matrix of a $d$-regular directed graph on $n$-vertices,
where we allow loops but no multiple edges.
For a subset $K\subset[n]:=\{1,2,\dots,n\}$ and a matrix $B$, by $B^K$ we denote
the submatrix of $B$ formed by rows $\row_i(B)$, $i\in K$.
In this paper, we consider the structure of the kernel of random
linear operators of the form $(M-z\idmat)^K$, where $M$ is a random
element of $\Mc$ (with respect to the uniform measure) and $z$ is a
fixed complex number.

Our motivation for this study is multifold.
We obtain new results regarding delocalization properties
of approximate eigenvectors for very sparse random matrices.
Apart of being of an independent interest, these results provide
new insights into spectral properties of random graphs.
Furthermore, as is shown in \cite{LLTTY third part}, our results
are key to understanding the intermediate singular values
of the matrix $M-z\idmat$, which in turn are crucial for establishing
the limiting spectral distribution of appropriately
rescaled adjacency matrices, when the dimension $n$ tends to infinity.

Spectral properties of random graphs, in particular, graphs with predefined degree sequences,
have been an object of active research.
In the case of $d$-regular undirected graphs, the magnitude of the second largest eigenvalue
as well as the limiting spectral distribution of the adjacency matrix
have been considered in various regimes and for different models of randomness
(uniform, permutation, and configuration models).
In particular, the study of the second largest eigenvalue has been motivated by the well known
relation between the magnitude of the spectral gap and the graph expansion properties \cite{Alon Milman, Dodziuk, HLW}.
We refer, in particular, to \cite{BFSU, FKS, Friedman, DJPP, Puder, Bordenave, CGJ, TY} and references therein
as well as to the survey \cite{HLW} for more information on spectral expanders.
The limiting spectral distribution of an (appropriately rescaled) adjacency matrix of an undirected $d$-regular graph
follows the Kesten--McKay law \cite{Kesten walks, McKay} which, for degree $d$ converging to infinity with $n$,
coincides with the classical semi-circle law \cite{TVW}. We refer, in particular, to \cite{Dumitriu Pal, BKY, BHY}
for recent results in this direction.

In the case of directed $d$-regular graphs,
establishing the limiting spectral distribution for constant $d$ presents a major challenge
not resolved as of this writing. It is conjectured that
the limiting spectral distribution of the
appropriately rescaled adjacency matrix follows the {\it oriented Kesten--McKay law} (see, for example, \cite[p.~52]{BC}).
For $\min(d,n-d)\to\infty$, the limiting distribution has been conjectured to follow the circular law,
thus matching (up to rescaling) the standard i.i.d.~non-Hermitian models.
Very recently, this conjecture has been partially resolved in the uniform and permutation models of randomness
under the assumption that the degree $d$ grows with $n$ at least poly-logarithmically
\cite{Cook circular, BCZ}.
However, the case of very slowly growing $d$ has remained open.
This (very sparse) regime is in certain respects fundamentally different
as it requires special handling of not only the smallest but also the {\it intermediate} singular values
of the shifted adjacency matrix $M-z\idmat$.

In \cite{LLTTY first part}, building upon arguments
in  \cite{Cook expansion, Cook adjacency, LLTTY:15, LLTTY-cras},
we have established lower bounds for the smallest singular value of the matrix $M-z\idmat$
which work for all $d$ larger than a large absolute constant (with probability estimates depending on $d$).
In this paper, we consider a more technical
(and more difficult) aspect of the study by establishing a structural theorem for the kernel of random operators
$(M-z\idmat)^K$.

This paper is an autonomous part in the series
of works in which we resolve the conjecture for the limiting spectral distribution
for any function $d=d(n)$ growing to infinity with the dimension $n$. In \cite{LLTTY third part},
we  use the main result of this paper
together with additional probabilistic arguments to derive bounds for intermediate singular values
of $M-z\idmat$ and, applying the standard argument of Girko \cite{Girko}, to
establish the circular law for the spectrum in the regime $d\to\infty$, resolving the
corresponding conjecture (see \cite[p.~5]{Cook circular}).
In order to avoid repetitions, we leave further discussion of Girko's approach and, more generally, the historical overview
of the circular law to \cite{LLTTY third part}.
As mentioned before, while being very useful in proving the limiting
law, the structural theorem is of interest on its own and in the case
$K=[n]$ can be viewed as a {\it delocalization} statement about
approximate eigenvectors of very sparse random matrices.

We start with formulating a ``soft'' version of the main result. In what follows,
given a vector $x\in\C^n$, by $x^{*}=(x_i^*)_i$ we denote the non-increasing
rearrangement of $(|x_i|)_i$. We also recall that for an $n\times n$ matrix $B$ and a subset $K\subset[n]$,
$B^K$ denotes the $|K|\times n$ matrix with rows $\row_i(B)$, $i\in K$.

\begin{theor}[Structural theorem]\label{ker th}
There are universal constants $c,c', C>0$ with the following property.
Let $n\geq C$, $C\leq d\leq \exp(c\sqrt{\ln n})$, and let $M$ be uniformly distributed on $\Mc$.
Let $z\in\C$ be such that $\vert z\vert \leq \sqrt{d}\, \ln d$.
Fix $d^{-1/2}\leq a\leq 1$ and any subset $\KK\subset[n]$ with $0\leq \vert \KK^c\vert\leq n/d^3$.
Set
\begin{align*}
\rho:=\max\Big(n^{-c}, \exp\Big(-\big(n/(1+\vert \KK^c\vert)\big)^{\frac{c\ln\ln d}{\ln d}}\Big)\Big),\;
\delta:=\frac{C\ln^2 d}{\ln (1/\rho)},\;
q:=\max(a\vert \KK^c\vert, 1).
\end{align*}
Then with probability at least $1-1/n$ every unit complex vector $x$ such that
$$
 \|(M-z\,\idmat)^K\,x\|_2\leq \vert\KK^c\vert^3\, n^{-6},
$$ satisfies one of the following two conditions:
\begin{itemize}
\item (Gradual with many levels) One has $\, \, \, x_i^*\leq (n/i)^{3} x_{q}^*\, \, $ for all $\, \, i\leq q$,\,
$$
 x_{i}^*\leq d^3(n/i)^6 x_{\lfloor c' n\rfloor}^*\quad \quad \mbox{ for all }
 \,\,\, q\leq i\leq \lfloor c' n\rfloor,
$$
 and
$$
\Big|\Big\{i\leq n:\,|x_i-\lambda |\leq \rho x_{\lfloor c' n\rfloor}^* \Big\}\Big|\leq  \delta n\quad\quad \mbox{for all $\,\,\, \lambda\in\C$}.
$$
\item (Very steep) $x_i^* > 0.9(n/i)^3 x_{q}^*$ for some  $i\leq q$.
\end{itemize}
\end{theor}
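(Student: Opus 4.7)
The plan is to prove the contrapositive: with probability at least $1-1/n$, every unit vector $x$ which is neither very steep nor gradual with many levels satisfies $\|(M-z\,\idmat)^K x\|_2 > |K^c|^3 n^{-6}$. To this end I would partition the unit sphere of $\C^n$ into three disjoint parts: (i) the very steep vectors, already accounted for in the conclusion; (ii) vectors satisfying the decay condition (a) but violating (b), i.e., exhibiting an atypical jump of $x^*$ on the middle range $q\le i\le \lfloor c' n\rfloor$; and (iii) vectors satisfying (a) and (b) but violating (c), i.e., possessing a level set of cardinality larger than $\delta n$ at the scale $\rho\, x^*_{\lfloor c' n\rfloor}$. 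The goal is then to rule out classes (ii) and (iii) as approximate kernel directions via a net-plus-small-ball scheme.

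For each of the bad classes I would construct an $\varepsilon$-net at a scale matching the kernel accuracy $|K^c|^3 n^{-6}$. The structural constraints on $x^*$ in class (ii) force the $\ell_2$-mass of $x$ to concentrate on a small index set and yield a net of controllable cardinality. In class (iii) the existence of a level set of size at least $\delta n$ allows me to approximate each $x$ by $\lambda\,\mathbf{1}_S + y$ with $|S|\ge \delta n$ and a small remainder $y$; a polynomially fine net for $\lambda\in\C$ (admissible since $|z|\le\sqrt{d}\,\ln d$ keeps the relevant disk bounded) combined with a lower-dimensional net for $y$ then covers this class.

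The heart of the argument is a small-ball bound
$$
\Prob\bigl(\|(M-z\,\idmat)^K v\|_2\le \tau\bigr)\le \rho_0^{|K|}
$$
for each fixed net vector $v$, with $\rho_0$ small enough to beat the net cardinality in a union bound. Because each row of $M$ has exactly $d$ ones, the classical Littlewood--Offord/LCD approach of Rudelson--Vershynin is essentially vacuous in this sparsity range, and one must use the alternative decomposition by degrees of structuredness announced in the abstract: quantitative anti-concentration for the scalar $\langle \row_i(M), v\rangle - z v_i$ expressed in terms of the level-set statistics of $v$ at various scales, exploiting precisely that vectors in (iii) have forbidden level sets and that vectors in (ii) have unstructured middle-range coordinates. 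The row-dependence imposed by the column-sum constraint would be handled via permutation/switching couplings, as in \cite{LLTTY first part}.

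The main obstacle lies in calibrating this new anti-concentration bound so that it is simultaneously sharp enough to beat the net entropy and robust across all scales of $\rho$ and $\delta$ appearing in the theorem. The transition at the threshold $q=\max(a|K^c|,1)$ forces a separate treatment of the initial segment $i\le q$ versus the middle range $q\le i\le \lfloor c' n\rfloor$, and the flexibility of $\lambda\in\C$ in condition (c) requires a preliminary discretization whose error is absorbed into $\tau$. After these calibrations, a union bound over all net vectors and all choices of $\lambda$ on the discrete grid should yield the stated $1-1/n$ failure probability.
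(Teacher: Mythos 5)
Your proposal captures the broad shape of the argument (decompose the sphere, treat very steep vectors separately, use nets and anti-concentration for the rest, and handle row dependence with switching-type couplings), but there are two concrete gaps that would cause the plan to fail as written.

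First, you have no treatment of \emph{almost constant} vectors, i.e.\ vectors $x$ with most coordinates equal to a common $\lambda\neq 0$. For such vectors the small-ball bound you posit,
$\Prob\bigl(\|(M-z\,\idmat)^K v\|_2\le\tau\bigr)\le\rho_0^{|K|}$,
is simply false: for $v=\mathbf 1$ one has $Mv=d\mathbf 1$ deterministically, so no anti-concentration can hold. The paper isolates these vectors into a separate class $\BB(\theta)$ and handles them by a \emph{deterministic} lower bound on $\|(M-\ww\idmat)^K x\|_2$ exploiting $d$-regularity (Lemma~\ref{lem: a-c}); this class must be carved out before any net-plus-small-ball scheme can begin. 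Your intended decomposition $x\approx\lambda\mathbf 1_S+y$ in class~(iii) points vaguely in this direction, but as you phrase it you are still relying on a small-ball estimate for the residual, which is not available near the constant direction.

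Second, and more fundamentally, your claim that a net ``of controllable cardinality'' exists for the vectors violating condition~(c) is unjustified, and in fact a single-scale net cannot work. After removing a level set of cardinality $\delta n$, the residual $y$ still lives in a space of dimension $\approx(1-\delta)n$, so any $\tau$-net with $\tau\sim|K^c|^3 n^{-6}$ has cardinality roughly $(1/\tau)^{(1-\delta)n}$, which no small-ball bound of the form $\rho_0^{|K|}$ can beat. The paper circumvents this by working with a whole \emph{family} of lattice approximations (the $d^u$-approximations for a range of exponents $u$) and by measuring the structure of each approximation through the $\ell$-decomposition into spread and regular $\ell$-parts with heights. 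The crucial dichotomy (Theorem~\ref{kappa and rho}) says that a gradual vector either has a $d^u$-approximation with many spread $\ell$-parts for some $u$ (class $\kapset_u$), or its $d^v$-approximation has $\ell$-parts of very large height (class $\rhoset_v$); each alternative yields a small-ball estimate, expressed through the estimators $\est_i$, which is \emph{tailored} to the complexity of the corresponding discrete class counted via Lemmas~\ref{l:equiv classes} and~\ref{equiv cardinality}. Without this multiscale bookkeeping, the net-entropy versus small-ball trade-off you describe cannot be closed. Related to this, the tensorization step for gradual vectors is not done by switchings alone: the passage from a single row to the full product $M^K y$ goes through a configuration-model coupling to a vector with independent coordinates (Section~\ref{s:small ball}), conditioned on the event that the associated multigraph is simple, and this construction is what makes the product $\prod_i\est_i$ appear.
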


We note that the probability bound in the theorem can be replaced with
$1-n^{-r}$ for any fixed $r\geq 1$ by adjusting the absolute constants.
The terms ``steep'' and ``gradual'' vectors will be discussed in detail below.
In full generality, the theorem will be given at the end of the paper, see Theorem~\ref{ker th gen}.
The above statement asserts that, given a vector $x\in\C^n$ which is close to the kernel of $(M-z\idmat)^K$,
either the coordinates of $x^*$ decrease fast for small indices ($x$ is ``very steep'') or,
if this is not the case, the vector $x$ is spread (has many non-zero components) and, moreover,
for any complex $\lambda$ very few coordinates of $x$ are concentrated around $\lambda$.
Note that when $|K^c|=1$, i.e., when we consider normal vectors to a linear subspace spanned by
$n-1$ matrix rows, the second assertion never holds (since $q=1$), and the theorem says that
the normal vectors are all spread and have many levels. Moreover, for $K=\{1,2,\dots,n\}$,
combining the theorem with a simple covering argument, we obtain the following delocalization
property.

\begin{cor}[Delocalization properties of eigenvectors]\label{cor: deloc}
There are universal constants $c, C >0$ such that the following holds.
Let $n,d$ and $M$ be as in Theorem~\ref{ker th}.
Then with probability at least $1-1/n$
any eigenvector $x$ of $M$, which is not parallel to $(1,1,\dots,1)$, satisfies
$x_i^*\leq d^3(n/i)^6 x_{\lfloor c n\rfloor}^*$ for all $1\leq i\leq  \lfloor c n\rfloor$, and, moreover,
$$
 \forall \lambda \in \C \quad\quad   \Big|\Big\{i\leq n:\,|x_i-\lambda |\leq n^{- c}
   x_{\lfloor c n\rfloor}^* \Big\}\Big|\leq   C n\ln^2 d/\ln n.
$$
\end{cor}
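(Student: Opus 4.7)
The plan is to apply Theorem~\ref{ker th} with $|K^c| = 1$ and combine it with a net argument over the eigenvalues of $M$. First I would fix $K = [n] \setminus \{i_0\}$ for any index $i_0$; then $|K^c| = 1$ and $q = \max(a\cdot 1, 1) = 1$. In this regime the ``very steep'' alternative requires $x_1^* > 0.9\, n^3\, x_1^*$, which is impossible for $n \geq 2$; so every vector meeting the hypothesis of Theorem~\ref{ker th} automatically falls into the ``gradual with many levels'' case. The resulting estimates reproduce the two assertions of the corollary: since $q = 1$, the bound $x_i^* \leq d^3(n/i)^6 x_{\lfloor c'n\rfloor}^*$ holds on the full range $1 \leq i \leq \lfloor c'n\rfloor$; and in the regime $C \leq d \leq \exp(c\sqrt{\ln n})$ a direct check shows that $\exp(-(n/2)^{c\ln\ln d/\ln d}) \leq n^{-c}$ for $n$ large, so $\rho = n^{-c}$, $\ln(1/\rho) \asymp \ln n$, and $\delta \leq C' \ln^2 d / \ln n$, matching the stated level-set bound.

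To promote the fixed-$z$ statement to one simultaneous over all eigenvalues, I would cover the disk $\{|z| \leq \sqrt{d}\,\ln d\}$ by a net $\mathcal{N}$ of mesh $n^{-6}$, whose cardinality is polynomial in $n$ thanks to $d \leq \exp(c\sqrt{\ln n})$. The remark following Theorem~\ref{ker th} allows the success probability to be boosted to $1 - n^{-r}$ for any fixed $r \geq 1$; choosing $r$ larger than the polynomial degree of $|\mathcal{N}|$ and applying a union bound, the conclusion of Theorem~\ref{ker th} holds simultaneously at every $z' \in \mathcal{N}$ with probability at least $1 - 1/n$. For any unit eigenvector $x$ of $M$ with eigenvalue $\lambda$ satisfying $|\lambda| \leq \sqrt{d}\,\ln d$, I would pick $z' \in \mathcal{N}$ with $|\lambda - z'| \leq n^{-6}$; then
\[
\|(M - z'\,\idmat)^K x\|_2 = |\lambda - z'|\,\|x\|_2 \leq n^{-6} = |K^c|^3 n^{-6},
\]
so Theorem~\ref{ker th} applies and $x$ inherits the desired ``gradual with many levels'' structure.

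Eigenvalues outside the disk $\{|z| \leq \sqrt{d}\,\ln d\}$ must be dealt with separately. For random $d$-regular digraphs, standard spectral-radius estimates (of Friedman / Bordenave / Cook type, or as a byproduct of the smallest-singular-value bounds from \cite{LLTTY first part}) imply that, with probability $\geq 1 - o(1)$, the Perron--Frobenius eigenvalue $d$ is the only one with $|\lambda| > C\sqrt{d}$, and its eigenspace is spanned by $(1,\dots,1)$; such eigenvectors are precisely the ones excluded in the hypothesis of the corollary.

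The main obstacle I anticipate is coordinating the net mesh with the tolerance $\|(M-z\,\idmat)^K x\|_2 \leq |K^c|^3 n^{-6}$: the choice $|K^c| = 1$ forces the mesh to be $n^{-6}$, which in turn dictates how large the exponent $r$ must be in the boosted probability bound. Any attempt to enlarge $|K^c|$ to relax the mesh would reintroduce a non-vacuous ``very steep'' alternative that would then need to be ruled out by hand for genuine eigenvectors of $M$; keeping $|K^c|$ minimal is therefore essential to the argument.
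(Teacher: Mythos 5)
Your proposal is correct and takes essentially the same route as the paper's own proof: bound the non-Perron eigenvalues of $M$ using the spectral-gap estimate (Theorem~\ref{norm lemma}), cover the relevant disk by a polynomially-sized net, boost the probability in the structural theorem via the remark after Theorem~\ref{ker th}, union-bound over the net, and observe that with $|K^c|$ minimal the ``very steep'' alternative is vacuous so every near-kernel vector is gradual with many levels. The only cosmetic difference is that the paper applies the general version Theorem~\ref{ker th gen} directly with $K=[n]$ (where the tolerance $L^3 n^{-6}\|x\|_2$ stays positive even though $|K^c|=0$), whereas you work with the headline Theorem~\ref{ker th}, and since its tolerance $|K^c|^3 n^{-6}$ would vanish at $|K^c|=0$, you set $|K^c|=1$ to keep it nonzero --- a valid workaround that does not affect $q$, $\rho$, or $\delta$ in the relevant asymptotic regime.
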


For completeness, we give a proof of Corollary~\ref{cor: deloc} at the end of the paper.
We would like to notice that delocalization properties of eigenvectors for various models have been
a focus of active research. In the case of sparse matrices we refer to \cite{BHY2} for eigenvector statistics for Erd\H{o}s--R\'enyi graphs and to \cite{AM,AM2,BS,G} for delocalization properties of eigenvectors (and almost eigenvectors) of undirected regular graphs.  In the non-Hermitian setting (relevant to our present work)
we refer to \cite{RV deloc,RV no gaps}.
The term {\it delocalization} usually refers to upper bounds on the $\ell_\infty$-norm of a random vector or,
more generally, upper bounds for the scalar product with a fixed unit vector.
The delocalization statement provided by Corollary~\ref{cor: deloc},
is closer to the concept of {\it no-gaps delocalization}, introduced in \cite{RV no gaps},
which bounds the $\ell_2$-mass of the vector supported on every subset of coordinates of a given size.
At the same time, Corollary~\ref{cor: deloc} not only provides lower bounds for the order statistics of eigenvectors
but also measures cardinalities of sets of almost equal coordinates, thus giving an additional structural information
(very important in our context).
Corollary~\ref{cor: deloc}, to our best knowledge,
is the first statement which provides quantitative information on the delocalization
for non-Hermitian random matrices with a {\it constant} number of non-zero elements in rows/columns.

A fundamental feature of the main theorem is that it provides information on the kernel of the matrix
for large constant $d$, with the ``unstructuredness'' measured in terms of the dimension $n$.
Here, ``unstructuredness'' refers to the allowed number of approximately equal
components of a vector.
For example,
we show that for $M^{\{2,\dots,n\}}$ with large probability
any vector in the kernel can have at most $O(n/\ln n)$
equal components.
We expect that the theorem and the argument used in its proof will turn to be useful
in the study of the spectrum of random directed graphs of constant degree.
In fact, combined with some known arguments, our result implies that
the random adjacency matrix $M$ has rank {\it at least} $n-1$ with probability going to one
with $n$ ($d$ being a large constant) \cite{LLTTY rank n-1}.

Theorem~\ref{ker th} can be interpreted as follows:
with probability very close to one we have
$$
 \inf\limits_{x\in T}\|(M-z\idmat)^K x\|_2>0,
$$
where the infimum is taken over the set $T\subset\C^n$ of all non-zero vectors,
which are neither ``very steep'' and
not ``gradual with many levels.''
Estimates of this type fall into a large body of research dealing with
matrix singularity and structural properties of null vectors for various models of randomness.
A possible strategy in estimating the infimum $\inf\limits_{x\in S}\|Bx\|_2$ (for a random matrix $B$
and a subset $S$, say, the unit Euclidean sphere)
consists in representing $S$ as a union of subsets $\bigcup_{\alpha}S_\alpha$ grouping together
vectors with a similar structure, and then combining bounds
for $\inf\limits_{x\in S_\alpha}\|Bx\|_2$. In turn,
each of the infima is bounded using the structural information about
vectors in $S_\alpha$ and may involve, depending on the problem, a discretization of the subset
(i.e., a version of a covering argument).
A very incomplete list of works involving this approach in the study of square non-Hermitian
or ``almost square'' matrices is
\cite{KKS95, TV bernoulli, TVW} (singularity of random Bernoulli matrices),
\cite{LPRT, RV, RV09, RT} (matrices with i.i.d. entries with a tail decay condition),
\cite{TV sparse, {TV ann math}, GT, BasRud, BasRud circ} (sparse matrices with i.i.d. entries, see also \cite{LiRi}
for the non-i.i.d. case), \cite{Cook adjacency, LLTTY:15, Cook circular, BCZ}
(adjacency matrices of directed $d$-regular graphs).
For a detailed exposition of this method we refer to \cite{RV2010} and \cite{V12}.
The decomposition into subsets differs significantly, depending on the randomness model.
In particular, estimating the singularity probability for Bernoulli matrices in \cite{KKS95,TV bernoulli, TVW} involved defining
the {\it combinatorial dimension} of certain discrete vectors.
Further, the idea from \cite{LPRT} of splitting the Euclidean sphere into sets of ``close to sparse''
and ``far from sparse'' vectors was developed in \cite{RV, RV09}, where the notion of {\it compressible}
and {\it incompressible} vectors appeared. In \cite{RV,RV09}, building upon earlier works on the
Littlewood--Offord theory (see, in particular, \cite{TV ann math}),
the concept of the {\it least common denominator} (LCD) of a vector was introduced,
and the Euclidean sphere was partitioned into subsets according to the magnitude of the LCD.
Further, in works \cite{Cook circular, BCZ, LLTTY first part, BasRud, BasRud circ} dealing with
adjacency matrices of sparse directed random graphs, the crucial structural property of a vector
was statistics of ``jumps'' in its non-increasing rearrangement, i.e., the magnitude of ratios of
the form $x_i^*/x_{Li}^*$,
where $i\leq n$ and $L>0$ is a scaling factor.
In works \cite{BasRud, BasRud circ}, this analysis of jumps in the rearrangement
was combined with the LCD-based approach of \cite{RV, RV09}.

Despite the progress in this research direction in the past years, an efficient estimate of the smallest singular value
and, more generally, of quantities of the form $\inf\limits_{x\in T}\|Bx\|_2$ for a very sparse random matrix $B$
(with a constant average number of non-zero elements in a row/column) seems to require essentially new
arguments. In this work, we propose such a new argument for $0/1$ random matrices with prescribed row/column sums.
We believe that our approach can be extended to more general sparse models.

A crucial new ingredient of our paper is the concept of the {\it $\ell$-decomposition} of a vector which is a partition
of $[n]$ into subsets encoding useful structural properties of a complex $n$-dimensional vector $x$.
We combine the $\ell$-decomposition with new tensorization arguments (which allow to pass from individual
small ball probability estimates for $\langle\row_i(M-z\,\idmat),x\rangle$ to the matrix-vector product $(M-z\,\idmat)^K x$)
and a discretization (covering) procedure to get a characterization of {\it gradual} vectors in the kernel.
On the other hand, the {\it steep} vectors are treated by a combination of covering arguments and
a procedure utilizing expansion properties of the graph, thus augmenting the approach in \cite{LLTTY first part}.
In the remainder of the introduction, we will discuss in detail the three main features of the proof: steep and gradual vectors,
$\ell$-decomposition, and tensorization.

\medskip

\noindent {\bf Steep, almost constant, and gradual vectors.}
The notions of steep, almost constant, and gradual vectors appeared in \cite{LLTTY first part}
in the context of bounding the smallest singular value of the shifted adjacency matrix.
Naturally, these notions play an important role in the present paper as well.
For technical reasons, we slightly modified the definitions, compared to \cite{LLTTY first part}.

A full description of the class of steep vectors in $\C^n$ is provided in Subsection~\ref{subs: steep vectors}.
Since the precise formulas are long and involve many parameters, we omit them in the introduction. We just
 loosely describe this class as the collection of vectors $x\in\C^n$ such that
for some indices $1\leq i<j\ll n$, the ratio $x_i^*/x_j^*$ is {\it very large} compared to the ratio $j/i$.
The basic example of a steep vector is $(1,1,\dots,1,0,0,\dots,0)$, with less than $cn$ ones (for a small constant $c>0$).
At the same time, the steep vectors are not necessarily close to sparse in the Euclidean metric -- in particular,
the steep vectors cannot be identified with {\it compressible} vectors introduced in \cite{LPRT, RV, RV09}.

The second class of vectors which we call {\it almost constant}, is much easier to describe explicitly --
those are all the vectors $x\in \C^n$ such that
$$
\exists \lambda \in \C \quad \quad
   |\{ i\leq n \, : \, |x_i - \lambda|\leq \theta\, x_{cn}^{*}\}| >n- cn,
$$
where $\theta$ is a negative constant power of the degree $d$
and $c>0$ is a small universal constant.
Vectors which are neither steep  nor almost constant are called {\it gradual}.
Thus, a gradual vector has many pairs of distinct coordinates and controlled ratios $x^*_i/x^*_j$
for all indices $1\leq i<j\ll n$.

In our study of the kernel of the random operator $(M-z\,\idmat)^K$, we consider its intersection
with the classes of steep and almost constant vectors in Section~\ref{steep} and with gradual vectors
-- in Sections~\ref{s:k-vektors}--\ref{s:proof}.
Theorem~\ref{ker th} combines the information about the intersections.

For large enough $K^c$ and small $d$,
simultaneous existence of very steep and gradual (with many levels) vectors in the kernel of the matrix $M^K$ is an objective fact.
We can consider the following informal argument. For an integer $p\geq 1$,
the kernel of $M^{\{p+1,\dots,n\}}$
contains $\ker M^{\{2,\dots,n\}}$, which,
in view of Theorem~\ref{ker th} and the above remark,
typically consists of gradual vectors with many levels.
At the same time, the columns of $M$, $\col_i(M)$, are ``locally'' almost independent,
in the sense that for every small subset $Q\subset[n]$, the joint distribution of $\col_i(M)$, $i\in Q$,
is ``close'' to the joint distribution of independent vectors uniform on the set $\{y\in\{0,1\}^n:\;|\supp y|=d\}$
(in order not to expand the paper we prefer not to discuss quantitative aspects of this observation).
Thus, for fixed integers $p\gg d$ and  $r\ll n$,
we have
\begin{align*}
\Prob\big\{&\exists i\leq r:\,\supp\, \col_i(M)\cap\{p+1,\dots,n\}=\emptyset
\big\}\\
&\approx1-\prod_{i=1}^r \Prob\big\{\supp\,\col_i(M)\cap\{p+1,\dots,n\}\neq\emptyset\big\}\\
&=1-\bigg(1-{p\choose d}{n\choose d}^{-1}\bigg)^r\approx 1-\exp\big(-(p/n)^d r\big).
\end{align*}
Accordingly, when $r\gg (n/p)^d$, with large probability there exists a null-column in $M^{\{p+1,\dots,n\}}$, so that a typical realization of $M^{\{p+1,\dots,n\}}$
contains a coordinate vector in its kernel, which is ``very steep.''

The analysis of the null steep and almost constant vectors, which occupies Section~\ref{steep},
is a development of an argument from \cite{LLTTY first part} with
some important technical additions.
It combines deterministic estimates for $Mx$ (assuming certain expansion properties of the underlying graph)
with covering arguments for non-constant vectors with large support.
For almost constant vectors $x$,
a satisfactory bound for the L\'evy concentration function of the product $Mx$ is generally impossible.
For example, if $x=(1,1,\dots,1)$ then $Mx=(d,d,\dots,d)$ {\it deterministically}.
A key step in bounding the Euclidean norm of $(M-z\,\idmat)x$ (or, more generally, $(M-z\,\idmat)^K x$) from below
for almost constant vectors is a non-probabilistic argument which utilizes $d$-regularity (see Lemma~\ref{lem: a-c}).
A related method is used for steep vectors if the jump occurs at the beginning of the non-increasing rearrangement
(Lemma~\ref{l:T0}).
For the remaining vectors $x$, anti-concentration estimates for individual row-vector products are tensorized
to obtain individual estimates for $Mx$, which are combined with a covering argument, using specially constructed nets
(see Subsection~\ref{subs: nets}).

\medskip

\noindent {\bf The least common denominator (LCD), and the $\ell$-decomposition.}
The correspondence between small ball probability and arithmetic
properties of the vector of coefficients goes back to Hal\'asz \cite{Hal, Hal2}, and was
refined in the work \cite{TV bernoulli} on the singularity probability, before being
developed for the least singular value problem in \cite{R-ann, TV ann math}. Then
in \cite{RV,RV09}, the LCD with parameters $\gamma, \alpha>0$ of a vector $x\in\R^n$
was defined as
$$
 LCD(x):=\inf\big\{\theta>0,\;\d(\theta x,\Z^n)<\min(\gamma\theta\|x\|_2,\alpha)\big\}.
$$
It was used there with $\gamma$ being a small positive constant and
$\alpha$ being a small constant multiple of $\sqrt{n}$.
Thus, LCD encapsulates information on the distance of a rescaled vector to the integer lattice.
The fundamental correspondence between the magnitude of the LCD of a vector $x$ and the small ball probability
for the random sum $\sum_{i=1}^n x_i\xi_i$ (for independent sufficiently ``spread'' random variables $\xi_i$)
was the key ingredient employed in \cite{RV,RV09}.
Specifically, it was shown in \cite{RV} that a normal vector to the hyperplane spanned by $n-1$ columns typically has
LCD which is exponentially large in dimension. This fact was then applied to estimate the small ball probability
for the least singular value of the random matrix. However,
in the sparse regime this approach presents certain challenges.
Note that the definition of the LCD does not allow to distinguish a vector having a (small) proportion of coordinates of the
same value  from a vector with almost all components distinct. For example, the vectors
$x^1:=(1/n,2/n,3/n,\dots,n/n)$ and $x^2:=(0,0,\dots,0,0, 0.01+1/n, 0.01+2/n,\dots,0.01+0.99n/n)$ (with the first $0.01n$ components equal to zero)
would have comparable LCDs, whereas behaviour of the corresponding scalar products with a row of a random $d$-regular
matrix is completely different. Specifically, if $\xi=(\xi_1,\xi_2,\dots,\xi_n)$ is the random $0/1$
vector uniformly distributed on
$$
 \{y\in\{0,1\}^n:\;|\supp\,y|=d\}
$$
then for any number $r\in\R$ the scalar product $\langle \xi,x^1\rangle$ is equal to $r$ with probability
$O(1/n)$ while $\langle \xi,x^2\rangle=0$ with probability {\it at least} $0.01^d$.
Indeed, it is the absence (or presence) of large constant blocks of coordinates which becomes crucial
in the sparse regime.

The notion of the $\ell$-decomposition developed in this paper is in  particular designed to deal with the above issue.
More importantly, the $\ell$-decomposition carries
very detailed information about the structure
of a vector.  This information turns out extremely useful
in estimating cardinalities of coverings as well as in bounding small ball probabilities.

The precise definition of the $\ell$-decomposition is rather involved and includes an iterative procedure
for constructing a partition of $[n]$ associated to a vector.
We skip the technical details in the introduction (see Subsection~\ref{subs: ell decomposition} for the actual construction procedure) and describe the $\ell$-decomposition of a vector $y$ in the lattice $\frac{1}{k}\big(\Z^2\big)^n$
(where $k$ is a large integer) as a partition of $[n]$ into non-empty subsets $(\lset^{(q)})_{q=1}^m$ (called {\it $\ell$-parts})
which satisfy, among some other conditions, the following:
For any $q\leq m$ and any numbers $a,b\in \frac{1}{k}\Z^2$, setting
$$
 J_{a,q}:=\{i\in \lset^{(q)}:\;y_i=a\} \quad \mbox{ and } \quad  J_{b,q}:=\{i\in \lset^{(q)}:\;y_i=b\},
$$
one has that either one of the sets is empty or $|J_{a,q}|\leq 4|J_{b,q}|$.
In other words, considering the levels (blocks of coordinates having the same value) of the vector $y$ restricted to $\lset^{(q)}$,
all the blocks have approximately the same cardinality. If the vector $y$ is real and non-increasing,
its restriction to $\lset^{(q)}$ can be viewed as a ladder or staircase, where all the stairs are of about the same size,
whereas the gaps between the stairs are allowed to differ significantly.
The number of ``stairs'' within the $\ell$-part is called {\it the height} of the $\ell$-part. Note that the height
is {\it not} determined by the magnitude of $y_i$, $i\in \lset^{(q)}$, but instead by the number of levels in $\lset^{(q)}$.
The position of the $\ell$-parts and their heights provide  essential information on anti-concentration properties of the vector,
specifically, on anti-concentration of the scalar product with a row of our random matrix.
The information contained in the $\ell$-decomposition  allows
us to compute {\it conditional} small ball probability
with imposed restrictions on the distribution of the row. This becomes useful when studying anti-concentration
for the matrix-vector product $M^Ky$ (see Section~\ref{s:small ball}).
Observe that the $\ell$-decomposition is defined for a discrete subset of $\C^n$;
in fact, given a gradual vector $x$ we construct its approximations by vectors in $\frac{1}{k}\big(\Z^2\big)^n$
for various values of $k$ ({\it{}$k$-approximations}) and consider the $\ell$-decomposition of each of the approximations.

At a high level, the way we apply the $\ell$-decomposition to the original problem is similar to the way
the least common denominator was used in \cite{RV}. In \cite{RV}, the set of unit incompressible vectors
was split into subsets according to the magnitude of the LCD. Then, with the help of a covering argument
combined with small ball probability estimates, the subsets of vectors with small (subexponential in dimension)
LCD were excluded from the set of potentially null vectors of the matrix, leaving only those with very large LCD.

In our setting, we partition the collection of gradual vectors according to properties of the $\ell$-decompositions
of their lattice approximations. Using a combination of small ball probability estimates and coverings,
we exclude those gradual vectors with ``not that many'' levels, leaving only those satisfying the first condition
in Theorem~\ref{ker th}. The partitioning scheme
is rather complicated because of the ``multidimensional'' nature of the $\ell$-decomposition,
i.e., due to the necessity to take into consideration a set of parameters rather than a single number.
The crucial notions used in the partitioning are those of {\it regular} and {\it spread} $\ell$-parts.
Given a vector $y\in\frac{1}{k}\big(\Z^2\big)^n$ with the $\ell$-decomposition $(\lset^{(q)})_{q=1}^m$,
we say that the $\ell$-part $\lset^{(q)}$ is {\it spread} if it contains at least two distinct levels and
for every pair of coordinates $y_i,y_j$ ($i,j\in \lset^{(q)}$, $y_i\neq y_j$), we have $|y_i-y_j|\geq d/k$.
In the case of a vector with ordered real components, we may say that
the gaps between the ``stairs'' within the spread $\ell$-parts are $d$ times larger than their absolute minimum $1/k$.
The $\ell$-parts which are not spread are called {\it regular}.
Naturally, the matrix-vector products $M^K y$, with $y$ having spread $\ell$-parts of large cardinality,
enjoy relatively better anti-concentration properties.
At a more detailed level, the following procedure is applied:

\begin{itemize}

\item We isolate the set of gradual vectors, whose $k$-approximation (for some $k$ within a specific range)
has large spread $\ell$-parts. The sets of vectors are denoted by $\kapset_u$ in the text
(see Subsection~\ref{s:decomposition of S}), where $u\geq 5$ is related to $k$ by $k=d^u$.

\item For every vector from $\bigcup\kapset_u$, its $k$-approximation $y$ has the structure which guarantees
strong bounds for the small ball probability
$$
 \Prob\{\|(M-z\,\idmat)^K\,y\|_2\leq t\}.
$$
 These bounds,
combined with a covering procedure for the vectors in $\kapset_u$, implies that
with probability close to one no vector in $\bigcup\kapset_u$ is approximately a null vector.

\item Thus, it remains to deal with vectors in the complement $\mathcal S\setminus \bigcup\kapset_u$,
where $\mathcal S$ denotes the set of gradual vectors in $\C^n$
(we emphasize again that the union is taken over $u$ within a specific range determined by $n$, $d$,
and the cardinality of the set $K$).
We show (see Subsection~\ref{s:decomposition of S}) that the complement $\mathcal S\setminus \bigcup\kapset_u$
consists of gradual vectors, whose $k$-approximation (for a specially chosen $k$) has $\ell$-parts of very large heights
(see the definition of sets $\rhoset_v$ and Propositon~\ref{kappa and rho}).
At an elementary level, the coordinates of those $k$-approximations $y$ take many distinct values.
Naturally, this property provides a fine small ball probability estimate for $(M-z\,\idmat)^K\,y$,
however, for a different reason than in the case of large spread $\ell$-parts.

\item As the final step, we make the following observation: the set of gradual vectors from
$\rhoset_v$, which have relatively large constant blocks of coordinates, has much smaller
complexity than the entire $\rhoset_v$. In a sense, it is possible to construct a net on the
set of such vectors with cardinality balanced by individual small ball probabilities, thus
excluding the set from the collection of potentially null vectors. The remaining set --
gradual vectors without large constant blocks -- are the ``gradual vectors with many levels''
from the first assertion of Theorem~\ref{ker th}.

\end{itemize}

The procedure described above occupies Section~\ref{s:k-vektors} and a considerable part of
Section~\ref{s:proof}. Passing from small ball probability bounds for scalar products with
individual matrix rows to the entire matrix-vector product is a crucial step, with technical
complexities arising because of the lack of independence between the rows. This {\it tensorization}
part of the argument
spans Section~\ref{s:small ball} and partially continues into Section~\ref{s:proof}.
We would like to describe it in  more detail.

\medskip

\noindent {\bf Tensorization.} Our goal is to represent the small ball probability
$$
 \Prob\{\|(M-z\,\idmat)^K y\|_2\leq t\}
$$
for a given $k$-vector $y$ (i.e., $y\in \frac{1}{k}\big(\Z^2\big)^n$) and $t>0$ in terms of the structure of $y$, i.e, in terms of the
$\ell$-decomposition with respect to $y$.
A bound is obtained via a series of reduction steps. At each step, we replace our random model or
quantities  with objects that are simpler to analyze.

As the first step, given a vector $y$ with its $\ell$-decomposition $(\lset^{(q)})_{q=1}^m$,
we condition on realizations of $\sum_{j\in \lset^{(q)}}\col_j(M)$, $q\leq m$ (recall that $\col_i(M)$
denote the $i$-th column of $M$).
Specifically, we define a collection of $n\times m$ matrices $\RR$ with integer valued entries
and study the small ball probability within the event
$$
 \Event_\RR=\big\{\sum_{j\in \lset^{(q)}}\col_j(M)=\col_q(\RR),\;\;q\leq m\big\}
$$
for some fixed matrix $\RR$.
In particular, this forces the distributions of the columns of $M$ from different $\ell$-parts to be independent.
Moreover, using the expansion properties of the underlying graph, we impose additional assumptions on $\RR$ thus getting statistics of
the number of non-zero entries of the matrix $M$ ``restricted'' to each $\ell$-part.
The conditioning on $\Event_\RR$ is described at the beginning of Section~\ref{s:small ball}, however,
 additional structural assumptions on matrices $\RR$ are introduced later in Section~\ref{s:proof}.

The next -- crucial -- step consists in replacing the given random model with another one having independent components.
More precisely, we define a random $n$-dimensional vector $Z$ with {\it jointly independent} components
such that, conditioned on a certain event, its distribution matches the conditional distribution of $M y$ given $\Event_\RR$.
In its essence, every component $Z_i$ is a sum of independent random variables, where each variable
indicates the level of $y$ ``hit'' by a non-zero entry in the $i$-th row of $M$.
We connect the (conditional) distribution of $Z$ with the distribution of $My$ by introducing an intermediate
random model involving bipartite multigraphs. We start by showing that the distribution of the adjacency matrix
of that multigraph, conditioned on the event that the graph is simple, coincides with the distribution of $M$.
Results of this type are known in the random graph literature. In our setting we apply a result from
\cite{McKay-simple}
to get the correspondence. In its turn, the distribution of the adjacency matrix of the multigraph
is directly related to the distribution of $Z$ conditioned on a certain event that can be viewed as a
sort of ``$d$-regularity'' property.

Anti-concentration estimates for $My$ thus can be obtained by multiplying bounds for individual components $Z_i$.
Those, in turn, can be written in terms of the $\ell$-decom\-position of $y$ and the structure of the $i$-th row of the matrix $\RR$.
The functions which encapsulate this information are called {\it the small ball probability estimators}.
As the main result of Section~\ref{s:small ball}, we estimate $\Prob\{\|(M-z\,\idmat)^K y\|_2\leq t\}$
in terms of the product $\prod_{i=1}^n \est_i$, where $\est_i$ is the small ball probability estimator for the $i$-th row/$i$-th component
(see Theorem~\ref{sbp th}).
Computing  the product $\prod_{i=1}^n \est_i$ is not straightforward
as it involves analysis of both the $\ell$-decomposition of $y$ and the matrix $\RR$.
In the first part of Section~\ref{s:proof}, we introduce other  {\it estimators}
related to $\est_i$, which are easier to study.
Once we obtain an explicit upper bound for the small ball probability, we combine it with covering arguments,
which were briefly mentioned above.


The arguments in the paper are largely self-contained, although we employ several external results.
This includes (mostly standard) bounds on concentration and anti-concentration of the sum of independent variables;
certain expansion properties of the underlying random $d$-regular digraph; an upper bound on the
second largest singular value of the adjacency matrix; some estimates regarding the configuration
model for random graphs with predefined degree sequences.

\section{Preliminaries}
\label{preliminaries}

By {\it universal} or {\it absolute} constants we always mean numbers independent of all involved parameters,
in particular independent of $d$ and $n$. Given positive integers $\ell<k$ we denote sets
$\{1, 2, \ldots , \ell\}$ and $\{\ell, \ell + 1,  \ldots , k\}$ by $[\ell]$ and $[\ell, k]$ correspondingly.
Having two functions $f$ and $g$ we write $f\approx g$ if there are two absolute positive constants
$c$ and $C$ such that $cf\le g\le Cf$.
Given $z\in \C$, we denote by $\Re\, z$ (resp., $\Im\, z$) the real (resp., imaginary) part of $z$.
We define a lexicographical order on $\C$ in the following way.
Given $x,y\in \C$, we have $x\geq y$ if either $\Re\, x> \Re\, y$ or $\Re\, x= \Re\, y$ and $\Im\, x\geq \Im\, y$.
The lexicographical ordering will be useful when defining maximum over a finite
subset of the complex plane.

By $\idmat$ we denote the identity $n\times  n$ matrix, we use $\bf 1$ for a vector of ones.
For $I\subset [n]$, by $\cproj_I$ we denote the orthogonal projection on the coordinate subspace $\R^I$ (or $\C^I$),
and denote the complement of $I$ inside $[n]$ by $I^c$ ($n$ is always clear from the context).
Given a vector $x=(x_i)_{i=1}^n\in \C^n$, we denote  $x^\dagger=\bar x =(\bar x_i)_{i= 1}^n$, where $\bar z$ is the complex conjugate of $z\in \C$,
and by $(x_i^*)_{i=1}^n$ we denote the non-increasing rearrangement of the sequence $(|x_i|)_{i=1}^n$.
We use $\la\cdot, \cdot \ra$ for the standard inner product on $\C^n$, that is
$\la x, y \ra = \sum _{i= 1}^{n} x_i \bar y_i$.
Further, we write $\|x\|_{\infty}=\max_i |x_i|$ for the $\ell_{\infty}$-norm of $x$.
For a list of notation related to matrices and graphs we refer to the beginning of Section~\ref{s:graphs}.

Working with classes of vectors, we often consider the Minkowski sum for two subsets $V$ and $W$
of $\C^n$, which is defined as
$$
  V+W = \{v+w \, : \, v\in V, \, w\in W\}.
$$

To obtain probability bounds we often consider certain relations between sets and
use the following simple claims to estimate their probabilities.
Let $A$, $B$ be sets, and $R\subset A\times B$ be a relation.
Given $a\in A$ and $b\in B$, the image of $a$ and preimage of $b$ are defined by
$$
     R(a) = \{ y \in B \, : \, (a,y)  \in  R\} \quad \mbox{ and } \quad
     R^{-1}(b) = \{ x \in A \, : \, (x,b)  \in  R\}.
$$
We also set $R(A)=\cup _{a\in A} R(a)$.
We use the following standard estimate (see e.g. Claim~2.1 in \cite{LLTTY:15}).

\begin{claim} \label{multi-al}
Let $s, t >0$.
Let $R$ be a relation between two finite sets $A$ and $B$ such that for
every $a\in A$ and every $b\in B$ one has $|R(a)|\geq s$ and $|R^{-1}(b)|\leq t$.
Then
$
        s |A|\leq  t |B|.
$
\end{claim}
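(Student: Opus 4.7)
The plan is to prove this by a straightforward double-counting of the cardinality of the relation $R$, viewed as a subset of $A\times B$. The key identity is that counting pairs $(a,b)\in R$ row by row (grouping by the first coordinate) must give the same answer as counting them column by column (grouping by the second coordinate).

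First I would write
\[
|R|=\sum_{a\in A}|R(a)|=\sum_{b\in B}|R^{-1}(b)|,
\]
which is immediate from the definitions of $R(a)$ and $R^{-1}(b)$ since the events ``$(a,b)\in R$'' partition $R$ both according to the first and the second coordinate.

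Next, I would apply the two hypotheses to bound each of these sums: from $|R(a)|\ge s$ for every $a\in A$ we obtain $\sum_{a\in A}|R(a)|\ge s|A|$, while from $|R^{-1}(b)|\le t$ for every $b\in B$ we obtain $\sum_{b\in B}|R^{-1}(b)|\le t|B|$. Combining these with the identity above yields $s|A|\le |R|\le t|B|$, which is the conclusion.

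There is no real obstacle here: the only thing to check is that $R(a)$ and $R^{-1}(b)$ are well defined finite sets (guaranteed by $A$, $B$ being finite), so the sums are finite and the rearrangement is valid. The proof is essentially a one-line application of Fubini for counting measure, and the content of the claim lies entirely in how it is later applied to bound probabilities of events built out of combinatorial relations between sets of matrices/graphs.
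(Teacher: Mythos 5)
Your proof is correct and is the standard double-counting argument; the paper does not reprove the claim but simply cites Claim~2.1 of \cite{LLTTY:15}, where the argument is exactly this bidirectional count of pairs in $R\subset A\times B$.
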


\subsection{Anti-concentration}

For a random vector $X$ distributed over a real or complex inner product space $E$,
its {\it L\'evy concentration function} $\cf(X,t)$ is defined as
$$
\cf(X,t):=\sup\limits_{\lambda\in E}\Prob\bigl\{\|X-\lambda\|_2\leq t\bigr\},\;\;t>0.
$$
In particular, if $X$ is a complex random variable then
$$
 \cf(X,t)=\sup\limits_{\lambda\in\C}\Prob\bigl\{|X-\lambda|\leq t\bigr\}.
$$
Dealing with the L\'evy concentration function of a complex random variable, we often
identify $\C$ with $\R^2$, which allows us  to apply
Propositions~\ref{prop: esseen} and \ref{prop: complex-kesten} formulated below for $E=\R^2$.

Upper bounds on the concentration function of a sum of independent random variables
is a standard subject, with many results available in the literature.
In our setting, we primarily deal with complex-valued
random variables, which in some situations require more delicate arguments.
In this subsection, we combine classical estimates of the concentration function
with some (not complicated) computations for vector-valued variables.

We will  use  (a particular version of) a theorem of Esseen
\cite{Esseen}
for sums of random vectors (Corollary 1 of Theorem~6.1 in \cite{Esseen} applied with $\rho _i=t_0$, $\rho=t$).

\begin{prop}[Esseen]\label{prop: esseen}
Let $m\geq 1$ and $\xi_1,\dots,\xi_m$ be independent random vectors in $\R^2$.
Then for any $t\geq t_0>0$ one has
$$
  \cf\Big(\sum_{i=1}^m \xi_i,t\Big)\leq
  \frac{C_{\ref{prop: esseen}}(t/t_0)^2}{\sqrt{m-\sum_{i=1}^m\cf(\xi_i,t_0)}},
$$
where $C_{\ref{prop: esseen}}>0$ is a universal constant. In particular, if
$\alpha \geq \max_{i\leq m} \cf(\xi_i,t_0)$ then
$$
  \cf\Big(\sum_{i=1}^m \xi_i,t_0\Big)\leq
  \frac{C_{\ref{prop: esseen}}}{\sqrt{m(1-\alpha)}}.
$$
\end{prop}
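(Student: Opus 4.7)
The plan is to apply Esseen's classical anti-concentration inequality for sums of independent random vectors directly, with an appropriate choice of parameters, and then derive the ``in particular'' clause by a routine substitution.

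For the first inequality, I would invoke Corollary~1 of Theorem~6.1 in Esseen's 1966 paper on the Kolmogorov--Rogozin inequality, specialized to vectors in $\R^2$. In Esseen's statement, the bound involves an individual scale $\rho_i$ for each summand and a target radius $\rho$ on the left-hand side; setting $\rho_i = t_0$ for every $i$ and $\rho = t$ yields precisely the displayed inequality, with the universal constant $C_{\ref{prop: esseen}}$ absorbing the numerical factors coming out of Esseen's argument. Since Esseen's theorem is the main technical input and is proved by a Fourier-analytic argument (estimating the modulus of the characteristic function of $\sum_i \xi_i$ on a small ball around the origin and integrating against a smooth cutoff of the indicator of a ball of radius $t$), I would take it as a black box rather than reproduce the proof.

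For the ``in particular'' clause, I would set $t = t_0$ in the first inequality so that the factor $(t/t_0)^2$ equals $1$. The hypothesis $\alpha \geq \max_{i \leq m} \cf(\xi_i, t_0)$ gives
$$\sum_{i=1}^{m} \cf(\xi_i, t_0) \leq m\alpha, \qquad \text{hence} \qquad m - \sum_{i=1}^{m} \cf(\xi_i, t_0) \geq m(1-\alpha).$$
Substituting this lower bound into the denominator of the first inequality yields the second one. The bound is of course vacuous when $\alpha \geq 1$, since $\cf$ is always bounded by $1$, so the case $\alpha < 1$ is the only one of interest.

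The one point requiring care is the bookkeeping between Esseen's parameterization and ours (his $\rho$ plays the role of $t$ and his $\rho_i$ the role of $t_0$), but there is no additional probabilistic obstacle: the proposition is essentially a direct quotation together with an elementary simplification. The genuine content of the statement lies entirely in Esseen's theorem itself, which controls how concentration functions of independent summands combine under convolution.
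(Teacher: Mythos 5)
Your argument matches the paper's own treatment: both simply invoke Corollary~1 of Theorem~6.1 in Esseen with $\rho_i=t_0$ and $\rho=t$, and then obtain the ``in particular'' clause by setting $t=t_0$ and bounding $\sum_i\cf(\xi_i,t_0)\leq m\alpha$. One small correction: the result is from Esseen's 1968 paper \emph{On the concentration function of a sum of independent random variables}, not the 1966 paper on the Kolmogorov--Rogozin inequality.
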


We will also need  a result of Miroshnikov \cite{miroshnikov}, which extends
 estimates on the concentration function due to Kesten \cite{kesten}
to the multi-dimensional setting. We state below the two dimensional version of
the  Corollary following Theorem~1 in \cite{miroshnikov} (note that the letter $E$
in that paper stands for the two-dimensional cube $B^2_\infty$, while we deal with the unit disc
$B_2^2$, so that  $B_2^2\subset E=B^2_\infty\subset \sqrt{2} B_2^2$).

\begin{prop}[Miroshnikov]\label{prop: complex-kesten}
Let $m\geq 1$ and  $\xi_1,\ldots, \xi_m$ be independent random vectors in $\R^2$.
Let $t_0>0$ be such that $\max_{i\leq m} \cf(\xi_i,t_0)\leq 1/2$. Then for any $t\geq t_0$ one has
$$
\cf\Big(\sum_{i=1}^m \xi_i,t\Big)\leq  \frac{C\, t }{t_0 \sqrt{m}}\, \max_{i\leq m}
\cf(\xi_i, \sqrt{2}\, t ),
$$
where $C$ is a positive universal constant.
\end{prop}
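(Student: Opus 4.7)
The plan is to obtain the bound by reducing to Miroshnikov's inequality in its native $\ell_\infty^2$ form via an elementary change of norm, and then recall how that inequality itself is proved. For a random vector $X$ in $\R^2$, set $\cf_\infty(X, s) := \sup_{\lambda \in \R^2} \Prob\{\|X - \lambda\|_\infty \le s\}$. The inclusions $B_2^2 \subset B_\infty^2 \subset \sqrt{2}\, B_2^2$ (mentioned just before the statement) yield the sandwich $\cf(X, s) \le \cf_\infty(X, s) \le \cf(X, \sqrt{2}\, s)$ for every $s > 0$; in particular $\cf(\xi_i, t_0) \le 1/2$ forces $\cf_\infty(\xi_i, t_0) \le 1/2$, so Miroshnikov's result applies directly.

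Applying his bound to the $\xi_i$'s in the $\ell_\infty$ metric,
$$
\cf_\infty\Big(\sum_{i=1}^m \xi_i,\, t\Big) \le \frac{C\, t}{t_0 \sqrt{m}} \max_{i \le m} \cf_\infty(\xi_i,\, t),
$$
and sandwiching both sides gives
$$
\cf\Big(\sum_{i=1}^m \xi_i,\, t\Big) \le \cf_\infty\Big(\sum_{i=1}^m \xi_i,\, t\Big) \le \frac{C\, t}{t_0 \sqrt{m}} \max_{i \le m} \cf(\xi_i,\, \sqrt{2}\, t),
$$
which is the announced inequality; the $\sqrt{2}$ factor in the statement is precisely what arises from this norm conversion.

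The substantive content is Miroshnikov's inequality itself, which is proved by the classical Esseen--Kesten characteristic function strategy. Writing $\phi_i(u) = \Exp\, e^{i\langle u,\, \xi_i\rangle}$ for $u \in \R^2$, a two-dimensional Esseen-type inversion bounds $\cf_\infty(\sum_i \xi_i, t)$ by an integral of $\prod_i |\phi_i(u)|$ over frequencies with $\|u\|_\infty \lesssim 1/t$. Each factor is then estimated via symmetrization: $|\phi_i(u)|^2 = \Exp \cos\langle u,\, \xi_i - \xi_i'\rangle$ yields, after separating the event that the symmetrized vector $\xi_i - \xi_i'$ has $\ell_\infty$-norm at least $s$, a pointwise bound of the form $|\phi_i(u)|^2 \le 1 - c\,(1 - \cf_\infty(\xi_i, s))\min(\|u\|_\infty^2 s^2,\, 1)$ for arbitrary $s > 0$. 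Kesten's two-scale trick then dresses $m - 1$ of the factors with $s = t_0$, using $\cf_\infty(\xi_i, t_0) \le 1/2$ to create Gaussian decay $\exp(-c\, m\, \|u\|_\infty^2 t_0^2)$ via $1 - x \le e^{-x}$, while extracting the sharper $\cf_\infty(\xi_i, t)$ from one distinguished factor at scale $s \sim t$. Integrating the resulting expression delivers the factor $t/(t_0 \sqrt{m})$ together with $\max_i \cf_\infty(\xi_i, t)$. The main obstacle is precisely this two-scale extraction: isolating the concentration factor at scale $\sim t$ rather than at the coarser scale $t_0$ requires a scale-dependent splitting of the frequency integral. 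The two-dimensional geometry itself plays a subordinate role, with the angular integration reducing to a routine radial Gaussian estimate.
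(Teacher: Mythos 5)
The paper does not actually prove Proposition~\ref{prop: complex-kesten}; it cites the Corollary following Theorem~1 in \cite{miroshnikov} and records, in the parenthetical remark preceding the statement, exactly the norm comparison $B_2^2\subset B_\infty^2\subset\sqrt{2}\,B_2^2$ that your argument is built on. So the reduction you propose --- sandwich the Euclidean L\'evy function between the two cube versions, quote Miroshnikov in the $\ell_\infty$ metric, and sandwich back --- is precisely the derivation the authors have in mind, and your sandwich $\cf(X,s)\le\cf_\infty(X,s)\le\cf(X,\sqrt{2}\,s)$ (in your notation, with $\cf_\infty$ denoting the $\ell_\infty$-ball version) is correct.

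There is, however, one step that fails as written. You claim that $\cf(\xi_i,t_0)\le 1/2$ \emph{forces} $\cf_\infty(\xi_i,t_0)\le 1/2$, but your own sandwich gives $\cf(\xi_i,t_0)\le\cf_\infty(\xi_i,t_0)$: the Euclidean quantity is the \emph{smaller} of the two (the disc sits inside the square), so an upper bound on $\cf(\xi_i,t_0)$ tells you nothing about $\cf_\infty(\xi_i,t_0)$. The Euclidean non-concentration hypothesis is the weaker one, not the stronger one, at a fixed scale. The correct transfer uses the other half of the sandwich at a shrunken scale: $\cf_\infty(\xi_i,t_0/\sqrt{2})\le\cf(\xi_i,t_0)\le 1/2$. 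Apply Miroshnikov's corollary with $t_0/\sqrt{2}$ in place of $t_0$; this costs only a factor $\sqrt{2}$, absorbed into the universal constant $C$, and the scale condition $t\ge t_0/\sqrt{2}$ is automatic from $t\ge t_0$. With that repair, the remaining two sandwich applications you make --- $\cf(\sum_i\xi_i,t)\le\cf_\infty(\sum_i\xi_i,t)$ on the left, and $\cf_\infty(\xi_i,t)\le\cf(\xi_i,\sqrt{2}\,t)$ inside the maximum --- yield the stated inequality. Your closing sketch of how Miroshnikov's inequality is itself proved is plausible in outline but plays no role in comparing with the paper, which treats that result as a black box.
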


In general, the factor $1/\sqrt m$ in the above estimates is the best possible and is attained
for example on $\xi_i$'s with $\Re \, \xi_i=\Im \, \xi_i$ being Bernoulli random variables.
But if for example for every $i\le m$, $\Re \, \xi_i$ and $\Im\, \xi_i$ are independent
Bernoulli random variables,  or if $\xi_i$ is uniformly distributed
over the unit square, then Theorem~2 from  \cite{kesten} implies  respectively bounds  $C/m$
and $Ct^2/m$ for all $t\in (0, 1/2]$.
Some other cases when the factor $1/\sqrt m$ can be improved to $1/m$ were considered in
\cite{Esseen-type} (for distributions satisfying a certain symmetry condition) and in
\cite{Esseen} (for, in a sense,  well spread distributions).
We will need the following statement which is known to specialists. We provide its proof
for the sake of completeness at the end of this section.

\begin{prop}\label{density}
Let $m\geq 1$ and  $\xi_1,\ldots, \xi_m$ be independent random vectors in $\R^2$
with densities bounded by $1$. Then the density of $\xi_1 + \ldots + \xi_m$ is
bounded by $C/m$, where $C$ is a universal constant.
\end{prop}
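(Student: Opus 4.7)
The plan is to bound $\|g_m\|_\infty$, where $g_m$ denotes the density of $\xi_1+\ldots+\xi_m$, by Fourier inversion. Let $\phi_i(t)=\Exp\,e^{i\langle t,\xi_i\rangle}$ be the characteristic function of $\xi_i$ and set $h_i:=f_i*\tilde f_i$ with $\tilde f_i(x)=f_i(-x)$; then $h_i$ is a symmetric probability density on $\R^2$ satisfying $\|h_i\|_\infty\le\|f_i\|_2^2\le 1$, and $|\phi_i(t)|^2=\hat h_i(t)\in[0,1]$. Fourier inversion gives
\[
 \|g_m\|_\infty \;\le\; (2\pi)^{-2}\int_{\R^2}\prod_{i=1}^m|\phi_i(t)|\,dt,
\]
with the integrand in $L^1$ for $m\ge 2$ (it is dominated by $|\phi_{j_1}(t)\phi_{j_2}(t)|$ for any distinct $j_1,j_2$, which is integrable by Cauchy--Schwarz and Plancherel). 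Convolving each $f_i$ with a Gaussian $\sigma G_i$ preserves $\|f_i\|_\infty\le 1$ and smooths the densities (a Lebesgue-point argument transfers any uniform-in-$\sigma$ bound on $\|g_m^{(\sigma)}\|_\infty$ to $\|g_m\|_\infty$ as $\sigma\to 0$), so I may assume that each $\xi_i$ has a smooth density and finite covariance $\Sigma_i=\mathrm{Cov}(\xi_i)$.

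A deterministic input I need is that $\det\Sigma_i\ge c_0>0$ for a universal $c_0$: passing to the standardized variable $Y_i=\Sigma_i^{-1/2}(\xi_i-\Exp\,\xi_i)$, a change of variables gives $\|f_{Y_i}\|_\infty\le\sqrt{\det\Sigma_i}\cdot\|f_i\|_\infty\le\sqrt{\det\Sigma_i}$, while Chebyshev's inequality yields $\p(|Y_i|\le 2)\ge 1/2$, bounded above by $\|f_{Y_i}\|_\infty\cdot 4\pi$, forcing $\det\Sigma_i\ge 1/(64\pi^2)$. The main Fourier estimate combines the elementary inequality $x\le e^{x-1}$ applied pointwise to $x=\hat h_i(t)\in[0,1]$,
\[
 \prod_{i=1}^m|\phi_i(t)|=\prod_{i=1}^m\hat h_i(t)^{1/2}\;\le\;\exp\Bigl(-\tfrac12\sum_{i=1}^m\bigl(1-\hat h_i(t)\bigr)\Bigr),
\]
with a quadratic lower bound $1-\hat h_i(t)\ge c_1\min(1,t^\top\Sigma_i t)$ valid for all $t$. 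Starting from $1-\hat h_i(t)=\int(1-\cos\langle t,x\rangle)h_i(dx)$, the bound for small $|t|$ follows from the second-order Taylor expansion together with $\mathrm{Cov}(h_i)=2\Sigma_i$, while for large $|t|$ it follows from the Riemann--Lebesgue decay of $\hat h_i$ combined with the $L^2$ control $\int|\phi_i|^2\le(2\pi)^2$.

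Granting this quadratic lower bound, Minkowski's determinant inequality $(\det\sum_i\Sigma_i)^{1/2}\ge\sum_i(\det\Sigma_i)^{1/2}\ge m\sqrt{c_0}$ and a planar Gaussian computation yield
\[
 \int_{\R^2}\exp\Bigl(-\tfrac{c_1}{2}\,t^\top\Bigl({\textstyle\sum_{i=1}^m\Sigma_i}\Bigr)t\Bigr)\,dt\;=\;\frac{2\pi/c_1}{\sqrt{\det\sum_i\Sigma_i}}\;\le\;\frac{C}{m},
\]
with an exponentially small contribution from the region where the quadratic bound saturates (there $\prod_i|\phi_i|\le e^{-cm}$). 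Substitution into the Fourier inversion estimate gives $\|g_m\|_\infty\le C/m$. The principal obstacle will be establishing the uniform quadratic lower bound on $1-\hat h_i(t)$: since $1-\cos u$ vanishes at multiples of $2\pi$, this lower bound is not pointwise in $\langle t,x\rangle$, and making it work requires partitioning $\R^2$ into a region where the Taylor expansion controls $1-\hat h_i$ and a region where the absolute continuity of $h_i$ and the density upper bound are used to produce a constant-order lower bound via Riemann--Lebesgue.
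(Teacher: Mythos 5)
Your framing — Fourier inversion, passing to $h_i=f_i*\tilde f_i$ so that $\hat h_i=|\hat f_i|^2\ge 0$, and the Chebyshev lower bound on $\det\Sigma_i$ — is correct as far as it goes, but the whole argument rests on the inequality
$$
1-\hat h_i(t)\ \ge\ c_1\min\bigl(1,\,t^\top\Sigma_i t\bigr)\qquad\text{for all }t,
$$
which you flag as the ``principal obstacle'' and propose to establish by splitting $\R^2$ and invoking Riemann--Lebesgue. This inequality is false under your hypotheses, and Riemann--Lebesgue cannot rescue it because it gives no quantitative decay rate. Concretely, on $\R$ let $f^{(1)}$ be the uniform mixture of indicator bumps of width $\delta$ centered at $2\pi k$ for $0\le k\le K$, and choose a one-dimensional density $f^{(2)}$ so that $f=f^{(1)}\otimes f^{(2)}$ has $\|f\|_\infty\le 1$. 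Then $h=f*\tilde f$ factors, and since $e^{-2\pi i k}=1$ for every $k$ one gets $\hat h^{(1)}(1)=|\hat f^{(1)}(1)|^2=1-O(\delta^2)$, while $\Sigma_{11}=\mathrm{Var}(f^{(1)})$ is of order $K^2\ge 1$. At $t=(1,0)$ this gives $\min(1,t^\top\Sigma t)=1$ but $1-\hat h(t)=O(\delta^2)$, which defeats any fixed $c_1$ as $\delta\to 0$; the construction respects $\|f\|_\infty\le 1$ and has $\det\Sigma$ bounded below, so nothing in your assumptions excludes it. (A secondary issue: Gaussian mollification does not produce finite covariance, so the reduction to finite $\Sigma_i$ would also need a separate truncation step — but that is a repairable technicality, whereas the pointwise quadratic bound is not.)

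The paper avoids second-moment information altogether. Lemma~\ref{lem-conc-prop} shows $\int_{\R^2}|\hat p|^q\,dx\le 47/q$ for every $q\ge 2$ and every probability density $p\le 1$ on $\R^2$: setting $F=|\hat p|^2$, the elementary inequality $k^2\sin^2 t\ge\sin^2(kt)$ implies that the superlevel sets $A_\delta=\{F\ge 1-\delta^2\}$ satisfy $|A_\delta|\le C\delta^2$ (via $\int F\le 1$ from Plancherel), and a layer-cake computation of $\int F^{q/2}$ finishes the lemma. Proposition~\ref{density} is then immediate from H\"older: $p(x)\le\int\prod_i|\hat f_i|\le\prod_i(\int|\hat f_i|^m)^{1/m}\le C/m$. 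That route is robust precisely because it never asserts a pointwise lower bound on $1-\hat h_i$, so it is unaffected by the lattice-type concentration that breaks your estimate.
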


Our next proposition is another case where the factor $1/\sqrt{m}$ can be improved.

\begin{prop}
\label{p:complexLevy}
Let $u,\varepsilon>0$, $m\geq 1$,  and let $\xi_1,\xi_2,\dots,\xi_m$ be i.i.d. discrete complex random variables
taking values on an $\varepsilon$-separated (in the Euclidean metric)
subset of the complex plane satisfying $\sup_{a\in\C}\Prob\{\xi_j=a\}\leq u$.
Then for any  $t>0$ one has
$$
  \cf\Big(\sum\limits_{j=1}^m \xi_j,t\Big)\leq
  C\, \max\big(ut^2/(m\varepsilon^2),u\big),
$$
where $C$ is a positive absolute constant.
\end{prop}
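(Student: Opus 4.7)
The plan is to deduce the bound from Proposition~\ref{density} by smoothing each discrete variable with an independent continuous perturbation of scale $\varepsilon$, and then transferring the resulting density estimate for the smoothed sum back to a concentration bound for $S:=\sum_{j=1}^m\xi_j$ via a decoupling inequality.

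First I would introduce i.i.d.\ auxiliary vectors $\eta_1,\dots,\eta_m$, independent of $(\xi_j)$, each uniformly distributed on the Euclidean disc $B(0,\varepsilon/4)\subset\R^2$, and set $W_j:=\xi_j+\eta_j$. The density of $W_j$ equals $\sum_{a}\Prob(\xi_j=a)\,f_{\eta_j}(\,\cdot-a)$; since $\supp(\xi_j)$ is $\varepsilon$-separated, every disc of radius $\varepsilon/4$ (diameter $\varepsilon/2<\varepsilon$) meets that support in at most one point, so at any given location only one translate $f_{\eta_j}(\,\cdot-a)$ is nonzero. Hence the density of $W_j$ is bounded by $M:=16u/(\pi\varepsilon^2)$. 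A simple rescaling (the variables $\sqrt{M}\,W_j$ then have densities bounded by $1$) combined with Proposition~\ref{density} shows that $\sum_j W_j$ has density bounded by $CM/m=C'u/(\varepsilon^2 m)$; integrating over a disc of radius $t$ gives
$$
\cf\Big(\sum_{j=1}^m W_j,\,t\Big)\ \leq\ \frac{C'' u t^2}{\varepsilon^2 m}\qquad\text{for every }t>0.
$$

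Next I would transfer this estimate to $S$. Letting $T:=\sum_j\eta_j$ and using independence of $S$ and $T$, for any $\lambda\in\C$ and any $0<t_0\le t$ we have $\Prob(|S+T-\lambda|\le t)\ge \Prob(|S-\lambda|\le t_0)\,\Prob(|T|\le t-t_0)$, so
$$
\cf(S,t_0)\ \leq\ \frac{\cf(S+T,t)}{\Prob(|T|\le t-t_0)}.
$$
A direct computation gives $\Exp|\eta_j|^2=\varepsilon^2/32$, and since the $\eta_j$ are centered and independent, $\Exp|T|^2=m\varepsilon^2/32$; Chebyshev's inequality then yields $\Prob(|T|\le\varepsilon\sqrt{m})\ge 31/32$. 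Taking $t_0\ge\varepsilon\sqrt{m}$ and $t=2t_0$ produces $\cf(S,t_0)\le C'''ut_0^2/(\varepsilon^2 m)$, while for $t_0<\varepsilon\sqrt{m}$ monotonicity of $\cf$ in its second argument combined with the previous bound at the threshold gives $\cf(S,t_0)\le C'''u$. Merging the two regimes produces the stated $\max$-bound.

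The main obstacle lies in the final decoupling step: because adding independent noise can only decrease the concentration function, one cannot directly compare $\cf(S,\cdot)$ with $\cf(S+T,\cdot)$ without paying a factor reflecting the likely size of $T$. This is the source of the threshold $t\sim\varepsilon\sqrt{m}$ that produces the two terms in the $\max$, and it also forces the smoothing scale to be a constant multiple of $\varepsilon$: a smaller scale would weaken the density bound on $W_j$, while a larger one would push the threshold well above $\varepsilon\sqrt{m}$ and shrink the regime in which the sharper $ut^2/(m\varepsilon^2)$ estimate applies.
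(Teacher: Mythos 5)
Your proposal is correct and follows essentially the same route as the paper's proof: smooth each $\xi_j$ by independent uniform noise on a small disc whose radius is a constant fraction of $\varepsilon$, bound the resulting density, invoke Proposition~\ref{density} after rescaling, and then undo the smoothing by conditioning on $\sum_j\eta_j$ being $O(\varepsilon\sqrt{m})$. The only cosmetic differences are the radius ($\varepsilon/4$ versus $\varepsilon/2$), the use of Chebyshev in place of Hoeffding to bound $|\sum_j\eta_j|$, and your explicit case split on $t_0\lessgtr\varepsilon\sqrt{m}$ versus the paper's single substitution, all of which lead to the same two-term $\max$-bound.
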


\begin{proof}
As above, in this proof it will be convenient to identify $\C$ with $\R^2$ and to present
complex  random variables as  two dimensional real random vectors. We also denote the Euclidean
unit ball (centered at $0$) on $\R^2$ by $B$. Let $\eta_j$, $j\leq m$,  be i.i.d. random vectors in $\R^2$ uniformly
distributed on $(\eps/2) B$
and jointly independent with $\xi_j$'s.
Note that by standard concentration estimates (e.g., one can apply Hoeffding's inequality \cite{Hoef} for
the first and second coordinates), we have for a large enough absolute positive constant $C$,
$$
  q:=\Prob\Big\{\Big\vert\sum\limits_{j=1}^m \eta_j\Big\vert\leq C\sqrt{m}\varepsilon\Big\}\geq 1/2.
$$

Define smoothed i.i.d. random variables $\xi_j'$, $j\leq m$, by setting
$\xi_j':=\xi_j+\eta_j$.
Since interiors of the discs of radius $\varepsilon/2$ centered at atoms of $\xi_j$'s
are disjoint, we get that the  densities of $\xi_j'$,  $j\leq m$,  are uniformly
bounded by $4u/(\pi\varepsilon^2)$.
By independence of $\eta_j$ and $\xi_j$ we observe that for every $t>0$,
\begin{align*}
 \cf\Big(\sum\limits_{j=1}^m \xi_j,t\Big)&=
 \max_{a\in \C} \,\,  \frac{1}{q}\,\, \Prob\Big\{\Big\vert\sum\limits_{j=1}^m \xi_j-a\Big\vert\leq t\; \mbox{ and }
  \Big\vert\sum\limits_{j=1}^m \eta_j\Big\vert\leq C\sqrt{m}\varepsilon\Big\}
  \\ &
  \leq
  2 \cf\Big(\sum\limits_{j=1}^m \xi_j', t+C\sqrt{m}\varepsilon\Big)
 = 2 \cf\Big(\sum\limits_{j=1}^m \xi_j'',\tau\Big),
\end{align*}
where $\xi_j'':=(2/\eps) \xi_j' \sqrt{u/\pi}$ and  $\tau=(2/\eps)t \sqrt{u/\pi}+2C\sqrt{u m/\pi}$.
Note that the densities of $\xi_j''$, $j\leq m$, are uniformly bounded by $1$. Thus Proposition~\ref{density}
implies the desired result.
\end{proof}

To prove Proposition~\ref{density} we need the following lemma (which, in a sense, similar to
the  proof of Theorem~1 in \cite{Esseen-type}).
In this lemma $\la\cdot , \cdot \ra$ denotes the canonical inner product on $\R^2$.

\begin{lemma}\label{lem-conc-prop}
 Let $p$ be a probability density on $\R^2$ bounded by $1$. Let $f=\hat p$, that is
 $$
    f(x)=\hat p(x) = \int _{\R^2} \exp(-2\pi i\la x, y\ra ) p(y) dy.
 $$
 Then for every $q\geq 2$ one has $\int _{\R^2} |f(x)|^q dx \leq 47/q$.
\end{lemma}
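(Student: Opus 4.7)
The plan is to split the argument in $q$: for $q\le 47$ a trivial Plancherel estimate suffices, while for $q>47$ the problem reduces to a two-dimensional local-limit-type estimate on the $L^\infty$-norm of the convolution power $p^{*k}$. Since $|f(x)|\le\int p=1$ pointwise, $q\mapsto\int|f|^q$ is nonincreasing on $[2,\infty)$, and Plancherel combined with $\|p\|_\infty\le 1$ gives
\[
\int|f|^q\,dx\le\int|f|^2\,dx=\int p^2\,dy\le\|p\|_\infty\int p\,dy\le 1\le 47/q
\]
for every $q\in[2,47]$. It remains to establish the bound for $q>47$.

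For $q=2k$ with $k$ large, I would use $(\hat p)^k=\widehat{p^{*k}}$ and Plancherel to write
\[
\int|f|^{2k}\,dx=\|p^{*k}\|_2^2\le\|p^{*k}\|_\infty,
\]
reducing the problem to the 2D convolution-smoothing bound $\|p^{*k}\|_\infty\le C/k$ with $C$ universal. This in turn I would prove via Fourier inversion $\|p^{*k}\|_\infty\le\int|f|^k\,dy$ combined with an analysis of the modulus of $\hat p$. Introducing $g=p*\tilde p$ with $\tilde p(y)=p(-y)$ yields $\hat g=|f|^2\in[0,1]$ and $g$ a symmetric probability density with $\|g\|_\infty\le 1$. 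A Chebyshev argument using $g(B_r)\le\pi r^2$ forces the covariance $\Sigma$ of $g$ to satisfy a universal lower bound $\sqrt{\det\Sigma}\ge c_0>0$: after centring and diagonalising, at least half of $g$'s mass lies inside a box of area at most $16\sqrt{\det\Sigma}$, so $\sqrt{\det\Sigma}\ge 1/32$. A polar-coordinate analysis of $\int\hat g^{k/2}\,dy$---using $\hat g^{k/2}\le\exp(-k(1-\hat g)/2)$, the quadratic estimate $1-\hat g(y)\ge c\,y^T\Sigma y$ near the origin, and exponential decay of $\hat g^{k/2}$ on the complement where $\hat g$ is bounded away from $1$---yields $\int|f|^{2k}\,dx\le C/(k\sqrt{\det\Sigma})\le C/k$. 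To extend from even integers to arbitrary $q\in[2k,2k+2]$ I would use monotonicity: $\int|f|^q\le\int|f|^{2k}\le C/(2k)\le 2C/q$, so tracking constants to ensure $2C\le 47$ closes the argument.

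The main obstacle is the uniform lower bound on $\sqrt{\det\Sigma}$: a density with $\|g\|_\infty\le 1$ may be supported in a thin rectangle, making $\lambda_{\min}(\Sigma)$ arbitrarily small, so any direction-by-direction quadratic estimate on $1-\hat g$ is non-uniform. The resolution is that after polar integration on the Fourier side the relevant invariant is the geometric mean $\sqrt{\lambda_1\lambda_2}=\sqrt{\det\Sigma}$ rather than $\lambda_{\min}$, and the former \emph{is} universally bounded below under $\|g\|_\infty\le 1$ by the box-volume argument above. Tracking absolute constants carefully through the level-set/polar calculation is what yields the explicit factor $47$ in the final bound.
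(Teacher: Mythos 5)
Your plan splits off the easy range $q\in[2,47]$ by Plancherel and then attacks $q=2k$ via $g=p*\tilde p$, $\hat g=|f|^2$, and a local-CLT style estimate on $\int\hat g^k$. The paper takes a genuinely different route, and the difference matters: the two central analytic ingredients of your plan are not available under the sole hypothesis $\|g\|_\infty\le 1$.

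The quadratic lower bound $1-\hat g(y)\ge c\,y^T\Sigma y$ near the origin, with $c$ and the neighbourhood universal over all admissible $g$, is false. The hypothesis controls $\|g\|_\infty$ but says nothing about tails, so $\Sigma$ may well be infinite in one or both directions, in which case the right-hand side is undefined while the left-hand side is bounded by $2$. Your ``box-volume'' observation only produces a \emph{lower} bound $\sqrt{\det\Sigma}\ge 1/32$, which is the harmless direction; it does nothing to repair the quadratic estimate, because that estimate implicitly requires finiteness and, worse, a universal constant. Even when $\Sigma$ is finite, the standard expansion $1-\hat g(y)=2\pi^2 y^T\Sigma y+o(|y|^2)$ has an error term governed by higher moments that the hypothesis does not control, and the pointwise inequality $\sin^2 t\ge c\,t^2$ on which one would base a moment-free version fails for $|t|$ large, so the contribution of far-out mass cannot be captured. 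Similarly, ``$\hat g$ is bounded away from $1$ outside a neighbourhood of $0$'' is true for each fixed $g$ by Riemann–Lebesgue and continuity, but the gap is not uniform over the class $\{\|g\|_\infty\le 1\}$, so the exponential-decay half of the integral is not universally controlled. Finally, even granting both pieces, the plan ends with an untracked constant $C$ and the wish ``$2C\le 47$''; nothing in the argument produces an explicit numerical constant.

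The paper circumvents all of this with a level-set argument that uses no moments whatsoever. Writing $F=|f|^2=\hat P$ with $P=p*\tilde p$, one has the exact identity $F(x)=1-2\int\sin^2(\pi\langle x,y\rangle)P(y)\,dy$. The elementary inequality $k^2\sin^2 t\ge\sin^2(kt)$ then shows that if $F(x)\ge 1-\delta^2$ and $k=\lfloor 1/(2\delta)\rfloor$, then $F(kx)\ge 1-(k\delta)^2\ge 3/4$; consequently $kA_\delta\subset\{F\ge 3/4\}$, and Plancherel ($\int F=\int p^2\le 1$) gives $|A_\delta|\le 4/(3k^2)\le 64\delta^2/3$ for $A_\delta=\{F\ge 1-\delta^2\}$. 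This is a purely measure-theoretic bound on the near-maximal level sets of $F$, with explicit constants and no appeal to $\Sigma$. The integral $\int F^{q/2}$ is then split over $\{F\le 3/4\}$, where one gets exponential smallness, and over the level sets, where the $\delta^2$ measure bound plugged into the distribution-function formula gives the $O(1/q)$ decay. If you want to pursue your plan, the place it gets stuck — the near-$1$ region of $\hat g$ — is exactly where you should replace the quadratic estimate by a measure bound of this kind.
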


\begin{proof}
 Denote $\tilde p(x)=p(-x)$, $P(x):=p*\tilde p$, where $*$ denotes the convolution,
  and $F(x)=|f(x)|^2$. Then,
  $$F= f\cdot \bar f= \hat P.$$
 Observe that the function $P$ satisfies $P(x) \leq 1$, $P(x)=P(-x)$ for every $x\in \R^2$,
 and $\int _{\R^2} P(x) dx = 1$. Therefore, for every $x\in \R^2$,
 $$
  F(x)= \int _{\R^2}  \cos (2\pi \la x, y\ra )P(y) dy =  \int _{\R^2}(1-  2\sin^2 (\pi \la x, y\ra )P(y) dy
    = 1- 2 \int _{\R^2} \sin^2 (\pi \la x, y\ra ) P(y) dy.
 $$
 Consider the sets
$$A_\delta:=\{x\in \R^2\, : \, F(x)\geq 1-\delta^2\}$$
for $\delta \in (0, 1/2]$.
 Note that for every integer $k\geq 1$ one has
$k^2\sin ^2 t \geq \sin^2 (kt)$.
Given $\delta \in (0, 1/2]$, let $k=\lfloor 1/(2\delta)\rfloor$.  Then
  for every $x\in A_\delta$ we have
 $$
  F(kx)\geq 1- 2 k^2 \int _{\R^2} \sin^2 (\pi \la x, y\ra ) P(y) dy =
  1 - k^2(1-F(x))\geq 1- (k\delta)^2\geq  3/4,
 $$
that is on the set $kA_\delta = \{kx \, : \, x\in A_\delta\}$ we have  $F\geq 3/4$.
On the other hand, by the Plancherel theorem we have
$$
  \int _{\R^2} F(x) dx = \int _{\R^2} |f(x)|^2 dx = \int _{\R^2}  p^2(x) dx\leq  \int _{\R^2}  p(x) dx =1.
$$
This implies $|k A_\delta|\leq 4/3$,  hence $|A_\delta|\leq 4/(3k^2)\leq 64\delta^2 /3$, in particular,
$|A_{1/2}|\leq 4/3$.  Finally we estimate
$$
 \int _{\R^2} |f(x)|^q dx = \int _{\R^2} (F(x))^{q/2} dx.
$$
Then for $q\geq 2$ we have
$$
  I_1:=\int _{A_{1/2}^c} (F(x))^{q/2} dx \leq (3/4)^{q/2-1} \, \int _{\R^2} F(x) dx \leq (3/4)^{q/2-1} \, \leq 3/q,
$$
and
\begin{align*}
  I_2&:=\int _{A_{1/2}} (F(x))^{q/2} dx =\int _0^1 (q/2) s^{q/2-1} |\{F\geq \max(s, 3/4)\}| ds
  \\ &=
  \int _0^{3/4} |\{F\geq  3/4 \}| d s^{q/2} +  \int _{3/4}^1 (q/2) s^{q/2-1} |\{F\geq s\}| ds
  \\ &=
  (3/4)^{q/2} |A_{1/2}| +  \int _{3/4}^1 (q/2) s^{q/2-1} |A_{\sqrt{1- s}}| \, ds
 \leq
  9/(16q) + (64/3)\, \int _{3/4}^1 (q/2) s^{q/2-1} (1-s) ds
\end{align*}
Using integration by parts, we have
$$
  \int _{3/4}^1 (q/2) s^{q/2-1} (1-s)\, ds\leq \int _{0}^1 (1-s)\, d s^{q/2} =
  \int _{0}^1  s^{q/2}\,  ds = \frac{2}{q+2}.
$$
Therefore,
$$
   I_2\leq 9/(16q) + 128/(3q) \leq 44/q.
$$
Since $ \int _{\R^2} |f(x)|^q dx =I_1 +I_2$, this completes the proof.
\end{proof}

\begin{proof}[Proof of Proposition~\ref{density}]
The case $m=1$ is trivial, so we assume $m\geq 2$. As in Lemma~\ref{lem-conc-prop}, set
$f_i=\hat p_i$, and denote the density of  $\xi_1,\xi_2,\dots,\xi_m$ by $p$. Then, applying
the H\"older inequality and Lemma~\ref{lem-conc-prop}, we obtain
$$
  p(x) = \Bigg| \int _{\R^2} \prod_{i=1}^m f_i(y) \exp(2\pi i\la x, y\ra )  dy\Bigg|
  \leq\int _{\R^2} \prod_{i=1}^m \big| f_i(y) \big| dy  \leq
   \prod_{i=1}^m \left(\int _{\R^2} \big| f_i(y) \big|^m  dy \right)^{1/m} \leq C/m.
$$
\end{proof}

\subsection{Concentration}

The next lemma is the tensorization argument. It is a variant of Lemma~2.2 in \cite{RV} and its proof follows the same lines.
We include it for the sake of completeness.
\begin{lemma}\label{lem-tensorization}
Let $\xi_1,\ldots,\xi_n$ be independent complex random variables and $\varepsilon_0, p_1,\ldots,p_n$ be non-negative
real numbers.
Assume that for every $i\leq n$ and every $\varepsilon\geq \varepsilon_0$ one has
$$
 \Prob\{ \vert \xi_i\vert\leq \varepsilon\}\leq \varepsilon^2 p_i.
$$
Then for every $\varepsilon\geq \varepsilon_0$ one has
$$
 \Prob\Big\{\sum_{i=1}^n \vert \xi_i\vert^2 \leq \varepsilon^2 n\Big\}\leq \left(C_{\ref{lem-tensorization}}\varepsilon\right)^{2n} \,
 \prod_{i=1}^n p_i,
$$
where $C_{\ref{lem-tensorization}}>0$ is a universal constant.
\end{lemma}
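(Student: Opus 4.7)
The plan is to run the standard exponential Markov (Laplace transform) argument, which turns a small ball lower bound on the sum $\sum_i |\xi_i|^2$ into a product of one-variable bounds. Write $Y_i := |\xi_i|^2/\varepsilon^2$, so the target event is $\{\sum_{i=1}^n Y_i \leq n\}$. For any $t>0$, Markov's inequality applied to $\exp(-t\sum_i Y_i)$ together with the independence of the $\xi_i$ gives
$$
\Prob\Big\{\sum_{i=1}^n Y_i \leq n\Big\}\leq e^{tn}\prod_{i=1}^n \Exp\bigl[e^{-tY_i}\bigr].
$$
So everything reduces to producing a uniform bound of the form $\Exp[e^{-tY_i}]\leq C\varepsilon^2 p_i$ for a convenient choice of $t$.

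To bound each Laplace transform in terms of the hypothesis, I would use the layer-cake identity
$$
\Exp\bigl[e^{-tY_i}\bigr] = \int_0^\infty \Prob\{Y_i \leq v\}\, t e^{-tv}\, dv
= \int_0^\infty \Prob\bigl\{|\xi_i|\leq \varepsilon\sqrt{v}\bigr\}\, t e^{-tv}\, dv,
$$
and then split the integral at $v=1$. On $\{v\geq 1\}$ the threshold $\varepsilon\sqrt{v}\geq \varepsilon\geq \varepsilon_0$, so the hypothesis applies directly and gives $\Prob\{|\xi_i|\leq \varepsilon\sqrt{v}\}\leq \varepsilon^2 v\, p_i$. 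On $\{v<1\}$ the hypothesis is not available at that scale, but monotonicity of small ball probabilities combined with $\varepsilon\geq \varepsilon_0$ yields $\Prob\{|\xi_i|\leq \varepsilon\sqrt{v}\}\leq \Prob\{|\xi_i|\leq \varepsilon\}\leq \varepsilon^2 p_i$. With $t=1$, both $\int_0^1 t e^{-tv}dv$ and $\int_1^\infty v\, t e^{-tv}dv$ are absolute constants, so I obtain $\Exp[e^{-Y_i}]\leq C_0\, \varepsilon^2 p_i$ for a universal $C_0$.

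Plugging back into the Markov step with $t=1$ yields
$$
\Prob\Big\{\sum_{i=1}^n |\xi_i|^2 \leq \varepsilon^2 n\Big\}\leq (eC_0)^n\, \varepsilon^{2n}\prod_{i=1}^n p_i,
$$
which is the claimed estimate with $C_{\ref{lem-tensorization}}=\sqrt{eC_0}$. There is no serious obstacle here; the one point of care is the range $v<1$ where the hypothesis does not apply literally, handled by the monotonicity remark together with the standing assumption $\varepsilon\geq \varepsilon_0$. The independence of the $\xi_i$ is used only at the very first step (to factor the Laplace transform), after which the proof is entirely one-dimensional.
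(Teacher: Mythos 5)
Your proof is correct and takes essentially the same route as the paper: both apply exponential Markov with $t=1$ after factoring the Laplace transform by independence, and both bound each $\Exp e^{-|\xi_i|^2/\varepsilon^2}$ by the layer-cake formula, splitting the integral into the range where the small-ball hypothesis applies and a short remaining range handled by monotonicity. The only cosmetic difference is the split point (you cut at threshold $\varepsilon$, the paper at $\varepsilon_0$), which does not affect the outcome.
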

\begin{proof}
Let $\varepsilon \geq \varepsilon_0$.
Using the hypothesis of the lemma and the distribution integral formula, we have
\begin{align*}
\Exp \exp(-\vert \xi_i\vert^2/\varepsilon^2)&= \int_{0}^1\Prob\{\exp(-\vert \xi_i\vert^2/\varepsilon^2)>s\}\,ds
=\int_0^\infty 2ue^{-u^2} \Prob\{|\xi_i|< u\varepsilon\}\,du\nonumber\\
&\leq
p_i \varepsilon^2\int_{\varepsilon_0/\varepsilon}^\infty 2u^3e^{-u^2}\,du+
p_i \varepsilon_0^2\int_0^{\varepsilon_0/\varepsilon} 2ue^{-u^2}\,du
\leq C p_i\varepsilon^2.
\end{align*}
By Markov's inequality, we obtain
\begin{align*}
 \Prob\Big\{\sum_{i=1}^n\vert \xi_i\vert^2\leq\varepsilon^2 n\Big\}
&= \Prob\Big\{\exp\Big(-\frac{1}{\varepsilon^2}\sum_{i=1}^n\vert \xi_i\vert^2\Big)
\geq e^{-n}\Big\}\nonumber \\
&\leq e^n \Exp \exp\Big(-\frac{1}{\varepsilon^2}\sum_{i=1}^n\vert \xi_i\vert^2\Big)
=e^{n} \prod_{i=1}^n \Exp \exp(-\vert \xi_i\vert^2/\varepsilon^2),
\end{align*}
which implies the desired result.
\end{proof}

\smallskip

The following statement is a non-Hermitian counterpart of the spectral gap estimates
for undirected random $d$-regular graphs -- a characteristic of major importance in connection with the graph expansion properties.
We refer to \cite{Alon Milman, Bordenave, BFSU, CGJ, Dodziuk, DJPP, Friedman, FKS, HLW, Puder, TY}
for more information on random expanders. The following statement was first proved in \cite{BFSU} for
$d\leq C\sqrt{n}$ (which is enough in this paper), then it was extended to the range $d\leq Cn^{2/3}$ in
\cite{CGJ} and to $d\leq n/2$ in \cite{TY}.
\begin{theor}\label{norm lemma}
There exists a universal constant $C_{\ref{norm lemma}}>0$ such that for $1\leq d\leq \frac{n}{2}$ one has
$\Prob(\Event_{\ref{norm lemma}})\geq 1-1/n^{100}$, where
$$
\Event_{\ref{norm lemma}}:=\Bigl\{M\in \Mc:\,\Big\|M- \frac{d}{n} {\bf 1}{\bf 1}^t \Big\|\leq C_{\ref{norm lemma}}\sqrt{d}\Bigr\}.
$$
\end{theor}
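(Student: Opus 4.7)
The plan is to bound $\|A\| := \|M - (d/n)\mathbf{1}\mathbf{1}^t\|$ by a Kahn--Szemer\'edi decomposition of the bilinear form $\langle Ax, y\rangle$, with the probabilistic inputs supplied by switching arguments on $\Mc$ uniformly in $1 \leq d \leq n/2$. Since $A\mathbf{1} = A^t\mathbf{1} = 0$, one has $\|A\| \leq 2\sup_{(x,y)\in \mathcal N_1 \times \mathcal N_2} |\langle Ax, y\rangle|$ for $(1/4)$-nets $\mathcal N_1, \mathcal N_2$ of the unit sphere in $\mathbf{1}^\perp \subset \C^n$, each of cardinality at most $12^{2n}$. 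For each pair $(x,y)$ in the net I split $\langle Ax, y\rangle = L(x,y) + H(x,y)$, where $L$ collects the \emph{light} index pairs $(i,j)$ with $|x_i \bar y_j| \leq \sqrt d/n$ and $H$ the \emph{heavy} complement.

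To control $L(x,y)$ I would use a Bernstein-type deviation bound that works on all of $\Mc$. The uniform measure on $\Mc$ is the stationary distribution of the switch Markov chain whose moves swap a pair of disjoint edges into a non-adjacent pair; a single switch perturbs $L(x,y)$ by at most $O(\sqrt d/n)$, and the expectation of the squared jumps along a trajectory producing a uniform sample is $O(d/n)$ after centring against the mean effect $O(d/n^2)$ per step. Freedman's inequality applied to the resulting martingale, or equivalently Chatterjee's method of exchangeable pairs, then yields the subgaussian tail
$$
\Prob\{|L(x,y)| > C_1\sqrt d\} \leq 2\exp(-c C_1^2 n)
$$
uniformly in $d \leq n/2$. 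Choosing $C_1$ large to beat $|\mathcal N_1||\mathcal N_2| \leq e^{Cn}$ and taking a union bound yields $\sup_{(x,y)\in \mathcal N_1 \times \mathcal N_2}|L(x,y)| \leq C_1 \sqrt d$ with probability $1 - e^{-c'n}$.

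For the heavy part I would work on the discrepancy event
$$
\mathcal D = \Bigl\{\forall S, T \subset [n]:\ \bigl|e(S,T) - d|S|\,|T|/n\bigr| \leq C_2 \bigl(\sqrt{d|S|\,|T| \log(en^2/(|S|\,|T|))} + \log n\bigr)\Bigr\},
$$
where $e(S,T) := \sum_{i\in S,\,j\in T} M_{ij}$. A dyadic Feige--Ofek grouping of the moduli of $x_i$ and $y_j$, combined with the heavy-pair constraint $|x_i y_j| > \sqrt d / n$, converts $\mathcal D$ into $|H(x,y)| \leq C_3 \sqrt d$ \emph{deterministically} for every unit $x, y$. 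To estimate $\Prob(\mathcal D^c)$ I would apply a switching injection: for fixed $(S,T)$ and each $k$ above the mean by the prescribed margin, switch an edge of $S \times T$ with an edge of $S \times T^c$ to produce a many-to-one map onto matrices with $e(S,T) = k - 1$, with source-to-target ratio of order $k/(d(n-|T|) - k)$; iterating yields a geometric tail in $k - d|S|\,|T|/n$, which after a union bound over the $4^n$ pairs $(S,T)$ sums to at most $n^{-101}$.

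The main obstacle is reaching the subgaussian constant $O(d/n)$ in the dense regime $d \asymp n$: the raw switch-chain Lipschitz bound $O(\sqrt d/n)$ multiplied by the $d_{\mathrm{sw}}$-diameter $O(nd)$ of $\Mc$ alone gives a deviation of order $d$, useless here. To recover the correct scaling one must centre the martingale increments so that the conditional variance replaces the squared Lipschitz constant, a step that corresponds to quantifying the spectral gap of the switch chain as $\Omega(1/(nd))$ uniformly in $d$. This is precisely the technical heart of the successive improvements covering $d \leq \sqrt n$, $d \leq n^{2/3}$ and $d \leq n/2$; once this concentration input is available, the deterministic discrepancy reduction and the net-based recombination described above yield $\|A\| \leq (2C_1 + 2C_3)\sqrt d$ with probability at least $1 - n^{-100}$.
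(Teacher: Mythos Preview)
The paper does not prove this theorem at all: it is quoted as an external input, with the sentence ``The following statement was first proved in \cite{BFSU} for $d\leq C\sqrt{n}$ (which is enough in this paper), then it was extended to the range $d\leq Cn^{2/3}$ in \cite{CGJ} and to $d\leq n/2$ in \cite{TY}.'' So there is no ``paper's own proof'' to compare against; the relevant comparison is with those references.

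Your high-level architecture --- the Kahn--Szemer\'edi light/heavy split, a net argument for the light part, and a deterministic discrepancy bound for the heavy part --- is exactly the framework of \cite{BFSU,CGJ,TY}. Where your sketch diverges, and where it has a genuine gap, is the concentration input for the light sum. You propose a switch-chain martingale and then, to get the right variance scale in the dense regime, appeal to a spectral gap $\Omega(1/(nd))$ for the switch chain. That is not how any of the cited papers proceed, and for good reason: quantitative spectral-gap bounds for the switch chain on $\Mc$ are themselves a hard problem and are not available as an off-the-shelf ingredient here. \cite{BFSU} works in the configuration model and transfers back by contiguity (hence the $d\leq C\sqrt n$ restriction); \cite{CGJ} obtains the concentration for $L(x,y)$ via Stein's method with size-biased couplings, bypassing the chain entirely; \cite{TY} uses yet another mechanism to reach $d\leq n/2$. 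Your own diagnosis in the last paragraph is accurate --- the raw Lipschitz-times-diameter bound is useless --- but the proposed fix (centre and invoke the spectral gap) is not a step one can currently carry out. If you want a self-contained argument, the cleanest route is to replace your switch-chain martingale by a size-biased coupling or an exchangeable-pair construction in the spirit of \cite{CGJ}, which delivers the required subgaussian tail for $L(x,y)$ directly, without any mixing-time input.
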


In fact one can replace the term $1/n^{100}$ in the above probability bound
by any negative power of $n$ at the expense of increasing the constant $C_{\ref{norm lemma}}$.

\section{Edge count statistics of $d$-regular digraphs}
\label{s:graphs}

A combination of probabilistic arguments shows that the edge counting statistics of random $d$-regular digraphs
(i.e., the number of edges connecting subsets of vertices of given cardinalities) concentrate around their
average values. In this section, we collect some estimates of the number of edges connecting given subsets
of vertices (equivalently, the number of non-zero elements in a given submatrix) and of
the number of in- or out-neighbors of a given vertex subset. While some of the statements
are borrowed from earlier works, others are new. We would like to note that properties of this type
were considered in the random setting in \cite{Cook expansion, LLTTY:15,LLTTY first part}.

First we introduce some notation. Denote by $\D$ the set of directed $d$-regular graphs on $n$ vertices,
where we allow loops but no multiple edges. This way, there is a natural bijection between $\D$ and $\Mc$.
We endow $\D$ with the uniform probability measure also denoted by $\Prob$.
Given a graph $\nrangr\in\D$ with an edge set $E$ and a subset $I\subset [n]$ of its vertices,
define sets of out- and in-neighbors as
$$
\outnbr(I)=\bigl\{v\leq n:\,  \exists i\in I \, \, (i,v) \in E \bigr\} \,\,\, \mbox{ and } \,\,\,
\innbr(I)=\bigl\{v\leq n:\,  \exists i\in I \, \, (v,i) \in E\bigr\} .
$$
Similarly, we define the out-edges and the in-edges as
$$
\outedg(I)=\bigl\{e\in E:\, e=(i,j) \text{ for some } i\in I \text{ and } j\leq n \bigr\}
$$
and
$$
\inedg(I)=\bigl\{e\in E:\, e=(i,j) \text{ for some } i\leq n \text{ and } j\in I\bigr\}.
$$
If $I=\{i\}$ we use lighter notation $\outnbr(i)$, $\innbr(i)$, $\outedg(i)$, and $\inedg(i)$.
Given a graph $\nrangr=([n],E)$, for every $I,J\subset [n]$ the set of all edges departing from $I$ and landing in $J$ is denoted by
$$
 \edg(I,J):=\bigl\{ e\in E:\, e=(i,j) \text{ for some } i\in I \text{ and } j\in J\bigr\}.
$$

 Let $M=\{\mu_{ij}\}\in \Mc$ and let $R_i=R_i(M)$ be the $i$'s row of $M$, $i=1,...,n$.
For
every subset  $J\subset [n]$, let
\begin{align*}
& S_J:=\{i\le n\,:\, \supp\,R_i\cap J\neq\emptyset \}
\end{align*}
be the union of supports of columns indexed by $J$ (the matrix will be clear from the context).
Given an $n\times n$ matrix $M$ and a set $\KK\subset [n]$, we use notation $M^\KK$ for a $|K|\times n$ matrix obtained from $M$ by removing
rows $R_i(M)$ with indices $i\not\in\KK$.

\smallskip

We start with the statement which essentially says that given a typical $d$-regular digraph
and a set of vertices $J$, which is not too large,
the set of all in-neighbors of $J$ has cardinality close
to the largest possible, i.e., $d|J|$. To formulate the statement, given $k\leq n$ and
$\eps \in (0,1)$ we introduce the set
\begin{equation}\label{Oo}
\Omega_{k, \eps}:=\Big\{M\in \Mc \,:\,\forall J\subset[n] \, \, \text{ with }\, \,
|J|=k
 \, \, \text{ one has }\, \,|S_J|\geq
(1-\varepsilon) d k \Big\}.
\end{equation}
Clearly, if $k=1$ then $\Omega_{k, \eps}=\Mc$.
The following theorem
is essentially  Theorem~2.2 of \cite{LLTTY:15} (see also Theorem 3.1 there).
\begin{theor}
\label{graph th known}
Let $e^8<d\leq n$, $\varepsilon_0=  \sqrt{\ln d/d}$, and  $\varepsilon\in [\varepsilon_0,1)$.
Let $k\leq c_{\ref{graph th known}}\eps n/d$,
where $c_{\ref{graph th known}}\in (0, 1)$ is a sufficiently small absolute positive constant.
Then
\begin{equation*}
\Prob(\Omega_{k,\varepsilon})\geq 1-\exp\left(-\frac{\eps^2 dk}{8}
\ln \left(\frac{e\eps c_{\ref{graph th known}} n}{k d}\right)\right).
\end{equation*}
In particular,
\begin{equation*}
\Prob\Big(\bigcap\limits_{k=1}^{\lfloor c_{\ref{graph th known}}\eps n/d\rfloor}\Omega_{k,\varepsilon}\Big)
\geq 1-\left( C d/\eps n\right)^{\eps^2 d/8 },
\end{equation*}
where $C$ is an absolute positive constant.
\end{theor}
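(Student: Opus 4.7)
The plan is to prove the first inequality by a two-level union bound combined with a McKay-style switching estimate, and then to derive the second inequality by summing the individual bounds in $k$; the argument essentially reproduces the one of \cite[Section~3]{LLTTY:15}. Fix $k$ in the admissible range and set $s:=\lfloor(1-\varepsilon)dk\rfloor$. The event $\Omega_{k,\varepsilon}^c$ is contained in the union, over ordered pairs of subsets $J,T\subset[n]$ with $|J|=k$ and $|T|=s$, of
\[
\Event_{J,T}:=\bigl\{M\in\Mc:\,\supp \col_j(M)\subseteq T\text{ for every }j\in J\bigr\},
\]
since whenever $|S_J(M)|\leq s$ one may take $T$ to be any superset of $S_J(M)$ of size exactly $s$. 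The number of such pairs is at most $\binom{n}{k}\binom{n}{s}\leq (en/k)^k(en/s)^s$, so it suffices to bound $\Prob(\Event_{J,T})$ uniformly in $(J,T)$.

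To bound $\Prob(\Event_{J,T})$ I would use the classical two-by-two switching. On $\Event_{J,T}$ the $dk$ ones of the columns indexed by $J$ are confined to the $T\times J$ block, so at least $\varepsilon dk$ of them are ``excess'' in the sense that they can be removed without uncovering a column of $J$. For such an excess entry $(i,j)\in T\times J$ one selects a pair $(i',j')\in T^c\times J^c$ with $M_{i'j'}=1$, $M_{ij'}=0$, $M_{i'j}=0$, and flips all four entries; the resulting matrix lies in $\Mc$ but has one fewer one inside $T\times J$, so $\supp\col_j$ is no longer contained in $T$. A careful forward/backward count of such switchings — there are roughly $\varepsilon dk\cdot(n-s)(n-k)d/n$ forward choices and at most $\approx sk$ ways to invert a single switching — iterated $\varepsilon dk$ times, yields
\[
\Prob(\Event_{J,T})\leq \Bigl(\tfrac{C_1 s}{n}\Bigr)^{\varepsilon dk}
\]
for some absolute $C_1>0$, valid in the regime $dk\leq c_{\ref{graph th known}}\varepsilon n$ where the multi-edge and ``no previously switched edge'' constraints do not degrade the count.

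Combining these ingredients, taking logarithms and substituting $s=(1-\varepsilon)dk$ gives
\[
\ln\Prob(\Omega_{k,\varepsilon}^c)\leq k\ln(en/k)+s\ln(en/s)+\varepsilon dk\ln(C_1 s/n).
\]
The dominant contribution is the negative term $\varepsilon dk\ln(dk/n)$. Using the hypothesis $\varepsilon\geq\sqrt{\ln d/d}$, which forces $\varepsilon^2 d\geq\ln d$ and allows the $k\ln(en/k)+s\ln(en/s)$ terms to be absorbed, together with $k\leq c_{\ref{graph th known}}\varepsilon n/d$, which turns $\ln(n/(dk))$ into $\ln(e\varepsilon c_{\ref{graph th known}} n/(kd))$ up to constants, one verifies the claimed upper bound $\exp\bigl(-\varepsilon^2 dk\ln(e\varepsilon c_{\ref{graph th known}} n/(kd))/8\bigr)$. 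The second inequality follows by a further union bound over $k\in\{1,\dots,\lfloor c_{\ref{graph th known}}\varepsilon n/d\rfloor\}$: the exponent decays rapidly in $k$, so the sum is dominated by the $k=1$ term, yielding $(Cd/(\varepsilon n))^{\varepsilon^2 d/8}$ after absorbing the constants.

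The principal obstacle is the switching count. In the boundary regime $\varepsilon$ close to $\varepsilon_0=\sqrt{\ln d/d}$ the exponent $\varepsilon^2 d$ is of order only $\ln d$, so a constant factor error in the base $(C_1 s/n)$ (or even a spurious $\ln d$ in the exponent) would entirely destroy the bound. One therefore needs to track the simple-graph constraint and the ``edges not previously switched'' constraint through all $\varepsilon dk$ iterations of the switching, and to carefully compare the forward and backward multiplicities, so that the final constant $C_1$ remains absolute and the base of the geometric factor is genuinely of order $s/n=(1-\varepsilon)dk/n$.
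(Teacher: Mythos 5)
The paper does not prove Theorem~\ref{graph th known}; it cites it as ``essentially Theorem~2.2 of [LLTTY:15].'' So your argument is necessarily an independent reconstruction, and the question is whether it is correct. It is not: the union bound over pairs $(J,T)$ is fatally wasteful, and no amount of care in the switching count can repair it.

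Concretely, write $s=\lfloor(1-\varepsilon)dk\rfloor$. Your union over $T$ costs a factor $\binom{n}{s}\geq (n/s)^{s}$, and you propose to compensate it with a bound $\Prob(\Event_{J,T})\leq (C_1 s/n)^{\varepsilon dk}$. But for $\varepsilon<1/2$ one has $s-\varepsilon dk=(1-2\varepsilon)dk>0$, hence
\[
\binom{n}{s}\Bigl(\frac{C_1 s}{n}\Bigr)^{\varepsilon dk}
\;\geq\;\Bigl(\frac{n}{s}\Bigr)^{s}\Bigl(\frac{C_1 s}{n}\Bigr)^{\varepsilon dk}
\;=\;C_1^{\varepsilon dk}\Bigl(\frac{n}{s}\Bigr)^{(1-2\varepsilon)dk}.
\]
Since $s\leq dk\leq c_{\ref{graph th known}}\varepsilon n$, we have $n/s\geq 1/(c_{\ref{graph th known}}\varepsilon)>1$, and as $\varepsilon\downarrow\varepsilon_0=\sqrt{\ln d/d}$ the positive exponent $(1-2\varepsilon)dk$ dwarfs $\varepsilon dk\ln(1/C_1)$. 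So the right-hand side of your union bound exceeds $1$, and the conclusion is vacuous precisely in the regime $\varepsilon\in[\varepsilon_0,\,1/2)$ that the theorem must cover. (One can check that even the sharper configuration-model-style estimate $\Prob(\Event_{J,T})\leq (Cs/n)^{dk}$, with exponent $dk$ rather than $\varepsilon dk$, still does not close the union over $T$ when $\varepsilon$ is small.) The source of the loss is massive over-counting: a matrix with $|S_J|$ small lies in $\Event_{J,T}$ for a combinatorially huge family of supersets $T\supseteq S_J$, and you pay the entropy of choosing $T$ without getting it back.

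The correct route, which you should take, avoids the auxiliary set $T$ altogether. Fix only $J$ with $|J|=k$ and observe that $|S_J|<(1-\varepsilon)dk$ forces at least $\lceil\varepsilon dk\rceil$ of the $dk$ unit entries in the $J$-columns to be ``collisions,'' i.e.\ to fall into a row already hit by a preceding such entry. Unioning over the $\binom{dk}{\lceil\varepsilon dk\rceil}$ possible collision positions and using a switching count (or a careful sequential-exposure argument) directly on $\Mc$ to bound the probability that a fixed position is a collision by $O(dk/n)$, one gets
\[
\Prob\bigl(|S_J|<(1-\varepsilon)dk\bigr)\leq\binom{dk}{\lceil\varepsilon dk\rceil}\Bigl(\frac{Cdk}{n}\Bigr)^{\varepsilon dk}
\leq\Bigl(\frac{Cedk}{\varepsilon n}\Bigr)^{\varepsilon dk},
\]
with no $\binom{n}{s}$ factor to absorb. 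Combining this with the $\binom{n}{k}$ union over $J$, and using $\varepsilon d\geq\sqrt{d\ln d}$ together with $k\leq c_{\ref{graph th known}}\varepsilon n/d$ to absorb $k\ln(en/k)$ into a fraction of the exponent, yields the stated $\exp\bigl(-\tfrac{\varepsilon^2 dk}{8}\ln(e c_{\ref{graph th known}}\varepsilon n/(kd))\bigr)$; the stated exponent carries $\varepsilon^2$ rather than $\varepsilon$, so there is slack and the constants need not be optimized. Your derivation of the ``in particular'' statement by summing over $k$ is in spirit fine, but it also requires care: $f(k)=k\ln(A_1/k)$ with $A_1=ec_{\ref{graph th known}}\varepsilon n/d$ is only increasing, with increments that shrink near $k\approx A_1/e$, so the geometric-sum step needs a brief justification rather than a one-line appeal to ``dominated by $k=1$.''
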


In this paper, we prove the following auxiliary theorem, which states that given a large set of columns (of a typical matrix from $\Mc$),
there are many rows having many ones in this set.

\begin{theor}\label{th-graph-new}
Let $d\leq n$ be large enough integers and let $\ko\geq d+24 e n/d.$
For every $k\geq \ko$, denote
$$
\alpha_k:= \frac{d(k-d)}{8 e n}-1 \quad \mbox{ and } \quad
\beta_k:= \max\Big(e n\, \exp{(-\alpha_k/2)}, \frac{4k\ln (en/k)}{\alpha_k}\Big).
$$
Let $\Event_{\ref{th-graph-new}}$ be the set of all $M\in\Mc$
such that for every $J\subset [n]$ with $\vert J\vert\geq \ko$ one has
$$
  \bigl|\bigl\{i\le n :\,|\supp\,\row_i(M)\cap J|<\alpha_{\vert J\vert}\bigr\}\bigr|
\leq \beta_{\vert J\vert}.
$$
Then
$$
 \Prob(\Event_{\ref{th-graph-new}})\ge 1-4e^{-\ko}.
$$
\end{theor}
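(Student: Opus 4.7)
The strategy is a two-tier union bound combined with a Chernoff-type estimate for submatrix edge counts. Fix an integer $k\ge \ko$ and a set $J\subset[n]$ of size $k$, and for any $I\subset [n]$ write $T_{I,J}:=\sum_{i\in I}|\supp\,\row_i(M)\cap J|$ for the total number of ones in the submatrix formed by rows $I$ and columns $J$. If the bad event $\{|\{i\le n:\,|\supp\,\row_i(M)\cap J|<\alpha_k\}|>\beta_k\}$ occurs, then there exists $I\subset[n]$ of size $m:=\lfloor\beta_k\rfloor+1$ with $T_{I,J}\le m\alpha_k$; hence by the union bound over $I$ it suffices to control $\binom{n}{m}\Prob(T_{I,J}\le m\alpha_k)$ uniformly over all $(I,J)$ with $|I|=m$ and $|J|=k$.

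\textbf{Key step: Chernoff estimate for $T_{I,J}$.} Under the uniform distribution on $\Mc$, by the row/column symmetry of the model one has $\Exp\,T_{I,J}=dmk/n$, whereas $m\alpha_k\le dmk/(8en)$ lies a constant factor below the mean. I would prove
$$
\Prob\bigl(T_{I,J}\le m\alpha_k\bigr)\le 2\exp\bigl(-c_0\,dmk/n\bigr)
\qquad (\ast)
$$
for an absolute $c_0>0$ via a \emph{switching} argument of the type used in the enumeration of regular graphs. Given $M\in\Mc$ with $T_{I,J}=s$, pick a quadruple $(i_1,j_1)\in I\times J^c$ and $(i_2,j_2)\in I^c\times J$ with $M_{i_1 j_1}=M_{i_2 j_2}=1$ and $M_{i_1 j_2}=M_{i_2 j_1}=0$, and flip the $2\times 2$ minor on rows $\{i_1,i_2\}$ and columns $\{j_1,j_2\}$; this yields $M'\in\Mc$ with $T_{I,J}=s+1$. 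A direct count shows that the number of such forward switches from $M$ is at least $(md-s)(dk-s)-O(d^3(m+k))$, while the number of reverse switches from any $M'$ with $T_{I,J}=s+1$ is at most $(s+1)\bigl((n-m)d-(dk-s-1)\bigr)$. The assumption $\ko\ge d+24en/d$ is exactly what makes the main term $(md-s)(dk-s)$ dominate the collision correction $d^3(m+k)$ throughout $s\le m\alpha_k$, so Claim~\ref{multi-al} gives $|\{M:T_{I,J}=s\}|\le e^{-1}|\{M:T_{I,J}=s+1\}|$. Iterating from $s=\lfloor m\alpha_k\rfloor$ up to $s\approx dmk/(2n)$ and summing the geometric series produces $(\ast)$.

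\textbf{Assembly.} Plugging $(\ast)$ into the bound from the first step and using $\binom{n}{m}\le (en/m)^m$,
$$
\Prob(\text{bad for }J)\le 2\exp\Bigl(m\Bigl(\ln\frac{en}{m}-\frac{c_0 dk}{n}\Bigr)\Bigr).
$$
The first lower bound in the definition $\beta_k\ge en\exp(-\alpha_k/2)$ forces $\ln(en/m)\le \alpha_k/2$, while $\alpha_k+1= d(k-d)/(8en)$ together with a suitable choice of $c_0$ gives $c_0 dk/n\ge \alpha_k$; consequently $\Prob(\text{bad for }J)\le 2\exp(-m\alpha_k/4)$. The second lower bound $\beta_k\ge 4k\ln(en/k)/\alpha_k$ then ensures $m\alpha_k/4\ge k\ln(en/k)$, which absorbs the union bound $\binom{n}{k}\le (en/k)^k$ over subsets $J$ of cardinality $k$ and leaves a surplus of at least $\exp(-\ko)$ (using $k\ge \ko$ and $\ln(en/k)\ge 1$). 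Summing the resulting geometric tail over $k\ge \ko$ yields the claimed bound $4e^{-\ko}$. The main obstacle in this plan is establishing the switching estimate $(\ast)$ uniformly in $s$ and $(I,J)$; once one verifies that the collision term $d^3(m+k)$ is negligible against $(md-s)(dk-s)$ in the regime $k\ge \ko$, the remainder of the argument is algebraic bookkeeping.
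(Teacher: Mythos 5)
Your proposal takes a genuinely different route from the paper's. The paper (Lemma~\ref{lem-prob-gamma-s-k}) bounds the \emph{stronger} event $\Gamma_S^J=\{\forall i\in S:\,|\edg(i,J)|<\alpha_{|J|}\}$ via a row-by-row decomposition: for each row $s\in S$ in turn it refines by $\ell=|\edg(s,[k])|$ and switches a single edge of row $s$ from $[k]^c$ into $[k]$, yielding $|\Gamma_{s,\ell}^k|\leq e^{-1}|\Gamma_{s,\ell+1}^k|$ and hence $|\Gamma_s^k|\leq e^{-\alpha_k}|\Gamma_{s-1}^k|$. You instead bound the weaker event $\{T_{I,J}\leq m\alpha_k\}$ by switching to increment the single aggregate scalar $T_{I,J}$, moving an edge from anywhere in $I\times J^c$ to anywhere in $I^c\times J$. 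The closing union bounds over $J$ and over the bad set, using the two branches in the definition of $\beta_k$, are essentially the paper's.

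The genuine difficulty sits in $(\ast)$, which you correctly flag. A sharper count of forward switches is $(md-s)(dk-s-2d^2)$: for each of the $md-s$ starting pairs $(i_1,j_1)$, at most $d^2$ candidate targets $(i_2,j_2)$ are excluded by $M_{i_1j_2}=1$ and at most $d^2$ by $M_{i_2j_1}=1$. Positivity of the forward count therefore requires $dk-s-2d^2>0$ throughout the iteration, and since $s$ must range over $[m\alpha_k,\,s^*]$ with $s^*\approx mdk/(en)$ and $m$ possibly as large as $n$, the collision term $2d^2$ is small relative to $dk-s$ only when $k$ exceeds $d$ by a constant multiple. The hypothesis $\ko\geq d+24en/d$ only guarantees $k/d\geq 1+24en/d^2$, so once $d$ exceeds a constant multiple of $\sqrt{n}$ — still a non-vacuous regime, since $\ko\leq n$ holds for $d$ up to nearly $n-24e$ — one may have $k/d$ close to $1$ and the forward count goes nonpositive. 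The paper's per-row switching escapes this: the target set $E_j$ explicitly excludes $\inedg(\outnbr(s)\cap[k])$ and $\outedg(\innbr(j))$, keeping the collision correction at $O(d^2)$ against a main term of order $(k-d)d$, which only needs $k>d$. Your scheme is sound in the regime $d\lesssim c\sqrt n$ in which the theorem is actually invoked later on (Corollary~\ref{graph th to prove}, Proposition~\ref{graph prop}); to prove the theorem exactly as stated you would need either to add $d\leq c\sqrt n$ to the hypotheses or to localize the switching to a single row as the paper does.
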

Before passing to the proof of Theorem~\ref{th-graph-new} we mention an immediate
corollary which will be used in Section~\ref{s:proof}.

\begin{cor}\label{graph th to prove}
There exist positive absolute constants $C_{\ref{graph th to prove}},c_{\ref{graph th to prove}}$
such that the following holds. Let $C_{\ref{graph th to prove}}\le d\le c_{\ref{graph th to prove}}\sqrt{n}$ and let
$\Event_{\ref{graph th to prove}}$ be the set of all $M\in\Mc$
such that for every $J\subset [n]$ with $\vert J\vert\geq n/\sqrt{d}$ one has
$$
 \big|\big\{i\le n :\,|\supp\,\row_i(M)\cap J|<{c_{\ref{graph th to prove}}d|J|}/{n}\big\}\big|
\leq n/\sqrt{d}.
$$
Then
$$
 \Prob(\Event_{\ref{graph th to prove}})\ge 1-4\exp\big(-n/\sqrt{d}\big).
$$
\end{cor}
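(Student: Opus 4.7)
The plan is to derive Corollary~\ref{graph th to prove} as a direct consequence of Theorem~\ref{th-graph-new} by choosing the threshold $\ell_0 := \lceil n/\sqrt{d}\,\rceil$ and tracking how the parameters $\alpha_k$ and $\beta_k$ simplify under the hypothesis $C_{\ref{graph th to prove}}\leq d\leq c_{\ref{graph th to prove}}\sqrt{n}$, with $C_{\ref{graph th to prove}}$ taken sufficiently large and $c_{\ref{graph th to prove}}$ sufficiently small. The conclusion will then read with $|\{i:|\supp\,\row_i(M)\cap J|<\alpha_{|J|}\}|\leq \beta_{|J|}$, and the task reduces to showing $\alpha_{|J|}\geq c_{\ref{graph th to prove}}d|J|/n$ and $\beta_{|J|}\leq n/\sqrt{d}$ whenever $|J|\geq \ell_0$.

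First, I would verify the applicability hypothesis $\ell_0\geq d+24en/d$. Splitting $n/\sqrt{d}\geq d+24en/d$ into the two inequalities $n/(2\sqrt{d})\geq d$ and $n/(2\sqrt{d})\geq 24en/d$ reduces to $n\geq 2d^{3/2}$ (guaranteed by $d\leq c_{\ref{graph th to prove}}\sqrt{n}$ with $c_{\ref{graph th to prove}}$ small) and $\sqrt{d}\geq 48e$ (guaranteed by $C_{\ref{graph th to prove}}$ large). In particular this forces $\ell_0\geq 2d$, so that $k-d\geq k/2$ for every $k\geq\ell_0$, and hence $\alpha_k\geq dk/(16en)-1$. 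Since $dk/(16en)\geq d\ell_0/(16en)\geq \sqrt{d}/(16e)\geq 2$ when $d$ is large, the additive $-1$ is absorbed to give $\alpha_k\geq dk/(32en)$ uniformly in $k\geq \ell_0$, which yields the announced lower bound with $c_{\ref{graph th to prove}}=1/(32e)$ (or similar).

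Next I would bound $\beta_k$ uniformly in $k\geq \ell_0$. The first candidate $en\exp(-\alpha_k/2)$ is decreasing in $k$, so it suffices to control it at $k=\ell_0$, where $\alpha_{\ell_0}\gtrsim \sqrt{d}$; for $d$ larger than a constant, $en\exp(-\alpha_{\ell_0}/2)\leq n/\sqrt{d}$ because $\exp(\sqrt{d}/(64e))$ easily dominates $e\sqrt{d}$. For the second candidate $4k\ln(en/k)/\alpha_k$, I substitute $\alpha_k\geq dk/(32en)$ to obtain the bound $128\, en\ln(en/k)/d$; since $k\geq n/\sqrt{d}$ gives $\ln(en/k)\leq 1+\tfrac12\ln d$, this is $\leq n/\sqrt{d}$ as soon as $\sqrt{d}\geq 128e(1+\tfrac12\ln d)$, again holding for $d$ above a large constant. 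Combining, $\beta_k\leq n/\sqrt{d}$ for every $k\geq\ell_0$, and the probability estimate $1-4e^{-\ell_0}\geq 1-4\exp(-n/\sqrt{d})$ transfers verbatim from Theorem~\ref{th-graph-new}.

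There is no genuine obstacle here: the statement is essentially a parameter specialization and the argument is a calculus check. The only point requiring a little care is ensuring that the bounds on $\alpha_k$ and $\beta_k$ hold \emph{uniformly} over $k\geq\ell_0$ rather than only at $k=\ell_0$, but this is free once one observes that $\alpha_k$ is linear in $k$ (after absorbing the constant) while the first term of $\beta_k$ is monotone decreasing and the second grows only like $k\ln(n/k)/\alpha_k$, which is dominated by the estimate at $k=\ell_0$ since $k/\alpha_k\lesssim n/d$ independently of $k$.
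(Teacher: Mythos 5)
Your proposal is correct and follows essentially the same route as the paper's own proof: specialize Theorem~\ref{th-graph-new} with $\ell_0 = \lceil n/\sqrt{d}\rceil$, use $\ell_0\geq 2d$ to get $\alpha_k\geq dk/(32en)\geq\sqrt{d}/(32e)$, and observe that both branches of $\beta_k$ are at most $n/\sqrt{d}$ once $d$ exceeds a suitable constant. The paper's version of this is terser (it simply records $\beta_k\leq Cn\ln d/d$ and says the constants can be adjusted), but the parameter check is identical to yours.
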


\begin{proof}[Proof of Corollary~\ref{graph th to prove}.]  Let $J\subset [n]$ be such that $k:=|J|\ge n/\sqrt{d}$.
By the conditions on $n$ and $d$ we have $k\geq 2d$ so that, with $\alpha_k,\beta_k$
defined as in Theorem~\ref{th-graph-new}, we have $\alpha_k\geq dk/(32en)\geq \sqrt{d}/(32e)$
and $\beta_k\leq C n\ln d/d$, for some positive constant $C$.
Adjusting the choice of the constant $c_{\ref{graph th to prove}}$ and
applying Theorem \ref{th-graph-new} with $\ell_0= \lceil n/\sqrt{d}\rceil\geq d+24 e n/d$ we obtain the result.
\end{proof}

In order to prove Theorem~\ref{th-graph-new}, we  need the lemma below
(it will be more convenient for us to formulate it in the graph language).
For every $S,J\subset [n]$ we introduce
$$
  \Gamma_S^J:=\{ \nrangr\in\D: \, \forall i\in S\text{ one has } \vert \edg(i, J)\vert < \alpha_{|J|}\},
$$
where $\alpha_k$'s were defined in Theorem~\ref{th-graph-new}.
When $S=[s]$ and $J=[k]$ (we postulate that $[0]=\emptyset$) we will denote the above set by $\Gamma_s^k$.
For every $\ell\leq d$, denote
$$
\Gamma_{s,\ell}^k:=\Gamma_{s-1}^k\cap \{ \nrangr\in\D: \vert \edg(s, [k])\vert =\ell \}.
$$
With these notations, we have $\Gamma_{0}^k=\D$ for every $k$.
Clearly
\begin{equation}\label{eq-decomposition-gamma}
\Gamma_s^k\subseteq\Gamma^k_{s-1}\quad \quad \text{ and }\quad \quad
\Gamma_s^k=\bigsqcup_{\ell=0}^{\lfloor \alpha_k\rfloor} \Gamma_{s,\ell}^k.
\end{equation}

\begin{lemma}\label{lem-prob-gamma-s-k}
Let $d,n,\ko$, and $\alpha_k$, $k\geq \ko$, be as in Theorem~\ref{th-graph-new}
and let $S,J\subset [n]$ be such that $|J|\ge \ko$.  Then
$$
 \Prob(\Gamma_S^J)\leq \exp(-\alpha_{|J|} |S|).
$$
\end{lemma}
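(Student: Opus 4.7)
The plan is to proceed by induction on $|S|$; the base case $|S|=0$ is trivial since $\Gamma_\emptyset^J=\D$. For the inductive step, pick any $s\in S$, set $S':=S\setminus\{s\}$, $k:=|J|$, and $\alpha:=\alpha_k$, and let $X_s:=|\edg(s,J)|$. Factoring
$$
\Prob(\Gamma_S^J) \;=\; \Prob(\Gamma_{S'}^J)\cdot \Prob\bigl(X_s<\alpha\mid\Gamma_{S'}^J\bigr),
$$
and using the inductive hypothesis $\Prob(\Gamma_{S'}^J)\leq e^{-\alpha(|S|-1)}$, the task reduces to showing $\Prob(X_s<\alpha\mid\Gamma_{S'}^J)\leq e^{-\alpha}$.

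The conditional distribution of $X_s$ given $\Gamma_{S'}^J$ will be controlled by an edge switching argument. Given $G\in\Gamma_{S'}^J$ with $X_s=\ell$, define a forward switch: choose $v\in\outnbr(s)\cap J^c$, $w\in J\setminus\outnbr(s)$, and $u\in\innbr(w)\setminus\{s\}$ with $(u,v)\notin E$, and replace the edges $(s,v),(u,w)$ by $(s,w),(u,v)$. The switched graph $G'$ lies in $\D$ (in- and out-degrees are preserved, and multi-edges are ruled out by the constraints on $w$ and $u$), satisfies $X_s(G')=\ell+1$, and has $X_u(G')=X_u(G)-1$ while $X_i(G')=X_i(G)$ for $i\in S'\setminus\{u\}$, so $G'\in\Gamma_{S'}^J\cap\{X_s=\ell+1\}$.

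A double-counting of switches then produces the key ratio estimate. For each $G\in\Gamma_{S'}^J$ with $X_s=\ell$, the number of valid forward switches is at least $d(d-\ell)(k-\ell-d)$, the $d^2$ correction subtracting the pairs with $u\in\innbr(v)$; while the number of reverse switches at a graph $G'$ with $X_s=\ell+1$ is at most $(\ell+1)(n-k)d$. The hypothesis $k\geq\ko=d+24en/d$ implies $\alpha\geq 2$ and $\mu:=2e(\alpha+1)\leq d/3$, and in the range $\ell\leq\mu$ one checks that $(d-\ell)(k-\ell-d)\geq d(k-d)/4$, so the forward count is at least $d^2(k-d)/4$. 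Combined with the identity $d(k-d)/n=8e(\alpha+1)$, double counting yields
$$
\frac{|\Gamma_{S'}^J\cap\{X_s=\ell\}|}{|\Gamma_{S'}^J\cap\{X_s=\ell+1\}|} \;\leq\; \frac{4(\ell+1)(n-k)}{d(k-d)} \;\leq\; \frac{\ell+1}{\mu}.
$$

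Writing $a_\ell:=\Prob(X_s=\ell\mid\Gamma_{S'}^J)$, the inequality $a_\ell/a_{\ell+1}\leq(\ell+1)/\mu$ is exactly the ratio of consecutive probabilities of a Poisson$(\mu)$ random variable. Iterating up to the Poisson mode $\ell^*:=\lfloor\mu\rfloor$ (which lies in the switching range since $\mu\leq d/3$) and bounding $1/P(\ell^*)$ via Stirling at the mode gives $a_\ell\leq C\sqrt\mu\cdot P(\ell)$ for $\ell\leq\ell^*$, with $P(j):=e^{-\mu}\mu^j/j!$. Summing over $\ell\leq\lfloor\alpha\rfloor$ and applying the standard Chernoff bound for the lower Poisson tail,
$$
\Prob\bigl(\mathrm{Poi}(\mu)\leq\alpha\bigr)\;\leq\;\exp\bigl(\alpha\log(\mu/\alpha)+\alpha-\mu\bigr),
$$
which with $\mu=2e(\alpha+1)$ collapses to $\exp(-2.75\alpha+O(1))$, shows that the $\sqrt\mu$ prefactor is absorbed as soon as $\alpha\geq 2$, producing $\Prob(X_s<\alpha\mid\Gamma_{S'}^J)\leq e^{-\alpha}$. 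Induction then gives the lemma. The principal technical obstacle is the combinatorial switching itself: verifying that the operation preserves the $\Gamma_{S'}^J$ condition and securing a lower bound on the forward switches strong enough that $(d-\ell)(k-\ell-d)$ remains comparable to $d(k-d)$ throughout the full iteration range $\ell\leq\mu$.
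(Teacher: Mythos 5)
Your proof is correct and relies on the same core device as the paper's---a simple switching that moves one of $s$'s out-edges from $J^c$ into $J$, giving a ratio bound on consecutive cardinalities $|\Gamma_{S'}^J\cap\{X_s=\ell\}|/|\Gamma_{S'}^J\cap\{X_s=\ell+1\}|$---but you convert that ratio into a tail bound in a genuinely different way. The paper extracts a \emph{uniform} ratio bound $\leq e^{-1}$ (valid for all $\ell<q$ with a free parameter $q$), iterates out to $q=2\lfloor\alpha_k\rfloor+2$ and sums a geometric series; you keep the $\ell$-dependent Poisson-type ratio $(\ell+1)/\mu$ with $\mu=2e(\alpha+1)=d(k-d)/(4n)$, iterate only to the mode $\lfloor\mu\rfloor$, pay an extra Stirling factor $C\sqrt\mu$ for normalizing $a_{\lfloor\mu\rfloor}\leq 1$, and finish with the Chernoff lower-tail bound for Poisson. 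Both work; yours is arguably cleaner in the bookkeeping and buys a sharper decay for small $\ell$, at the cost of invoking a ready-made tail estimate rather than elementary geometric summation. There is also a small structural difference in the switching itself worth noting: the paper restricts the auxiliary vertex $u$ to $[s]^c$ so that $X_i$ is unchanged for all $i<s$, which is why their forward count carries the $-q(s+d-1)$ correction; you instead allow $u\in S'$ and observe that the forward switch can only \emph{decrease} $X_u$, preserving membership in $\Gamma_{S'}^J$. This removes the $s$-dependence from the forward count and is a mild simplification. The only places I would ask you to tighten are bookkeeping: state explicitly that $\mu\leq d/4$ (since $k-d<n$ and $d\leq n$), so the entire iteration range $\ell\leq\lfloor\mu\rfloor$ stays strictly below $d$; and note that distinct forward switches on a fixed $G$ produce distinct $G'$ (the four edges in $E(G)\triangle E(G')$ determine $v,w,u$ uniquely), so the switch count really is a lower bound on $|R(G)|$ as required by Claim~\ref{multi-al}.
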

\begin{proof}
Without loss of generality we may assume that $S=[s]$ and $J=[k]$ for some $s\geq 1$ and $k\geq \ko$.
Let $q$ be a parameter in the interval $\alpha_k<q\leq d$, which  will be chosen later.
We first compare the cardinalities of $\Gamma_{s,\ell}^k$ and $\Gamma_{s,\ell+1}^k$ for
every $\ell<q$. To this end we construct a relation
$R_\ell\subset \Gamma_{s,\ell}^k\times \Gamma_{s,\ell+1}^k$.
Let $\nrangr\in \Gamma_{s,\ell}^k$. For  $j>k$ denote
$$
 E_j:= \edg([s]^c,[k])\setminus \Big(\inedg\big(\outnbr(s)\cap[k]\big)\cup \outedg\big(\innbr(j)\big)\Big).
$$
Since $\nrangr\in \Gamma_{s,\ell}^k$,
$$
 \vert\edg([s]^c,[k])\vert = kd - \vert\edg([s], [k])\vert \geq kd-\alpha_k(s-1)-\ell .
$$
On the other hand, since $\vert\outnbr(s)\cap[k]\vert =\ell$, then
$\vert\edg\big([s]^c,\outnbr(s)\cap[k]\big)\vert\leq \ell(d-1)$.
Using that
$$
 \vert \edg\big(\innbr(j)\setminus[s], [k]\big)\vert \leq  d (d-1)
$$
we obtain
\begin{equation}\label{eq-size-Ej}
\vert E_j\vert \geq kd- \alpha_k(s-1)-\ell d-d (d-1) \geq (k-d+1)d- q (s+d-1).
\end{equation}

Now we are ready to define the relation $R_\ell$.
We let a pair $(\nrangr,\nrangr')$ belong to $R_\ell$ for some $\nrangr'\in\Gamma_{s,\ell+1}^k$ if $\nrangr'$
can be obtained from $\nrangr$ in the following way. Choose $j\in \outnbr(s)\cap [k]^c$ and an edge $(u,v)\in E_j$.
We destroy the edge $(s,j)$ and create the edge $(s,v)$, then we destroy the edge $(u,v)$ and create the edge $(u,j)$
(in other words, we perform the simple switching on the vertices $s,u,j,v$). Note that the conditions
$u\not\in\innbr(j)$ and $v\not\in\outnbr(s)$, which are implied by the definition of $E_j$, guarantee that the simple
switching does not create multiple edges, and we obtain a valid graph in $\Gamma_{s,\ell+1}^k$.
Using (\ref{eq-size-Ej}) and assuming
\begin{equation}\label{cond-al}
q \leq \frac{d(k-d)}{2(n+d)},
\end{equation}
 we deduce that for every $\nrangr\in\Gamma_{s,\ell}^k$ one has
\begin{equation}\label{eq-size-image}
\vert R_\ell(\nrangr)\vert \geq (d-\ell)\Big[ (k-d+1)d- q(s+d-1)\Big]\geq \frac{d(k-d)(d-q)}{2}.
\end{equation}

Now we estimate the cardinalities of preimages. Let $\nrangr'\in R_\ell(\Gamma_{s,\ell}^k)$. In order to reconstruct
a graph $\nrangr$ for which $(\nrangr,\nrangr')\in R_\ell$ we need to perform a simple switching which destroys an
edge in $\edgpr(s,[k])$ and adds an edge in $\edgpr(s,[k]^c)$. To this end, choose
$$
 v\in \outnbrpr(s)\cap [k] \quad \mbox{ and } \quad
 j\in [k]^c\setminus \outnbrpr(s).
$$
Since $\vert \innbrpr(j)\vert =d$, there are at most $d$
simple switchings which destroy the edge $(s,v)$ and create the edge $(s,j)$.
Using that $|\outnbrpr(s)\cap [k]|=\ell+1$, we observe
\begin{equation}\label{eq-size-preimage}
\vert R_\ell^{-1}(\nrangr')\vert \leq d(\ell+1) (n-k-(d-\ell-1))\leq dq(n-k).
\end{equation}

Using Claim~\ref{multi-al} together with inequalities
\eqref{eq-size-image} and \eqref{eq-size-preimage}, we obtain that for every $\ell<q$,
$$
\vert\Gamma_{s,\ell}^k\vert \leq \frac{2q(n-k)}{(k-d)(d-q)}\vert\Gamma_{s,\ell+1}^k\vert.
$$
Therefore
$$
\vert\Gamma_{s,\ell}^k\vert \leq \left(\frac{2q(n-k)}{(k-d)(d-q)}\right)^{q-\ell}\vert\Gamma_{s,q}^k\vert\leq
\left(\frac{2q(n-k)}{(k-d)(d-q)}\right)^{q-\ell}\vert\Gamma_{s-1}^k\vert.
$$
This together with (\ref{eq-decomposition-gamma}) implies that
$$
\vert \Gamma_s^k\vert\leq \sum_{\ell=0}^{\lfloor \alpha_k\rfloor}\left(\frac{2q(n-k)}{(k-d)(d-q)}\right)^{q-\ell}\vert\Gamma_{s-1}^k\vert
\leq  e^{\lfloor\alpha_k\rfloor +1-q}\, \vert\Gamma_{s-1}^k\vert ,
$$
provided that
$$
\frac{2q(n-k)}{(k-d)(d-q)}\leq e^{-1}.
$$
We choose $q=2\lfloor \alpha_k\rfloor+2$ which satisfies  the above condition and the condition
(\ref{cond-al})
by the definition of $\alpha_k$. Therefore
we have
$$
\vert \Gamma_s^k\vert\leq e^{-\alpha_k}\vert \Gamma_{s-1}^k\vert.
$$
Since we do not impose any restrictions on $s$, we conclude that
$$
\vert \Gamma_s^k\vert=\vert \D\vert \prod_{p=1}^s \frac{\vert \Gamma_p^k\vert}{\vert \Gamma_{p-1}^k\vert}
\leq e^{-s\alpha_k}\, \vert \D\vert.
$$
\end{proof}

\begin{proof}[Proof of Theorem~\ref{th-graph-new}.]
We start by defining
\begin{align*}
 \Gamma:= \Big\{\nrangr\in\D:\, \exists J\subset [n],\, |J|\geq \ko\, \,\, \text{with}\,\,
\bigl|\bigl\{i\le n :\,|\edg(i,J)|<\alpha_{\vert J\vert}\bigr\}\bigr|
> \beta_{\vert J\vert}\Big\}.
\end{align*}
It is not difficult to see that $\Gamma$ is the graph counterpart of event
$\Event_{\ref{th-graph-new}}$,
in particular $\Prob(\Event_{\ref{th-graph-new}})=\Prob(\Gamma)$.
Note that
$$
 \Gamma=\bigcup_{\underset{\vert J\vert\geq \ko}{J\subset [n]}}
 \bigcup_{\underset{\vert S\vert> \beta_{\vert J\vert} }{S\subset [n]}} \Gamma_{S}^J.
$$
Therefore, applying Lemma~\ref{lem-prob-gamma-s-k} and taking the union bound, we obtain
$$
\Prob(\Gamma)\leq \sum_{\underset{\vert J\vert\geq \ko}{J\subset [n]}}\sum_{\underset{\vert S\vert> \beta_{\vert J\vert} }{S\subset [n]}} \Prob(\Gamma_{S}^J)\leq \sum_{\underset{\vert J\vert\geq \ko}{J\subset [n]}}\sum_{\underset{\vert S\vert\geq \beta_{\vert J\vert} }{S\subset [n]}} e^{-\alpha_{\vert J\vert}\vert S\vert}
=\sum_{k\geq \ko}\sum_{s\geq \beta_k} {n\choose k} {n\choose s} e^{-\alpha_k s}.
$$
Further, by the choice of $\beta_k$, we have
$
{n\choose s} e^{-\alpha_k s}\leq (en/s)^s \, e^{-\alpha_k s}\leq e^{-\alpha_k s/2}
$
for all $s\geq \beta_k$ and since $\alpha_k\geq 2$, then
$$
\sum_{s\geq \beta_k} {n\choose s} e^{-\alpha_k s}\leq 2e^{-\alpha_k\beta_k/2}.
$$
By the choice of $\beta_k$'s, this implies
$$
\Prob(\Gamma)\leq 2\sum_{k\geq \ko}{n\choose k}e^{-\alpha_k\beta_k/2}\leq 2\sum_{k\geq \ko}\left(\frac{k}{en}\right)^k\leq 4e^{-\ko},
$$
which completes the proof.
\end{proof}

Combining Theorems~\ref{graph th known} and~\ref{th-graph-new}, we prove the following proposition.

\begin{prop}\label{graph prop}
There exist absolute positive  constants $C$ and $c_{\ref{graph prop}}$  such that the following holds.
Let $C\leq d\leq c_{\ref{graph prop}}\sqrt{n}/\ln n$ and let
$\Event_{\ref{graph prop}}$ be the set of all $M\in\Mc$
such that for all $J\subset [n]$ one has
\begin{align*}
 \big|\big\{i\leq n:\,&|\supp\,\row_i(M)\cap J|\geq {c_{\ref{graph prop}} d|J|}/{n}\, \, \, \mbox{ and }\, \, \,
\\
&|\supp\,\row_i(M)\cap J^c|\geq {c_{\ref{graph prop}} d|J^c|}/{n}\big\}\big|\geq c_{\ref{graph prop}} \min(d|J|,d|J^c|,  n).
\end{align*}
Then $$\Prob(\Event_{\ref{graph prop}})\geq 1-n^{-c_{\ref{graph prop}} d}.$$
\end{prop}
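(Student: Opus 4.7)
The plan is to prove the proposition by combining the three previous probabilistic results: Theorem~\ref{graph th known} for small $|J|$, Corollary~\ref{graph th to prove} for large $|J|$, and Theorem~\ref{norm lemma} for the intermediate range. I would define $\Event_{\ref{graph prop}}$ as the intersection of the events supplied by these results with appropriately chosen parameters, so that by a union bound $\Prob(\Event_{\ref{graph prop}})\geq 1-n^{-c_{\ref{graph prop}} d}$; the dominant contribution to the failure probability comes from Theorem~\ref{graph th known}, while Corollary~\ref{graph th to prove} contributes $\exp(-\Omega(n/\sqrt{d}))$ and Theorem~\ref{norm lemma} contributes at most $n^{-r}$ for any fixed $r$, both of which are negligible for $d\leq c_{\ref{graph prop}}\sqrt{n}/\ln n$. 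On $\Event_{\ref{graph prop}}$ I would verify the deterministic conclusion for each $J$; since the condition in the statement is symmetric under $J\leftrightarrow J^c$, I may assume $|J|\le n/2$, which simplifies $\min(d|J|,d|J^c|,n)$ to $\min(d|J|,n)$.

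For small $|J|$, namely $|J|\leq c_1n/d$ with some small $c_1>0$, I would apply Theorem~\ref{graph th known} with a fixed $\varepsilon\leq 1/4$, obtaining $|S_J|\ge (1-\varepsilon)d|J|$. Since the columns of $M$ indexed by $J$ contain exactly $d|J|$ ones, a pigeonhole argument shows that at least $(1-2\varepsilon)d|J|\ge d|J|/2$ rows intersect $J$ in precisely one coordinate; each such row has $d-1$ coordinates in $J^c$. For $c_{\ref{graph prop}}$ small enough, the inequalities $1\ge c_{\ref{graph prop}} d|J|/n$ (equivalent to $|J|\le n/(c_{\ref{graph prop}} d)$) and $d-1\ge c_{\ref{graph prop}} d|J^c|/n$ both hold, so these rows already furnish $c_{\ref{graph prop}} d|J| = c_{\ref{graph prop}}\min(d|J|,n)$ admissible rows. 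For large $|J|$, namely $|J|\ge n/\sqrt{d}$, I would apply Corollary~\ref{graph th to prove} separately to $J$ and to $J^c$ (the latter has $|J^c|\ge n/2\ge n/\sqrt{d}$); each application rules out at most $n/\sqrt{d}$ rows, so at least $n-2n/\sqrt{d}\ge n/2$ rows satisfy both lower bounds, which is $\ge c_{\ref{graph prop}} n = c_{\ref{graph prop}}\min(d|J|,n)$ in this regime since $d|J|\ge \sqrt{d}\,n\ge n$.

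The principal technical difficulty will be the intermediate range $c_1n/d\le |J|\le n/\sqrt{d}$, which is not covered by either of the arguments above. Here I would use Theorem~\ref{norm lemma}: decomposing $M=(d/n){\bf 1}{\bf 1}^t+E$ with $\|E\|\le C_{\ref{norm lemma}}\sqrt{d}$ and applying this to the indicator vector of $J$ produces the second-moment estimate
$$
 \sum_{i=1}^n \bigl(|\supp\,\row_i(M)\cap J|-d|J|/n\bigr)^2 \le C_{\ref{norm lemma}}^2\, d|J|.
$$
By Chebyshev's inequality the number of rows failing the lower bound $|\supp\,\row_i(M)\cap J|\ge c_{\ref{graph prop}} d|J|/n$ is at most an absolute constant times $n^2/(d|J|)$, with the symmetric estimate holding for the $J^c$ condition. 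In the intermediate range, provided $|J|\ge Cn/d$ and $|J^c|\ge Cn/d$ with $C$ large in terms of $C_{\ref{norm lemma}}$, both bad sets have size at most $n/4$, so at least $n/2\ge c_{\ref{graph prop}} n$ rows satisfy both conditions simultaneously. The constants $c_1$ (controlled through $\varepsilon$ in Theorem~\ref{graph th known}) and the intermediate cutoff need to be chosen consistently with $C_{\ref{norm lemma}}$; any residual multiplicative gap near $|J|\sim n/d$ can be closed either by applying Theorem~\ref{graph th known} with a non-constant $\varepsilon\in[\varepsilon_0,1)$ or by invoking Theorem~\ref{th-graph-new} directly with a suitably chosen $\ell_0$ of order $n/d$ so that $\alpha_{\ell_0}$ and $\beta_{\ell_0}$ yield the required row count.
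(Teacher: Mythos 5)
Your decomposition into small, intermediate, and large $|J|$ is genuinely different from the paper's: the paper handles $|J|<2d$ and $|J|>n-2d$ separately and covers the bulk $|J|\in[2d,n-2d]$ through Theorem~\ref{th-graph-new} (via the events $\Event_1,\Event_2,\Event_3$), whereas you propose to handle the intermediate regime with a Chebyshev argument driven by the spectral-norm bound of Theorem~\ref{norm lemma}. Unfortunately that route cannot deliver the stated probability. Theorem~\ref{norm lemma} only guarantees failure probability $n^{-r}$ for a fixed absolute $r$ (and increasing $r$ forces $C_{\ref{norm lemma}}$ to increase), while Proposition~\ref{graph prop} asserts a failure probability of $n^{-c_{\ref{graph prop}}d}$ with $c_{\ref{graph prop}}$ an absolute constant. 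Since $d$ is allowed to be as large as $c_{\ref{graph prop}}\sqrt n/\ln n$, the target bound is super-polynomially small in $n$, and no fixed power $n^{-r}$ can be bounded by $n^{-c_{\ref{graph prop}}d}$ once $d>r/c_{\ref{graph prop}}$. Your remark that the $n^{-r}$ contribution is ``negligible for $d\le c_{\ref{graph prop}}\sqrt n/\ln n$'' has it exactly backwards: that term is dominant for large $d$. This is precisely why the paper avoids the spectral-norm route here and instead uses Theorem~\ref{th-graph-new}, whose $4e^{-\ell_0}$ failure with $\ell_0\sim n/d$ is $\le n^{-cd}$ throughout the permitted range of $d$.

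A second, smaller gap is the coverage near $|J|\sim n/d$, which you flag but whose proposed fixes do not work. Theorem~\ref{graph th known} is applicable only for $|J|\le c_{\ref{graph th known}}\varepsilon n/d<c_{\ref{graph th known}}n/d$, for every $\varepsilon\in[\varepsilon_0,1)$, so choosing $\varepsilon$ non-constant cannot push the boundary past $c_{\ref{graph th known}}n/d$ (and the ``exactly one $1$'' pigeonhole already needs $\varepsilon<1/2$). The Chebyshev step requires $|J|\ge C'n/d$ with $C'$ of order $C_{\ref{norm lemma}}^2$, and there is no reason $c_{\ref{graph th known}}$ and $C'$ should mesh. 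Invoking Theorem~\ref{th-graph-new} with $\ell_0$ of order $n/d$ also fails: for $k\sim n/d$ one has $\alpha_k=O(1)$ and therefore $\beta_k\ge en\,e^{-\alpha_k/2}=\Theta(n)$, so the bound on the number of bad rows is vacuous. What actually closes the gap (and is used by the paper inside the proof of $\Event_2$) is a subsampling trick: pass to $J_0\subset J$ of size $\approx c_{\ref{graph th known}}\varepsilon n/d$, deduce $|\innbr(J)|\ge|\innbr(J_0)|\ge 0.9d|J_0|=\Theta(n)$, and then discard the at most $2|J|$ rows whose intersection with $J$ exceeds $d-\alpha_n/\alpha_{\ell_0}$. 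You would need something equivalent to cover $|J|\in[c_{\ref{graph th known}}n/d,\,C'n/d]$, but this fix does not repair the more serious probability issue above.
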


\begin{proof}
Let $c>0$ be small enough absolute constant and let $d\leq c\sqrt{n}$.
We first treat subsets $J$ satisfying
$|J|< 2d$. Recall that $S_J$ denotes the union of supports of columns indexed by $J$. Therefore,
using $d\leq c\sqrt{n}$ and applying
Theorem~\ref{graph th known}, we observe that there exists an absolute constant $c'>0$ such that
with probability at least $1-n^{-c' d}$ one has $|S_J|\ge 0.9d|J|$ for all $J$ with $|J|< 2d$.
Now fix $J\subset [n]$ satisfying both  $|J|< 2d$ and $|S_J|\ge 0.9d|J|$.
Since $d\leq c\sqrt{n}$, then $cd |J|/n\leq 2c^3<1$ for small enough $c$. Therefore, the
condition
$$
 |\supp\,\row_i(M)\cap J|\geq {c d|J|}/{n}
$$
means that $i\in S_{J}$.
Note also that in this case $\min(d|J|,d|J^c|,  n)= d|J|$. Thus, it is enough to show that
$$
 |\{i\in S_{J}:\, |\supp R_i(M)\cap J^c|\ge c d \}|\ge c d|J|.
$$
Let $\ell$ denote the number of rows  having at least $d-1$ ones in $J^c$. Counting ones
in the columns indexed by $J$ we have
$$
   d|J^c|= dn-d|J| \leq d \ell + (d-2) (n-\ell) = 2 \ell + dn - 2n.
$$
This implies $\ell \geq n - d|J|/2$. Therefore there are at least
$
    \ell  +|S_J|-n\geq 0.4 d|J|
$
rows indexed by $S_J$ and having at least $d-1$ ones in $J^c$.
This proves that the set of all matrices in $\Mc$ satisfying the condition of the proposition
for subsets $J\subset[n]$ with $|J|<2d$, has measure at least $1-n^{-c' d}$. Interchanging
the role of $J$ and $J^c$ we obtain the same bound for subsets $J$ satisfying $|J|> n - 2d$.

For the rest of the proof we deal only with sets $J$ satisfying $|J|\in [2d, n-2d]$
for which the quantities $d(\vert J\vert-d)/n$ and $d\vert J\vert/n$ are equivalent
up to a constant multiple (and similarly for $J^c$).
We will prove a more precise relation, which is convenient to formulate in the graph language.
Namely, denoting by
$\Event $  the set of all digraphs $\nrangr\in\D$ such that for every $J\subset [n]$ with
$|J|\in [2d, n-2d]$
\begin{align*}
   \Big\vert\Big\{&i\le n :\,
 |\edg(i,J)| \geq \frac{c_1 d(\vert J\vert-d)}{n}
 \, \, \mbox{ and } \, \,
 |\edg(i,J^c)| \geq \frac{c_1 d(\vert J^c \vert-d)}{n}
  \Big\}\Big\vert\\
&\geq c_1 \min(d\vert J\vert, d\vert J^c\vert, n),
\end{align*}
we show that the event $\Event$ has probability at least  $1-n^{-c_1 d}$,
where $c_1>0$ is a sufficiently small universal constant.

Given $\ko\geq  d+24en/d$ and $k\ge \ko$, we consider parameters $\alpha_k$ and $\beta _k$ introduced
in Theorem~\ref{th-graph-new}. Additionally, for $k<\ko$ we set $\alpha_k:=\alpha _{\ko}$ and let $\beta_{k}$
be defined by the same formula as $\beta _k$ for $k\geq \ko$.  Note that
$(\alpha_k)_k$ is a non-decreasing sequence, while  $(\beta_k)_k$ is a non-increasing sequence.
Now set $\ko= \lfloor d+Cn/d\rfloor$, where $C\geq 24e$ is a sufficiently large universal constant
chosen so that $\beta_{\ko}\leq n/4$ (then $\beta _i + \beta _j \leq n/2$ for every $i, j\geq \ko$).
Note that  $2d<\ko< n/2$. Define the event
\begin{align*}
\Event _1:=\Big\{&\nrangr\in\D:\, \forall J\subset [n], \vert J\vert \in [\ko, n-\ko]),\,\,\, \text{ one has }\\
&\big\vert\big\{i\le n :\,|\edg(i,J)|\geq \alpha_{\vert J\vert} \text{ and } |\edg(i,J^c)|\geq \alpha_{\vert J^c\vert} \big\}\big\vert
\geq n-\beta_{\vert J\vert}-\beta_{\vert J^c\vert}\Big\}
\end{align*}
and, for $m=2,3$, the events
\begin{align*}
&\Event _m:=\Big\{\nrangr\in\D:\, \forall J\subset [n], \vert J\vert \in S_m,  \,\,\,  \text{ one has }\\
&\Big\vert\Big\{i\le n :\,|\edg(i,J)|\geq \frac{\alpha_{\vert J\vert}}{\alpha_{\ko}} \text{ and } |\edg(i,J^c)|
\geq \frac{\alpha_{\vert J^c\vert}}{\alpha_{\ko}} \Big\}\Big\vert
\geq c_1\, \min(d\vert J\vert,  d\vert J^c\vert, n)\Big\}.
\end{align*}
where $S_2=[\ko-1]$, $S_3=[n-\ko+1, n]$.
Then we clearly have $\Prob(\Event _2)=\Prob(\Event _3)$ and, moreover,
$\Event _1\cap\Event _2 \cap \Event _3 \subset \Event$, provided that
$c_1$ is sufficiently small. Therefore,
\begin{equation}\label{eq-event-prop-graph}
 \Prob(\Event )\geq \Prob(\Event _1)+\Prob(\Event _2)+ \Prob(\Event _3)-2= \Prob(\Event _1)+ 2 \Prob(\Event _2)-2.
\end{equation}

First, we estimate probability of $\Event _1$.
For any set $J$ with $\vert J\vert \geq \ko$ and $\vert J^c\vert\geq \ko$, the condition
\begin{align*}
  \big\vert\big\{i\le n :\,|\edg(i,J)|\geq \alpha_{\vert J\vert}\big\}\big\vert&\geq n-\beta_{\vert J\vert}
  \text{  and  }
  \big\vert\big\{i\le n :\, |\edg(i,J^c)|\geq \alpha_{\vert J^c\vert} \big\}\big\vert
\geq n-\beta_{\vert J^c\vert}
\end{align*}
for a graph $\nrangr\in\D$ implies
$$
\big\vert\big\{i\le n :\,|\edg(i,J)|\geq \alpha_{\vert J\vert}\,\,\, \text{ and }
\,\,\, |\edg(i,J^c)|\geq \alpha_{\vert J^c\vert} \big\}\big\vert
\geq n-\beta_{\vert J\vert}-\beta_{\vert J^c\vert}.
$$
Therefore by Theorem~\ref{th-graph-new} we obtain
$\Prob(\Event _1)\geq
1-8e^{-\ko}.$

We now turn  to the probability of $\Event _2$. Note that for every $J$ with $\vert J\vert <\ko$, we have
$\alpha_{\vert J\vert}= \alpha_{\ko}$ and $\alpha_{\vert J^c\vert}\leq \alpha_n$. Further, for every $i\leq n$,
$$
 \vert\edg(i,J)\vert+\vert \edg(i,J^c)\vert=d.
$$
Therefore, for every graph $\nrangr\in\D$ we have
\begin{align*}
 \Big\{i\le n :\,|\edg(i,J)|\geq \frac{\alpha_{\vert J\vert}}{\alpha_{\ko}}
 \,\,\,  \text{ and }\,\,\,  &|\edg(i,J^c)|\geq \frac{\alpha_{\vert J^c\vert}}{\alpha_{\ko}} \Big\}
   \\
  & \supset   \Big\{i\le n :\,|\edg(i,J)|\geq 1
 \,\,\,  \text{ and }\,\,\,  |\edg(i,J^c)|\geq \frac{\alpha_{n}}{\alpha_{\ko}} \Big\}
  \\
 & = \innbr(J)\setminus \Big\{i\le n :\, \vert \edg(i,J)\vert > d-\frac{\alpha_n}{\alpha_{\ko}}\Big\}.
\end{align*}
Since $\alpha_{\ko}\geq 2$, $\alpha_{n}\leq d/(8e)$, we observe
$$
 \Big\vert\Big\{i\le n :\, \vert \edg(i,J)\vert > d-\frac{\alpha_n}{\alpha_{\ko}}\Big\}\Big\vert\leq
 \frac{d\vert J\vert}{d-\alpha_n/\alpha_{\ko}}\leq 2\vert J\vert.
$$
Therefore,
$$
  \Event _2\supset \Big\{\nrangr\in\D:\, \forall J\subset [n], \vert J\vert< \ko, \,
   \vert\innbr(J)\vert \geq c_1\min(d\vert J\vert,  n)+2\vert J\vert\Big\}
$$
 We apply Theorem~\ref{graph th known} with $\eps=0.1$. Recall that $c_1$ is small enough.
 Theorem~\ref{graph th known} implies that there exists a universal constant $c_1'>0$,
 such that with probability at least $1-n^{-c_1' d}$ for every $J$ with
$|J|\leq c_{\ref{graph th known}}n/(10 d)$ one has
$$
 \vert\innbr(J)\vert\geq 0.9 d\vert J\vert\geq c_1\min(d\vert J\vert,  n)+2\vert J\vert
$$
  and for every $J$ with $c_{\ref{graph th known}}n/(10 d) \leq |J|<\ko$, passing to
  a subset $J_0\subset J$ with $|J_0|= \lfloor c_{\ref{graph th known}}n/(10 d)\rfloor$
  and using $\ko \leq 2Cn/d$,
  one has
$$
 \vert\innbr(J)\vert\geq \vert\innbr(J_0)\vert\geq 0.9 d\vert J_0 \vert\geq 9 c_{\ref{graph th known}}
  d\vert J \vert/ (400C) \geq c_1\min(d\vert J\vert,  n)+2\vert J\vert.
$$
Thus $\Prob(\Event _2)\geq 1-n^{-c_1' d}$.
By \eqref{eq-event-prop-graph} this implies $\Prob(\Event ) \geq 1 - 8e^{\ko}-2 n^{-c_1' d}$,
which together  with the bounds obtained at the beginning of the proof implies the desired result.
\end{proof}

Finally, we need the following deterministic statement dealing with sets
of in-neighbours of two disjoint sets of vertices. Given two disjoint subsets $J^\ell$, $J^r\subset[n]$
and a matrix $M\in \Mc$, denote
$$
\il=\il(M):=\{i\le n :\,|\supp R_i\cap J^\ell|=1 \, \, \text{ and }\, \,\supp R_i\cap J^r=\emptyset\},
$$
and
$$
 \ir=\ir(M):=\{i\le n :\,\supp R_i\cap J^\ell=\emptyset\,\,\text{ and }\,\,|\supp R_i\cap J^r|=1\}.
$$
Here the upper indices $\ell$ and $r$ refer to {\it left} and {\it right}, since later for a given
vector $x\in \C^n$, denoting by $\sigma$ a permutation  of $[n]$ satisfying
$x_i^*=|x_{\sigma(i)}|$ for all $i\le n$, we will choose $J^\ell=\sigma([k_1])$ and
$J^r=\sigma([k_2, n])$ for some $k_1<k_2$.
The following statement is Lemma 2.7 from \cite{LLTTY first part}.

\begin{lemma}\label{c:SJ}
Let $d$ and $\varepsilon$ be as in Theorem~\ref{graph th known}. Let $p\ge 2$, $m\geq 1$ be
integers satisfying $pm \leq c_{\ref{graph th known}} \eps n/d$
and let  $J^\ell, J^r\subset[n]$ be such that
$J^\ell\cap J^r=\emptyset$, $|J^\ell|=m$, $|J^r|=(p-1)m$. Let $M\in \Omega_{pm, \eps}$.
Then
$$
   |\il|\ge(1-2\eps p)d|J^\ell|.
$$
In particular, if $|J^r|=|J^\ell|=m$ with $m\leq c_{\ref{graph th known}} \eps n/(2d)$ then
$$
   (1-4\eps) dm \leq \min(|\il|, \, |\ir|) \leq \max (|\il|, \, |\ir|) \leq dm .
$$
\end{lemma}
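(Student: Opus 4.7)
The plan is to reduce everything to a single application of the structural assumption $M\in \Omega_{pm,\eps}$ on the union $J:=J^\ell\cup J^r$. Since $J^\ell$ and $J^r$ are disjoint, $|J|=pm$, and the constraint $pm\leq c_{\ref{graph th known}}\eps n/d$ is precisely what allows $\Omega_{pm,\eps}$ to be nontrivial on such $J$. By definition of $\Omega_{pm,\eps}$ we then have
$$
|S_J|\geq (1-\eps)\, d\, pm.
$$
The core of the argument will be a double-counting that converts this lower bound on the number of rows \emph{touched} by $J$ into an upper bound on the number of rows \emph{multiply touched} by $J$.

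The second step is as follows. Group the rows by their intersection size with $J$: set $T_k:=\{i\leq n:|\supp R_i\cap J|=k\}$. Since each column has exactly $d$ ones, I have the two identities
$$
\sum_{k\geq 1}|T_k|=|S_J|\quad\text{and}\quad \sum_{k\geq 1} k|T_k|=d|J|=dpm.
$$
Subtracting and using $|S_J|\geq (1-\eps) dpm$ yields $\sum_{k\geq 2}(k-1)|T_k|\leq \eps dpm$, and since $k\leq 2(k-1)$ for $k\geq 2$, the total number of ones of $M$ lying in the columns of $J$ and coming from rows in $\bigcup_{k\geq 2}T_k$ is at most $2\eps dpm$.

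In the third step I relate this back to $I^\ell$. By the very definition of $I^\ell$, a row $i$ lies in $I^\ell$ if and only if it belongs to $T_1$ and its unique entry in $J$ falls into $J^\ell$. Hence $|I^\ell|$ equals the number of ones in the columns indexed by $J^\ell$ that come from $T_1$ rows. Since the total number of ones in those columns is $d|J^\ell|=dm$, and at most $2\eps dpm$ of them come from rows outside $T_1$, I obtain
$$
|I^\ell|\geq dm-2\eps dpm = (1-2\eps p)\, d|J^\ell|,
$$
which is the main inequality. For the ``In particular'' part, the choice $|J^\ell|=|J^r|=m$ gives $p=2$; applying the main inequality to $J^\ell$ and (symmetrically) to $J^r$ yields the lower bound $(1-4\eps)dm$ on both $|I^\ell|$ and $|I^r|$, while the upper bound $|I^\ell|,|I^r|\leq dm$ is immediate from the fact that each row in $I^\ell$ (resp.\ $I^r$) accounts for exactly one of the $dm$ ones in the columns indexed by $J^\ell$ (resp.\ $J^r$).

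There is no real obstacle here beyond packaging. The only ``trick'' is to notice that the hypothesis on $|S_J|$ for $J=J^\ell\cup J^r$ is exactly strong enough: the $\eps$-slack in the union bound controls the total excess multiplicity $\sum_{k\geq 2}(k-1)|T_k|$, which in turn dominates the ones in $J^\ell$ that fail to contribute to $I^\ell$. The factor $p$ in the conclusion arises solely because the excess is measured against $|J|=pm$ while the conclusion is normalized by $|J^\ell|=m$.
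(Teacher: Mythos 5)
Your proof is correct. The paper itself does not supply an argument for this lemma (it cites Lemma~2.7 of the prior paper in the series), so there is nothing in the present source to compare against; but your double-counting argument is clean and complete. The key reduction --- applying the expansion hypothesis $M\in\Omega_{pm,\eps}$ only to the union $J=J^\ell\cup J^r$, then converting the $\eps$-slack in $|S_J|$ into the excess-multiplicity bound $\sum_{k\geq 2}(k-1)|T_k|\leq\eps dpm$, and then using $k\leq 2(k-1)$ for $k\geq 2$ --- is exactly the natural route given that the hypothesis only concerns sets of size $pm$, not $m$. The identification $|\il|=|\{i\in T_1:\ \text{unique }1\text{ in }J\text{ lies in }J^\ell\}|$ is correct, and the step bounding the number of $J^\ell$-ones lost to multiply-touched rows by $2\eps dpm$ is valid. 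The ``in particular'' part follows by the $J^\ell\leftrightarrow J^r$ symmetry when $p=2$, and the upper bound $\leq dm$ is immediate from regularity, as you say. One small stylistic point: you could note explicitly that the bound is in general lossy --- if one also assumed $M\in\Omega_{m,\eps}$ one would get $(1-(p+1)\eps)dm$ instead of $(1-2p\eps)dm$ --- but under the stated hypotheses your argument is the right one.
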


\section{Almost constant and steep vectors}
\label{steep}

As in \cite{LLTTY first part} we split $\C^n$ into three classes of vectors which we call
{\it steep}, {\it gradual}, and {\it almost constant vectors}.
This section is devoted to steep and almost constant vectors. The definition of steep vectors is similar to the one given in
\cite{LLTTY first part}, with slight modifications one of which is quite important.
Note that if a subset $K\subset [n]$ is such that $|K^c|$ is much larger than $n^{1-1/d}$, then
the submatrix $M^K$ will contain null columns with large probability, hence the kernel of $M^K$
will contain very sparse vectors.
Therefore, when studying the kernel of $M^K$ (or $(M-z\,\idmat)^K$),
very sparse vectors and those ``close'' to very sparse should be handled separately,
see the definition of $\st_3$ below.
The set $\st_3$ -- the set of {\it very steep} vectors -- can be viewed as an enlargement of the set of very sparse vectors;
in this sense, our
construction is related to the definition of compressible vectors,
$$
  \mbox{Comp} (m, \rho) := \{x\in \C^n  \,:\,  \exists\, \, m\mbox{-sparse vector}
  \,\, y\in \C^n \, \,  \mbox{ such that } \, \,  \|x-y\|_2 \leq \rho \| x \|_2\},
$$
 introduced in \cite{RV} following ideas from \cite{LPRT} (as usual, $m$-sparse
 means that a vector has at most $m$ non-zero coordinates).
Both classes, $\st_3$ and Comp,
are introduced as classes of vectors close to $m$-sparse vectors (for an appropriate $m$).  An important difference between the two
lies in how the distance to the set of sparse vectors is measured -- instead of the Euclidean
distance used for compressible vectors, we estimate the $\ell_\infty$-norm after some normalization related to
a variant of the weak $\ell_{1/3}$-norm.

After introducing and eliminating very steep vectors we consider other
vectors with a jump in their non-increasing rearrangement.
The goal is to show that such vectors are far from the kernel of $M^K$. We will distinguish two  types of jumps. The first one occurs at the beginning
of the non-increasing rearrangement and is of order $4d$, that is for certain $m<k$, we have
$x^*_m>4d x^*_k$, see the definition of $\st_0$ below.
To treat such vectors $x$ we use
Lemma~\ref{l:T0}, which yields that with large probability
the random matrix distributed in $\Mc$ has many rows with exactly one $1$ in coordinates
corresponding to $m$ largest components of $x$, and all zero coordinates in places corresponding
to $m+1$-st to $k$-th largest component of $x$.
Since the total numbers of ones in every row is $d$ we have that the inner product of every such row with
$x$ is separated from zero. Unfortunately, since this procedure relies on graph expansion properties given by Theorem~\ref{graph th known}, it works only when
$k$ is not too large, namely when  $k\leq c_{\ref{graph th known}} \eps n/d$. For larger values of $k$
we use a different technique, which requires considering a jump of order $d^{3/2}$, see the definitions of $\st_1$ and $\st_2$.
For vectors with such jumps,  using the switching technique,  we estimate the probability that a fixed vector is close to the kernel,  construct a special net of rather
small cardinality in the set of such vectors, then use the union bound.
This scheme is similar to the one used in
\cite{LLTTY first part} with an important difference in the class $\st_2$ -- we use a number of coordinates proportional to $n$ in the definition (meaning $\nn$ is proportional to $n$), while in \cite{LLTTY first part} we used only $n/\ln d$ coordinates. This makes
bounding the probability for individual vectors much more difficult and involved (the method of \cite{LLTTY first part}
does not work). The main novelty of our new method is splitting the set $\Mc$ into many equivalence classes and
working in each class separately.
The construction of nets, which is also rather delicate, comes from
 \cite{LLTTY first part}.

 Finally, we treat almost constant vectors, i.e., vectors having many coordinates which
   are almost equal to each other. Having excluded
 vectors with jumps it only remains to treat those without any.
 However, if a vector $x$ with no jumps has many almost equal coordinates
 (say on a set $J$), then its inner product with a row having many ones
 inside $J$  cannot be close to zero (and we show that there are many such rows).
 In view of this observation, it is important to find a balance between quantitative
 characteristics of the level of jumps and the places where these jumps occur.
 Due to technical reasons, such a balance cannot be achieved directly,
 in particular, to treat almost constant vectors, one needs to consider
 constant jumps (not a power of $d$). Fortunately, it turns out that every almost
 constant vector without a constant jump can be represented as the sum of a vector
 with a big jump and a constant vector, i.e., a vector whose coordinates are equal
 to each other. Moreover, our proof for vectors with big jumps is stable under shifts
 by constant vectors which makes our treatment of almost constant vectors a lot easier.

\smallskip

We now introduce the following parameters, which will be used throughout this section.
First fix $1\leq \LL\leq n/d^3$ (we always assume that $n\geq d^3$).
When considering the minor $M^K$, $L$ will be responsible for the size of the set $K^c$.
In order to use Theorem~\ref{graph th known}, we fix $\eps_0$ and a related parameter
$p$ as follows:
$$
\eps _0 = \sqrt{(\ln d)/{d}}, \quad \quad  p= \lfloor 1/(5 \eps_0) \rfloor=\left\lfloor \tfrac{1}{5} \sqrt{d/\ln d}\r\rfloor
$$
(the choice of $p$ comes from $\eps _0 p < 1$ needed in Lemma~\ref{l:T0} in order to apply  Lemma~\ref{c:SJ}).
 Furthermore, we  fix a sufficiently small positive  absolute constant $\aaa$
(we don't try to estimate the actual value of $\aaa$, the conditions on how small it is appear
in the corresponding proofs).
Set
$$
  n_1:=\lceil  n /d^{3/2} \rceil, \quad n_2 :=\lfloor  n/d^{2/3} \rfloor, \quad \mbox{ and } \quad \nn :=\lfloor \aaa n \rfloor.
$$
We also fix two positive integers $r$ and $r_0=r_0(\LL)$ such that $p^{r}<n_1\leq p^{r+1}$ and
$r_0$ is the smallest non-negative integer satisfying $p^{r_0}\geq 20\LL/d$.
Note that $0\leq r_0<r$. Indeed,
$$
 p^{r-1} \geq n_1/p^2 \geq 25 n \ln d/d^{5/2}>20 L /d,
$$
which implies that $r_0\leq r-1$.

Finally, denote the class of constant vectors by
$$
 \CC : = \{ x=(x_i)_{i=1}^n\in \C^n \, :\, x_1=x_2 =...=x_n\}.
$$

\subsection{Steep vectors}
\label{subs: steep vectors}

The definition of the class of steep vectors consists of few steps at which we define the sets
 $\st_0$, $\st_1$, $\st_{2}$, and  $\st_3$.
We start with $\st_3$, the class of {\it very steep} vectors.
Set
$$
  \st_{3}= \st_{3} (\LL) :=  \{x\in \C^n\,:\,
  \exists i\leq p^{r_0}  \,\, \mbox{ such that } \, \,   x_{i}^{*}> (n/i)^3
    x_{p^{r_0}}^{*}\}.
$$
Note that one can relate this class to the class of vectors close to $m$-sparse
vectors with $m=p^{r_0}-1$. Indeed, consider the following variant of
the weak $\ell_{1/3}$-norm,
$$
  |||x|||  =\frac{1}{n^3}\, \sup _{i\leq m} i^{3} x_i^* .
$$
Then
$$
  \st_{3}= \{x\in \C^n  \,:\, \exists\, \, m\mbox{-sparse vector}  \,\, y\in \C^n \, \,
  \mbox{ such that } \, \,  \|x-y\|_\infty < ||| x |||\}.
$$

 We now define the set $\st_0$. For $r_0\leq i\leq r-1$ set
$$
  \st_{0, i}= \st_{0, i}(\LL):=\{x\in \C^n\,:\,  x\not\in \st_{3} \cup \bigcup_{j=r_0}^{i-1} \st_{0, j}
  \,\, \mbox{ and } \, \,  x_{p^i}^{*}> 4 d x_{p^{i+1}}^{*}\},
$$
where $\cup_{j=r_0}^{r_0-1}\st_{0, j}$ means $\emptyset$,  and for $i=r$ let
$$
  \st_{0, r}=\st_{0, r}(\LL):=\{x\in \C^n\,:\,  x\not\in \st_{3} \cup \bigcup_{j=r_0}^{r-1} \st_{0, j}
  \,\, \mbox{ and } \, \,  x_{\lc n_1/p\rc}^{*}> 4 d x_{n_1}^{*}\},
$$
Let
$$
 \st_0=\st_0(\LL) :=\bigcup _{i=r_0}^{r}\st _{0, i}.
$$

Next we define $\st_1$ and $\st_2$ as
$$
  \st _1 = \st_{1} (\LL): =\{x\in \C^n\,:\,  x\not\in  \st_{0}\cup \st_{3}
  \,\, \mbox{ and } \, \,  x_{n_{1}}^{*}> d^{3/2}\, x_{n_2}^{*}\}
$$
and
$$
  \st _2 = \st_{2} (\LL): =\{x\in \C^n\,:\,  x\not\in  \st_{0}\cup \st_1 \cup \st_{3}
  \,\, \mbox{ and } \, \,  x_{n_{2}}^{*}> d^{3/2}\, x_{\nn}^{*}\} .
$$

Below we  work with constant shifts of steep vectors,
so we also introduce the following sets for  $0\leq i\leq 3$,
$$
   \st _i^\CC  := \{v\in \C^n\,:\, v=x+y \,\, \, \, \mbox{ for some } \, \, \, \,
  x\in  \st_{i} \, \, \mbox{ and } \, \,  y\in \CC \, \,
  \mbox{ with } \, \, |y_1|\leq x_{n_1}^*/10 \}.
$$
Note that
\begin{equation}\label{t3shift}
  \st_{3}^\CC\subset \{v\in \C^n\,:\,
  \exists i\leq p^{r_0}  \,\, \mbox{ such that } \, \,   v_{i}^{*}> 0.9 (n/i)^3
    v_{p^{r_0}}^{*}\}.
\end{equation}

Finally we define sets of {\it steep} and {\it shifted steep} vectors as
$$
   \st:=\st_0\cup \st_1\cup \st_2\cup \st_3 \quad \mbox{ and }
 \quad \st _\CC:=\st_0^\CC\cup \st_1^\CC \cup \st_2^\CC.
$$
Note that the set of steep vectors contains {\it very steep} vectors.

\medskip

Our first goal in this section is to prove  the following theorem.

\begin{theor} \label{t:steep}
Let $d\geq 1$ be large enough, $n\geq d^3$, $1\leq \LL\leq n/d^3$, $\KK\subset [n]$ with $|\KK^c|\leq L$,
and $\ww\in \C$ be such that $|\ww|\leq d/2$. Let
$$
  \Event_{steep}:=\Big\{M\in\Mc\,:\,\exists\;v\in \st_\CC\, \, \, \mbox{ such that }
  \, \, \,
  \|(M-\ww \idmat)^\KK v\|_2 < \frac{\LL^3  d}{ n^{6} } \, \,  \|v\|_2
  \Big\}.
$$
 Then
\begin{equation*}
\label{Psteep}
\Prob(\Event_{steep}) \leq \min\left( \exp\left(-\LL/d\r),\, \exp\big(-(\ln d)(\ln  n)/20\big)\right).
\end{equation*}
\end{theor}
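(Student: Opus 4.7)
The plan is to partition $\st_\CC$ as $\st_3^\CC\cup\st_0^\CC\cup\st_1^\CC\cup\st_2^\CC$, establish a deterministic lower bound on $\|(M-\ww\idmat)^\KK v\|_2$ for each piece on a suitable high-probability event, and take the union bound. Before splitting into cases I would use $d$-regularity to dispose of the constant shift: writing $v=x+y_1{\bf 1}$ with $x\in\st_i$ and $|y_1|\le x^*_{n_1}/10$, we have $(M-\ww\idmat)v=(M-\ww\idmat)x+(d-\ww)y_1{\bf 1}$, and since $|d-\ww|\ge d/2$ this extra contribution is easy to track. From here the task reduces to bounding $\|(M-\ww\idmat)^\KK x\|_2$ from below for a genuine steep $x$, with the ${\bf 1}$-term absorbed as a small penalty governed by $|y_1|\le x^*_{n_1}/10$.

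For the very steep class $\st_3$ I would treat $x$ as essentially supported on its top $p^{r_0}$ coordinates. On the event of Theorem~\ref{graph th known} (which holds except with probability exponentially small in $dp^{r_0}\ge 20L$), Lemma~\ref{c:SJ} applied with $J^\ell$ the top $i$ coordinates of $|x|$ for the index $i\le p^{r_0}$ witnessing the $\st_3$ condition and $J^r$ the next $(p-1)i$ coordinates yields at least $(1-O(\eps))di$ rows of $M$ whose support meets $J^\ell$ exactly once and avoids $J^r$. On any such row the isolated entry contributes an $x_j$ with $|x_j|\ge x^*_i>(n/i)^3 x^*_{p^{r_0}}$, which dominates the sum of the remaining $d-1$ entries of the row by a direct comparison with the levels just past $J^r$. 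Summing the squared contributions over these rows (discarding the at most $L$ removed by passing to $\KK$, negligible since $di\ge dp^{r_0}\ge 20L$) gives a deterministic lower bound for $\|(M-\ww\idmat)^\KK x\|_2$ that easily dominates $L^3 d\|x\|_2/n^6$.

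The class $\st_0$ is treated by the same mechanism, iterated across the dyadic scales $p^{r_0},p^{r_0+1},\dots,p^{r}$, using Lemma~\ref{l:T0} (the specialization of Lemma~\ref{c:SJ} required here). At each scale, the $4d$-jump between $x^*_{p^i}$ and $x^*_{p^{i+1}}$ ensures that on an isolating row the single nonzero entry beats the sum of the $d-1$ tail entries by a factor of roughly $4$, contributing $\asymp dp^i(x^*_{p^i})^2$ to $\|(M-\ww\idmat)^\KK x\|_2^2$; the base scale $i=r_0$ alone suffices to absorb the $L$ removed rows. The probabilistic cost is just the failure probability of the expansion event from Theorem~\ref{graph th known}, which is exponentially small in $L$ and yields the $\exp(-L/d)$ branch of the claimed bound.

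The principal obstacle is the pair $\st_1,\st_2$, whose jumps sit at indices $n_1\asymp n/d^{3/2}$ and $n_2\asymp n/d^{2/3}$, outside the range where Lemma~\ref{c:SJ} applies, so deterministic expansion-based arguments no longer work. For each I would follow a covering plus anti-concentration pattern: build an $\eta$-net $\Net$ on the normalized set with log-cardinality controlled by the essential-support size below the jump, bound $\Prob\{\|(M-\ww\idmat)^\KK y\|_2\le\eta\}$ for each fixed $y\in\Net$ via switching combined with Proposition~\ref{prop: esseen} to extract per-row $1/\sqrt d$ anti-concentration, tensorize using Lemma~\ref{lem-tensorization}, and union bound. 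For $\st_1$ the net lives essentially on the first $n_2$ coordinates, with log-cardinality $\asymp n_1\ln(n/n_1)$ easily beaten by the tensorized estimate. For $\st_2$, where the jump reaches index $\nn\asymp n$, a straightforward net is too large; here I would implement the equivalence-class scheme flagged in the section preamble, decomposing $\Mc$ into classes indexed by a coarse feature of $M$ on a tail block of coordinates, performing switching inside each class so that the conditional anti-concentration improves in tandem with a shrunken net on the remaining free coordinates, and summing. The delicate point, where essentially all of the technical work sits, is to calibrate this partition so that the logarithms of the class count and the net cardinality together fall below the per-vector small-ball exponent by at least $(\ln d)(\ln n)/20$, matching the second branch of the probability estimate; constant shifts along ${\bf 1}$ are handled by thickening each net by a polynomial-in-$n$ family of translates.
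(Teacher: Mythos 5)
Your outline misreads the definition of $\st_\CC$. The paper explicitly sets
$\st_\CC:=\st_0^\CC\cup\st_1^\CC\cup\st_2^\CC$, deliberately \emph{excluding} $\st_3^\CC$ — your proposal instead partitions it as $\st_3^\CC\cup\st_0^\CC\cup\st_1^\CC\cup\st_2^\CC$ and spends a full paragraph on the $\st_3$ class. This is more than a bookkeeping slip: no high-probability lower bound of the claimed form can exist for very steep vectors. If $L=|\KK^c|$ is of order $n/d^3$ (which the hypotheses allow), then with nontrivial probability the submatrix $M^\KK$ has a null column, and a coordinate vector $e_j$ (which belongs to $\st_3$) then lies exactly in the kernel of $M^\KK$. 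That is precisely why $\st_3$ is carved out as the ``very steep'' alternative in Theorem~\ref{ker th} rather than being eliminated. Your own justification for $\st_3$ — that the $L$ removed rows are negligible ``since $di\ge dp^{r_0}\ge 20L$'' — is false for that class: the $\st_3$ condition only gives a jump at some $i\le p^{r_0}$, so $di$ can be as small as $d$, far below $20L$, and Lemma~\ref{c:SJ} then yields nothing useful after removing $\KK^c$. The inequality $p^{r_0}\ge 20L/d$ is exactly what saves the $\st_0$ argument (Lemma~\ref{l:T0} requires $m\ge p^{r_0}$), and it is available for $\st_{0,i}$ because there $i\ge r_0$, but not for $\st_3$.

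Setting that aside, the rest of your outline tracks the paper's argument faithfully. The $\st_0^\CC$ treatment via expansion and isolated row entries is Lemma~\ref{l:T0} (built on Lemma~\ref{c:SJ} and Theorem~\ref{graph th known}), including the observation that the shift along $\mathbf{1}$ contributes $(d-\ww)y_1$ which is absorbed using $|y_1|\le x^*_{n_1}/10$. For $\st_1^\CC$ and $\st_2^\CC$ the paper indeed uses nets of controlled cardinality (Lemma~\ref{l:net}), per-vector small-ball estimates obtained by splitting $\Mc$ into the classes $\f(\ii,V)$ and equivalence classes $\hset$, switching, and Esseen's inequality via \eqref{conc-inner} (Lemma~\ref{individual} and Claim~\ref{cl-one-n}), followed by a union bound; your description of the $\st_2$ difficulty — the jump landing at index $\nn\asymp n$ forcing a finer conditioning to balance net size against small-ball exponent — is the correct reading of the paper's partition-into-$J_q$-blocks device and the resulting $1/\sqrt{q_0}$ gain from Proposition~\ref{prop: esseen}. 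So the fix is simply to drop $\st_3$ from the decomposition (it does not appear in $\st_\CC$) and tighten the quantitative accounting for $\st_0$, $\st_1$, $\st_2$ as in the paper.
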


\smallskip

We will now formulate simple properties of steep vectors which will be used later.
The following lemma shows that the vectors from the complement of $\st$
have a rather regular decay of their coordinates.

\begin{lemma} \label{l:decay}
Let $d\geq 1$ be large enough, $n\geq d^3$, and  $x\not \in \st$.
Then
$$
 x_m^*\leq \left\{
\begin{array}{ll}
\left(n/ m\right)^{6} \,  x^*_{\nn} & \mbox{ if }\, 1\leq m\leq p^{r_0},
\\
d \left(n/ m\right)^{3} \,  x^*_{\nn}, & \mbox{ if }\, p^{r_0}\leq m\leq n_1.
\end{array}
\right.
$$
Furthermore, for  every  $n_1\leq j \leq i\leq \nn$ one has
$$
  x_j^*\leq    x^*_{n_1}\leq d^3  x^*_{\nn} \leq        d^3 x^*_{i}.
$$
\end{lemma}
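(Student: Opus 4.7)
}
The plan is to translate the hypothesis $x\notin\st$ into a concrete family of inequalities on the non-increasing rearrangement, then feed those inequalities into three separate regimes. Since $\st=\st_0\cup\st_1\cup\st_2\cup\st_3$, assuming $x$ lies outside each class yields:
\begin{itemize}
\item[(a)] (from $x\notin \st_3$) \, $x_i^*\leq (n/i)^3\,x_{p^{r_0}}^*$ for every $i\leq p^{r_0}$;
\item[(b)] (from $x\notin \st_0$) \, $x_{p^j}^*\leq 4d\, x_{p^{j+1}}^*$ for every $r_0\leq j\leq r-1$, together with $x_{\lc n_1/p\rc}^*\leq 4d\, x_{n_1}^*$;
\item[(c)] (from $x\notin \st_1$) \, $x_{n_1}^*\leq d^{3/2}\, x_{n_2}^*$;
\item[(d)] (from $x\notin \st_2$) \, $x_{n_2}^*\leq d^{3/2}\, x_{\nn}^*$.
\end{itemize}
First I would settle the tail statement: for $n_1\leq j\leq i\leq \nn$ the monotonicity of $x^*$ together with (c) and (d) gives $x_j^*\leq x_{n_1}^*\leq d^3 x_{\nn}^*\leq d^3 x_i^*$ directly.

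Next I would handle the middle range $p^{r_0}\leq m\leq n_1$. Iterating the jumps in (b) from $p^j$ down to $\lc n_1/p\rc\leq p^r$, then using (c) and (d), yields the chain inequality
$$
x_{p^j}^*\leq (4d)^{r-j+1}\, x_{n_1}^*\leq (4d)^{r-j+1}\, d^3\, x_{\nn}^*
\qquad (r_0\leq j\leq r).
$$
The key is to compare this with $d\,(n/m)^3 x_{\nn}^*$ at $m=p^{j+1}$ (the worst case within the interval $[p^j,p^{j+1})$). Writing $k=r-j$ and using both $p^{r+1}\geq n_1\geq n/d^{3/2}$ (hence $n^3\geq p^{3r}d^{9/2}/8$) and the lower bound $p\geq \sqrt{d/\ln d}/6$ (valid for $d$ large), the required inequality reduces, after a short algebraic manipulation, to
$$
2^{2k+5}\,d^{k-3/2}\leq p^{3(k-1)},
$$
which holds for $k=0,1$ directly once $d$ exceeds an absolute constant, and for $k\geq 2$ because the right-hand side dominates the left by a factor that grows like $d^{k/2}/(\ln d)^{3k/2}$. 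Monotonicity of $x^*$ then extends the bound from $m=p^j$ to every $m\in[p^j,p^{j+1})$, yielding the second case of the lemma.

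Finally, I would address the initial range $1\leq m\leq p^{r_0}$. Combining (a) with the bound on $x_{p^{r_0}}^*$ just established (applied at $j=r_0$ together with $p^{r_0+1}=p\cdot p^{r_0}$), I get
$$
x_m^*\leq (n/m)^3\, x_{p^{r_0}}^*\leq (n/m)^3\cdot \frac{d}{p^3}\Big(\frac{n}{p^{r_0}}\Big)^3 x_{\nn}^*.
$$
The target $(n/m)^6 x_{\nn}^*$ is thus obtained as soon as $d/p^3\leq (p^{r_0}/m)^3$, i.e.\ $m\leq p^{r_0}\cdot p/d^{1/3}$. Since $p\geq d^{1/3}$ for $d$ sufficiently large (the condition $\sqrt{d/\ln d}/6\geq d^{1/3}$ is equivalent to $d^{1/6}\geq 6\sqrt{\ln d}$, which holds once $d$ is a large absolute constant), the factor $p/d^{1/3}$ is at least $1$ and the bound applies to every $m\leq p^{r_0}$.

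The routine parts are the bookkeeping of the chain in (b) and the elementary estimates with $n$, $n_1$, $n_2$, $\nn$. The main technical obstacle is the passage from $(4d)^{r-j+1}d^3$ to $d(n/m)^3$ through the inequality $2^{2k+5}d^{k-3/2}\leq p^{3(k-1)}$: it is here that one must carefully use both the arithmetic relation $p^{r+1}\geq n_1\geq n/d^{3/2}$ and the precise size $p\approx \sqrt{d/\ln d}$, and it is precisely this step that forces $d$ to be large enough in an absolute sense.
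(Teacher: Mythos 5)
Your proposal is correct and follows essentially the same route as the paper's proof: translate $x\notin\st$ into the chain of jump inequalities (a)--(d), iterate the $(4d)$-jumps to bound $x^*_{p^j}$ by $(4d)^{r-j+1}d^3\,x^*_{\nn}$, and compare against the target power of $n/m$ using $n\lesssim n_1 d^{3/2}\leq p^{r+1}d^{3/2}$ together with $p\approx\sqrt{d/\ln d}$. The only cosmetic differences are that you package the middle-range comparison as the explicit numerical inequality $2^{2k+5}d^{k-3/2}\leq p^{3(k-1)}$ (checked case by case) and derive the initial range from the middle-range bound evaluated at $j=r_0$, whereas the paper uses the single estimate $4d<p^3$ and bounds the initial range directly from the chain; both organizations prove the same inequalities under the same largeness assumption on $d$.
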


\begin{proof}
Since $p^r\ge \lc n_1/p\rc$ and $x\notin \st$, we have
$
x_{p^r}^* \leq 4d x^* _{n_1}\le 4d^{4}x^*_{\nn}.
$
Therefore,  for every $r_0\leq j\leq r$,
$$
   x_{p^j}^*\leq  (4d) x_{p^{j+1}}^*
  \leq \ldots \leq  (4d)^{r-j} x_{p^r}^*\leq 4d^4(4d)^{r-j}x_{\nn}^*.
 $$
Since for large $d$ one has $4d<p^3$ and $p^{r}\le n_1\le n/d^{3/2}+1$, we deduce for $j=r_0$ that
$$
 x_{p^{r_0}}^*\leq 4d^4(4d)^{r-r_0} x_{\nn}^*\le 4d^4p^{3(r-r_0)} x_{\nn}^*\le 4d^4(n_1/p^{r_0})^3 x_{\nn}^*\le(n/p^{r_0})^3 x_{\nn}^*.
$$
Since $x\not \in \st_{3}$, this implies the bound for every $1\leq m\leq p^{r_0}$.

Now let $p^j \leq m <p^{j+1}$ for some $r_0\leq j<r$. Then
$$
   x_{m}^*\leq   x_{p^{j}}^*\leq    4d^4p^{3(r-j)} x_{\nn}^* \leq
  4d^4(n_1p/m)^{3}x_{\nn}^*\leq d \left(n/m\r)^3  x_{\nn}^* ,
$$
which proves the case $m<p^r$. For $p^r\leq m\leq n_1$ we have $n/m\geq  n/n_1\geq d^{3/2}/2$, hence
$$
  x_{m}^*\leq 4d  x_{n_1}^*\leq 4 d^4 x_{\nn}^* \leq \left(n/m\r)^3  x_{\nn}^* .
$$

The last inequality is trivial.
\end{proof}

The next lemma provides a comparison of the $\ell_2$-norm of a given vector with one of its coordinates.
It is similar to Lemma~3.5 from \cite{LLTTY first part}. Since our choice of
parameters as well as the definition of steep vectors is slightly different we provide the proof
for the sake of completeness.

\begin{lemma} \label{l:norma}
Let $d\geq 1$ be large enough,  $n\geq d^3$, $1\leq \LL\leq n/d^3$, and  $x\in \C^n\setminus \st _{3}$.
Then
$$
  \|x\|_2 \le \frac{n^{6}}{100 \LL^3  d^{3/2}}\, x_m^*,
$$
where $m=p^i$  if  $x\in \st _{0,i}$  for some  $r_0\leq i\leq r$ and
$m=n_1$ if $x\notin \st _{3}  \cup \st _{0}$.
\end{lemma}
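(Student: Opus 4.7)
The plan is to bound $\|x\|_2^2 = \sum_j (x_j^*)^2$ by slicing the coordinates along the geometric-like levels $1 < p^{r_0} < p^{r_0+1} < \dots < p^N \le n_1 \le n$, where $N = i$ in the first case ($x\in \st_{0,i}$, $m=p^i$) and $N = r$ in the second case ($x\notin \st_3 \cup \st_0$, $m=n_1$). Since $x\notin \st_3$, one has $x_j^* \le (n/j)^3 x_{p^{r_0}}^*$ for every $j \le p^{r_0}$, and since $x\notin \st_{0,s}$ for every $r_0 \le s < N$ (together with the extra step $x_{\lceil n_1/p\rceil}^* \le 4d\, x_{n_1}^*$ in the second case), chaining these inequalities yields the key estimate
\[
x_{p^{r_0}}^* \le (4d)^{\kappa}\, x_m^*, \qquad \kappa :=
\begin{cases} i - r_0, & \text{in the first case,}\\ r - r_0 + 1, & \text{in the second case.}\end{cases}
\]

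With this preparation, I would split $\|x\|_2^2$ into three natural pieces. The \emph{head} $\sum_{j\le p^{r_0}}(x_j^*)^2$ is at most $(x_{p^{r_0}}^*)^2\sum_{j\ge 1}(n/j)^6 \le 1.1\,n^6(x_{p^{r_0}}^*)^2$. The \emph{middle} sums $\sum_{s=r_0}^{N-1} p^{s+1}(x_{p^s}^*)^2$, after substituting $x_{p^s}^* \le (4d)^{\kappa - (s-r_0)} x_m^*$, form a super-geometric series in $s$ with consecutive ratio $p/(4d)^2 = O(1/d^{3/2})\ll 1$, so the entire sum is dominated by twice its first term $2\, p^{r_0+1}(4d)^{2\kappa} x_m^{*2}$. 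The \emph{tail} $\sum_{j>p^N}(x_j^*)^2$ is bounded by $(n + O(d^2 n_1))\, x_m^{*2}$, using $x_j^* \le x_m^*$ (with an extra factor $O(d^2)$ coming from the narrow range $(p^r, n_1]$ in the second case). Since $p^{r_0+1} \ll n^6$, combining these and using the chain bound for $x_{p^{r_0}}^*$ gives
\[
\|x\|_2^2 \le 1.2\, n^6 (4d)^{2\kappa} x_m^{*2} + (\text{low order})\, x_m^{*2}.
\]

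It then remains to show $(4d)^{2\kappa} \lesssim n^6/(L^6 d^3)$. By the definition of $r_0$ one has $p^{r_0} \ge 20L/d$, and by the definition of $N$ one has $p^N \le n_1 \le 2n/d^{3/2}$, which together give $p^\kappa \le n/(10\,L\,d^{1/2})$ up to a harmless factor of $p$. Writing $\alpha := \log(4d)/\log p$, so that $p^\alpha = 4d$, the identity $(4d)^{2\kappa} = (p^\kappa)^{2\alpha}$ combined with $\alpha \to 2$ as $d \to \infty$ yields, roughly, $(4d)^{2\kappa} \lesssim p^4 n^4/(L^4 d^2) \le n^4/(L^4 (\ln d)^2)$ (using $p^4 \le d^2/(\ln d)^2$). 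Multiplying by $n^6$ and invoking $L \le n/d^3$, which gives $L^2 d \le n^2/d^5$, one sees that this is indeed $\le n^{12}/(10^4 L^6 d^3)$, and taking square roots produces the claimed inequality. The main technical obstacle is the quantitative handling of the exponent $\alpha$: since $\alpha - 2 = O(\log\ln d/\log d)$, a subpolynomial multiplicative correction $(n/(Ld^{1/2}))^{\alpha-2}$ appears, and one must verify that it is absorbed by the $n^6$ slack on the right-hand side. This can be done either via the explicit inequality $4d \le 100\, p^2 \ln d$ combined with the upper bound $\kappa \le \log(n/(Ld))/\log p$, or by simply taking $d$ past a fixed absolute threshold, consistent with the ``$d$ large enough'' hypothesis of the lemma.
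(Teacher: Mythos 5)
Your proposal follows essentially the same route as the paper's: decompose $\|x\|_2^2$ along the geometric levels $1<p^{r_0}<p^{r_0+1}<\dots<p^r\le n_1$, bound the head using $x\notin\st_3$ (so $x_j^*\le(n/j)^3 x_{p^{r_0}}^*$ and the resulting series over $1/j^6$ converges), chain the jump-free conditions $x_{p^s}^*\le 4d\,x_{p^{s+1}}^*$ through the middle, and bound the tail trivially. Your chain bound $x_{p^{r_0}}^*\le(4d)^{\kappa}x_m^*$ with $\kappa=i-r_0$ (resp.\ $r-r_0+1$) is exactly what the paper uses, and the dominance of the middle sum by its first term (ratio $p/(4d)^2\ll 1$) is also correct.

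Where you depart from the paper is the last arithmetic step, and this is where your write-up is imprecise. The paper closes the estimate by the clean elementary fact $4d\le p^3$ (valid for $d$ large enough, since $p\ge\tfrac15\sqrt{d/\ln d}-1$), which immediately gives $(4d)^{\kappa}\le p^{3\kappa}$ and then $p^{r-r_0}\le n_1 d/(20L)\le n/(10L\sqrt d)$ finishes it. Instead you introduce $\alpha:=\log(4d)/\log p$ and argue through $\alpha\to 2$ plus a ``subpolynomial correction.'' Two small issues with that. First, the inequality you cite, $4d\le 100\,p^2\ln d$, is reversed as stated: from $p\le\tfrac15\sqrt{d/\ln d}$ one gets $100\,p^2\ln d\le 4d$; only with a strictly larger constant (to absorb the floor in the definition of $p$) does $4d\le C\,p^2\ln d$ hold. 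Second, the correction factor $(n/(L\sqrt d))^{\alpha-2}$ is subpolynomial in $n/(L\sqrt d)$ only as $d\to\infty$; for a fixed large $d$ it is a genuine positive power of $n/(L\sqrt d)$, and absorbing it requires invoking both $L\le n/d^3$ (so $n/(L\sqrt d)\ge d^{5/2}$) and the fact $\alpha<3$ -- which is exactly $4d<p^3$ again. So your ``alternate path'' in fact collapses back onto the paper's argument, just with extra detours; the conclusion is correct, but stating $4d\le p^3$ directly (as the paper does) is cleaner and avoids the fragile ``$\alpha\to 2$'' heuristic.
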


\begin{proof} By the definition of $\st_3$, for
$x\not \in \st_{3}$ one has
$$
   \sum _{i=1}^{p^{r_0}} (x^*_i)^2\leq  \sum _{i=1}^{p^{r_0}} (n/i)^{6}\,  (x^*_{p^{r_0}})^2
   \leq \frac{4}{3} \, n^{6}\,  (x^*_{p^{r_0}})^2.
$$
If  $x\in \st _{0, r_0}$, then
$$
  \|x\|_2^2 = \sum _{j=1}^{p^{r_0}-1} (x_{j}^*)^2 + \sum _{j=p^{r_0}}^{n} (x_{j}^*)^2 \leq
  \frac{4}{3} \, n^{6}\,  (x^*_{p^{r_0}})^2 + n  (x_{p^{r_0}}^*)^2 \leq
  2 \, n^{6}\,   (x_{p^{r_0}}^*)^2,
$$
which implies the bound in the case $i=r_0$.
Let $x\in \st _{0,i}$ for some $r_0< i\leq r$.
Then $x\not\in \st_{3}$ and for every $j<i$ one has $x\not\in \st_{0, j}$. Therefore,
assuming without loss of generality  that $x_{p^i}^*=1$, as in the previous lemma
we observe
$$
  x_{p^{r_0}}^*\leq   (4d)^{i-r_0} x_{p^i}^* =  (4d)^{i-r_0} \le   p^{3(i-r_0)}.
$$
Therefore, using again that $4d\le p^3$, we observe
\begin{align*}
      \|x\|_2^2 &=  \sum _{j=1}^{p^{r_0}} (x_{j}^*)^2 + \sum _{j=p^{r_0}+1}^{p^{r_0+1}} (x_{j}^*)^2 + \sum _{j=p^{r_0+1}+1}^{p^{r_0+2}} (x_{j}^*)^2+\dots
      \\
      &\leq 2  \, n^{6}\,  (x^*_{p^{r_0}})^2 + p^{r_0+1} (4d)^{2(i-r_0)} + p^{r_0+2} (4d)^{2(i-r_0-1)} + \ldots +  p^{i} (4d)^{2} + n
      \\
      &\leq 3n^6p^{6(i-r_0)}+(i-r_0)p^{6i+1-5r_0}
      \\
      &\leq 4n^6p^{6(r-r_0)}.
\end{align*}
Recalling that $20\LL/d \leq p^{r_0}\leq 20\LL p/d$ and $p^r\leq n_1=\lceil n/d^{3/2}\rceil \leq p^{r+1}$, we have
$$
 p^{r-r_0}\le \frac{n_1 d}{20L}\leq \frac{ n}{10L \sqrt d},
$$
which, together with the above, implies the desired bound in the case  $r_0<i\leq r$.
Repeating  the above scheme and using that $p^r\leq n_1<p^{r+1}$, we  obtain the result for
$x\not \in \st_{3} \cup \st _0$.
\end{proof}

\subsection{Lower bounds on $\|Mx\|_2$ for  vectors from $\st_0$}

Here we provide lower  bounds on the ratio $\|Mx\|_2/\nx _2$  for vectors $x$ from $\st_0$.
Recall that given $\eps$ and $k$ the set $\Omega_{k, \eps}$  was introduced
before Theorem \ref{graph th known} (see  (\ref{Oo})).

\begin{lemma} \label{l:T0} There exists an absolute positive constant $C$ such that the following holds.
Let $d\geq C$, $n\geq d^3$, $1\leq \LL \leq n/d^3$,  $\KK\subset [n]$ with $|\KK^c|\leq \LL$, and let
$\ww\in \C$ be such that $|\ww|\leq d/2$.
Then for every $v\in \st_0^\CC$ and every
$$
  M\in  \Omega_{n_1,\eps _0}   \cap  \,   \bigcap _{j= r_0+1}^{r}\, \Omega_{p^j,\eps _0}
$$
one has
$$
 \|(M-\ww \idmat)^\KK  v\|_2 \geq  \frac{p^{r_0/2}\,\LL^3\, d^{2}}{n^{6}}\, \|v\|_2.
$$
\end{lemma}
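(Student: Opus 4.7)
The plan is to exploit the coordinate gap of $x$ at positions $m_1<m_2$ (built into the definition of $\st_{0,s}$) to locate many rows $\ell$ of $M$ for which $|((M-\ww\idmat)v)_\ell|\gtrsim x^*_{m_1}$. Writing $v=x+y_1\mathbf{1}\in\st_0^\CC$ with $x\in\st_{0,s}$ for some $s\in[r_0,r]$ and $|y_1|\leq x^*_{n_1}/10$, I let $\sigma$ be a permutation with $x^*_k=|x_{\sigma(k)}|$ and set $(m_1,m_2):=(p^s,p^{s+1})$ when $s<r$, respectively $(m_1,m_2):=(\lceil n_1/p\rceil,n_1)$ when $s=r$. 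By construction $x^*_{m_1}>4d\,x^*_{m_2}$, $m_2\leq n_1$, and $m_1\geq p^{r_0}$.

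The first step invokes Lemma~\ref{c:SJ} with $J^\ell:=\sigma([m_1])$ and $J^r$ a $(p-1)m_1$-subset of $\sigma([m_1+1,n])\setminus J^\ell$, which for $s<r$ is simply $\sigma([m_1+1,m_2])$; in the boundary case $s=r$, $m_1$ is shrunk slightly to $\lfloor n_1/p\rfloor$ (which still satisfies $m_1\geq p^{r-1}\geq p^{r_0}$) and $J^r\subset\sigma([m_1+1,n_1])$, possible since $(p-1)\lfloor n_1/p\rfloor\leq n_1-\lfloor n_1/p\rfloor$. The hypothesis $M\in\Omega_{n_1,\eps_0}\cap\bigcap_{j=r_0+1}^r\Omega_{p^j,\eps_0}$ supplies the requisite expansion, and Lemma~\ref{c:SJ} produces $\il$ of size $\geq(1-2\eps_0 p)\,d\,m_1\geq(3/5)d\,m_1$ (using $\eps_0 p\leq 1/5$). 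Restricting to $I^*:=\il\cap\KK\cap\{\ell:\sigma^{-1}(\ell)>m_2\}$ leaves $|I^*|\geq d\,m_1/4$, since the discarded rows number at most $|\KK^c|\leq\LL\leq d\,m_1/20$ (via $\LL\leq d\,p^{r_0}/20$ and $m_1\geq p^{r_0}$) plus at most $m_2\leq p\,m_1\leq d\,m_1/10$ indices in ``top'' positions ($p\leq d/10$ for $d$ large).

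For $\ell\in I^*$, letting $j_\ell\in J^\ell$ denote the unique nonzero position of row $\ell$ within $J^\ell$, I expand
\[
((M-\ww\idmat)v)_\ell=x_{j_\ell}+\sum_{\substack{k:\,M_{\ell,k}=1\\ \sigma^{-1}(k)>m_2}}x_k+(d-\ww)y_1-\ww x_\ell.
\]
The main term $|x_{j_\ell}|\geq x^*_{m_1}$ dominates: the sum has at most $d-1$ terms each bounded by $x^*_{m_2}$; $|(d-\ww)y_1|\leq(3d/20)x^*_{m_2}$ since $m_2\leq n_1$; and $|\ww x_\ell|\leq(d/2)x^*_{m_2}$ since $\sigma^{-1}(\ell)>m_2$. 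Invoking $x^*_{m_1}>4d\,x^*_{m_2}$ gives $|((M-\ww\idmat)v)_\ell|\geq x^*_{m_1}/2$, whence
\[
\|(M-\ww\idmat)^\KK v\|_2^2\geq |I^*|(x^*_{m_1}/2)^2\geq d\,m_1(x^*_{m_1})^2/16.
\]
For the denominator, Lemma~\ref{l:norma} supplies $\|x\|_2\leq\frac{n^6}{100\LL^3 d^{3/2}}\,x^*_{p^s}\leq\frac{n^6}{100\LL^3 d^{3/2}}\,x^*_{m_1}$ (as $x^*_{p^s}\leq x^*_{m_1}$ in both cases), while the shift contribution $\|y\|_2\leq\sqrt n\,x^*_{m_1}/10$ is negligible against this because $\sqrt n\,\LL^3 d^{3/2}/n^6\ll 1$ under $\LL\leq n/d^3$. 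Thus $\|v\|_2\leq\frac{n^6}{90\LL^3 d^{3/2}}\,x^*_{m_1}$, producing
\[
\frac{\|(M-\ww\idmat)^\KK v\|_2}{\|v\|_2}\geq\frac{22\,\LL^3 d^2\sqrt{m_1}}{n^6}\geq\frac{p^{r_0/2}\LL^3 d^2}{n^6}
\]
via $\sqrt{m_1}\geq p^{r_0/2}$.

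The main technical point is the boundary case $s=r$, where $\lceil n_1/p\rceil$ and $n_1$ are not powers of $p$, so Lemma~\ref{c:SJ} does not apply with $m_1$ as originally set. The shrinkage described above reduces $|J^\ell\cup J^r|$ to $p\lfloor n_1/p\rfloor\leq n_1$, so that the event $\Omega_{n_1,\eps_0}$ still provides the needed lower bound on $|S_{J^\ell\cup J^r}|$ with essentially the same expansion constant (since $n_1-p\lfloor n_1/p\rfloor\leq p\ll n_1$), while the errors from the $d-1$ nonzero entries of each good row falling beyond $n_1$ remain bounded by $x^*_{n_1}\leq x^*_{m_2}$.
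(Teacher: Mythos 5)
Your proof follows essentially the same route as the paper's: write $v=x+y_1\mathbf{1}$, use the jump $x^*_{m_1}>4d\,x^*_{m_2}$ from the definition of $\st_0$, apply Lemma~\ref{c:SJ} to find $\gtrsim dm_1$ rows with exactly one $1$ in $J^\ell=\sigma([m_1])$ and none in $J^r$, show each such row gives $|\langle R_\ell(M-\ww\idmat),v^\dagger\rangle|\geq x^*_{m_1}/2$, and divide by $\|v\|_2$ via Lemma~\ref{l:norma}. The bookkeeping (sizes, constants) matches the paper up to minor differences in how one dominates the $|I^*|$ estimate.

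One flagged point: your proposed fix for the boundary case $s=r$ is not actually sound as written. You shrink $m_1$ to $\lfloor n_1/p\rfloor$ so that $p\lfloor n_1/p\rfloor\leq n_1$, and then claim that $\Omega_{n_1,\eps_0}$ "still provides the needed lower bound on $|S_{J^\ell\cup J^r}|$." But by definition the event $\Omega_{k,\eps}$ controls only sets of cardinality \emph{exactly} $k$, and there is no a~priori monotonicity: membership in $\Omega_{n_1,\eps_0}$ does not give any lower bound on $|S_J|$ for $|J|<n_1$. So the shrinkage does not license an application of Lemma~\ref{c:SJ}, which wants $M\in\Omega_{pm,\eps}$ with $pm=|J^\ell\cup J^r|$. (For what it is worth, the paper's own proof keeps $m=\lceil n_1/p\rceil$ and applies Lemma~\ref{c:SJ} to the set of size $p\lceil n_1/p\rceil$, which likewise requires $\Omega_{p\lceil n_1/p\rceil,\eps_0}$ rather than the $\Omega_{n_1,\eps_0}$ that appears in the hypothesis; this is a small inherited imprecision that does not affect the probability bound in Theorem~\ref{t:steep}, since one can simply replace $\Omega_{n_1,\eps_0}$ by $\Omega_{p\lceil n_1/p\rceil,\eps_0}$ in $\Omega_0$.) So the cleanest path is to not shrink $m_1$ at all and just note that the $\Omega$-event in the hypothesis should be read as the one for size $p\lceil n_1/p\rceil$.
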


\begin{proof}
Let $v=x+y$, where $x\in \st_{0}$ and $y\in \CC$ with $|y_1|\leq x^*_{n_1}/10$.
Fix $r_0\leq  i\leq r$ such that  $x\in \st_{0, i}$. If $i<r$ set $m=p^i$, if $i=r$ set $m=\lc n_1/p\rc$. Then $x_{m}^{*}> 4 d x_{p m}^{*}$.
Fix a permutation $\sigma=\sigma_x$ of $[n]$ such that $x_i^*=|x_{\sigma(i)}|$ for $i\le n$.
Let
$$
  J^\ell=\sigma([m]), \quad  J^r=\sigma([p m]\setminus[m]), \quad \mbox{ and } \quad
  J=(J^\ell\cup J^r)^c.
$$
Then, for sufficiently large $d$,
$$
  |J^\ell\cup J^r|= pm \leq p \lc n_1/p\rc \le  c_{\ref{graph th known}}\eps_0 n/d \quad \, \, \, \mbox{ and } \quad \, \, \,
 |J^r|=(p-1)|J^\ell|=(p-1)m.
$$
Denote by $I_\ell$ the set of rows  having exactly one 1 in $J^\ell$ and no 1's in $J^r$.
 Lemma~\ref{c:SJ}  implies that
$$
  |I_\ell|\geq  (1-2p\eps_0)m d\ge 3m d/5.
$$
Let $I=(I_\ell\setminus (J^\ell\cup J^r))\cap \KK$ (so that the minor $I\times (J^\ell\cup J^r)$
does not intersect the main diagonal and only rows indexed by $\KK$ are considered). Since
$|\KK^c|\leq \LL$ and $m\geq p^{r_0}\geq 20\LL/d$, we have
$$
 |I|\geq 3md/5-pm - \LL  \geq  md(3/5 - p/d - 1/20)\geq md/2
$$
provided that $d$ is large enough.
By definition, for every $s\in I$  there exists $j(s)\in J^\ell$ such that
$$
   \supp R_{s}\cap J^\ell=\{j(s)\},\quad \supp R_{s}\cap J^r= \emptyset,\quad\text{and}\quad
   \max_{i\in J}|x_i|\le x^*_{m p} .
$$
Since $|y_1|\leq x^*_{n_1}/10 \leq x^*_{pm}/10$,  $s\not\in J^\ell\cup J^r$ (which implies $\vert x_{s}\vert \leq x^*_{pm}$), and $j(s)\in J^{\ell}$
(which implies $|x_{j(s)}|\geq x^*_m> 4 d x^*_{m p}$), we obtain
\begin{align*}
  |\langle R_{s} (M&-\ww \idmat),\, (x+y)^\dagger\rangle|
  =\Big| x_{j(s)}+\sum_{j\in J\cap \supp R_{s}} x_j - \ww   x_s + d y_1 - \ww y_1 \Big| \\&  \geq|x_{j(s)}|- (d-1)\, x_{m p}^*
   - |\ww| \, x_{m p}^* - (d+|z|) |y_1| \ge  x^*_{m}/2 .
\end{align*}
Since the number of such rows is $|I|\geq m d/2$ and $I\subset \KK$,  we obtain
$$
  \|(M-\ww \idmat)^\KK (x+y)\|_2 \geq \sqrt{m d}\, x^*_{m}/ 2\sqrt{2}.
$$
Using Lemma~\ref{l:norma}, we have
$$
 \|x+y\|_2\leq \nx_2 + \|y\| _2\leq \frac{n^{6}}{100 \LL^3 d^{3/2}} \, x^*_{m} + \sqrt{n} |y_1| \leq
 \left(\frac{n^{6}}{100 \LL^3 d^{3/2}} + \frac{n^{1/2}}{10} \r) x^*_{m},
$$
which implies the desired result.
\end{proof}

\subsection{Bounds for vectors  from $\st_1^\CC \cup \st_2^\CC$}\label{subs: nets}

For the  vectors  from $\st_1^\CC \cup \st_2^\CC$ we will
use the union bound together with a covering argument.
We first construct nets for ``normalized'' versions of the sets $\st_i^\CC$ and then provide
individual probability bounds for elements of the nets. The natural normalization for ``non-shifted''
component would be
$x_{n_1}^{*}=1$, which we use for $\st_1^\CC$.
However, for individual probability bounds below and to have the same level of approximation, it is
more convenient to use a slightly different normalization for $\st_2^\CC$.  We construct nets for
the sets
$$
 \st'_i=\{x+y \, \, :\, \, x\in \st_i\,:\, x_{n_i}^{*}=1 \, \, \, \, \mbox{ and } \,\,\,\,
  y\in \CC \, : \, |y_1|\leq x_{n_1}^{*}/10\} , \,\,\, i=1,2.
$$

Then, repeating the proof of Lemma~3.8 from \cite{LLTTY first part} with slight adjustments, we obtain the
following lemma.

\begin{lemma}[Cardinalities of nets]
\label{l:net}
Let $d\leq n^{1/3}$ be large enough and $i=1,2$.
There exists a set $\Net _i= \Net_i' + \Net_i''$, $\Net_i'\subset \C^n$, $\Net_i''\subset \CC$, with the following properties.
The cardinality
$$
  |\Net_i| \le \exp\left( 7  n_{i+1}\ln d \r).
$$
For every $u\in \Net_i'$ one has $u_j^*=0$ for all $j\geq n_{i+1}$.
For every $x\in \st_i$ with  $x_{n_i}^{*}=1$ and every $y\in \CC$ with
$|y_1|\leq x_{n_1}^{*}/10$ there are $u\in \Net_i'$ and $w\in \Net_i''$
  satisfying
  $$
         \|x-u\|_\infty \leq 1/d^{3/2} \, \, \, \, \mbox{ and } \,\,\,\,  \|y-w\|_\infty \leq 1/d^{3/2}.
  $$
\end{lemma}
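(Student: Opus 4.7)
Proof plan.
The net splits as a Minkowski sum $\Net_i=\Net_i'+\Net_i''$, and its two factors are essentially independent. The second factor $\Net_i''$ is a plain planar $d^{-3/2}$-net in the disc $\{y\in\CC:|y_1|\le x^*_{n_1}/10\}$. Under the normalization $x^*_{n_i}=1$, the relevant radius is bounded by a power of $d$: for $i=1$ the radius is simply $1/10$, and for $i=2$ the exclusion $x\notin\st_1$ gives $x^*_{n_1}\le d^{3/2}x^*_{n_2}=d^{3/2}$. Either way a standard volumetric estimate yields $|\Net_i''|=d^{O(1)}$, which is completely absorbed by the target bound $\exp(7n_{i+1}\ln d)$. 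All the real work lies in building $\Net_i'$.

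The starting observation for $\Net_i'$ is that, under $x^*_{n_i}=1$ and $x\in\st_i$, the defining inequality of the class forces $x^*_{n_{i+1}}<d^{-3/2}$ (explicitly: $x^*_{n_1}>d^{3/2}x^*_{n_2}$ for $i=1$ and $x^*_{n_2}>d^{3/2}x^*_{\nn}$ for $i=2$). Hence every coordinate of rank at least $n_{i+1}$ already lies within $d^{-3/2}$ of $0$, and we may simply take $u_j=0$ there, which is exactly the support condition $u_j^*=0$ for $j\ge n_{i+1}$ demanded in the conclusion. It therefore suffices to discretize the at most $n_{i+1}-1$ genuinely large coordinates. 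The scheme I propose is to split the top rank range $[1,n_{i+1}]$ into geometric blocks $[p^j,p^{j+1})$ for $j=0,1,\dots,J$ with $p^{J+1}$ slightly exceeding $n_{i+1}$; for each block I choose which subset of $[n]$ realizes those ranks, and on that subset I discretize the complex value on a $d^{-3/2}$-lattice inside a disc whose radius $R_j$ is a block-uniform upper bound on $x^*_{p^j}$. The crucial input is Lemma~\ref{l:decay}, which together with $x^*_{\nn}\le d^{-3/2}$ yields a scale-dependent bound of the shape $R_j\lesssim d^{-1/2}(n/p^j)^3$ on the larger blocks and an analogous (weaker but smaller-cardinality) bound on the smaller blocks.

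Counting then has a combinatorial part (the multinomial coefficient for allocating physical positions to blocks together with $\binom{n}{n_{i+1}-1}$ for the overall support) and an analytic part (the product over blocks of the cardinalities of the $d^{-3/2}$-nets in the discs of radius $R_j$). Each of these is geometrically dominated by its contribution at the last block of size $\approx n_{i+1}$, so after a standard geometric-tail summation the total log-cardinality is bounded by $n_{i+1}\bigl(C_1\ln(n/n_{i+1})+C_2\ln d\bigr)$ with explicit small constants. For $i=1$ we have $n/n_{i+1}=d^{2/3}$ and for $i=2$ we have $n/n_{i+1}=O(1)$, so in both cases the bound collapses to $O(n_{i+1}\ln d)$, with the numerical prefactor kept below $7$ for all sufficiently large $d$. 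The main obstacle is precisely this quantitative balancing: a naive uniform $d^{-3/2}$-net on a single disc of radius $x^*_1\sim n^{O(1)}$ would give $\approx n_{i+1}\ln n$ per coordinate and ruin the estimate in the sparse regime $\ln n\gg\ln d$. The scale partition, anchored by the decay estimates that come from excluding $\st_0$ and $\st_3$ (i.e., Lemma~\ref{l:decay}), is exactly what makes the argument work.
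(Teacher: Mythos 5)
Your construction --- geometric $p$-adic block decomposition of the top $n_{i+1}$ ranks with rank-dependent disc radii for the per-coordinate lattices, a separate planar net for the $\CC$-component, and the observation that the defining inequality of $\st_i$ forces $x^*_{n_{i+1}}<d^{-3/2}$ so that setting $u_j=0$ there already achieves the required $\ell_\infty$-approximation --- is sound, and since the paper defers this proof to Lemma 3.8 of \cite{LLTTY first part}, of which this is stated to be a slight variant, your plan almost certainly coincides with the argument used there.

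One point needs correcting: Lemma~\ref{l:decay} is stated under the hypothesis $x\notin\st$, so it does not apply to $x\in\st_i\subset\st$; in particular the step $x^*_{n_1}\le d^3 x^*_{\nn}$ in its proof relies on $x\notin\st_1\cup\st_2$, which fails here. The bound you want instead comes from the weaker exclusions $x\notin\st_0\cup\st_3$, which are part of the definition of $\st_1$ and $\st_2$: these give $x^*_{p^j}\le(4d)^{r-j+1}x^*_{n_1}$ for $r_0\le j\le r$ and $x^*_i\le(n/i)^3 x^*_{p^{r_0}}$ for $i\le p^{r_0}$, and combined with the normalization $x^*_{n_i}=1$ (plus $x^*_{n_1}\le d^{3/2}x^*_{n_2}$ from $x\notin\st_1$ when $i=2$) they yield disc radii that are in fact smaller than your stated $d^{-1/2}(n/p^j)^3$, so your cardinality count is unaffected. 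But the invocation of Lemma~\ref{l:decay} should be replaced by a direct appeal to $x\notin\st_0\cup\st_3$ and the intermediate inequalities of its proof.
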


\smallskip

We now turn to the individual probability bounds where we will work in a more general setting by considering any $n\times n$ complex matrix $\WW$ instead of the shift $\ww\idmat$.
To obtain the lower bounds on $\|(M+\WW) x\|_2$ for vectors $x$ from our nets, we  investigate
the behavior of the inner products $\la R_i(M+\WW), x^\dagger \ra$. One of the tools
that we use is the L\'evi concentration function for  $\la R_i(M+\WW), x^\dagger \ra$. To estimate this
function we, in particular, will use Theorem~\ref{graph th known} for $2m$ columns of $M$ corresponding
to the $m$ biggest and $m$ smallest (in modulus) coordinates of $x$, where $m=n_1$ or $m=n_2$.
The main difficulty in this scheme comes from the restriction $2m \le c_{\ref{graph th known}} \eps n/d$
in Theorem~\ref{graph th known}, which is not satisfied for $m=n_2$. To resolve this problem we split
the set of $2m$ columns into smaller subsets of columns of size at most $c_{\ref{graph th known}} \eps n/d$,
and create independent random variables corresponding to this splitting and such that their sum is
$\la R_i(M+\WW), x^\dagger \ra$ up to a constant. Then we apply Proposition~\ref{prop: esseen},
allowing to deal with L\'evy concentration function for sums of independent random variables.

We first describe subdivisions of $\Mc$ needed for our construction. Given $J\subset [n]$ and
$M\in \Mc$  denote
$$
  I (J, M) = \{i \leq n \, : \, |\supp R_i(M) \cap J |       =1\}
$$
({\it cf.},  the definition of $\il(M)$,  $\ir(M)$  before  Lemma~\ref{c:SJ} -- clearly, if
we split $J$ into $J^\ell$ and $J^r$, then $I (J, M)= \il(M) \cup \ir(M)$). By $\MSet _J$ we denote
the set of $n\times |J|$ matrices obtained from matrices $M\in \Mc$ by taking columns with indices in $J$,
i.e.,
$$
     \MSet _J = \left\{ V = \{v_{ij}\}_{i\leq n, j\in J}\, : \, \exists M\in \Mc \, \, \mbox{ such that }
     \, \, \forall i\leq n\, \forall j\in J \, \,\, \,  v_{ij}=\mu_{ij} \r\}.
$$
Now we fix $q_0\leq n$ and a partition $J_0$, $J_1$, ..., $J_{q_0}$ of $[n]$.
Given subsets $I_1$, ..., $I_{q_0}$ of $[n]$ and $V\in \MSet _{J_0}$, denote
$\ii = (I_1, \ldots, I_{q_0})$ and  consider the class
$$
   \f (\ii, V) =  \left\{M\in \Mc \, :\,  \forall  q\in [q_0] \quad I (J_q, M) =  I_q \,\,
     \mbox{ and } \,\,\forall i\leq n\, \forall j\in J_0  \,\,\,  \mu_{ij}= v_{ij} \right\}
$$
(depending on the choice of $\ii$ such a class can be empty). In words, we fix the columns
indexed by $J_0$ and for each $q\in [q_0]$ we fix the rows having exactly one 1 in columns indexed by $J_q$.
Then $\Mc$ is the disjoint union of classes $\f (\ii, V)$ over all $V\in \MSet _{J_0}$ and
all $\ii \in (\mathcal{P} ([n]))^{q_0}$, where  $\mathcal{P} (\cdot)$ denotes the power set.

Furthermore, given $V$ and $\ii$ as above, we split each class $\f (\ii, V)$
into smaller equivalence classes using the following
equivalence relation. Fix $i\leq n$ and $A\subset [q_0]$.
Denote $A_0:=\{0\}\cup ([q_0]\setminus A)$. We say that two matrices $M, \widetilde M\in \f (\ii, V)$
are equivalent  if
$$
  \forall s< i\, \, \forall j\leq n \quad\quad  \mu _{sj}= \tilde \mu _{sj},
$$
$$
  \forall s\leq n\, \, \forall j\in  J':= \bigcup_{q\in A_0}J_q \quad\quad  \mu _{sj}= \tilde \mu _{sj},
$$
and
$$
  \forall s\leq n\, \, \forall q\in A \quad\quad  \sum _{j\in J_q}\mu _{sj}= \sum _{j\in J_q}\tilde \mu _{sj}.
$$
The collection of equivalence classes corresponding to this relation will be denoted by
$$
 \mathcal{H} = \mathcal{H} (\f (\ii, V), i, A),\quad
 \mbox{ in particular }\quad \f (\ii, V) = \bigcup _{H\in \mathcal{H}} H.
$$
Note that for matrices in a given class $H$, the rows $R_1$, ..., $R_{i-1}$ are fixed and every block $[n]\times J_q$ has a prescribed sum in each row,
thus, in a sense, these blocks are independent of each other on $H$.

Finally, given a vector $x\in \C^n$, an index $i\leq n$,  a class $H\in \mathcal{H}$ (in particular, $V, \ii, i, A$ are fixed), and $q\in A$, we introduce a random variable $\xi_q$
on $H$ by
$$
   \xi _q = \xi_q (M) := \sum _{j\in J_q} \mu _{ij} x_j.
$$
In words, $\xi_q$ represents the dot product of $x$ with the restriction of the $i$-th row to $J_q$.
Later we use this construction in the case when $i\in I_q$ for all $q\in A$,
that is for a specific choice of parameters
defining our classes  (recall here that for $M\in H$, $|\supp R_i(M) \cap J_q |=1$
provided that $i\in I_q$). As we have already mentioned, by construction, for
 matrices in the class $H$ every block $[n]\times J_q$ has a prescribed sum
 in each row, therefore the random variables $\xi_q$, $q\in A$, are independent.
Thus, using that for a fixed matrix $W=\{w_{ij}\}$ and a fixed constant vector
$y\in \CC$, the function
$$
  \xi'  = \xi'(M) := \sum _{j\in  J'} \mu _{ij} x_j + \sum _{j=1}^n
  w_{ij} x_j + y_1 d  + y_1 \sum _{j=1}^n  w_{ij}
$$
is a constant on $H$, we may apply Proposition~\ref{prop: esseen}
(in which we identify $\C$ with $\R^2$) to
$$
   \left|\la R_i(M+\WW), (x+y)^\dagger \ra\right| = \Big| \sum _{q\in A} \xi_q + \xi' \Big|
$$
with some $\alpha >0$ satisfying  $\cf (\xi_q , 1/3)\leq \alpha$ for every $q\in A$. This gives
\begin{equation}\label{conc-inner}
  \Prob \left(\left|\la R_i(M+\WW), (x+y)^\dagger \ra \right| \leq 1/3\right) \leq \frac{C_0}{\sqrt{(1-\alpha) |A|}},
\end{equation}
where $C_0$ is a positive absolute constant.

\smallskip

We are ready now to estimate individual probabilities.

\begin{lemma}[Individual probability]
\label{individual}
There exist absolute constants $C, C'>1>c_1>0$ such that the following holds.
Let $C<d< n$, $\KK\subset [n]$, $\eps \in [\eps _0, 0.01]$.
Set $m_0=   \lfloor c_{\ref{graph th known}}\eps n/(2d)\rfloor$ and
let $m_1$ and $m_2$ be such that  $m_1<m_2\leq n-m_1$. Assume that
$x\in \C^n$ satisfies
$$
   x^*_{m_1}> 2/3 \quad \mbox{ and } \quad x^*_i = 0 \, \, \, \mbox{ for every }\, \, i>  m_2.
$$
Let $\WW$ be a complex $n\times n$ matrix, $y\in \CC$, and denote $m=\min(m_0, m_1)$ and
$$
  E= E(x) = \left\{
     M \in \Mc\, :\, \|(M+\WW)^\KK (x+y) \|_2\leq   \sqrt{ c_1 m d}   \r\}.
$$
Then if $m_1 \leq m_0$ and  $|\KK^c| \leq 3 m_1 d/5$
$$
   \Prob(E\cap  \Omega _{2m_1,\eps} )\leq  \left(5/6 \r)^{m_1 d/2},
$$
if $m_1> C'  m_0$, $\eps=0.01$, and $|\KK^c| \leq 3 m_0 d/5$
$$
   \Prob(E\cap  \Omega _{2m_0,\eps} )\leq  \left(\frac{C n}{m_1 d}\r)
   ^{m_0 d/4}.
$$
\end{lemma}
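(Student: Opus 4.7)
The plan is to identify a large set of ``good'' rows of $M$ on the event $\Omega_{2m,\eps}$ via Lemma~\ref{c:SJ}, bound the anti-concentration of each associated row-dot-product using the equivalence class decomposition of $\Mc$ introduced just before the lemma, and then combine the per-row bounds by the tensorization Lemma~\ref{lem-tensorization}. Concretely, let $\sigma$ be a permutation of $[n]$ with $x_i^*=|x_{\sigma(i)}|$, and set $J^\ell:=\sigma([m])$, $J^r:=\sigma([n-m+1,n])$. Since $m_2\leq n-m_1\leq n-m$, every $j\in J^r$ satisfies $x_j=0$, while $|x_j|\geq x_{m_1}^*>2/3$ for every $j\in J^\ell$. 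With $|J^\ell\cup J^r|=2m\leq 2m_0\leq c_{\ref{graph th known}}\eps n/d$, Lemma~\ref{c:SJ} (applied with $p=2$) produces on $\Omega_{2m,\eps}$ a set $I^\ell=I^\ell(M)$ of at least $(1-4\eps)dm\geq 0.96\,dm$ indices $i$ with exactly one ``$1$'' in $J^\ell$ and none in $J^r$. Excluding the at most $3md/5$ indices of $K^c$, the set $I^*:=I^\ell\cap K$ has cardinality at least $dm/3$.

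For the per-row anti-concentration I use the equivalence class decomposition preceding~\eqref{conc-inner}. In Case~2 (where $m=m_0$) I partition $\sigma([m_1])$ into $q_0:=\lfloor m_1/m_0\rfloor$ nearly-equal blocks $J_1,\dots,J_{q_0}$, each of size comparable to $m_0$; in Case~1 (where $m=m_1$) I simply take $q_0=1$ and $J_1=J^\ell$. Setting $J_0:=[n]\setminus\bigcup_{q\geq 1}J_q$ and $\ii:=(I_q)_q$ with $I_q:=I(J_q,M)$, I decompose $\Mc$ into classes $\f(\ii,V)$ with $V\in\MSet_{J_0}$. For each fixed $i\in I^*$ and $A_i:=\{q\in[q_0]:i\in I_q\}$ I further refine into a class $H\in\mathcal H(\f(\ii,V),i,A_i)$; on such an $H$ the random variables $\xi_q(M)=\sum_{j\in J_q}\mu_{ij}\overline{x_j}$, $q\in A_i$, become jointly independent. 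Identifying $\C$ with $\R^2$ and applying~\eqref{conc-inner}, I obtain on each $H$ a L\'evy bound $\cf(\xi_i^{\mathrm{tot}},1/3)\leq p$ for the full contribution $\xi_i^{\mathrm{tot}}:=\langle\row_i(M+W),(x+y)^\dagger\rangle$. In Case~2 this gives $p$ small (of order at most $\sqrt{m_0/m_1}$ from Esseen, improvable via a smoothing argument in the spirit of Proposition~\ref{density}); in Case~1, where $|A_i|=1$, an elementary exchange argument on the unique ``$1$'' in $J^\ell$ produces an absolute constant $p<1$.

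To combine the per-row bounds, within the master class $\f(\ii,V)$ I condition successively on the rows of $M$ outside $I^*$ and on the row-wise block sums $\sum_{j\in J_q}\mu_{ij}$, $i\in I^*$; after this conditioning the $\xi_i^{\mathrm{tot}}$, $i\in I^*$, become conditionally independent and the per-row bound from the previous step persists. Applying Lemma~\ref{lem-tensorization} to the inequality $\|(M+W)^K(x+y)\|_2^2\geq\sum_{i\in I^*}|\xi_i^{\mathrm{tot}}|^2$ at scale $\eps^2|I^*|\asymp c_1 md$ with per-row constants derived from $p$, and then integrating against the probability of $\f(\ii,V)$ and summing over $(\ii,V)$, yields a bound of the claimed form. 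Using $m_0\asymp\eps n/d$ from Theorem~\ref{graph th known} and $|I^*|\geq dm/3$, and choosing $c_1$ sufficiently small to absorb all absolute constants, I obtain $(5/6)^{m_1 d/2}$ in Case~1 and $(Cn/(m_1 d))^{m_0 d/4}$ in Case~2.

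The main technical obstacle is that rows of a uniformly random matrix $M\in\Mc$ are not independent, whereas both Esseen's inequality in~\eqref{conc-inner} and Lemma~\ref{lem-tensorization} require independent summands. The equivalence class construction of $\f(\ii,V)$ and $\mathcal H(\f(\ii,V),i,A)$ is tailored to recover this independence conditionally at the cost of substantial bookkeeping; calibrating the block size $m_0$, the number of blocks $q_0\asymp m_1/m_0$ (which controls the per-row L\'evy gain via~\eqref{conc-inner}), and the separating threshold $C'$ so that the per-row gains survive the union-bound over the classes $\f(\ii,V)$ and the L\'evy-to-density conversion in Case~2, is the principal quantitative difficulty.
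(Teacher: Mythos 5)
Your high-level roadmap (Lemma~\ref{c:SJ} to locate good rows, the $\f(\ii,V)$--$\mathcal H$ decomposition to restore enough independence for a per-row L\'evy bound, then a tensorization step, then a choice of $c_1$ to absorb constants) matches the paper's. But there are two concrete gaps.

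First, the anti-concentration breaks because your blocks $J_q$ are taken inside $\sigma([m_1])$ only. For the variable $\xi_q=\sum_{j\in J_q}\mu_{ij}x_j$, conditioned on $i\in I_q$ we have $\xi_q=x_{j(q)}$ for the unique $j(q)\in\supp\,\row_i\cap J_q$; if $J_q\subset\sigma([m_1])$, then \emph{every} admissible value satisfies $|x_{j(q)}|>2/3$, so $\xi_q$ can concentrate tightly and the L\'evy bound $\cf(\xi_q,1/3)$ has no reason to be bounded away from $1$. The paper instead partitions \emph{both} $\sigma([m_1])$ and $\sigma([n-m_1+1,n])$ into equal-size blocks $J^\ell_q$, $J^r_q$ of size $m$, and takes $J_q=J^\ell_q\cup J^r_q$. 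Then the unique index $j(q)$ is either in $J^\ell_q$ (value $>2/3$) or in $J^r_q$ (value $=0$), and the per-row anti-concentration is exactly the nontrivial competition between these two events. That competition is what Claim~\ref{cl-one-n} controls via a simple-switching argument; your ``elementary exchange argument'' is the right idea, but it has no content if $J^r_q$ is absent.

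Second, your tensorization step cannot go through Lemma~\ref{lem-tensorization}. That lemma has two hypotheses that fail here: (i) the $\xi_i^{\mathrm{tot}}$ must be genuinely independent, and conditioning on the rows outside $I^*$ and on the block sums for rows in $I^*$ does not deliver this, because the rows of $M$ share the column-sum constraints (each column must still sum to $d$), so rows indexed by $I^*$ remain dependent; (ii) the per-row hypothesis is $\Prob\{|\xi_i|\leq\eps\}\leq\eps^2 p_i$, a quadratic-in-$\eps$ density-type bound, while the per-row estimate you have from~\eqref{conc-inner} is only a constant L\'evy bound at scale $1/3$ (in Case~1 it is just an absolute constant $<1$, with no decay in $\eps$ at all). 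The paper's tensorization avoids both problems: it observes that on $E(x)$ at least $k_0\approx 2md/3$ of the $k\approx 2md/3$ rows in $I_0$ must have $|\langle R_i(M+\WW),(x+y)^\dagger\rangle|<1/3$, takes a union bound over the $\binom{k}{k_0}$ choices of these rows, and then bounds $\Prob_\f(\bigcap_{i\in B}\Omega_i)$ by a product of \emph{conditional} probabilities $\Prob_\f(\Omega_i\,|\,\Omega_1\cap\cdots\cap\Omega_{i-1})$ via the tower property. Each factor is then controlled by passing to the class $H\in\mathcal H(\f(\ii,V),i,A_i)$ (which has the first $i-1$ rows fixed, so the conditioning is either $H$ or $\emptyset$) and invoking~\eqref{conc-inner} with $\alpha\leq 4/5$ from Claim~\ref{cl-one-n}. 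This sequential-conditioning scheme needs neither independence of rows nor $\eps^2$ decay.

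A further small point: the paper also needs the set of ``good'' rows to be those $i$ with $|A_i|\geq\gamma q_0$ for $\gamma=3md/(5n)$ (so that Esseen's gain $\sim 1/\sqrt{|A_i|}$ is uniform), and shows this set still has cardinality $\geq 2md/3$ after removing $K^c$. Your $I^*$, built from a single pair $(J^\ell,J^r)$ of size $m$, does not carry this multi-block structure in Case~2.

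So: right ingredients, but both the block construction and the tensorization step need to be reworked along the lines above.
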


\begin{rem}\label{rem-ind}
We apply this lemma below twice: first  with $m_1=n_1<m_0$, $m_2=n_2$, $\eps = 0.01$, obtaining
\begin{equation*} \label{individual-one}
   \Prob(E\cap  \Omega _{2n_1,0.01}  )\leq  \left(5/6 \r)^{n_1 d/2};
\end{equation*}
then with $m_1=n_2>m_0$, $m_2=n_3$, $\eps = 0.01$, obtaining
\begin{equation*}\label{individual-two}
  \Prob(E\cap  \Omega _{2m_0, 0.01} )\leq  \left(\frac{C n}{d n_2} \r)
   ^{0.01\, c_{\ref{graph th known}} n/8} \leq  \left(\frac{C_1}{d} \r)^{c n},
\end{equation*}
where $C_1=8 C^3$ and $c= c_{\ref{graph th known}} n/2400$ are positive absolute constants.
\end{rem}

\begin{proof}
Fix $\gamma = 3md/5n$.
Fix $x\in \C^n$ and $y\in \CC$ satisfying the condition of the lemma. Let $\sigma$ be a permutation
of $[n]$ such that
$x_i^*=|x_{\sigma(i)}|$ for all $i\le n$. Denote $q_0=m_1/m$ and without loss of generality
assume that either $q_0=1$ or that $q_0$ is a large enough integer. Let
$J^{\ell}_1, J_2^\ell, \ldots,  J^\ell_{q_0}$ be a partition
of $\sigma ([m_1])$ into sets of cardinality $m$.
Let $J^{r}_1, J_2^r, \ldots,  J^r_{q_0}$ be a partition
of $\sigma ([n-m_1+1, n])$ into sets of cardinality $m$. Denote
$$
 J_q:=J^\ell_q\cup J^r_q  \, \, \, \mbox{ for }\,\,\, q\in [q_0] \quad \mbox{ and } \quad
 J_0:= [n]\setminus \bigcup _{q=1}^{q_0} J_q.
$$
Then $J_0$, $J_1$, ..., $J_{q_0}$ is a partition of $[n]$, which we fix in this proof.
Let $M\in\omep$.  For every pair  $J^\ell_q$, $J^r_q$, let the sets $\il_q(M)$ and  $\ir_q(M)$
be defined as before  Lemma~\ref{c:SJ} and let $I_q=I_q(M)= \il_q(M) \cup \ir_q(M)$.
Since
$$
 |J_q|=2m \le 2m_0\le c_{\ref{graph th known}} \eps n/d,
$$
 Lemma~\ref{c:SJ}  implies  that
$$
 |\il_q(M)|,\,|\ir_q(M)|\in[(1-4\eps)md,\,md],
$$
 in particular,
 \begin{equation}\label{cond-card}
 |I_q|\in[2(1-4\eps)md,\,2md].
 \end{equation}
Now we split $\Mc$ into a disjoint union of classes $\f (\ii, V)$ defined at the beginning of this subsection
with $V\in \MSet _{J_0}$ and  $\ii =(I_1, \ldots, I_q)$ and note that $\omep \cap \f (\ii, V)\ne \emptyset$
implies that
$I_q$ satisfies (\ref{cond-card}) for every $q$.
Thus, to prove our lemma it is enough to prove a uniform upper bound for  such classes,
indeed,
$$
  \Prob(E(x)\cap  \omep )\leq \max  \p (E(x) \cap \omep\,|\, \f (\ii, V))
  \leq \max  \p (E(x) |\, \f (\ii, V))
$$
where the first maximum is taken over all  $\f (\ii, V)$ with $\omep \cap \f (\ii, V)\ne \emptyset$ and
the second maximum is taken over $\f (\ii, V)$ with $I_q$'s satisfying (\ref{cond-card}).

Fix such a class $\f (\ii, V)$ and denote the uniform probability on it
just by $\p_\f$, that is
$$
 \p_\f (\cdot) = \p( \cdot \, | \,  \f (\ii, V)).
$$
 Let
 $$
  I: = \bigcup _{q=1}^{q_0} I_q.
 $$
 Note that $|I|\leq 2 q_0 md$.
We first show that the set of $i$'s belonging to many $I_q$'s is rather large.
More precisely, given $i\in [n]$ denote
$$
 A_i = \{ q\in[q_0]\, : \, i\in I_q\}, \quad
 I_{00}=\{i\in I\, : \, |A_i|\ge \gamma   q_0\},  \quad \mbox{ and }\quad I_0=I_{00}\cap \KK.
$$
Then, using bounds on cardinalities of $I_q$'s, one has
$$
  2(1-4\eps) m d q_0 \leq \sum_{q=1}^{q_0} |I_q| = \sum_{i=1}^n |A_i| \leq |I_{00}| q_0 + (n-|I_{00}|) \gamma q_0
  \leq |I_{00}| q_0 + n \gamma q_0.
$$
Since $\eps \leq 0.01$, $\gamma = 3md/(5n)$ and $|K^c|\leq 3md/5$, we get
$$
   |I_0|\geq |I_{00}| -|\KK^c|\geq  2(1-4\eps) m d - 6md/5   \geq 2md/3.
$$
 Without loss of generality
we assume that $I_0=\{1, 2, \ldots |I_0|\}$ and  only consider the first
$k:=\lc  2md/3 \rc$  indices from it. Then $[k]\subset I_0 \subset \KK$.

Now, by definition, for matrices $M\in E(x)$ we have
$$
  \|(M+\WW)^\KK (x+y)\|_2^2 = \sum _{i\in \KK} | \la R_i(M+\WW), (x+y)^\dagger \ra|^2 \le c_1\, md .
$$
Therefore there are at most $9 c_1 md$ rows  with
$| \langle R_i(M+\WW), (x+y)^\dagger \rangle|\ge 1/3$. Hence,
$$
 |\{i\leq k\, : \,|  \langle R_i(M+\WW), (x+y)^\dagger \rangle|< 1/3\}|\ge
   2md/3 - 9 c_1 md \ge 2(1-14 c_1)md/3
$$
(we used that   $I_0\subset K$).
Let
 $
   k_0:= \lc 2(1-14 c_1) md/3 \rc
 $
and for every $i\leq k$ denote
$$
  \Omega_i:=\{M\in\f (\ii, V) \, :\, |\la R_i(M+\WW), (x+y)^\dagger \ra|< 1/3 \}
  \quad \mbox{ and } \quad \Omega_0= \f (\ii, V) .
$$
Then
\begin{align*}
  \Prob_{\f}(E(x)) &\le  \sum _{B\subset [k]\atop |B|=k_0 } \,
  \Prob_{\f}\Big(\bigcap_{i\in B}\Omega_i\Big)
  \le   {k \choose k_0 }\, \max _{B\subset [k]\atop |B|=k_0 } \, \Prob_{\f}\Big(\bigcap_{i\in B} \Omega_i\Big).
\end{align*}
Without loss of generality we assume that the maximum above is attained at $B=[k_0]$. Then
\begin{equation} \label{ptensor}
  \Prob_{\f}(E(x))  \le
 \left(1/ c_1\r)^{10 c_1 md} \,\, \, \prod_{i=1}^{k_0} \, \Prob_{\f}(\Omega_{i}|\,\Omega_1\cap\ldots\cap \Omega_{i-1}).
\end{equation}

 Next we estimate the factors in the product. Fix $i$ and $A_i= \{ q\, : \, i\in I_q\}$. Since $i\in I_0$, we have
 $|A_i|\geq \gamma q_0$. Consider
 the splitting of $\f (\ii, V)$ into classes $H\in \mathcal{H} = \mathcal{H} (\f (\ii, V), i, A_i)$ as described
 before  the statement of the lemma and let $\p _H$ denote the uniform probability on a class $H$, i.e.,
 $\p_H(\cdot)=\p(\cdot | H)$.  Since in
every class $H$ all matrices have their first $i-1$ rows fixed,  for every $H$ the intersection
 $H_i:=H\cap \Omega_1\cap\ldots\cap \Omega_{i-1}$ is either  $H$ or $\emptyset$.
 Thus
 $$
    \Prob_{\f} (\Omega_{i} | \, \Omega_1\cap\ldots\cap \Omega_{i-1} ) \leq
    \max _{H: H_i\ne \emptyset}   \Prob_{H}(\Omega_{i}).
 $$
Fix $H$ such that $H_i\ne \emptyset$ and consider the random variables $\xi_q$, $q\in A_i$, defined above. Then by
(\ref{conc-inner}) we have
$$
  \Prob_{H}(\Omega_{i}) = \Prob_{H}(|\la R_i(M+\WW), (x+y)^\dagger \ra | \leq 1/3) \leq \frac{C_0 \alpha}{\sqrt{(1-\alpha) |A_i|}}\leq
  \frac{C_0 \alpha}{\sqrt{(1-\alpha) \gamma q_0}}
$$
where
 $\alpha = \max _{q\in A_i} \cf (\xi_q (M), 1/3)$.
Note that
in the case $q_0=1$ we just have
$$
  \Prob_{H}(\Omega_{i}) = \alpha = \cf (\xi_1 (M), 1/3).
$$
Thus it remains to estimate $\cf (\xi_q , 1/3)$ for $q\in A_i$. Fix $q\in A_i$, so that $i\in I_q$.
Recall that, by construction, the intersection of the support of $R_i(M)$ with $J_q$
is a singleton. Denote the corresponding index by $j(q)$.  Then
$$
  \xi _q = \xi_q(M) = \sum _{j\in J_q} \mu _{ij} x_j = x_{j(q)}
$$
and note that $|x_{j(q)}|>2/3$ whenever $j(q)\in J^\ell_q$ and
$x_{j(q)} =0$ whenever $j(q)\in J^r_q$.
Denote
$$
  H^\ell = \{M\in H \, :\, j(q)\in J_q^\ell \} \quad \mbox{ and } \quad
  H^r = \{M\in H \, :\, j(q)\in J_q^r \}.
$$
Since $|\xi _q|$ is either larger than $2/3$ or equals $0$ we observe that
$$
   \cf (\xi_q (M), 1/3) \leq \max\{ \Prob_{H}(H^\ell),\,  \Prob_{H}(H^r)\}.
$$

\begin{claim} \label{cl-one-n} For $i\leq \lc 2md/3 \rc$ one has
$$
   \max\{ \Prob_{H}(H^\ell),\, \Prob_{H}(H^r)\}   \leq 4/5.
$$
\end{claim}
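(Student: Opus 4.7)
The plan is to show, by a switching argument, that $|H^\ell|\le 4|H^r|$ (and symmetrically $|H^r|\le 4|H^\ell|$), which yields $\max\{\Prob_H(H^\ell),\Prob_H(H^r)\}\le 4/5$. Recall that within $H$ the first $i-1$ rows are fixed, the columns indexed by $J_0\cup\bigcup_{q'\notin A_i}J_{q'}$ are fixed, the partial row sums $d_s^{q'}:=\sum_{j\in J_{q'}}\mu_{s,j}$ are prescribed for every $s$ and every $q'\in A_i$, and each set $I_{q'}(M)$ is fixed. Since $i\in I_q$, every $M\in H$ has a unique position $j(q,M)\in J_q$ with $\mu_{i,j(q,M)}=1$, lying in $J_q^\ell$ for $M\in H^\ell$ and in $J_q^r$ for $M\in H^r$.

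I define a relation $R\subset H^\ell\times H^r$ as follows. Given $M\in H^\ell$, write $j_1:=j(q,M)\in J_q^\ell$ and pick any row $s\in I_q\cap\{i+1,\dots,n\}$ whose unique 1 in $J_q$ lies at some $j_2\in J_q^r$; then set $\mu_{i,j_1},\mu_{s,j_2}\to 0$ and $\mu_{i,j_2},\mu_{s,j_1}\to 1$ (note that $\mu_{s,j_1}=0$ before the swap since $s\in I_q$). A direct verification shows that this four-entry swap preserves every constraint defining $H$: rows $i$ and $s$ each still contain a single 1 in $J_q$ (so the partial row sums and $I_q$-memberships are preserved), the column sums at $j_1$ and $j_2$ are unchanged (a 1 migrates between rows $i$ and $s$ in each of these two columns), the first $i-1$ rows are untouched, and the fixed columns and the blocks $J_{q'}$ with $q'\neq q$ are not affected; hence the resulting matrix $M'$ lies in $H^r$. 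Observe that a single-row modification cannot stay inside $H$ because it would violate column sums, so one is forced to compensate in another row; requiring $s\in I_q\cap\{i+1,\dots,n\}$ ensures the compensating row starts with a single 1 in $J_q$ (giving a clean swap) while respecting the fact that rows $\le i-1$ are frozen.

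The crucial estimate is a lower bound on $N^+_r(M):=|R(M)|$, the number of admissible $s$'s. Distributing the $md$ ones of columns $J_q^r$ by row-block, and using that row $i$ contributes $0$ to $J_q^r$ when $M\in H^\ell$, one has $md=T_r^{<i}+N^+_r(M)+T_r^{>i}(M)$, where $T_r^{<i}$ is the fixed total contribution from rows $\le i-1$ and $T_r^{>i}(M)\le T^{>i}:=\sum_{s\in I_q^c\cap\{i+1,\dots,n\}}d_s^q$ is bounded by a quantity fixed in $H$. Combining with the identity $|I_q\cap\{i+1,\dots,n\}|+T^{>i}=2md-T^{<i}-1$ (a double-count of 1's of $J_q$ in rows $\ge i+1$) gives $N^+_r(M)\ge|I_q\cap\{i+1,\dots,n\}|-md+1$. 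Since $|I_q|\ge 2(1-4\eps)md$ by Lemma~\ref{c:SJ} and $|I_q\cap[i]|\le i\le\lc 2md/3\rc$, for $\eps\le 0.01$ this forces $N^+_r(M)\ge 0.92\,md-i+1\ge md/4$ when $d$ is large. On the preimage side, each $M'\in H^r$ has $|R^{-1}(M')|\le md$ because any preimage is determined by a row $s\in I_q\cap\{i+1,\dots,n\}$ whose unique 1 in $J_q$ under $M'$ lies in $J_q^\ell$, and the total number of 1's in $J_q^\ell$ is $md$. Applying Claim~\ref{multi-al} gives $(md/4)\,|H^\ell|\le md\,|H^r|$, so $|H^\ell|\le 4|H^r|$ and hence $\Prob_H(H^\ell)\le 4/5$; the bound for $\Prob_H(H^r)$ is obtained by the same argument with the roles of $\ell$ and $r$ interchanged. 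The main input, and the only step where things could fail with a worse choice of switch, is the lower bound $|I_q|\ge 2(1-4\eps)md$ coming from the expansion event $\omep$, which guarantees a sufficient supply of compensating rows.
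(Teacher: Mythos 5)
Your proof is correct and follows essentially the same route as the paper: the same switching relation on $(I_q\setminus[i-1])\times J_q$, the same preimage bound $|R^{-1}(M')|\le md$, and an application of Claim~\ref{multi-al}. The only cosmetic difference is that the paper identifies $|R(M)|=|\ir_q(M)\setminus[i-1]|$ directly and bounds it below by $|I_q|-md-i+1\ge(1-8\eps)md-i+1$, whereas you re-derive the same lower bound through a double-count over row blocks; the resulting estimate, and the final $4/5$ bound, coincide.
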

Combining the probability estimates starting with  (\ref{ptensor}),
using that $\gamma = 3md/5n$, and applying  Claim~\ref{cl-one-n},
we obtain in the case $q_0=m_1/m\geq C'$,
\begin{align*}
   \Prob_{\f}(E(z))&\leq  \left( \frac{1}{c_1}\r)^{10 c_1 md} \,\, \,    \left(\frac {4 C_0}{\sqrt{5\gamma q_0}}\r)^{2(1-14 c_1) md/3}
   \\& =
    \left(\frac{1}{c_1}\r)^{10 c_1 md} \,\, \,    \left(\frac{4 C_0 \sqrt{n}}{\sqrt{3 m_1 d}}\r)^{2(1-14 c_1)md/3} \leq
   \left(\frac{C_1 n}{m_1 d}\r)^{md/4},
\end{align*}
provided that $c_1$ is small enough and $C'$ is large enough.
In the case $q_0=1$ we have
$$
   \Prob_{\f}(E(z))\leq \left( \frac{1}{c_1}\r)^{10 c_1 md}   \,\, \, \left( \frac{4}{5}\r)^{2(1-14 c_1) md/3}   \leq
   \left( \frac{5}{6}\r)^{md/2}
$$
provided that $c_1$ is small enough.
This completes the proof.
\end{proof}

\begin{proof}[Proof of Claim~\ref{cl-one-n}]
We show the bound for $\Prob_{H}(H^\ell)$, the other bound is similar.
Note that for matrices in $H$ we have $I(J_q, M) = I_q$ and $I_q$ satisfies
(\ref{cond-card}). Since $|J_q^\ell|=|J_q^r|=m$, we observe that on $H$ one has
$$
  |I_q^\ell (M)|\geq |I_q|- md \geq  (1-8\eps) md \quad \mbox{  and } \quad
  |I_q^r (M)|\leq md.
$$

To compare cardinalities,  define a relation $R\in H^\ell \times H^r$ by
$(M,\,M')\in R$ iff $M\in H^\ell$,  $M'\in H^r$, and
$M'$ can be obtained from $M$ by a simple switching in
$$(I_q\setminus[i-1]) \times J_q$$
(note that the i-th row is necessarily involved in
the switching). It is easy to check that
for every $M\in H^\ell$ and  every $M'\in H^r$ one has
$$
  \vert R(M)\vert  = \vert \ir _q (M)\setminus [i-1]\vert  \quad \mbox{ and } \quad
 \vert R^{-1}(M') \vert=\vert  \il _q (M')\setminus [i-1]\vert ,
$$
hence
$|R(M)|\geq (1-8\eps) md - i+1$  and
$|R^{-1}(M')|  \leq md$.
Claim~\ref{multi-al} yields
$$
      |H^\ell|/|H^r|\leq \frac{md}{(1-8\eps) md - i+1}\leq \frac{1}{1/3-8\eps}.
$$
Therefore,
$$
 |H|/|H^\ell| = (|H^\ell|+|H^r|)/|H^\ell| \geq 4/3-8\eps ,
$$
which completes the proof since $\eps \leq 0.01$.
\end{proof}

\subsection{Proof of Theorem \ref{t:steep}}

We are ready to complete the proof.

\begin{proof}[Proof of Theorem \ref{t:steep}.]
Recall that $d$ is large enough,  $\eps _0=\sqrt{(\ln d) /d}$, $p=\lfloor 1/(5\eps _0)\rfloor$, and let $\eps=0.01$.
Denote $m= m_0=   \lfloor c_{\ref{graph th known}}\eps n/(2d)\rfloor$ and note that
$n/d^{3/2}\leq n_1\leq m_0\leq n_2$ and that $|K^c|\leq \LL\leq 3n_1d/5\leq 3m_0d/5$.
Below we deal with matrices from
$$
  \Omega_0 =  \Omega_{2 n_1,\eps}
 \,  \cap\,    \Omega_{2 m_0,\eps } \cap \Omega_{n_1,\eps _0}  \,  \cap\, \bigcap _{j= r_0}^{r}\, \Omega_{p^j,\eps _0}.
$$

\smallskip

If $v\in \st_0^\CC$ and $M\in \Omega_0$ then Lemma~\ref{l:T0}
implies that
$$
  \|(M-\ww \idmat)^\KK  v\|_2 \geq \frac{\LL^3\, d^{2}}{n^{6}}\, \|v\|_2.
$$

We turn now to the case $v\in \st_i^\CC$ for $i=1,2$. Let
\begin{align}
 &\Event_{i}:=\Big\{M\in\Mc\,:\,\exists\, v\in \st_i^\CC \,\, \, \mbox{such that}\,\,\,\|(M-\ww  \idmat)^\KK v\|_2\le
 \frac{\sqrt{ c_1 m d}}{2 \, b_i }\, \|v\|_2\Big\},\notag
\end{align}
where $c_1$ is the constant from Lemma~\ref{individual}, $b_1=n^{6}/(\LL^3 d^{3/2})$, and $b_2= d^{3/2} b_1=n^{6}/\LL^3$.
For a matrix $M\in \Event_{i}$ there exists $v=v(M)\in \st_i^\CC$
$$
 \|(M-\ww  \idmat)^\KK v)\|_2\le  \frac{\sqrt{ c_1 m d}}{2 \, b_i }\, \|v\|_2.
$$
Write $v=x+y$, where $x \in \st_i$ and $y \in \CC$ such that $|y_1|\leq x^*_{n_1}/10$.
Normalize $v$ so that $x_{n_{i}}^{*}=1$ (that is, $v\in \st_i'$). By Lemma~\ref{l:norma} we have
$$
  \|v\|_2=\|x+y\|_2\leq \frac{n^{6}}{100 \LL^3 d^{3/2}}\, x^*_{n_1} + \frac{\sqrt{n} \,  x^*_{n_1}}{10} \leq
   \frac{ n^{6}}{ \LL^3 d^{3/2}} \, x^*_{n_1}\leq b_i x^*_{n_i}=b_i.
$$
Let $\Net_i=\Net_i'+\Net_i''$ be the net constructed in Lemma~\ref{l:net}. Then there exist
$u\in \Net_i'$ with $u_{n_{i}}^{*}\geq 1-1/2d^{3/2}>2/3$
and $u_{j}^{*}=0$ for $j> n_{i+1}$, and $w\in \Net_i''\subset \CC$, such that for large enough $d$,
$$
 \|v-(u+w)\|_2\leq \sqrt{n}\, ( \|x-u\|_\infty+\|y-w\|_\infty)
 \leq 2 \sqrt{n} \, d^{-3/2}\leq \frac{\sqrt{ c_1 m d}}{4 d}.
$$
Therefore, using that $\|M\|=d$ and $|\ww|\leq d$, we obtain that
for every matrix $M\in \Event_{i}$ there exist $u=u(M)\in \Net_i'$ and $w=w(M)\in \Net_i''\subset \CC$ with
$$
   \|(M-\ww  \idmat)^\KK (u+w)\|_2\le \|(M-\ww  \idmat)^\KK v\|_2 + (\|M\| +|\ww|) \|v-(u+w)\|_2 \leq
  \sqrt{ c_1 m d}.
$$

Using union bound,  our choice of $n_1$, $n_2$, $n_3$, Lemma~\ref{l:net}, and
Lemma~\ref{individual} twice --
first  with $m_1=n_1<m_0$, $m_2=n_2$, $\eps = 0.01$,
then with $m_1=n_2>m_0$, $m_2=n_3$, $\eps = 0.01$  (see Remark~\ref{rem-ind}),
we obtain for small enough $\aaa$ and large enough $d$,
$$
 \p\left(\Event_{1} \cap \Omega _0\r) \leq \exp \left(-(n_1 d/2)\ln (6/5) + 7 n_2 \ln d\r) \leq
 \exp \left(- n_1 d/20 \r)
$$
and
$$
 \p\left(\Event_{2} \cap \Omega _0\r) \leq \exp \left(-c n\ln d + 7 n_3 \ln d\r) \leq
 \exp \left(- c_0 n \ln d\r),
$$
where $c_0>0$ is an absolute constant.

Combining all cases  we obtain that for $x\in \st_{\CC}$ one has
$\|(M-\ww  \idmat)^K x\|_2 \leq \beta \nx_2$, where
$$
  \beta:= \min \left(  \frac{\LL^3\, d^{2}}{n^{6}},\, \,
   \frac{\LL^3 \sqrt{ c_1 m_0 d}}{2 n^{6} } \ \r)
  \geq \frac{\LL^3 d^{2}}{n^{6}}\, \min \left(1, \,
   \frac{\sqrt{ c_1 c_{\ref{graph th known}}\eps n/2  }}{2 d^{2}  } \r)
  \geq \frac{ \LL^3 d}{ n^{6}}
$$
with probability at most
$$
  p_0:=\p\left( \Omega_0^c\r)+ \exp \left(- n_1 d/20 \r) +  \exp \left(- c_0 n \ln d\r).
$$

We now estimate the probability $p_0$.
Using Theorem~\ref{graph th known} and that $n_1\geq n/d^{3/2}$, $\eps_0^2 d= \ln d$, $\eps=0.01$,
 we obtain  for large enough $d$,
$$
   p_1:=   \p\left( \Omega_{2 n_1,\eps }^c\r) +  \p\left( \Omega_{2 n_1,\eps _0 }^c\r) + \exp \left(-n_1 d/20\r)
$$
$$
  \leq
     \exp\left(-\eps^2 d n_1/4 \right) +  \exp\left(-\eps_0^2 d n_1 /8 \right)
    + \exp \left(-n_1 d/20\r) \leq \exp \left(-n/ d^{3/2}\r) ;
$$
$$
   p_2:=   \p\left( \Omega_{2 m_0,\eps }^c\r)  + \exp \left(- c_0 n\ln  d\r)
  \leq
     \exp\left(-\eps^2 d m_0/4 \right) +  \exp \left(- c_0 n\ln  d\r)
     \leq \exp \left(- c_3 n\r) ;
$$
and
\begin{align*}
   p_3:= \sum  _{i=r_0}^{r}\p\Big( \Omega_{p^j,\eps_0}^c\Big) &\leq \sum _{i=r_0}^{r}
   \exp\Big( -\frac{p^i\ln d }{8}\ln \Big(\frac{
   n}{p^i d^{3/2}}\Big)\Big)
   \\
   &\leq \exp\Big(-\frac{p^{r_0}\ln d }{9}
   \ln \Big(\frac{n}{p^{r_0} d^{3/2}}\Big)\Big),
\end{align*}
where $c_3$ is a positive absolute constant.
Since $r_0\geq 0$ and $n\geq d^3$, we have
$$
   p_3\leq \exp\left(-\frac{\ln d }{9}
   \ln \left(\frac{n}{ d^{3/2}}\right)\right)
   \leq \exp\left(-\frac{(\ln d)\ln n }{18}\r).
$$
 Since $p^{r_0}\geq 20\LL/d$,
we also have $p_3\leq \exp(-2 \LL (\ln d/d))$.
Since $p_0\leq p_1+p_2+ p_3$, the desired estimate follows.

\end{proof}

\subsection{Almost constant vectors}
\label{subs: almost constant}

Given $\theta>0$,  we introduce a class of almost constant vectors by
$$
  \BB(\theta) =\{ x\in \C^n\, : \, \exists \lambda \in \C \,  \mbox{ such that }\,
   |\{ i\leq n \, : \, |x_i - \lambda|\leq \theta\, x_{\nn}^{*}\}| >n- \nn\}.
$$
Note that this class slightly differs from the class considered in \cite{LLTTY first part} --
there we compared the error in terms of $\|x\|_2$ instead of $x_{\nn}^{*}$.

\begin{rem}\label{lambda-0}
 Let $x\in \BB(\theta) $.
Fix a permutation $\sigma=\sigma_x$ of $[n]$ such that $x_i^*=|x_{\sigma(i)}|$ for $i\le n$.
Fix $\lam _0 =\lam _0(x) \in \C$ such that the cardinality of
$$
   J_1:= \{ i\leq n \, : \, |x_i - \lambda_0|\leq \theta\, x_{\nn}^* \}
$$
is at least  $n- \nn+1$.
Then there exist positive integers $k, \ell$ with $k\leq \nn <\ell$ such that $\sigma(k), \sigma(\ell) \in J_1$  and
$$
 x_{k}^* - \theta  x_{\nn}^*    \leq  |x_{\sigma(k)}| -  |  x_{\sigma(k)} - \lambda_0|
     \leq  | \lambda_0|\leq | \lambda_0 - x_{\sigma(\ell)}| + |x_{\sigma(\ell)}| \leq
    \theta  x_{\nn}^* +   x_{\ell}^* ,
$$
 which implies
\begin{equation}\label{rhon3}
    (1-\theta)    x_{\nn}^*
     \leq  | \lambda_0| \leq   (1+\theta)  x_{\nn}^* .
\end{equation}
\end{rem}

Define $n_0=\lfloor n/16 d\rfloor$.
Given $t>0$,  consider the following class of vectors
$$
  S (t) : =\{x\in \C^n\,:\,  0< x_{n_{0}}^{*}\leq  t x_{\nn}^{*}\}.
$$

The proof of the next lemma is similar to that of Theorem~3.1
from \cite{LLTTY first part}. We provide it at the end of the section
for the sake of completeness.

\begin{lemma}\label{lem: a-c}
Let  $\theta \in (0, 1/20]$ and $t\geq 12$ be such that $\aaa t \leq 1/100$.
Let $\KK\subset [n]$ with $|\KK^c|\leq n/4$  and $\ww\in \C$ with $|\ww |\leq d/5$.
Then for every $x\in \BB(\theta)\cap S(t)$ and every $M\in \Mc$ one has
$$
  \| (M - \ww \idmat)^\KK x\|_2 \geq \frac{d  \sqrt{n}}{2 \sqrt{2}}\, x^*_{\nn}.
$$
\end{lemma}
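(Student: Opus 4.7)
The plan is to exploit the identity $(M-\ww\idmat)\mathbf{1}=(d-\ww)\mathbf{1}$ together with the almost-constancy of $x$: on a large subset $G\subset K$ the vector $(M-\ww\idmat)x$ will stay close to the explicit vector $(d-\ww)\lambda_0\mathbf{1}$, and the lower bound will be extracted by testing $(M-\ww\idmat)^K x$ against the indicator of $G$ and applying Cauchy--Schwarz. Fix $\lambda_0=\lambda_0(x)$ and $J_1:=\{i:|x_i-\lambda_0|\leq \theta x_{\nn}^*\}$ as in Remark~\ref{lambda-0}, so that $|J_1|\geq n-\nn+1$ and $|\lambda_0|\geq (1-\theta)x_{\nn}^*$ by \eqref{rhon3}. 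Introduce the ``very large'' set $T':=\{i:|x_i|>tx_{\nn}^*\}$, which satisfies $|T'|\leq n_0-1$ since $x\in S(t)$. The conditions $t\geq 12$, $\theta\leq 1/20$, together with $|\lambda_0|\leq (1+\theta)x_{\nn}^*$, force $T'\subset J_1^c$; indeed, for $i\in T'$ one has $|x_i-\lambda_0|\geq tx_{\nn}^*-(1+\theta)x_{\nn}^*>\theta x_{\nn}^*$. On the ``moderate'' set $J_2:=J_1^c\setminus T'$ one then has $|J_2|\leq \nn$ and $|x_i-\lambda_0|\leq (11t/10)x_{\nn}^*$.

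Define
\[
G:=K\cap J_1\cap\{i\leq n:\supp R_i(M)\cap T'=\emptyset\}.
\]
Since every column of $M$ contains exactly $d$ ones, $|\{i:\supp R_i(M)\cap T'\neq\emptyset\}|\leq d|T'|\leq dn_0\leq n/16$. Combined with $|K^c|\leq n/4$, $|J_1^c|\leq \nn$ and $\aaa\leq 1/1200$ (from $\aaa t\leq 1/100$ and $t\geq 12$), this yields $|G|\geq 2n/3$. Setting $\tilde y:=x-\lambda_0\mathbf{1}$ and using $(M-\ww\idmat)\mathbf{1}=(d-\ww)\mathbf{1}$, one obtains
\[
\bigl\langle (M-\ww\idmat)^K x,\mathbf{1}_G\bigr\rangle=|G|(d-\ww)\lambda_0+\sum_{i\in G}\bigl((M-\ww\idmat)\tilde y\bigr)_i.
\]
Swapping summation order, $\sum_{i\in G}(M\tilde y)_i=\sum_{j}c_j\tilde y_j$ where $c_j:=|\{i\in G:M_{ij}=1\}|$ satisfies $0\leq c_j\leq d$ and $c_j=0$ for $j\in T'$ by construction of $G$.

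The three error pieces are bounded as follows: $|\sum_{j\in J_1}c_j\tilde y_j|\leq \theta x_{\nn}^*\sum_j c_j=\theta d|G|x_{\nn}^*$; the sum $|\sum_{j\in J_2}c_j\tilde y_j|\leq (11t/10)x_{\nn}^*\cdot d|J_2|\leq (11/1000)\,dn\,x_{\nn}^*$ via $\aaa t\leq 1/100$; and $|\ww\sum_{i\in G}\tilde y_i|\leq (d/5)|G|\theta x_{\nn}^*$ since $G\subset J_1$. Combined with $|(d-\ww)\lambda_0|\geq (4d/5)(19/20)x_{\nn}^*=(76/100)dx_{\nn}^*$, these give
\[
\bigl|\langle (M-\ww\idmat)^K x,\mathbf{1}_G\rangle\bigr|\geq 0.7\,d|G|x_{\nn}^*-0.011\,dn\,x_{\nn}^*,
\]
and Cauchy--Schwarz yields
\[
\|(M-\ww\idmat)^K x\|_2\geq\Bigl(0.7\,d\sqrt{|G|}-\frac{0.011\,dn}{\sqrt{|G|}}\Bigr)x_{\nn}^*,
\]
which with $|G|\geq 2n/3$ comfortably exceeds $\tfrac{d\sqrt{n}}{2\sqrt{2}}x_{\nn}^*$. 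The main obstacle will be the bookkeeping of numerical constants; the ``medium'' contribution from $J_2$ is the delicate piece, and the hypothesis $\aaa t\leq 1/100$ is used precisely to render it negligible relative to the main term.
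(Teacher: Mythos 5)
Your proof is correct, and it takes a genuinely different route from the paper's. The paper argues \emph{row by row}: it partitions $[n]$ into $J_1$ (close to $\lambda_0$) and three tails $J_2,J_3,J_4$ graded by magnitude, discards the rows that have too many ones in $J_2,J_3,J_4$, shows the surviving set $I\subset K$ has cardinality at least $n/2$, and then establishes the pointwise bound $|\langle R_i(M-\ww\idmat),x^\dagger\rangle|\geq d\,x_{\nn}^*/2$ for each $i\in I$ before summing squares. You instead argue \emph{globally}: you exploit the eigenvector identity $(M-\ww\idmat)\mathbf{1}=(d-\ww)\mathbf{1}$, split $x=\lambda_0\mathbf{1}+\tilde y$, test $(M-\ww\idmat)^Kx$ against the indicator $\mathbf{1}_G$ of a single well-chosen set $G\subset K$, and extract the lower bound on $\|\cdot\|_2$ by Cauchy--Schwarz. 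Both proofs rest on the same structural facts --- $|\lambda_0|\approx x_{\nn}^*$, the set of coordinates far from $\lambda_0$ is small, and $d$-regularity means each bad column only taints $d$ rows --- but the single test-vector argument is somewhat shorter and makes the use of $\mathbf{1}$ being an eigenvector of $M$ explicit rather than implicit. The paper's pointwise version is more refined (it actually certifies that each of $\geq n/2$ coordinates of $(M-\ww\idmat)^Kx$ is individually large, not just the projection onto one direction), which is closer in flavor to the other row-inner-product arguments in Section~4; but for the stated conclusion your argument suffices and the numerics check out: with $\aaa\leq 1/1200$ one gets $|G|\geq 0.686\,n>2n/3$, the error terms sum to at most $(6\theta/5)d|G|x_{\nn}^*+0.011\,dn\,x_{\nn}^*$, the main term is at least $0.76\,d|G|x_{\nn}^*$, and after Cauchy--Schwarz the coefficient of $d\sqrt{n}\,x_{\nn}^*$ is about $0.56>1/(2\sqrt 2)$.
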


We also need the following simple lemma about almost constant vectors not covered
by Lemma~\ref{lem: a-c}.

\begin{lemma}\label{lem: splitt}
Let $d\geq 3$,  $0<\theta \leq 10/d^3$ and $t\geq 12$ be such that $\aaa t\leq 1/100$. Then
 every  $x\in \BB(\theta)\setminus S(t)$ can be represented as $x=w+y$ with
$w\in \st $ and $y\in \CC$ with $|y_1|\leq w^*_{n_1}/10$.
\end{lemma}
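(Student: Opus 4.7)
The plan is to set $y:=\lambda_0\mathbf{1}\in\CC$, where $\lambda_0=\lambda_0(x)$ is the scalar furnished by Remark~\ref{lambda-0}, and $w:=x-y$; in the degenerate case $x_\nn^*=0$ (equivalently $\lambda_0=0$, by (\ref{rhon3})) I would simply take $y=0$ and $w=x$, in which case the norm condition $|y_1|\leq w_{n_1}^*/10$ is vacuous. Two things then remain to check: (a)~$w\in\st$, and (b)~$|y_1|=|\lambda_0|\leq w_{n_1}^*/10$.

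First I would extract two baseline bounds from Remark~\ref{lambda-0}: at most $\nn-1$ indices $i$ satisfy $|w_i|=|x_i-\lambda_0|>\theta x_\nn^*$, giving $w_\nn^*\leq\theta x_\nn^*\leq(10/d^3)x_\nn^*$, and (\ref{rhon3}) gives $|\lambda_0|\leq(1+\theta)x_\nn^*$. In the main case $x_{n_0}^*>0$, the hypothesis $x\notin S(t)$ forces $x_{n_0}^*>tx_\nn^*\geq 12x_\nn^*$. With $\sigma$ a permutation realizing $|x_{\sigma(i)}|=x_i^*$, each of the $n_0$ top indices $\sigma(i)$, $i\leq n_0$, satisfies
$$
|w_{\sigma(i)}|\geq|x_{\sigma(i)}|-|\lambda_0|\geq x_{n_0}^*-(1+\theta)x_\nn^*\geq(t-1-\theta)x_\nn^*,
$$
hence $w_{n_0}^*\geq(t-1-\theta)x_\nn^*$. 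Since $n_1\leq n_0$ for $d$ large (using $d^{3/2}\geq 16d$), also $w_{n_1}^*\geq(t-1-\theta)x_\nn^*$. Verifying $11(1+\theta)\leq t$ then delivers (b) via
$$
10|y_1|\leq 10(1+\theta)x_\nn^*\leq(t-1-\theta)x_\nn^*\leq w_{n_1}^*.
$$

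For (a), the two extremes combine into $w_{n_1}^*/w_\nn^*\geq(t-1-\theta)/\theta\geq(11-10/d^3)/(10/d^3)>d^3$. Since $n_1<n_2<\nn$, factoring $w_{n_1}^*/w_\nn^*=(w_{n_1}^*/w_{n_2}^*)(w_{n_2}^*/w_\nn^*)$ shows that at least one of the two factors strictly exceeds $d^{3/2}$, which by the nested definitions of $\st_1,\st_2$ places $w\in\st_1\cup\st_2$ unless $w$ already belongs to $\st_0\cup\st_3$; either way $w\in\st$. The degenerate case $x_{n_0}^*=0$ yields $w=x\neq 0$ with $w_\nn^*=w_{n_2}^*=0$ (since $n_0<n_2<\nn$) and $w_1^*>0$, so the last-positive index $m<\nn$ in the non-increasing rearrangement triggers one of the jump conditions defining $\st_3,\st_{0,r_0},\ldots,\st_{0,r},\st_1$, according to which interval among $[1,p^{r_0}),[p^{r_0},p^{r_0+1}),\ldots,[p^{r-1},p^r),[\lceil n_1/p\rceil,n_1),[n_1,n_2)$ contains $m$.

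The main obstacle I expect is quantitative rather than conceptual: the inequalities $11(1+\theta)\leq t$ driving (b) and $(t-1-\theta)/\theta>d^3$ driving (a) both sit at the margin of what $t\geq 12$ and $\theta\leq 10/d^3$ provide. They indeed hold for all sufficiently large $d$, but at the extreme $\theta=10/d^3$ they force $d$ to be at least around $5$, so tracking the absolute constants carefully, together with handling the implicit ``$d$ large'' needed for $n_1\leq n_0$, is the delicate part of the argument.
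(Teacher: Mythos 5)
Your proof is essentially identical to the paper's: same choice of $y=\lambda_0\mathbf 1$, $w=x-y$ via Remark~\ref{lambda-0}, same lower bound $w_{n_0}^*\geq(t-1-\theta)x_{\nn}^*$, same upper bound $w_{\nn}^*\leq\theta x_{\nn}^*$, same conclusion $w_{n_1}^*>d^3 w_{\nn}^*$ from $(t-1-\theta)/\theta>d^3$, and the same route to $|y_1|\leq w_{n_1}^*/10$ (the paper writes it as $(t/(1+\theta)-1)>10$, which is the same inequality $t>11(1+\theta)$ you isolate). You make explicit the factoring $w_{n_1}^*/w_{\nn}^*=(w_{n_1}^*/w_{n_2}^*)(w_{n_2}^*/w_{\nn}^*)$ that the paper leaves implicit, and you discuss the degenerate case $x_{\nn}^*=0$ (which the paper skips silently by asserting $x_{n_0}^*>tx_{\nn}^*$ without comment), which is a nice addition. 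You are also right to flag the quantitative slack: at $\theta=10/d^3$ and $t=12$ both your version and the paper's require $t>11(1+\theta)$, i.e., $d^3>110$, and the paper needs $n_1\leq n_0$, i.e., $d\geq256$; neither is implied by ``$d\geq3$'' literally, but both proofs only claim this for $d$ large and the lemma is only invoked in that regime. One remaining edge case your (and the paper's) proof silently skips: $x=0$ lies in $\BB(\theta)\setminus S(t)$ but admits no such decomposition, since constant vectors are never steep; your degenerate-case paragraph asserts $w=x\neq0$ without justification. This is harmless in the application but worth noting.
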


\begin{proof} Fix $x\in \BB(\theta)\setminus S(t)$. Then $x_{n_0}^*> t x^*_{\nn}$.
Let $\sigma$, $J_1$, and $\lambda _0$ be as in Remark~\ref{lambda-0}.
Consider $y=(\lambda _0, \lambda _0, ..., \lambda _0)\in \CC$ and $w=x-y$.
Since $x \not \in  S(t)$ and by (\ref{rhon3}),  for every $i\leq n_0$ we have
$$
  |w_{\sigma(i)}| \geq |x_{\sigma(i)}| - |\lambda _0| \geq x^*_{n_0} - (1 +\theta) x^*_{\nn}
  > (t-1-\theta) x^*_{\nn} >10  x^*_{\nn}.
$$
This implies $w_{n_0}^* > 10  x^*_{\nn}.$ On the other hand, for every $i \in J_1$, one has $|w_i|\leq \theta x^*_{\nn}$.
Since $|J_1|> n-\nn$, this implies $w_{\nn}^* \leq \theta x^*_{\nn}$. Using that $\theta \leq 10/d^3$, we obtain
$$
  w_{n_1}^*\geq w_{n_0}^* > d^3 w^*_{\nn},
$$
which shows $w\in \st$.

Using again that $x \not \in  S(t)$ and the inequality (\ref{rhon3}),
we observe that  for every $i\leq n_0$,
$$
  |w_{\sigma(i)}| \geq |x_{\sigma(i)}| - |\lambda _0| \geq |x_{\sigma(n_0)}|
  - |\lambda _0|\geq t x^*_{\nn} - |\lambda _0|
  > (t/(1+\theta) -1)  |\lambda _0|>10 |\lambda _0|,
$$
which implies $|y_1| =  |\lambda _0|\leq w^*_{n_1}/10$ and completes the proof.
\end{proof}

As a consequence of Theorem~\ref{t:steep} and Lemmas~\ref{lem: a-c},
\ref{lem: splitt}, and \ref{l:norma}, we obtain the main theorem of this section.

\begin{theor}\label{th: steep and ac}
Let   $\aaa  \leq 1/1200$, $d\geq 1$ be large enough,
$n\geq d^3$, $1\leq \LL\leq n/d^3$, and $0<\theta \leq 10/d^3$.
Let $\KK\subset [n]$ with $|\KK^c|\leq \LL$  and $\ww\in \C$ with $|\ww |\leq d/5$.
Then with probability at least
$$
  1- \min(\exp(-\LL/d), \exp(-(\ln d)(\ln n)/20),
$$
one has that for
every $x\in (\BB(\theta)\setminus \st_{3}^\CC) \cup \st _\CC$
$$
  \| (M - \ww \idmat)^\KK x\|_2 \geq \frac{ \LL^3 d}{n^{6}}\, \|x\|_2.
$$
\end{theor}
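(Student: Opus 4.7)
The plan is to reduce the theorem, case by case, either to Theorem~\ref{t:steep} (handling $\st_\CC$ on a single good event) or to a deterministic bound coming from Lemmas~\ref{lem: a-c}, \ref{lem: splitt}, \ref{l:norma}, and~\ref{l:decay}. Accordingly, I would first apply Theorem~\ref{t:steep} with the same parameters $\LL,\KK,\ww$ as in the present statement, obtaining an event $\Omega_0:=\Event_{steep}^c$ of probability at least $1-\min(\exp(-\LL/d),\exp(-(\ln d)(\ln n)/20))$ on which $\|(M-\ww\,\idmat)^{\KK}v\|_2\ge \LL^3 d/n^6\,\|v\|_2$ holds simultaneously for every $v\in\st_\CC$. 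All subsequent work will take place on this event, and the remaining task is to verify the same inequality for every $x\in\BB(\theta)\setminus\st_3^\CC$.

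Fix $t:=12$; since $\aaa\le 1/1200$ one has $\aaa t\le 1/100$, so the hypotheses of both Lemmas~\ref{lem: a-c} and~\ref{lem: splitt} are satisfied (note also $|\KK^c|\le L\le n/d^3\le n/4$ and $|\ww|\le d/5$). I would split the argument on the dichotomy $x\in S(t)$ vs.\ $x\notin S(t)$. In the case $x\in\BB(\theta)\setminus S(t)$, Lemma~\ref{lem: splitt} produces a decomposition $x=w+y$ with $w\in\st$ and $y\in\CC$ satisfying $|y_1|\le w_{n_1}^*/10$. Because $x\notin\st_3^\CC$, the component $w$ cannot belong to $\st_3$, so necessarily $w\in\st_0\cup\st_1\cup\st_2$, which forces $x\in\st_\CC$; the bound is then immediate from Theorem~\ref{t:steep}.

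The case $x\in\BB(\theta)\cap S(t)\setminus\st_3^\CC$ is where the new work lies. If $x$ happens to lie in $\st$ then, since $x\notin\st_3$, automatically $x\in\st_0\cup\st_1\cup\st_2\subset\st_\CC$ and we again invoke Theorem~\ref{t:steep}. Otherwise $x\notin\st$, so Lemma~\ref{lem: a-c} applies deterministically and gives $\|(M-\ww\,\idmat)^{\KK}x\|_2\ge (d\sqrt n/2\sqrt 2)\,x_{n_3}^*$. To convert this into a bound in terms of $\|x\|_2$ I would apply Lemma~\ref{l:norma} (which, for $x\notin\st_3\cup\st_0$, yields $\|x\|_2\le n^6/(100\LL^3 d^{3/2})\,x_{n_1}^*$) together with the comparison $x_{n_1}^*\le d^3 x_{n_3}^*$ from Lemma~\ref{l:decay} (valid for $x\notin\st$). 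Combining these gives $\|x\|_2\le n^6 d^{3/2}/(100\LL^3)\,x_{n_3}^*$, and inserting this into the Lemma~\ref{lem: a-c} estimate produces exactly the required inequality up to an absolute numerical factor.

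The main obstacle, though a minor one, is the final quantitative comparison: the product of the two structural estimates carries a factor of $d^{3/2}/\sqrt{n}$ that must be absorbed by the standing hypothesis $n\ge d^3$; concretely the calculation reduces to checking that $50\sqrt n/\sqrt 2\ge d^{3/2}$, which is precisely guaranteed by $n\ge d^3$. All other issues are bookkeeping: matching the two lemmas' parameter hypotheses and noting that the only use of randomness occurs through Theorem~\ref{t:steep}, so the probability bound in the conclusion is the one inherited from $\Omega_0$.
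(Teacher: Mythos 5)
Your proposal is correct and takes essentially the same route as the paper's proof: single use of randomness through Theorem~\ref{t:steep}, reduction of $\BB(\theta)\setminus S(t)$ to the shifted steep classes via Lemma~\ref{lem: splitt}, and the deterministic estimate from Lemma~\ref{lem: a-c} combined with Lemmas~\ref{l:norma} and~\ref{l:decay} for $\BB(\theta)\cap S(t)\setminus\st$, closed out by the $n\ge d^3$ comparison. The only cosmetic difference is that you organize the case split around $S(t)$ before excluding $\st_\CC$, whereas the paper excludes $\st_\CC\cup\st_3^\CC$ first; the content is identical.
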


\begin{proof}
Fix $t=12$. Fix $x\in \BB(\theta)\setminus \st_{3}^\CC$. If $x\in \st_\CC$ then the result follows by
Theorem~\ref{t:steep}. Therefore we assume that $x\not\in  \st_\CC\cup \st_{3}^\CC$. Then,
in particular, $x\not\in  \st_0\cup \st_{3}$ and $x\not\in  \st_1\cup \st_2$, hence,
by Lemma~\ref{l:norma} we have
$$
   x_{\nn}^*\geq x_{n_1}^*/d^3 \geq \frac{100\LL^3}{n^{6}  d^{3/2}}\, \|x\|_2.
$$
Since $n\geq d^3$, this and Lemma~\ref{lem: a-c} implies the case when $x\in S(t)$. Note that Lemma~\ref{lem: splitt}
says that
$$
   \BB(\theta) \setminus S(t)\subset \st_\CC\cup \st_{3}^\CC,
$$
therefore we are done.
\end{proof}

\begin{proof}
[Proof of Lemma~\ref{lem: a-c}.]
Since $x\in S(t)$, we have $x_{\nn}^*\ne 0$.
Let $\sigma$, $J_1$, and $\lambda _0$ be as in Remark~\ref{lambda-0} and set
$$
  J_2=\sigma([n_{0}])\setminus J_1 , \quad  J_3 = \sigma([\nn])\setminus (J_1\cup J_2), \quad \mbox{ and }
  \quad J_4=[n]\setminus  (J_1\cup \sigma([\nn])).
$$
Then $|J_3|, |J_4|\leq \nn$, $[n]=J_1 \cup J_2\cup J_3\cup J_4$, and
\begin{equation}\label{condonJ}
\forall j\in J_4 \, \, \, \, |x_j|\leq x_{\nn}^*
 \quad \mbox{ and } \quad
\forall j\in J_3 \, \, \, \, |x_j|\leq x_{n_0}^*\leq  t x_{\nn}^*
\end{equation}

\smallskip

 Now, given a matrix $M\in\Mc$, consider
$$
 I_2 = \{ i\leq n \, : \, \supp R_i (M)\cap J_2  \ne \emptyset \} \, \mbox{ and }\,
 I_\ell = \{ i\leq n \, : \, |\supp R_i (M)\cap J_\ell| \geq   16 \nn d/n \},
$$
for $\ell = 3,4$.
Since $M\in \Mc$, we have $|I_2|\leq d\, n_{0}$ and $(16\nn d/n) |I_\ell| \leq d | J_\ell |$, hence
$$
 |I_2|\leq n/16  \quad  \quad \mbox{ and } \quad \quad
  | I_\ell | \leq n/16 \, \, \mbox{ for } \, \, \ell =3,4.
$$
Set $I:=[n]\setminus(I_2\cup I_3\cup I_4\cup \sigma([\nn])\cup K^c)$. Then for small enough $\aaa$,
$$
  |I| \geq  n - 3 n /16 -  \nn -n/4 \geq   n / 2 \quad \mbox{ and } \quad \forall i\in I \, \, \, \,
  |x_i|\leq x^*_{\nn} \leq |\lam _0|/(1-\theta).
$$
Moreover, for every $i\in I$, denote $\overline{J _\ell}=\overline{J _\ell} (i) = J_\ell \cap \supp R_i (M)$
for $1\leq \ell\leq  4$, and  note that $\overline{J _2}=\emptyset$ since $i\not\in I_2$.
Using the triangle inequality, we observe for every $i\in I$,
$$
|\langle R_i(M-\ww \idmat), x^\dagger \rangle|
\geq \Big|\sum _{j \in  \overline{J_1}}  x_j\Big| - \sum _{j \in \overline{J_3}} |x_j| - \sum _{j \in \overline{J_4}} |x_j|
  - |\ww x_i|.
$$
We estimate each of the terms on the right hand side separately. By the definition of $J_1$, we have
$$
\Big|\sum _{j \in  \overline{J_1}}  x_j\Big|\geq |\lam _0|\, |\overline{J _1}|- \sum _{j \in  \overline{J_1}} \Big| x_j-\lambda_0\Big|\geq |\overline{J _1}| \, ( |\lam _0|- \theta x^*_{\nn})
\geq (d - 32 \nn d/n)\, (1-2\theta)\, x^*_{\nn},
$$
where for the last inequality we used (\ref{rhon3}) and that for $i\not\in I_2\cup I_3\cup I_4$ one has
$$
   |\overline{J _1}|= d-   |\overline{J _2}|-|\overline{J _3}|- |\overline{J _4}|\geq d - 32 \nn d/n.
$$
Using (\ref{condonJ}),
$$
  \sum _{j \in  \overline{J_3}}  |x_j|+ \sum _{j \in  \overline{J_4}} | x_j|\leq |\overline{J _3}|\, x_{n_0}^* + |\overline{J _4}|\, x_{\nn}^*\leq 16 (1+t) \nn d x^*_{\nn}/n.
$$
Putting together the above estimates, we obtain for large enough $d$
\begin{align*}
    |\langle R_i(M-\ww \idmat), x^\dagger \rangle|
  &\geq \left((d - 32 \nn d/n)(1-2\theta) - 16 (1+t) \nn d /n   -  | \ww|\r) x^*_{\nn}
  \\
  &\geq \left(1-2\theta - 16 \aaa (3+t)    -  | \ww|/d\r) d x^*_{\nn}\geq d x^*_{\nn}/2,
\end{align*}
where we used  $\theta \leq 1/20$, $t+3\leq 5t/4$, $\aaa t\leq 1/100$, and $\nn/n\leq \aaa$,
and $|\ww|\leq d /5$. This implies
$$
   \| (M - \ww \idmat) x\|_2 \geq  \frac{d x^*_{\nn}}{2}\, |I|^{1/2} \geq \frac{d x^*_{\nn}}{2} \, \sqrt{\frac{n}{2}},
$$
and completes the proof.
\end{proof}

\section{Gradual vectors}
\label{s:k-vektors}

In this section we introduce the notion of $k$-vectors,
which provide a discretization of the set of gradual vectors, and discuss their properties.
We will use notations of Section~\ref{steep}, in particular, $\eps_0$, $p$, $r$,
$n_1$, $n_2$, and $\nn$.

\smallskip

We first define the set of gradual vectors as the set of all vectors which are not almost
constant and not steep. Note that any
gradual vector $x$ satisfies $x_{\nn}^{*}\neq 0$.
We will use the following normalization of
gradual vectors,
$$
  \mathcal S:=\bigl\{x\in\C^n\setminus (\st\cup \BB)\, : \, x_{\nn}^{*}=1\bigr\},
$$
where $\BB=\BB(\theta_0)$ with $\theta_0=10/d^3$ (the set $\BB(\theta)$
was introduced at the beginning of  Section~\ref{subs: almost constant}).
Note that, by the definition of the almost constant vectors, we have for any $x\in\mathcal S$
that
$$\forall\;\lambda\in\C\;\;\;|\{ i\leq n \, : \, |x_i - \lambda|\leq \theta_0\}| \leq n- \nn,$$
and by the definition of the steep vectors,
\begin{align*}
  &\forall \,0\le i\le r_0\, : \, \, \quad x^*_{p^i}\le (n/p^i)^3 (4d)^{r-r_0+1}d^3, \\
  &\forall \,r_0< i\le r\, : \, \,  \quad  x^*_{p^i}\le  (4d)^{r-i+1}d^3,\\
 & x^*_{\lceil n_1/p\rceil}\le 4d^4,\quad  x^*_{n_1}\le d^3, \quad \mbox{ and }
  \quad x^*_{n_2}\le d^{3/2}.
\end{align*}

\subsection{Gradual $k$-vectors}

For every positive integer $k$ we define  {\it $k$-vectors} as  vectors in $\C^n$ with
coordinates taking values in the set $\Z^2/k=\{\omega/k:\, \omega\in\Z^2\}$.
Let $x=(x_1,x_2,\dots,x_n)\in\C^n$ and $k\in\N$.
The {\it $k$-approximation} of $x$ is defined as the $k$-vector $y\in\C^n$
such that $\Re\, y_i=\lfloor k \Re\, x_i\rfloor/k$ and $\Im\, y_i=\lfloor k \Im\, x_i\rfloor/k$ for all $i\leq n$.
Clearly, $\|x-y\|_\infty\leq\sqrt{2}/k$.

\smallskip

Below we split gradual vectors into classes of vectors, such that every pair of vectors from a
given class has the same coordinates up to some permutation. We formalize it as follows.
Let $x=\{x_i\}_i \in\C^n$. By $x^\sharp=\{x_i^\sharp\}_i$ denote the
vector $(x_{\sigma(1)},x_{\sigma(2)},\dots,x_{\sigma(n)})$, where the
permutation $\sigma$ is chosen so that
$x_{\sigma(1)}\geq x_{\sigma(2)}\geq\ldots\geq  x_{\sigma(n)}$
in the sense of lexicographical order introduced in Section~\ref{preliminaries}.
Recall that $x^{*}=\{x_i^*\}_i$ denotes the non-increasing
rearrangement of $\{|x_i|\}_i$. Consider the following subset of ``normalized'' $k$-vectors,
$$
\kset_k :=  \big\{y\in\C^n:\,y\mbox{ is a $k$-approximation of a vector in $\mathcal S$}\big\}.
$$
Observe that for every $y\in \kset_k$,
\begin{equation} \label{norm-simp}
1-\sqrt{2}/k \leq y_{\nn}^{*}\leq 1+\sqrt{2}/{k}.
\end{equation}
 Next consider the equivalence relation on $\kset_k$ defined by
 $x\stackrel{\sharp}{\sim}y$ iff $x^\sharp=y^\sharp$ for two $k$-vectors $x$ and $y$.
This relation partitions the set $\kset_k$ into the equivalence classes.
We first estimate how many classes we have.

\begin{lemma}\label{l:equiv classes}
Let $d\leq n^{1/3}$ be large enough and $1\leq k\leq \sqrt{n}/d^{3/2}$. Then
the  number of the equivalence classes (with respect to the relation $\stackrel{\sharp}{\sim}$) in $\kset_k$ does not exceed $e^n$.
\end{lemma}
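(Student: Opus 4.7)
The plan is to identify each equivalence class in $\kset_k$ with the multiset $\{y_i\}_{i=1}^n$ of coordinates of any representative, and to count such multisets directly. Since $y$ is a $k$-approximation of some $x\in\mathcal S$ (so $x\notin \st\cup \BB$, $x^*_{\nn}=1$, and $\|y-x\|_\infty\le \sqrt{2}/k$), the multiset inherits the cardinality constraints: at most $\nn$ coordinates satisfy $|y_i|>2$; at most $n_2$ satisfy $|y_i|>2d^{3/2}$; at most $n_1$ satisfy $|y_i|>2d^3$; for each $r_0\le j\le r$, at most $p^j$ satisfy $|y_i|>2d(n/p^j)^3$; and every coordinate obeys $|y_i|\le n^C$ for a universal constant $C$ (combining $x\notin \st_3\cup\st_0$ with $x^*_{n_1}\le d^3$, $p\le \sqrt d$, and $p^r\le n_1$).

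Next, partition the admissible region $\{v\in \Z^2/k : |v|\le n^C\}$ into $O(\log n)$ concentric annular layers $A$ whose outer radii are the successive thresholds above. Each layer has size $|A|\le \pi(\text{outer radius})^2 k^2$; using $k^2\le n/d^3$ this yields $|A_0|\le 4\pi n/d^3$, $|A_1|\le 4\pi n$, $|A_2|\le 4\pi nd^3$, and $|A_{3,j}|\le 4\pi n^7/(dp^{6j})$, while the respective maximum numbers of coordinates are $c_{A_0}^{\max}\le n$, $c_{A_1}^{\max}\le \nn$, $c_{A_2}^{\max}\le n_2$, $c_{A_{3,j}}^{\max}\le p^{j+1}$ (and $\le p^{r_0}$ in the very top). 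Specifying the multiset amounts to choosing a valid tuple $(c_A)$ with $\sum c_A=n$ and $c_A\le c_A^{\max}$, together with a multiset of size $c_A$ inside each $A$, so
\[
\#\{\text{equivalence classes}\}\;\le\; (n+1)^{O(\log n)}\prod_A\binom{|A|+c_A^{\max}}{c_A^{\max}},
\]
with the combinatorial prefactor $e^{O(\log^2 n)}=e^{o(n)}$ absorbing the choice of layer sizes.

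The third step is to estimate each factor via $\log\binom{v+c}{c}\le c\log(2ev/c)$ when $c\le v$ and via the symmetric estimate otherwise. A direct computation gives: $A_0$ contributes $O(n\log d/d^3)$; $A_1$ contributes at most $\nn\log(O(1/\aaa))\le n/100$ using the choice $\aaa\le 1/1200$; the bottleneck layer $A_2$ contributes $O(n\log d/d^{2/3})$; the geometric cascade $\sum_{j=r_0}^{r-1}p^{j+1}\log(O(n^7/(dp^{7j+1})))$ is dominated by its largest term at $j=r-1$ and, using $p\le\sqrt d$ and $p^r\le n/d^{3/2}$, sums to $O(n\log d/d^{3/2})$; and the very top block of $\le p^{r_0}$ positions is handled via the per-position estimate $\prod_{i\le p^{r_0}}\pi(n/i)^6 B^2 k^2$ with $B=x^*_{p^{r_0}}\le d(n/p^{r_0})^3$ (from Lemma~\ref{l:decay}), which after Stirling contributes a lower-order amount.

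The main obstacle will be ensuring that the sum of these contributions stays strictly below $n$. The dominant term comes from the layer $A_2$ and is proportional to $n\log d/d^{2/3}$; since $\log d/d^{2/3}\to 0$ as $d\to\infty$, it is an arbitrarily small absolute constant for $d$ exceeding a sufficiently large absolute threshold, which is precisely what the hypothesis ``$d$ large enough'' supplies. Under this condition, all contributions---including the $e^{O(\log^2 n)}$ prefactor from the choice of $(c_A)$---sum to at most $n$, and therefore the number of equivalence classes is bounded by $e^n$ as claimed.
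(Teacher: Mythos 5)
Your approach mirrors the paper's at a structural level: both count multisets of points from $\Z^2/k$ subject to cardinality-versus-magnitude constraints inherited from $x\in\mathcal S$; the paper stratifies by rank (blocks $I_1,\dots,I_{r+3}$ of consecutive positions), you stratify by magnitude (annular layers), and these dual parameterizations give essentially the same calculation for the bulk blocks. However, your per-position estimate for the top block of at most $p^{r_0}$ positions does not close the argument, and this is a genuine gap.

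Take the logarithm of $\prod_{i\leq p^{r_0}}\pi(n/i)^6 B^2 k^2$. Stirling handles $\prod_i(n/i)^6$, yielding $6p^{r_0}\log(en/p^{r_0})$, but the common factor $(\pi B^2 k^2)^{p^{r_0}}$ still contributes $p^{r_0}\log(\pi B^2 k^2)\geq p^{r_0}\log(k^2)$. Under the hypotheses $k$ may be as large as $\sqrt{n}/d^{3/2}$, so $\log(k^2)$ can be of order $\log n$; and since $p^{r_0}\le 20Lp/d$ with $L$ allowed up to $n/d^3$, $p^{r_0}$ can be of order $n/d^{7/2}$. The resulting contribution, of order $n\log n/d^{7/2}$, is not $O(n)$ when $d$ is a fixed large constant and $n\to\infty$ — exactly the regime the lemma must cover (the hypothesis is ``$d$ large enough, $d\le n^{1/3}$'', so $n$ may be arbitrarily large with $d$ fixed). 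Counting ordered per-position values rather than multisets loses precisely the $(p^{r_0})!$-type gain needed here. Note also that a single multiset count over the whole top annulus does not rescue the argument: $|A|$ then carries the factor $n^6$ coming from $x^*_1$, which again produces a term $p^{r_0}\log n$.

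The paper instead subdivides positions $1,\dots,p^{r_0}$ into the geometric sub-blocks $I_1,\dots,I_{r_0}$ and bounds each sub-block's multiset count by $\binom{N_i+\nu_i-1}{N_i}$ with $N_i\leq p^i$. The key arithmetic step is $k^2 d^3 p^{-i}\leq n p^{-i}\leq p^{r+1-i}d^{3/2}$: dividing $\nu_i$ by $N_i$ replaces the offending $\log(k^2)$ by a quantity linear in $r-i$, after which $\sum_i p^i\cdot O\bigl((r+2-i)\log d\bigr)$ is a geometric series dominated by its largest term, yielding $O(p^{r+1}\log d)=O(n\log d/d)$ with no $\log n$ dependence. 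Your annular cascade for $j\geq r_0$ realizes exactly this mechanism, so the fix is to continue the multiset cascade $A_{3,j}$ into the range $j<r_0$ rather than switching to a per-position product there.
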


\begin{proof}
From every equivalence class choose exactly one representative
$x$, satisfying $|x_1|\geq |x_2|\geq \ldots \geq |x_n|$, multiply it by $k$ and consider
the set $\kset _k'$ of such elements. Note that by definitions every element of
$\kset _k'$ has integer coordinates and,  moreover,
$\kset _k' \subset k \kset _k$.

\smallskip

Define a partition of $[n]$ into following $r+4$ sets.
Let $I_0=[n]\setminus [\nn]$. Set
$I_{1}=[p]$.
Then for every $1< i\leq r$, set $I_i=[p^i]\setminus [p^{i-1}]$.
Finally, set
$$
  I_{r+1}=[n_1]\setminus [p^r], \quad \quad I_{r+2}=[n_2]\setminus [n_1], \quad \quad \mbox{ and }\quad \quad I_{r+3}=[\nn]\setminus [n_2].
$$
The cardinalities of $I_i$'s, $0\leq i\leq r+3$, we denote by $N_i$'s.
Clearly, $N_0\leq n$, $N_{r+j}\leq n_j$ for $j=1,2,3$, and $N_{i}\leq p^i$ for  $1\leq i\leq r$.

By the normalization of vectors in $\mathcal{S}$ and by (\ref{norm-simp}), for every $x\in k \kset _k$,
we have $x^*_{\nn}\leq k+\sqrt{2}\leq 2.5 k$. Therefore, by
the definition of gradual vectors we have that for every $x\in \kset _k'$ and every $r_0\leq i\leq r$,
\begin{equation}\label{besknorm}
  x^*_{\nn}\leq 2.5 k, \, \, \, \, \, \, x^*_{n_{2}}\leq 2.5 k d^{3/2},\, \, \, \,  \, \,
  x^*_{p^{r+1}} \leq x^*_{n_{1}}\leq 2.5 k d^3,\, \, \, \,  \, \,   x^*_{p^{i}} \leq 2.5 k d^3 \, (4d)^{r+1-i}
\end{equation}
  and, using that $n\leq n_1 d^{3/2}\leq p^{r+1} d^{3/2}$ and $p^2 \leq d$, for $0\leq i<r_0$,
\begin{equation}\label{besknorm-1}
 x^*_{p^{i}} \leq  2.5  k (n/p^i)^3 x^*_{p^{r_0}}\leq  2.5  k p^{3(r+1-i)} \, d^{4.5} \, (4d)^{r+1-r_0} \leq
 2.5  k d^{4.5} \, (4d)^{2.5(r+1-i)}.
\end{equation}

For $0\leq i\leq r+3$ let $\nu_i$ be the number of possible distinct coordinates of the projection of $y\in \kset_k'$ on $\C^{I_i}$.
Recall that every element of $\kset_k'$ has integer coordinates. Note that if a complex number $z=a+ \iii b$ with integer $a$ and $b$
satisfy $|z|\leq  A$ for some  $A\geq 2.5$ then $-A\leq a, b \leq A$, so there are at most $(2A+1)^2\leq 6 A^2 $ such numbers $z$.
Therefore, by (\ref{besknorm}) and (\ref{besknorm-1}), we have for $1\leq i \leq r+1$,
$$
 \nu_0\le 40 k^2  ,\quad \nu_{r+3} \le 40 k^2 d^3 , \quad
 \nu_{r+2} \le 40 k^2 d^6,
 \quad \mbox{ and }\quad
 \nu_i\le 40 k^2 d^9 \, (4d)^{5(r+2-i)}.
$$

The number of sequences $\{x_i\}_{i=1}^N$ in $\C^N$ taking values in a set of cardinality $\nu$,
where we don't distinguish between sequences which can be obtained one from another by a permutation,
equals ${N+\nu-1 \choose N}$ (indeed, after introducing an order, this corresponds
to the number of  non-increasing sequences $\{y_i\}_{i=1}^N \subset [\nu]$ and we can pass
to the strictly decreasing sequences $\{z_i\}_{i=1}^N \subset [\nu+N-1]$, where $z_i=y_i+N-i$,
hence this number is the same as the number of $N$ elements subsets of $[\nu+N-1]$).
 This leads to
$$
 |\kset'_k| \le
 \prod _{i=0}^{r+3}  {N_i+\nu_i -1 \choose N_i}.
$$

Using bounds for $\nu_i$ and $N_i$, the standard estimate ${m\choose \ell} \leq (e m/\ell)^\ell$,
and that $d^3 k^2\leq  n$, $p^{r_0}\leq n/d^{3/2}$,
we get
$$
  B_1:=  {N_0+\nu_0 -1\choose N_0} \leq {N_0+\nu_0 \choose \nu_0} \leq  {n+40 k^2\choose 40 k^2} \leq
  {n+ \lfloor 40 n/d^3\rfloor \choose \lfloor 40 n/d^3\rfloor} \leq
 \left(e d^3/20 \right)^{40 n/d^3};
$$
$$
  B_2:=  {{N_{r+2}}+\nu_{{r+2}} -1 \choose {N_{r+2}}}\leq  {n_2 + 40 k^2 d^{6} \choose n_2}\leq
  \left(\frac{41 e n d^3}{n_2}\right)^{n_2}
  \leq   d^{ 4 n/d^{2/3}};
$$
$$
  B_3:=  {{N_{r+3}}+\nu_{{r+3}} -1 \choose {N_{r+3}}-1}
  \leq  {\nn + 40 k^2 d^3\choose \nn} \leq (41 e /\aaa )^{\aaa n} ;
$$
and, for $1\leq  i\leq r+1$,
$$
 B_{4, i}:=   {N_i+\nu_i-1 \choose N_i}
 \leq
  {p^i +  40 k^2  d^9 \, (4d)^{5(r+2-i)}\choose p^i}.
$$
If $p^i>40 k^2 d^9 \, (4d)^{5(r+2-i)}$ then $B_{4, i}\leq 4^{p^i}$, otherwise, using
$$
   k^2 d^3  p^{-i} \leq   n p^{-i} \leq   n_1 d^{3/2} p^{-i} \leq   p^{r+1-i} d^{3/2},
$$
 we have
$$
 B_{4, i} \leq \left(\frac{80 e k^2  d^9 \, (4d)^{5(r+2-i)}}{p^i}\right)^{p^{i}}
   \leq\left(  d^8 \, (4d)^{6(r+2-i)} \right)^{p^{i}}.
$$

Denoting $B_4=\prod _{i=1}^{r+1} B_{4,i}$, using that $d$ is large enough, and passing to sums of logarithms,
we have
\begin{align*}
 \ln B_4 &\leq \sum _{i=1}^{r+1} p^i\,  \ln \left( d^8 \, (4d)^{6(r+2-i)} \right) \leq
  \sum _{\ell=1\atop (\ell=r+2-i)}^{r+1}  p^{r+2-\ell} \, \left(6 \ell\, \ln (4d) + 8 \ln d \right)\\
 &\leq 20 p^{r+1} \ln d  \leq 20 p n_1 \ln d \leq  n (\ln d )/d.
\end{align*}
Combining all bounds we obtain
$$
   |\kset _k ' |\leq B_1 B_2 B_3 B_4\leq
e^n,
$$
provided that $\aaa$ is small enough and  $d$ is large enough.
\end{proof}

\subsection{The $\ell$-decomposition with respect to $k$-vectors}\label{subs: ell decomposition}

In this subsection, we introduce one of the most important technical ingredients of the paper --
the $\ell$-decomposition with respect to $k$-vectors, which is a special way to structure
a $k$-vector $y$ as a collection of two-dimensional ``stairs'' or ``ladders'' which ultimately determine the anti-concentration properties of the product $My$ (with a random matrix $M$ uniformly distributed in $\MSet_{n,d}$).

Let $y=(y_i)_{i=1}^n \in \C^n$ be a $k$-vector. We will construct a partition of [n] into
two sequences of subsets of $[n]$, $(\ls_j(y))_{j=0}^\infty$ and $(\lr_j(y))_{j=0}^\infty$,
which we call {\it spread $\ell$-parts} and {\it regular $\ell$-parts}, respectively.
Note that all but a finite number of the subsets are empty. When the vector $y$ is clear from
the context, we will simply write $\ls_j$ and $\lr_j$ for the corresponding $\ell$-parts.

Our construction consists of a series of steps (indexed by $j$), and each step
comprises a sequence of substeps.
At $j$-th step (except $j=0$), we already have sets $(\ls_u)_{u=0}^{j-1}$ and $(\lr_u)_{u=0}^{j-1}$
constructed. If $j=0$ set $I_0:=[n]$ and  $\Lambda_0:=\{y_i:\,i\in [n]\}$, otherwise
set
$$
  I_j:=[n]\setminus \big(\bigcup_{u\leq j-1}\ls_u\,\cup\,\bigcup_{u\leq j-1}\lr_u\big)\quad\mbox{and}\quad
  \Lambda_j:=\{y_i:\,i\in I_j\}.
$$
Now, for each $\lambda\in \Lambda_j$ such that
$|\{i\in I_j:\,y_i=\lambda\}|< 2^{j+1}$ we let
$$
  L(j,\lambda):=\{i\in I_j:\,y_i=\lambda\},
$$
and for every $\lambda\in \Lambda_j$ with
$|\{i\in I_j:\,y_i=\lambda\}|\geq 2^{j+1}$
we let
$L(j,\lambda)$ be the subset of $\{i\in I_j:\,y_i=\lambda\}$ of cardinality $2^{j}$
such that
$$
  L(j,\lambda)= I_j\cap [1,\, \sup L(j,\lambda)]
$$
(that is, we choose $L(j,\lambda)$ as the ``leftmost''  subset of cardinality $2^{j}$).
Note that by construction for $j\geq 0$ we have
\begin{equation}\label{levset}
2^{j-1}\leq |L(j,\lambda)|< 2^{j+1}.
\end{equation}
We  refer to sets $(L(j,\lambda))_{\lambda\in \Lambda_j}$ as {\it level sets of order $j$}
(with respect to $y$).
The union of the level sets of order $j$ will form the spread and regular parts, $\ls_j$
and $\lr_j$, i.e., we define $\ls_j$ and $\lr_j$  so that
$$
  \ls_j\cup\lr_j = \bigcup\limits_{\lambda\in \Lambda_j}L(j,\lambda).
$$
To separate the spread part from the regular one of the same order, we apply an
embedded procedure consisting of substeps.
Our construction of spread vectors is based on extracting a maximal $(d/k)$-separated set
subset from $\Lambda_j$, consisting of at least $2$ elements, provided that such a set exists.
Note, that we need to have at least $2$ elements to be able to apply anti-concentration later.
We  construct a subset $\Lambda_j^S\subset \Lambda_j$ as follows.

\smallskip

\noindent {\it Substep $1$.}
If the diameter of $\Lambda_j$ is strictly less than $d/k$
then we set $\Lambda_j^S:=\emptyset$ and terminate.
Otherwise, note that there is at least one pair of numbers $\lambda,\lambda'\in \Lambda_j$ such that $|\lambda-\lambda'|\geq d/k$.
Define $\lambda_1$ as the {\it largest} (with respect to the lexicographical order,
see Section~\ref{preliminaries}) number in $\Lambda_j$ such that $|\lambda_1-\lambda'|\geq d/k$
for some $\lambda'\in \Lambda_j$ and pass to the next substep.

\smallskip

\noindent
{\it Substep $m$ ($m>1$).}
We have already chosen numbers
$\lambda_1,\lambda_2,\dots,\lambda_{m-1}$ in $\Lambda_j$.
If all $\lambda\in \Lambda_j$ are within a distance strictly less than $d/k$ to $\{\lambda_1,\lambda_2,\dots,\lambda_{m-1}\}$
then set $\Lambda_j^S:=\{\lambda_1,\lambda_2,\dots,\lambda_{m-1}\}$ and terminate (note,
by the construction, this cannot happen if $m=2$).
Otherwise, let $\lambda_m$ be the largest number in $\Lambda_j$ with the distance to
$\{\lambda_1,\lambda_2,\dots,\lambda_{m-1}\}$ greater or equal to $d/k$
and go to the next substep.

\smallskip

\noindent
Note that by construction we have that the sequence $(\lambda_m)_{m\geq 1}$ is decreasing (with respect to the lexicographical order) and, moreover,
$\vert \lambda_u-\lambda_{v}\vert\geq d/k$ for every admissible $u\neq v$.
Now, by {\it the spread $\ell$-part of order $j$} with respect to $y$, we call the union
$$\ls_j=\ls_j(y):=\bigcup_{\lambda\in \Lambda_j^S}L(j,\lambda)$$
and by {\it the regular $\ell$-part of order $j$} with respect to $y$, we call the union
$$
  \lr_j=\lr_j(y):=\bigcup_{\lambda\in \Lambda_j\setminus\Lambda_j^S}L(j,\lambda).
$$

The {\it height} $h(\cdot)$ of a regular (resp, spread)
$\ell$-part is the number of level sets it comprises (if the $\ell$-part is empty then $h=0$).
In particular, by (\ref{levset}), if $\lset_j$ is either $\ls_j$ or $\lr_j$, then
\begin{equation}\label{levsethei}
2^{j-1} h(\lset_j )\leq |\lset_j|\leq  2^{j+1} h(\lset_j).
\end{equation}
Note also that by the construction the height of a non-empty spread part is at least $2$.
We will often write $\lset$ to denote an $\ell$-part (of some order) with respect to $y$.
Note also that the maximal number of steps (starting with the step $j=0$) that we can have
is the smallest $j+1$ such that $n<2^{j+1}$, i.e. $j+1 = \lceil \log _2 n\rceil < 1.5 \ln n$
for large enough $n$. Therefore, the number of non-empty $\ell$-parts, denoted below
by $m(y)$ is at most $3\ln n$.

Finally we introduce the $\ell$-decomposition. Let $y$ be a $k$-vector with the corresponding
$\ell$-parts $\{\ls_j,\lr_j\}_{j\geq 0}$. We will re-enumerate the non-empty spread and
regular $\ell$-parts and will write $(\lset^{(q)})_{q=1}^m$ (i.e., suppressing the order
and spreadness/regularity), where $m=m(y)\leq 3\ln n$.
To make this representation  unique, we assume that within the sequence
$(\lset^{(q)})_{q=1}^m$, any spread $\ell$-part precedes (by the index) any regular $\ell$-part,
and that for any two spread (resp. regular) parts, the one of smaller order precedes the other.
In what follows, such a sequence will be called the {\it $\ell$-decomposition} with respect to $y$.

Below, given a level set $L\subset [n]$, i.e., a set of coordinates where $y_i$ preserves its value, we
denote this value by $y(L)$.

\medskip

To clarify our construction we would like to provide the following example.

\smallskip

\noindent {\it Example.} Let $n=7$, $d=2$, $k=6$. Consider $y=(1/2,1/3,1/2,1/6,1/2,1/3,-1/3)$.
Note that $y$ is a $k$-vector.
According to the above procedure, at step $j=0$ we have $\Lambda_0=\{1/2,1/3,1/6,-1/3\}$ and construct level sets
$L(0,1/2)=\{1\}$, $L(0,1/3)=\{2\}$, $L(0,1/6)=\{4\}$, $L(0,-1/3)=\{7\}$.
Since $d/k=1/3$, we get that $\Lambda_0^S=\{1/2, 1/6, -1/3\}$.
Thus $\{1, 4, 7\}$ is the spread $\ell$-part of order $0$,
and $\{2\}$ is the regular $\ell$-part of order $0$. At step $1$, we have $\Lambda_1=\{1/2,1/3\}$ and construct level sets
$L(1,1/2)=\{3,5\}$ and $L(1,1/3)=\{6\}$. Then $\Lambda_1^S=\emptyset$, therefore $\emptyset$
 is the spread $\ell$-part of order $1$, and
$\{3,5,6\}$ is the regular $\ell$-part of order $1$.
Altogether, we have $m(y)=3$ non-empty $\ell$-parts --
one spread $\ell$-part of order $0$ with the height $3$,
one regular $\ell$-part of order $0$ with the height $1$, and
one regular $\ell$-part of order $1$ with the height $2$.
The  $\ell$-decomposition with respect to $y$ is
$(\{1, 4, 7\}, \, \{2\},\,  \{3,5,6\})$.

\medskip

A quick analysis of the construction procedure for the $\ell$-parts gives the following properties,
which we summarize into three lemmas. We leave the (rather straightforward) proofs to the reader.

\begin{lemma}\label{l: level sets basic}
Let $y$ be a $k$-vector,  $\lambda\in\Z/k$ and set $I=\{i\leq n:\,y_i=\lambda\}$.
Assume that $I\ne \emptyset$  and denote $u:=\big\lfloor\log _2((|I|+1)/3)\big\rfloor$.
Then
$$
  I=\bigcup_{j=0}^{u+1} L(j,\lambda),
$$
$$
  2^u\leq |L(u+1,\lambda)|=|I|-2^{u+1}+1\leq 2^{u+2}-1,
 \quad
  \mbox{ and } \quad \forall\, 0\leq j\leq u\, : \, \,  |L(j,\lambda)|=2^j.
$$
\end{lemma}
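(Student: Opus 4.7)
The plan is to verify the lemma by tracking, step by step, how many coordinates of value $\lambda$ remain at each stage of the level-set construction, and showing that this count decreases in a prescribed deterministic way until the small-case branch is triggered at step $u+1$.

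First, I would introduce the notation $N_j := |\{i\in I_j:\,y_i=\lambda\}|$ so that $N_0=|I|$, and prove by induction on $j$ that as long as the large-case branch is taken (i.e.\ $N_j\geq 2^{j+1}$), we have $|L(j,\lambda)|=2^j$ and $N_{j+1}=N_j-2^j$. Summing the telescoping relation gives the closed form
\begin{equation*}
N_j=|I|-(2^0+2^1+\dots+2^{j-1})=|I|-2^j+1
\end{equation*}
for every $j$ up through the first time the small-case triggers.

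Next, I would determine precisely when the construction switches to the small-case branch. The condition $N_j\geq 2^{j+1}$ rewrites as $|I|\geq 3\cdot 2^j-1$, equivalently $2^j\leq (|I|+1)/3$, i.e.\ $j\leq \log_2((|I|+1)/3)$. Thus the large-case branch is used exactly for $0\leq j\leq u$, where $u=\lfloor\log_2((|I|+1)/3)\rfloor$, giving the claim $|L(j,\lambda)|=2^j$ for $0\leq j\leq u$. At step $j=u+1$ the definition of $u$ forces $2^{u+1}>(|I|+1)/3$, hence $N_{u+1}=|I|-2^{u+1}+1<2^{u+2}$, so the small-case branch applies and $L(u+1,\lambda)=\{i\in I_{u+1}:y_i=\lambda\}$ has cardinality exactly $|I|-2^{u+1}+1$.

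Finally, I would verify the two-sided estimate on $|L(u+1,\lambda)|$: the lower bound $|I|-2^{u+1}+1\geq 2^u$ is equivalent to $|I|\geq 3\cdot 2^u-1$, which follows from $2^u\leq (|I|+1)/3$ (by definition of $u$); the upper bound $|I|-2^{u+1}+1\leq 2^{u+2}-1$ is equivalent to $|I|\leq 3\cdot 2^{u+1}-2$, which follows from $2^{u+1}>(|I|+1)/3$ combined with the integrality of $|I|$. The identity $I=\bigcup_{j=0}^{u+1}L(j,\lambda)$ is then immediate: the $L(j,\lambda)$ are pairwise disjoint by construction, their cardinalities sum to $(2^0+\dots+2^u)+(|I|-2^{u+1}+1)=|I|$, and after step $u+1$ no coordinate of value $\lambda$ remains, so $\lambda\notin \Lambda_j$ for $j\geq u+2$.

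There is no real obstacle here; the lemma is a bookkeeping statement that pins down the switchover index between the two branches of the $L(j,\lambda)$ construction. The only mild subtlety is being careful with the off-by-one arising from $\sum_{i=0}^{j-1}2^i=2^j-1$ versus $2^j$, which is exactly what produces the factor $(|I|+1)/3$ rather than $|I|/3$ in the definition of $u$.
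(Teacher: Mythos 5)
Your proof is correct; the paper explicitly leaves this lemma to the reader as a ``rather straightforward'' consequence of the construction, and your telescoping of $N_{j+1}=N_j-2^j$, pinning the switchover to the small-case branch at $j=u+1$ via the equivalence $N_j\geq 2^{j+1}\iff 2^j\leq(|I|+1)/3$, is exactly the intended argument. The two-sided bound on $|L(u+1,\lambda)|$ and the union identity then follow as you show.
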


\smallskip

\begin{lemma}\label{l: ell parts monotone}
Let $y$ be a $k$-vector, let $j\geq 1$ and assume that $\ls_j\cup\lr_j\neq \emptyset$.
Then for all $0\leq m<j$ we have $\ls_{m}\cup\lr_{m}\neq \emptyset$,
$$
  h(\ls_{m})+h(\lr_{m})\geq h(\ls_j)+h(\lr_j),
$$
and
$$
 \{y_i:\,i\in \ls_{j}\cup\lr_{j}\}\subset\{y_i:\,i\in \ls_{m}\cup\lr_{m}\}.
$$
\end{lemma}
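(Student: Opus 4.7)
The plan is to reduce both conclusions of the lemma to a simple monotonicity property of the residual index sets $I_m$ used in the construction of the $\ell$-parts. By definition, for any $0 \leq m < j$ one has
$$I_j = [n]\setminus \bigcup_{u\leq j-1}(\ls_u\cup\lr_u) \subset [n]\setminus \bigcup_{u\leq m-1}(\ls_u\cup\lr_u) = I_m,$$
simply because the union on the right-hand side of the definition of $I_j$ contains strictly more terms than the one defining $I_m$. Consequently every value attained by $y$ on $I_j$ is also attained on $I_m$, which gives $\Lambda_j \subset \Lambda_m$.

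The next step will be to observe two identifications, valid for every order $m$ with $\Lambda_m \neq \emptyset$:
$$\{y_i:\,i\in\ls_m\cup\lr_m\}=\Lambda_m \qquad\text{and}\qquad h(\ls_m)+h(\lr_m)=|\Lambda_m|.$$
The first holds because $\ls_m\cup\lr_m = \bigcup_{\lambda\in\Lambda_m}L(m,\lambda)$, with each $L(m,\lambda)$ nonempty and identically equal to $\lambda$ under $y$. The second holds because by construction $\ls_m$ and $\lr_m$ are disjoint unions of level sets $L(m,\lambda)$ indexed respectively by $\Lambda_m^S$ and $\Lambda_m\setminus \Lambda_m^S$, so the total number of level sets they comprise is $|\Lambda_m^S|+|\Lambda_m\setminus \Lambda_m^S|=|\Lambda_m|$. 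The same identifications hold at order $j$ by virtue of the hypothesis $\ls_j\cup\lr_j\neq\emptyset$ (which forces $\Lambda_j\neq\emptyset$).

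Combining the two ingredients, the inclusion $\Lambda_j \subset \Lambda_m$ directly yields the value-set inclusion, while $|\Lambda_j| \leq |\Lambda_m|$ gives the height comparison $h(\ls_j)+h(\lr_j) \leq h(\ls_m)+h(\lr_m)$. For nonemptiness, the assumption $\ls_j\cup\lr_j\neq\emptyset$ means $\Lambda_j\neq\emptyset$, and then $\Lambda_m\supset\Lambda_j$ forces $\Lambda_m\neq\emptyset$, hence $\ls_m\cup\lr_m\neq\emptyset$. I do not anticipate any genuine obstacle here: the statement is a bookkeeping consequence of the recursive construction of the $\ell$-parts, and the only care required is to verify cleanly the two identifications relating heights and value sets to $|\Lambda_m|$ and $\Lambda_m$, both of which are immediate from the definitions of the level sets and the spread/regular split.
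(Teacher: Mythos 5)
Your proof is correct, and since the paper explicitly leaves this lemma's proof to the reader, there is no competing argument in the source to compare against. The reduction you make is exactly the natural one: from $I_j\subset I_m$ you get $\Lambda_j\subset\Lambda_m$, and the two identifications $\{y_i:\,i\in\ls_m\cup\lr_m\}=\Lambda_m$ (because each $L(m,\lambda)$, $\lambda\in\Lambda_m$, is nonempty and $y$-constant equal to $\lambda$) and $h(\ls_m)+h(\lr_m)=|\Lambda_m^S|+|\Lambda_m\setminus\Lambda_m^S|=|\Lambda_m|$ convert that inclusion into all three assertions of the lemma. This is the bookkeeping argument the authors had in mind.
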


\smallskip

\begin{lemma}\label{l: elem properties of ell}
Let $y$ be a $k$-vector and let $\ls_j$, $\lr_j$, $j\geq 0$,
be its $\ell$-parts. Then
\begin{itemize}
\item The height of every non-empty spread $\ell$-part is at least $2$.
\item For every non-empty spread $\ell$-part $\ls$ and any $i_1,i_2\in\ls$ with $y_{i_1}\neq y_{i_2}$
we have $|y_{i_1}-y_{i_2}|\geq d/k$.
\item If $\widetilde y$ is a permutation of the vector $y$ then necessarily the $\ell$-parts of $y$ and
$\widetilde y$ agree up to a permutation of $[n]$; in particular, the heights and cardinalities of spread or regular
$\ell$-parts of a given order with respect to $y$ and $\widetilde y$ are the same.
\end{itemize}
\end{lemma}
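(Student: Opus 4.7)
The plan is to verify each of the three bullets directly from the construction of the $\ell$-parts given in Subsection~5.2; no new ideas are needed beyond careful bookkeeping of the substep procedure that defines $\Lambda_j^S$.

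For the first bullet, I would re-examine Substep~$1$: we reach Substep~$2$ only after choosing some $\lambda_1\in\Lambda_j$ with the additional property that there exists $\lambda'\in\Lambda_j$ at distance $\geq d/k$ from $\lambda_1$. At Substep~$2$ this $\lambda'$ is then itself a valid candidate, so the procedure cannot terminate with $\Lambda_j^S=\{\lambda_1\}$. Hence whenever $\Lambda_j^S$ is non-empty we have $|\Lambda_j^S|\geq 2$, and since each $\lambda\in\Lambda_j^S$ contributes exactly one constituent level set $L(j,\lambda)$ to $\ls_j$, we obtain $h(\ls_j)=|\Lambda_j^S|\geq 2$. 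This is essentially the parenthetical remark already present in the construction.

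For the second bullet, by construction $\ls_j=\bigcup_{\lambda\in\Lambda_j^S}L(j,\lambda)$, so both $y_{i_1}$ and $y_{i_2}$ lie in $\Lambda_j^S$; if $y_{i_1}\neq y_{i_2}$ they correspond to two distinct selected values $\lambda_u,\lambda_v$. The greedy rule at each substep $m\geq 2$ picks $\lambda_m$ at distance $\geq d/k$ from all previously selected $\lambda_1,\ldots,\lambda_{m-1}$, so any two elements of $\Lambda_j^S$ are $(d/k)$-separated, giving $|y_{i_1}-y_{i_2}|\geq d/k$ as claimed.

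For the third bullet, I would induct on the step index $j$, tracking how the construction transforms under a bijection $\sigma\colon[n]\to[n]$ with $\widetilde y_i=y_{\sigma(i)}$. The key observation is that at each step the unordered set $\Lambda_j$ depends only on the multiset $(y_i)_{i\in I_j}$, and the greedy procedure producing $\Lambda_j^S$ depends only on this set together with the threshold $d/k$; hence $\Lambda_j(\widetilde y)=\Lambda_j(y)$ and $\Lambda_j^S(\widetilde y)=\Lambda_j^S(y)$ as subsets of $\Z^2/k$. Consequently, for each $\lambda$ the cardinality $|\{i\in I_j:\,y_i=\lambda\}|$ is preserved, which via Lemma~\ref{l: level sets basic} (or directly from the construction) fixes $|L(j,\lambda)|$, and therefore also the heights $h(\ls_j)$, $h(\lr_j)$ and the cardinalities $|\ls_j|$, $|\lr_j|$. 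The sets $L(j,\lambda)$ themselves of course move under $\sigma$ (with a minor bookkeeping caveat from the ``leftmost subset'' rule when a value appears more than $2^{j+1}$ times in $I_j$), but this is precisely what is meant by ``agreeing up to a permutation of $[n]$.'' I do not anticipate any genuine obstacle here; the entire lemma is a direct unpacking of definitions, and the only thing to be careful about is to verify permutation invariance of each ingredient of the construction one at a time.
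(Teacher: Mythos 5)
Your proof is correct. The paper leaves this lemma to the reader (``We leave the (rather straightforward) proofs to the reader''), and your verification is exactly the direct unpacking of the substep procedure that the authors intended: bullet one is the parenthetical already embedded in Substep~2 of the construction, bullet two follows from the greedy $(d/k)$-separation rule, and bullet three follows because $\Lambda_j$, the cardinalities $|\{i\in I_j:\,y_i=\lambda\}|$, and the greedy selection of $\Lambda_j^S$ all depend only on the multiset of values, hence are permutation invariant.
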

The last property implies that with every equivalence class $\mathcal C\subset\kset_k$ and every $j\geq 0$,
we may associate four integers by fixing (an arbitrary) $y\in \mathcal C$ and setting
$$
  \cards_j(\mathcal C) := |\ls_j(y)|, \;
  \cardr_j(\mathcal C) := |\lr_j(y)|, \;
  \heis_j(\mathcal C):=h(\ls_j(y)), \;
  \heir_j(\mathcal C):=h(\lr_j(y)).
$$

The following lemma allows to estimate cardinalities of equivalence classes in terms of these
quantities.

\begin{lemma}\label{equiv cardinality}
Let $k\geq 1$, $\mathcal C$ be an equivalence class in $\kset_k$ with respect to the relation
$\stackrel{\sharp}{\sim}$.
Then the cardinality of the class $\mathcal C$ can be estimated as
$$
 |\mathcal C|\leq n!
 \prod_{j=0}^\infty \frac{{\heis_j}^{\cards_j}\,{\heir_j}^{\cardr_j}}{\cards_j!\,\cardr_j!},
$$
where we adopt the notation $0^0=1$.
\end{lemma}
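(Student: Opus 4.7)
The plan is to start from the elementary identity $|\mathcal C|=n!/\prod_a n_a!$, where the product runs over the distinct values $a\in\Z^2/k$ taken by the coordinates of any fixed representative $y_0\in\mathcal C$ and $n_a:=|\{i\leq n:\,(y_0)_i=a\}|$. (Every $y\in\mathcal C$ is a permutation of $y_0$, and the denominator counts the permutations fixing $y_0$.) From there, the goal is to bound the denominator from below in terms of the $\ell$-part statistics via two successive applications of the multinomial inequality $\binom{N}{a_1,\dots,a_k}\leq k^N$.

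For the first reduction, fix $a$ and recall that Lemma~\ref{l: level sets basic} partitions $\{i:(y_0)_i=a\}$ into level sets $L(0,a),L(1,a),\dots$, so $n_a=\sum_j|L(j,a)|$. Since the corresponding multinomial coefficient is at least $1$, this gives $n_a!\geq\prod_j|L(j,a)|!$. Taking the product over $a$ and regrouping by level sets (each non-empty level set of $y_0$ belongs to exactly one value), I obtain
$$
|\mathcal C|\,\leq\,\frac{n!}{\prod_{L}|L|!},
$$
where the product now runs over all non-empty level sets of $y_0$.

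For the second reduction, group level sets by the $\ell$-part containing them. Consider a spread $\ell$-part of order $j$ with cardinality $\cards_j$, height $\heis_j$, and level-set sizes $s_1,\dots,s_{\heis_j}$ summing to $\cards_j$. The multinomial bound yields
$$
\prod_{i=1}^{\heis_j}s_i!\,=\,\frac{\cards_j!}{\binom{\cards_j}{s_1,\dots,s_{\heis_j}}}\,\geq\,\frac{\cards_j!}{\heis_j^{\cards_j}},
$$
and an identical estimate for regular $\ell$-parts produces a factor $\cardr_j!/\heir_j^{\cardr_j}$. Multiplying these bounds across all $\ell$-parts (their cardinalities and heights are invariants of $\mathcal C$ by the last item of Lemma~\ref{l: elem properties of ell}) and substituting into the previous display gives the claimed inequality.

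I do not anticipate a real obstacle: once the level-set and $\ell$-part layers are identified as the natural places to apply the multinomial estimate, the argument reduces to bookkeeping. The only point worth flagging explicitly is that the numbers $\cards_j,\cardr_j,\heis_j,\heir_j$ are class invariants, so that the infinite product on the right makes sense independently of the chosen representative; this is precisely what the third bullet of Lemma~\ref{l: elem properties of ell} provides. Empty $\ell$-parts contribute the convention-factor $0^0/0!=1$ and so cause no issue.
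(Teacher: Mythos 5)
Your argument is correct, and it takes a different (and, I think, cleaner) route than the paper's. The paper reasons "synthetically": it first counts the number of ordered labeled partitions of $[n]$ into blocks of sizes $\cards_j$, $\cardr_j$ (which is exactly $n!/\prod_j \cards_j!\,\cardr_j!$), and then, for each fixed partition, bounds the number of compatible vectors $y\in\mathcal C$ by $\prod_j \heis_j^{\cards_j}\heir_j^{\cardr_j}$, since each coordinate in a spread (resp.\ regular) $\ell$-part of order $j$ can take at most $\heis_j$ (resp.\ $\heir_j$) values once the class levels are fixed. Multiplying the two counts gives the bound. You instead start from the exact identity $|\mathcal C|=n!/\prod_a n_a!$ (the class consists precisely of the distinct permutations of any representative) and then push the denominator down through two layers of the multinomial inequality — first from value-multiplicities $n_a$ to level-set sizes $|L(j,a)|$ via Lemma~\ref{l: level sets basic}, then from level-set sizes to $\ell$-part statistics $\cards_j!/\heis_j^{\cards_j}$ and $\cardr_j!/\heir_j^{\cardr_j}$. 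Both proofs hinge on the same multinomial inequality (the paper's implicitly, yours explicitly applied twice); the advantage of your version is that it begins from an exact equality and only loses through two visibly controlled relaxations, which makes it slightly easier to see where slack is introduced. Your remark that the invariance of $\cards_j,\cardr_j,\heis_j,\heir_j$ under $\stackrel{\sharp}{\sim}$ (third bullet of Lemma~\ref{l: elem properties of ell}) is what makes the right-hand side well-defined is the right point to flag, and the $0^0=1$ convention handles the vacuous $\ell$-parts exactly as you say.
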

\begin{proof}
There are clearly $n!/\prod_{j=0}^\infty {\cards_j!\cardr_j!}$ ways to ``assign'' $\ell$-parts to
specific locations within $[n]$.
Fix for a moment $j\geq 0$ with $\cards_j\neq 0$ and let $\ls$ be a
fixed subset of $[n]$ of cardinality $\cards_j$. Recall that $P_{\ls} (y)\in\C^{\ls}$ denotes the
coordinate projection of $y$ onto $\C^{\ls}:=\spn\{e_i:\,i\in\ls\}$. Consider the set
$$
  W_{\ls}:=\bigl\{P_{\ls} (y):\,\mbox{$y\in\mathcal C\, $ is such that $\, \ls_j(y)=\ls$}\bigr\}
$$
Since all vectors within a given equivalence class share the same {\it levels},
the cardinality of $W_{\ls}$ can be estimated from above by ${\heis_j}^{\cards_j}$.
Similarly, we can estimate the number of realizations of regular $\ell$-parts.
Combining this with the estimate for ``location assignments," we obtain the desired bound.
\end{proof}

\subsection{Decomposition of the set of gradual vectors}
\label{s:decomposition of S}

In this subsection, we define a way to partition the set of gradual vectors $\mathcal S$
in terms of structure of their $k$-approximations.
Roughly speaking, we will observe the following dichotomy for a vector $x$ in $\mathcal S$:
either $x$ possesses a $k$-approximation $y$ (for a relatively small $k$) whose $\ell$-decomposition contains
many spread $\ell$-parts (that is, the distance between the ``stairs'' in a graphical representation of $y$
is often large), or, for an appropriately chosen $k$, the $k$-approximation of $x$ contains $\ell$-parts
with large heights.

\smallskip

Given integer $u\geq 0$ we introduce two subsets of $\mathcal S$,
\begin{align*}
\kapset_u:=\bigl\{&x\in \mathcal S:\,\mbox{in the $\ell$-decomposition with respect to the $d^u$-approximation of $x$,}\\
&\mbox{the total cardinality of the spread $\ell$-parts is at least }c_\kapset \nn\bigr\}
\end{align*}
and
\begin{align*}
\rhoset_u:=\bigl\{&x\in \mathcal S:\,\mbox{in the $\ell$-decomposition with respect to the $d^u$-approximation of $x$,}\\
&\mbox{the total cardinality of spread and regular $\ell$-parts with heights not smaller} \\
&\mbox{than } c_\rhoset 2^{c_\rhoset (u-4)\aaa} \aaa \mbox{ is at least }c_\rhoset \nn \bigr\}.
\end{align*}
Here, by ``total cardinality'' we mean the cardinality of the union of the respective $\ell$-parts,
and $c_\kapset$, $c_\rhoset\in (0, 1)$ are two universal constants whose values can be derived from the proofs.
Note that for small $u\geq 1$, we have $c_\rhoset 2^{c_\rhoset (u-4)\aaa} \aaa\leq 1$, so the set $\rhoset_u$ coincides with $\mathcal S$.

The next theorem is the main statement of the subsection, and one of the main technical ingredients of the paper.

\begin{theor}[Decomposition of $\mathcal S$]\label{kappa and rho}
Let $v\geq 5$ be an integer. Then
$$
 \mathcal S=\bigcup\limits_{u=4}^{v}\kapset_u\,\cup\,\rhoset_v.
$$
\end{theor}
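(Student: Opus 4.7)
The plan is to prove the theorem by induction on $v$. For the base case, whenever the threshold $H_v:=c_\rhoset 2^{c_\rhoset(v-4)\aaa}\aaa$ is less than $1$ (which for small enough $c_\rhoset,\aaa$ holds at $v=5$ and well beyond), every non-empty $\ell$-part of the $d^v$-approximation automatically has height at least $1>H_v$, and since the $\ell$-parts partition $[n]$ whose cardinality is $n\ge c_\rhoset\nn$, we get $\rhoset_v=\mathcal S$ trivially, and hence $\mathcal S=\bigcup_{u=4}^v\kapset_u\cup\rhoset_v$.

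For the inductive step $v\to v+1$, I take $x\in\mathcal S\setminus\bigcup_{u=4}^{v+1}\kapset_u$. By the induction hypothesis applied at $v$, $x\in\rhoset_v$, so the $d^v$-approximation $y^{(v)}$ carries a sub-collection $\mathcal A$ of $\ell$-parts of height at least $H_v$ covering at least $c_\rhoset\nn$ coordinates. The goal is to exhibit in the $d^{v+1}$-approximation $y^{(v+1)}$ a family of $\ell$-parts of height at least $H_{v+1}=H_v\cdot 2^{c_\rhoset\aaa}$ covering at least $c_\rhoset\nn$ coordinates. The starting observation is that $y^{(v+1)}$ strictly refines $y^{(v)}$: the identity $\lfloor\lfloor d^{v+1}x\rfloor/d\rfloor=\lfloor d^v x\rfloor$ (valid coordinatewise) shows that distinct values at level $v$ remain distinct at level $v+1$, so each $y^{(v)}$-level set splits into at most $d^2$ new $y^{(v+1)}$-level sets whose values lie in a plane region of diameter $\sqrt 2\,d/d^{v+1}$.

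The core step is a dichotomy applied to each $\lset\in\mathcal A$. Examine the multiset of $y^{(v+1)}$-values on the coordinates of $\lset$: either (a) these values admit a $(d/d^{v+1})$-separated subset of cardinality at least $2^{c_\rhoset\aaa}\cdot h(\lset)$, in which case the greedy extraction procedure defining $\Lambda_j^S$ in the $\ell$-decomposition of $y^{(v+1)}$ recruits a proportional number of coordinates of $\lset$ into a spread $\ell$-part of $y^{(v+1)}$; or (b) no such separated subset exists, whence the values of $y^{(v+1)}$ on $\lset$ lie in tightly packed clusters whose contribution to regular $\ell$-parts of $y^{(v+1)}$ has combined height at least $2^{c_\rhoset\aaa}\cdot h(\lset)\ge H_{v+1}$. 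Since $x\notin\kapset_{v+1}$, the spread parts of $y^{(v+1)}$ cover fewer than $c_\kapset\nn$ coordinates, so case (a) can account for an $O(c_\kapset)$-fraction of $\mathcal A$ (measured by total cardinality) at most. The remainder of $\mathcal A$ falls into case (b) and furnishes the required cover by tall $\ell$-parts, provided the constants satisfy $c_\kapset\ll c_\rhoset$.

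The main obstacle will be making the dichotomy rigorous, because the $\ell$-decomposition is defined on all of $[n]$ whereas the dichotomy is stated locally on $\lset$. In particular, the order $j$ of a newly formed $\ell$-part of $y^{(v+1)}$ intersecting $\lset$ need not match that of $\lset$, distinct members of $\mathcal A$ may contribute coordinates to a common $\ell$-part of $y^{(v+1)}$, and the spread/regular classification depends on the global lexicographic ordering of values rather than their behavior inside $\lset$ alone. To manage this I would track how value multiplicities split under refinement at the granularity of individual level sets, invoke Lemmas~\ref{l: level sets basic} and~\ref{l: ell parts monotone} to convert these local counts into height statistics for $y^{(v+1)}$, and apply a pigeonhole argument on the $O(\ln n)$ possible orders to absorb logarithmic losses. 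The modest growth rate $2^{c_\rhoset\aaa}$ per inductive step that appears in $H_v$ is exactly the factor these bookkeeping losses force, once the constants $c_\kapset,c_\rhoset,\aaa$ are tuned.
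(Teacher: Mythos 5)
Your base case is fine (for $v=5$ the threshold $c_\rhoset 2^{c_\rhoset\aaa}\aaa<1$, so $\rhoset_5=\mathcal S$), and an induction on $v$ is a legitimate framing. The gap is in the inductive step, specifically in the dichotomy you apply to each $\lset\in\mathcal A$. Cases (a) and (b) are exhaustive only as a partition of hypotheses; the conclusion you attach to (b) is a non sequitur. If the $y^{(v+1)}$-values on $\lset$ admit no $(d/d^{v+1})$-separated subset of size $\geq 2^{c_\rhoset\aaa}h(\lset)$, you cannot infer a combined height of at least $2^{c_\rhoset\aaa}h(\lset)$ for the regular parts touching $\lset$. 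The simplest failure mode: if the coordinates of $x$ happen to lie on or near the lattice $\Z^2/d^v$, then $y^{(v+1)}$ can agree with $y^{(v)}$ on $\lset$, so the number of distinct $y^{(v+1)}$-values there is exactly $h(\lset)$ — strictly less than $2^{c_\rhoset\aaa}h(\lset)$ — and the heights cannot increase at all, let alone by a factor $2^{c_\rhoset\aaa}$. Nothing in the hypothesis $x\notin\kapset_{v+1}$ rules out this scenario; it only constrains the spread parts, and the spread parts are vacuous when nothing splits. More generally, ``values lie in few tightly packed clusters'' constrains the diameter of the value set, not its cardinality, and the height of a regular $\ell$-part is a count of distinct levels that has no lower bound in terms of how tightly the values are packed.

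The underlying misconception is that the passage from $y^{(v)}$ to $y^{(v+1)}$ forces heights to grow geometrically on each step of avoiding $\kapset_{v+1}$. That is not how the paper's argument works, and I don't believe it is true. The paper's key structural lemma (Lemma~\ref{l: dichotomy III}) establishes a different dichotomy: either $x\in\kapset_u\cup\kapset_{u+1}$, or the \emph{level-set cardinalities} $|J^u(i)|$ halve on a set of $\geq\nn/192$ coordinates when passing from $u$ to $u+1$. Nothing is said about heights at a single step. The height conclusion only emerges after accumulating the halving over $u$ from $4$ to $v$ (a pigeonhole/averaging argument over a counting vector $a_i$) to conclude that $|J^v(i)|\leq 2^{-(v-4)\aaa/384}n$ on a set $J$ of size $\geq\nn/400$. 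Small level sets force the $\ell$-parts covering $J$ to have bounded order $j_0$ (via Lemma~\ref{l: level sets basic} and Lemma~\ref{l: ell parts monotone}), and only then does a cardinality count — short $\ell$-parts at orders $\leq j_0$ cannot cover all of $J$ — produce the lower bound on heights. Your scheme skips this two-stage ``shrink level sets, then convert to heights by counting'' structure entirely; to repair it you would effectively have to rediscover Lemma~\ref{l: dichotomy III} and then abandon the per-step height-growth claim in favor of the cumulative level-set-size bound.
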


Theorem~\ref{kappa and rho} says that for any vector $x$ in $\mathcal S$, either
$x$ belongs to $\kapset_u$ for some $u\leq v$ or $x\in\rhoset_v$.
To prove this theorem, we first consider more technical (yet more simple)
ways to partition $\mathcal S$, and then gradually ``replace'' them with the conditions we are interested in.

\smallskip

The following lemma  is a straightforward implication of Lemma~2.2 in \cite{LLTTY first part}.

\begin{lemma}\label{l: separation in k}
Let $\theta_0=10/d^3$, $x\in \mathcal S$, $k\geq 5/\theta_0$, and let $y$ be the $k$-approximation of $x$.
Then there exist disjoint subsets $I,J\subset [n]$ such that $|I|,|J|\geq   \nn/4$ and for
any $i\in I$ and $j\in J$ we have
$\vert y_i-y_j\vert \geq \theta_0/2$.
\end{lemma}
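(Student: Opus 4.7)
The plan is to invoke Lemma~2.2 of \cite{LLTTY first part} applied to $x$ itself, and then transfer the resulting separation to the $k$-approximation $y$ via the triangle inequality, losing only a controlled amount due to the rounding error $\|x-y\|_\infty \leq \sqrt{2}/k$.

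First, I would unpack what $x \in \mathcal S$ gives us. By the definition of $\mathcal S$ and $\BB(\theta_0)$, together with the normalization $x_\nn^*=1$, one has
$$
\forall \lambda\in\C:\quad  \big|\{i\leq n:\,|x_i-\lambda|\leq \theta_0\}\big|\leq n-\nn.
$$
Equivalently, every disc of radius $\theta_0$ in $\C$ misses at least $\nn$ of the coordinates of $x$. This is precisely the hypothesis needed to feed into Lemma~2.2 of \cite{LLTTY first part}, which (in our language) produces disjoint subsets $\widetilde I, \widetilde J\subset [n]$ with $|\widetilde I|, |\widetilde J|\geq \nn/4$ and a quantitative separation
$$
  \forall i\in\widetilde I,\,\forall j\in\widetilde J:\quad |x_i-x_j|\geq c\,\theta_0
$$
for some absolute constant $c$ whose precise value (at least $1/2+2\sqrt 2/5$) is what makes the constants in the present lemma fit.

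Second, I would transfer the estimate from $x$ to $y$. Since $y$ is the coordinatewise $k$-approximation of $x$, with real and imaginary parts rounded to multiples of $1/k$, we have $|x_i-y_i|\leq \sqrt 2/k$ for every $i$. Using $k\geq 5/\theta_0$, this yields
$$
  2\|x-y\|_\infty \leq \frac{2\sqrt 2}{k}\leq \frac{2\sqrt 2\, \theta_0}{5}.
$$
Setting $I:=\widetilde I$ and $J:=\widetilde J$, the triangle inequality gives, for $i\in I$, $j\in J$,
$$
 |y_i-y_j|\geq |x_i-x_j|-2\|x-y\|_\infty \geq c\,\theta_0-\frac{2\sqrt 2\, \theta_0}{5}\geq \frac{\theta_0}{2},
$$
which is exactly the conclusion.

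The entire substantive work is contained in the cited Lemma~2.2 of \cite{LLTTY first part}; the present statement is essentially a ``lattice version'' of it. The only mild point to check is that the separation constant $c$ furnished by that lemma is large enough to absorb the rounding loss $2\sqrt 2/k$, which is why the hypothesis is stated as $k\geq 5/\theta_0$ rather than, say, $k\geq 1/\theta_0$. I do not foresee any real obstacle.
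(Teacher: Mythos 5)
Your argument is broadly in the right spirit (Lemma~2.2 of \cite{LLTTY first part} plus a triangle-inequality transfer), but the \emph{order} in which you apply the triangle inequality is what puts the whole thing at risk, and you flag the risk yourself without resolving it. You apply the cited lemma to $x$ at scale $\theta_0$ and then push the separation over to $y$, losing $2\|x-y\|_\infty\le 2\sqrt 2/k\le(2\sqrt 2/5)\theta_0\approx 0.57\,\theta_0$. For the conclusion $|y_i-y_j|\ge\theta_0/2$ to survive this loss, the separation constant $c$ furnished by Lemma~2.2 must satisfy $c\ge 1/2+2\sqrt 2/5\approx 1.07$, i.e.\ strictly larger than the anti-concentration radius. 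That is an unusually strong output for a lemma of this type (the natural output of such dichotomy arguments is separation at roughly the same scale $\theta$ at which the anti-concentration hypothesis is stated, i.e.\ $c\le1$), and you explicitly leave it unverified.

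The robust route is to apply the triangle inequality \emph{before}, not after, invoking the lemma: first transfer the anti-concentration from $x$ to $y$ at the smaller radius $\theta_0/2$. Since $x\in\mathcal S$ and $x_\nn^*=1$, for every $\lambda\in\C$ one has $|\{i:|x_i-\lambda|\le\theta_0\}|\le n-\nn$; combined with $\|x-y\|_\infty\le\sqrt 2/k\le(\sqrt 2/5)\theta_0$, the inclusion $\{i:|y_i-\lambda|\le\theta_0/2\}\subseteq\{i:|x_i-\lambda|\le\theta_0/2+\sqrt 2/k\}\subseteq\{i:|x_i-\lambda|\le\theta_0\}$ shows that $y$ itself satisfies the hypothesis of Lemma~2.2 at radius $\theta_0/2$. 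Applying the cited lemma directly to $y$ then produces disjoint $I,J$ with $|I|,|J|\ge\nn/4$ and $|y_i-y_j|\ge\theta_0/2$, requiring only the standard separation constant $c\ge1$. This also explains the hypothesis $k\ge5/\theta_0$: one only needs $\theta_0/2+\sqrt2/k\le\theta_0$, i.e.\ $k\ge2\sqrt2/\theta_0$, so the stated $k\ge5/\theta_0$ has comfortable slack. In short: until you verify that Lemma~2.2 actually delivers $c>1$, your version has a constant-factor gap; reordering the argument as above closes it unconditionally.
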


We now prove a dichotomy lemma dealing with cardinalities of $\ell$-parts.

\begin{lemma}\label{l: dichotomy ls lr}
Let $\theta_0=10/d^3$, $x\in \mathcal S$, $k\geq 2d/\theta_0$, and let $y$ be the $k$-approximation of $x$.
Then at least one of the following assertions holds.
\begin{itemize}
\item The cardinality of $\, \bigcup_j\ls_j\cup\lr_j$, where the union is taken over all $j\geq 0$ with
$h(\ls_j)+h(\lr_j)\geq 10$, is at least $\nn/8$.
\item The total cardinality of the spread $\ell$-parts in the $\ell$-decomposition with respect to $y$ is at least
$\nn/120$.
\end{itemize}
\end{lemma}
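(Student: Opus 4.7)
The plan is to prove the contrapositive: assume that the first alternative fails, and deduce the second. By Lemma~\ref{l: ell parts monotone} the quantity $h(\ls_j)+h(\lr_j)$ is non-increasing in $j$, so I set $j^{*}:=\min\{j\geq 0:\,h(\ls_j)+h(\lr_j)<10\}$; the failure hypothesis then reads $|\bigcup_{j<j^{*}}(\ls_j\cup\lr_j)|<\nn/8$. First, I would apply Lemma~\ref{l: separation in k} to produce disjoint sets $I,J\subset[n]$ with $|I|,|J|\geq\nn/4$ and $|y_i-y_{i'}|\geq\theta_0/2$ for all $i\in I$ and $i'\in J$.

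Next, I would introduce the threshold $j^{**}:=\min\{j\geq 0:\,\Lambda_j^{S}=\emptyset\}$. By the construction of $\Lambda_j^{S}$, emptiness forces the diameter of $\Lambda_{j^{**}}$ to be strictly less than $d/k$, which is at most $\theta_0/2$ by the assumption $k\geq 2d/\theta_0$; and by the nesting $\Lambda_j\subset\Lambda_{j^{**}}$ for $j\geq j^{**}$ (Lemma~\ref{l: ell parts monotone}) the same bound persists for all higher orders. Since the $I$- and $J$-values are $\theta_0/2$-separated, $\Lambda_{j^{**}}$ can contain values from at most one of the families $\{y_i:\,i\in I\}$, $\{y_i:\,i\in J\}$; swapping the roles of $I$ and $J$ if necessary, I assume the $J$-values are absent from $\Lambda_{j^{**}}$, so that $J\cap\bigcup_{j\geq j^{**}}(\ls_j\cup\lr_j)=\emptyset$.

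Combining $J\subset\bigcup_{j<j^{**}}(\ls_j\cup\lr_j)$ with $|J|\geq\nn/4$ and $|\bigcup_{j<j^{*}}(\ls_j\cup\lr_j)|<\nn/8$ yields
\[
\sum_{j^{*}\leq j<j^{**}}|\ls_j\cup\lr_j|\;\geq\;\Big|J\cap\bigcup_{j^{*}\leq j<j^{**}}(\ls_j\cup\lr_j)\Big|\;>\;\frac{\nn}{4}-\frac{\nn}{8}\;=\;\frac{\nn}{8},
\]
and in particular $j^{**}>j^{*}$. For each $j\in[j^{*},j^{**})$ the spread part is non-empty, so $h(\ls_j)\geq 2$, and $h(\ls_j)+h(\lr_j)<10$ forces $h(\lr_j)\leq 7$. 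The level-set bound~(\ref{levset}) then gives $|\ls_j|\geq 2^{j-1}h(\ls_j)\geq 2^{j}$ and $|\lr_j|<2^{j+1}h(\lr_j)\leq 14\cdot 2^{j}$ for $j\geq 1$, so $|\ls_j|>|\ls_j\cup\lr_j|/15$ (the case $j=0$, where each level set is a singleton, gives the even better ratio $2/9$). Summing and using disjointness of the $\ell$-parts,
\[
\sum_{j\geq 0}|\ls_j|\;\geq\;\sum_{j^{*}\leq j<j^{**}}|\ls_j|\;\geq\;\frac{1}{15}\cdot\frac{\nn}{8}\;=\;\frac{\nn}{120},
\]
which is the second alternative.

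I expect the main obstacle to be the \emph{swap} step: one needs to see that the emptiness of $\Lambda_{j^{**}}^{S}$ collapses the remaining value set into a cluster of diameter less than $\theta_0/2$, and then to leverage the separation in Lemma~\ref{l: separation in k} to force one of $I$, $J$ entirely into the middle band $[j^{*},j^{**})$. After that the ratio $|\ls_j|\geq|\ls_j\cup\lr_j|/15$ is a straightforward level-set computation, and the constants $\nn/8$ and $1/15$ are tight enough that the final $\nn/120$ emerges without slack.
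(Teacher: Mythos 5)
Your argument is correct, and the plan you sketch (including the ``swap'' step you flag as the main risk) goes through exactly as you intend. The two key facts you need are both available: (i) by construction, $\Lambda_j^{S}=\emptyset$ if and only if the diameter of $\Lambda_j$ is strictly less than $d/k$, so your threshold $j^{**}$ is well-defined and finite, and $\Lambda_0^S\neq\emptyset$ (hence $j^{**}\geq 1$) follows from the separation provided by Lemma~\ref{l: separation in k} together with $d/k\leq\theta_0/2$; (ii) by Lemma~\ref{l: ell parts monotone} the sets $\Lambda_j=\{y_i:\,i\in\ls_j\cup\lr_j\}$ are nested, so the diameter bound at order $j^{**}$ persists for all $j\geq j^{**}$, and ``no $J$-value in $\Lambda_{j^{**}}$'' propagates to ``$J\cap\bigcup_{j\geq j^{**}}(\ls_j\cup\lr_j)=\emptyset$''. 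The ratio $|\ls_j|\geq|\ls_j\cup\lr_j|/15$ follows from (\ref{levsethei}) uniformly in $j\geq 0$ (your separate treatment of $j=0$ is harmless but unnecessary); note your citation of (\ref{levset}) there should really be to (\ref{levsethei}).

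Compared to the paper's proof, the overall engine is the same --- Lemma~\ref{l: separation in k} to produce the $\theta_0/2$-separated sets $I,J$, the monotonicity in Lemma~\ref{l: ell parts monotone}, and the $h(\ls_j)\geq 2$, $h(\lr_j)\leq 7$ ratio argument --- but the organization is genuinely different. The paper works top-down from $j_0:=\max\{j:\,(\ls_j\cup\lr_j)\cap(I\cup J)\neq\emptyset\}$, uses the nesting of value-sets to propagate the $I$-values down through all $j\leq j_0$, and then splits those $j$ that meet $J$ into high-height ($U_1$) and low-height ($U_2$) classes, deriving nonemptiness of $\ls_j$ for $j\in U_2$ from the simultaneous presence of an $I$-value and a $J$-value at that level. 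You instead work by contrapositive with two explicit thresholds: $j^{*}$ where the cumulative height drops below $10$, and $j^{**}$ where the spread parts stop. This collapses the paper's case split into a single ``middle band'' $[j^{*},j^{**})$ and replaces the ``$I$-value and $J$-value both present $\Rightarrow$ spread part nonempty'' step with the cleaner observation that $j<j^{**}$ already forces $\Lambda_j^{S}\neq\emptyset$ by definition. Your version is somewhat tidier; the paper's version keeps the argument closer to the raw definition of $\Lambda_j^{S}$ (via separated representatives) without invoking the diameter characterization of when it is empty. Both close the argument with the same arithmetic, so the constants $\nn/8$ and $\nn/120$ land in the same place.
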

\begin{proof}
By Lemma~\ref{l: separation in k}, we can find disjoint sets $I, J\subset[n]$
of cardinality at least  $\nn/4$ such that for any $i\in I $ and any $j\in J$ one has
$
\vert y_i-y_j\vert \geq \theta_0/2\geq d/k.
$
Let $(\ls_j,\lr_j)_{j=0}^\infty$ be the $\ell$-parts of $y$,
and let $j_0$ be the largest integer $j$ such that
$$
  (\ls_j\cup\lr_j)\cap (I\cup J)\neq \emptyset.
$$
For concreteness,  assume that $(\ls_{j_0}\cup\lr_{j_0})\cap I\neq \emptyset$
(the other case is treated similarly).
By Lemma~\ref{l: ell parts monotone},
$
 \{y_i:\,i\in \ls_{j}\cup\lr_{j}\}\cap \{y_i:\,i\in I\}\neq \emptyset
$
for all $j\leq j_0$.

Consider two disjoint sets of indices,
$$
  U_1= \left\{ j\leq j_0 \, : \, (\ls_{j}\cup\lr_{j})\cap J\neq \emptyset \, \, \mbox{ and } \, \,
   h(\ls_j)+h(\lr_j)\geq 10\right\}
$$
and
$$
  U_2= \left\{ j\leq j_0 \, : \, (\ls_{j}\cup\lr_{j})\cap J\neq \emptyset \, \, \mbox{ and } \, \,
   h(\ls_j)+h(\lr_j)\leq 9\right\}.
$$
Clearly,
$$
 J\subset\bigcup_{j\in U_1\cup U_2}(\ls_{j}\cup\lr_{j}),
$$
hence either
$$
 \Big|\bigcup_{j\in U_1}(\ls_{j}\cup\lr_{j})\Big|\geq  \nn/8 \quad \mbox{ or } \quad
 \Big|\bigcup_{j\in U_2}(\ls_{j}\cup\lr_{j})\Big|\geq  \nn/8.
$$
If the first bound holds we get that the total cardinality of spread or regular $\ell$-parts of cumulative height
at least $10$ is at least $\nn/8$, i.e. the first alternative of the lemma holds. We now assume that
the second bound holds. Note that for every $j\in U_2$ we have
$$
 \{y_i:\,i\in \ls_{j}\cup\lr_{j}\}\cap \{y_i:\,i\in I\}\neq \emptyset
\, \mbox{ and }\, \{y_i:\,i\in \ls_{j}\cup\lr_{j}\}\cap \{y_i:\,i\in J\}\neq \emptyset.
$$
Using that $\vert y_a-y_b\vert \geq d/k$ for all $a\in I$, $b\in J$, by the definition of the spread $\ell$-part,
we necessarily have $h(\ls_j)\geq 2$, hence $h(\lr_j)\leq 7$. By (\ref{levsethei}) this implies
$|\ls_j|\geq |\lr_j|/14$ for every $j\in U_2$. Thus,
$$
  \Big|\bigcup_{j\in U_2}\ls_{j}\Big| \geq  \frac{1}{15} \,
  \Big|\bigcup_{j\in U_2}(\ls_{j}\cup\lr_{j})\Big|\geq  \nn/120,
$$
which implies the desired result.
\end{proof}

Lemma~\ref{l: dichotomy ls lr} allows us to prove a more elaborate  dichotomy  statement.

\begin{lemma}\label{l: dichotomy III}
Let $x\in \mathcal S$, $u\geq 4$, and let $y^{u}$ and $y^{u+1}$ be the $d^u$- and $d^{u+1}$-approximations
of $x$, respectively. For each $i\leq n$, set
$$
  J^u(i):=\{j\leq n:\,y^u_j=y^u_i\}\quad\mbox{ and }\quad J^{u+1}(i):=\{j\leq n:\,y^{u+1}_j=y^{u+1}_i\}.
$$
Then we have  the following dichotomy.
\begin{itemize}
\item Either $x\in\kapset_{u}\cup\kapset_{u+1}$,
\item or $\big|\big\{i\leq n:\,2|J^{u+1}(i)|\leq |J^{u}(i)|\big\}\big|\geq \nn/192$.
\end{itemize}
\end{lemma}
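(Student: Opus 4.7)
The plan is to prove the contrapositive: assume $|G^c| < \nn/192$, where $G^c := \{i: 2|J^{u+1}(i)| \leq |J^u(i)|\}$, and derive $x \in \kapset_u \cup \kapset_{u+1}$. Without loss of generality assume also $x \notin \kapset_u$ and aim to show $x \in \kapset_{u+1}$.

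First I would apply Lemma~\ref{l: dichotomy ls lr} to $y^u$, valid because $d^u \geq d^4 \geq 2d/\theta_0$ for $u \geq 4$. Fix $c_\kapset := 1/120$; since $x \notin \kapset_u$, the second alternative of that lemma fails, so the first gives $|A_u| \geq \nn/8$, where $A_u := \bigcup_{j:\, h(\ls_j(y^u))+h(\lr_j(y^u))\geq 10}(\ls_j(y^u)\cup\lr_j(y^u))$. For each value $\lambda \in \Lambda^u := \{y^u_i:i\leq n\}$ with $J^u_\lambda := \{i: y^u_i = \lambda\}$, call $\lambda$ \emph{dominated} when some $\mu^*(\lambda) \in \Z^2/d^{u+1}$ satisfies $|J^u_\lambda \cap J^{u+1}_{\mu^*(\lambda)}| > |J^u_\lambda|/2$. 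Partitioning $G^c$ according to the value of $y^u$ gives
\[
|G^c| \;=\; \sum_{\lambda \text{ undom.}} |J^u_\lambda| \;+\; \sum_{\lambda \text{ dom.}}\bigl(|J^u_\lambda| - |J^{u+1}_{\mu^*(\lambda)}|\bigr),
\]
so the hypothesis forces both summands to be $<\nn/192$; in particular the undominated mass inside $A_u$ is $<\nn/192$.

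The geometric core of the argument is that if $\lambda_1, \lambda_2 \in \Z^2/d^u$ satisfy $\max(|\Re(\lambda_1-\lambda_2)|, |\Im(\lambda_1-\lambda_2)|) \geq 2/d^u$, then the componentwise bound $|\mu^*(\lambda) - \lambda| \leq (d-1)/d^{u+1}$ yields $|\mu^*(\lambda_1) - \mu^*(\lambda_2)| \geq (d+1)/d^{u+1} > d/d^{u+1}$, so $\mu^*(\lambda_1), \mu^*(\lambda_2)$ satisfy the spread condition at scale $u+1$. Since an $\ell_\infty$-ball of radius $<2/d^u$ contains at most $9$ grid points of $\Z^2/d^u$, any $m \geq 10$ distinct grid points at scale $u$ admit a $(2/d^u)$-$\ell_\infty$-separated subset of size $\geq m/9$. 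Applied to each order $j$ with $|\Lambda_j^u| \geq 10$ (which is precisely what defines $A_u$), and after subtracting the $<\nn/192$-mass of undominated $\lambda$'s, this produces a set of dominated $\lambda$'s of size $\gtrsim |\Lambda_j^u|/9$ whose $\mu^*$-images are pairwise spread-separated at scale $u+1$. Because $|J^{u+1}_{\mu^*(\lambda)}| > |J^u_\lambda|/2$, these $\mu^*$-values populate the $y^{u+1}$-$\ell$-decomposition at orders close to $j$ with comparable level-set sizes, and by a greedy/lex analysis at least a positive fraction of them lie in the spread $\ell$-parts at scale $u+1$.

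Summing over the relevant orders $j$ yields spread mass at scale $u+1$ bounded below by a universal constant times $|A_u| \geq \nn/8$, exceeding $c_\kapset\nn = \nn/120$, which forces $x \in \kapset_{u+1}$ and contradicts the assumption; hence $|G^c| \geq \nn/192$. The hard part will be making this last step fully rigorous: verifying that the spread-separated $\mu^*$-images are actually selected into $\Lambda_{j'}^{u+1,S}$ by the greedy lexicographic construction (rather than being crowded out by other sub-level values at the same order), that the orders $j'$ at scale $u+1$ align with $j$ at scale $u$ up to $O(1)$, and that the accumulated constant beats $c_\kapset$. This requires careful bookkeeping of the left-to-right processing rule in the $\ell$-decomposition, as well as of the $\leq\nn/192$ mass absorbed by non-dominant and undominated sub-levels.
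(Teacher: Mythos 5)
Your outline correctly identifies the contrapositive framework, the use of Lemma~\ref{l: dichotomy ls lr} on $y^u$ with $c_\kapset \leq 1/120$, the refinement property $J^{u+1}(i)\subset J^u(i)$, and the lattice-separation geometry (your mod-$3$ / $\ell_\infty$-ball pigeonhole, and the estimate that $\|\mu^*(\lambda_1)-\mu^*(\lambda_2)\|_\infty \geq 2/d^u - (d-1)/d^{u+1} > d/d^{u+1}$). These match the paper in substance; your ``dominated $\lambda$'' bookkeeping is a reformulation of the paper's bijection $\rho:U^u\to U^{u+1}$ where $U^m:=\{J^m(i):i\in I^c\}$, and the decomposition of $|G^c|$ into undominated mass plus $\sum(|J^u_\lambda|-|J^{u+1}_{\mu^*(\lambda)}|)$ is correct.

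The gap is precisely the one you flag at the end, and it is not a finishing detail but the technical core of the lemma. Extracting a $\frac{1}{9}$-fraction of separated $\mu^*$-values at a fixed order $j$ of the $y^u$-decomposition does not by itself place those values into spread $\ell$-parts of $y^{u+1}$ with enough total mass: you must (a) pin down at which orders $j'$ they appear in the $y^{u+1}$-decomposition, (b) verify that within the order-$j'$ greedy lexicographic selection they actually get absorbed into $\ls_{j'}(y^{u+1})$ rather than being cut off, and (c) sum the resulting cardinalities with explicit constants that beat $c_\kapset\nn$ after subtracting the $<\nn/192$ loss. The paper handles (a) not by tracking individual values but by introducing the order-sets $V^m:=\{j:|\{J\in U^m:(\ls_j(y^m)\cup\lr_j(y^m))\cap J\neq\emptyset\}|\geq 10\}$, showing via Lemma~\ref{l: ell parts monotone} that $V^u,V^{u+1}$ are initial intervals in $\Z_{\geq 0}$ and, via Lemma~\ref{l: level sets basic} applied to $\rho(J)\subset J$ with $2|\rho(J)|>|J|$, that $\sup V^{u+1}\geq\sup V^u-1$; it then proves the intermediate mass bound $|\bigcup_{j\in V^u}(\ls_j(y^u)\cup\lr_j(y^u))\cap\bigcup_{J\in U^u}J|\geq\nn/18$ and passes through $\rho$ (losing at most a factor $8$ from Lemma~\ref{l: level sets basic}) to get the analogous $\geq\nn/144$ bound at scale $u+1$. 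For (b), the argument is by contradiction on the greedy construction: if $\ls_j(y^{u+1})$ had fewer than $b/9$ levels, one exhibits a value $z_a$ at $\ell_\infty$-distance $>d/d^{u+1}$ from all chosen $\lambda_i$, so the lexicographic-max selection would not have terminated. None of this is cosmetic; Lemmas~\ref{l: ell parts monotone} and~\ref{l: level sets basic} enter only here. Your proposal states the right target and the right geometry, but does not supply the order-alignment and greedy-termination argument that makes the accumulation work.
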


\begin{proof}
Let $x,y^u,y^{u+1}$ be as above and note that $d^u\geq 2d/\theta_0$ for any $u\geq 4$, in particular
we may apply Lemma~\ref{l: dichotomy ls lr} witk $k=d^u$ to the vector $y^u$. Denote
$$
 I:=\big\{i\leq n:\,2|J^{u+1}(i)|\leq |J^{u}(i)|\big\}.
$$
Assume that $x\notin \kapset_u$ and that $I<\nn/192$.
We show that $x\in \kapset_{u+1}$.

For $m=u, u+1$ denote
\begin{align*}
U^{m}&:=\{J^{m}(i):\,i\in I^c\}  \quad \mbox{ and } \\
V^m&:=\big\{j\geq 0:\,|\{J\in U^{m}:\,(\ls_j(y^{m})\cup\lr_j(y^{m}))\cap J\neq \emptyset\}|\geq 10\big\}.
\end{align*}
We first prove that
\begin{equation}\label{old-lemma}
\Big|\bigcup_{j\in V^{u+1}}(\ls_{j}(y^{u+1})\cup\lr_{j}(y^{u+1}))\cap \bigcup_{J\in U^{u+1}}J\Big|
\geq \nn/144.
\end{equation}

Note that by the definition of $k$-approximation, given $j$, $\Re\ y^u_j=\ell/d^u$ for some integer $\ell$ if and only if
$\Re\, y^{u+1}_j=\ell/d^u+ m/d^{u+1}$
for some $0\leq m < d$, and the same holds for the imaginary
parts of  $y^u_j$, $y^{u+1}_j$. This implies that $J^{u+1} (i) \subset J^{u} (i)$ for every $i$.
Thus, there exists a bijection $\rho: U^{u}\to U^{u+1}$  such that each set
$J\in U^u$ corresponds to $\rho(J)\in U^{u+1}$ with $\rho(J)\subset J$ and $2|\rho(J)|>|J|$.
Since every $J$ in $U^{u}$ is a level set, Lemma~\ref{l: ell parts monotone} implies
that the set $V^u$ is an interval in $\Z$, that is, either $V^u=\emptyset$ or
$V^u=\{0,\dots,\, \sup V^u\}$. Similarly,  $V^{u+1}$ is an interval. Moreover, if $V^u\ne\emptyset$ then
Lemma~\ref{l: level sets basic} together with the inequality $2|\rho(J)|>|J|$ implies
$$
  \sup V^{u+1}\geq \max(\sup V^{u}-1, 0)
$$

Consider the set
$$
 J_0:=\{ j\geq 0 \, : \, j\notin V^u \quad \mbox{ and } \quad h(\ls_j(y^{u}))+h(\lr_j(y^{u}))\geq 10\}.
$$
Observe that for every $j\in J_0$, the union $\ls_j(y^{u})\cup\lr_j(y^{u})$ has at least
$$
 h(\ls_j(y^{u}))+h(\lr_j(y^{u}))- 9\geq (h(\ls_j(y^{u}))+h(\lr_j(y^{u})))/10
$$
of its level sets contained entirely in $I$.
Hence, by (\ref{levset}) (see also Lemma~\ref{l: level sets basic}),
$$
 \forall j\in J_0 \quad \big|(\ls_j(y^{u})\cup\lr_j(y^{u}))\cap I\big|\geq \frac{1}{40}\, \big|\ls_j(y^{u})\cup\lr_j(y^{u})\big|.
$$
Since $x\not\in \kapset_u$, the total cardinality of the spread $\ell$-parts in the $\ell$-decomposition
with respect to $y^u$ is at most $c_\kapset\nn <  \nn/120$ provided that $c_\kapset<1/120$.
Therefore, by Lemma~\ref{l: dichotomy ls lr}, the total cardinality of spread and regular parts of cumulative height
$10$ or more, is at least
$\nn/8$. Then the last relation and the upper bound on the cardinality of $I$ yield that
$$
  \Big|\bigcup_{j\in V^u}\ls_{j}(y^u)\cup\lr_{j}(y^u)\Big|
  \geq \nn/8 -12\vert I\vert \geq \nn/16
$$
(in particular, $V^u\ne \emptyset$).
Using that $\bigcup_{J\in U^u}J\supset I^c$, we obtain
\begin{equation}\label{interm-ineq}
 \Big|\bigcup_{j\in V^u}(\ls_{j}(y^u)\cup\lr_{j}(y^u))\cap \bigcup_{J\in U^u}J\Big|
 \geq \nn/16- \vert I\vert \geq \nn/18.
\end{equation}

Next, consider a set $J\in U^u$ satisfying
$$
 L:=\bigcup_{j\in V^u}(\ls_{j}(y^u)\cup\lr_{j}(y^u))\cap J\neq\emptyset.
$$
Then $L$ is the union of level sets of $y^u$ of all orders $0, ...,\, j_0$ for some $0\leq j_0 \leq \sup V^u$.
Since the set $\rho(J)\in U^{u+1}$ has cardinality greater than  $|J|/2$, Lemma~\ref{l: level sets basic}
implies that $\rho(J)$ must contain level sets of $y^{u+1}$ of all orders $0, ...,\, \max(j_0-1,0)$
(note that necessarily $\max(j_0-1,0)\in V^{u+1}$).
Applying Lemma~\ref{l: level sets basic} again, we obtain
$$
  \Big| \bigcup_{j\in V^{u+1}}(\ls_{j}(y^{u+1})\cup\lr_{j}(y^{u+1}))
  \cap \rho(J)\Big| \geq
  \frac{1}{8}\, \Big| \bigcup_{j\in V^{u}}(\ls_{j}(y^{u})\cup\lr_{j}(y^{u}))\cap J\Big|.
$$
This together with (\ref{interm-ineq}) implies (\ref{old-lemma}).

\smallskip

Finally we show that (\ref{old-lemma}) implies that $x\in \kapset_{u+1}$.

\smallskip

Fix $j\in V^{u+1}$ and let $J^1,J^2,\dots,J^b$ ($b\geq 10$) be (distinct) elements of $U^{u+1}$,
which have a non-empty intersection with $\ls_{j}(y^{u+1})\cup\lr_{j}(y^{u+1})$.
Denote $z_a=y^{u+1}(J^{a})$ and $w_a=y^u(\rho^{-1}(J^{a})$, $a\leq b$. Since $\rho$ is a
bijection, $w_1$, ..., $w_b$ are also distinct. It will be convenient, to see elements
of those two sequences as elements of lattices $\Lambda _u:= (\Z/d^{u})^2$ and
$\Lambda _{u+1}:= (\Z/d^{u+1})^2$.  We also denote $D=[0, (d-1)/d^{u+1}]\times [0, (d-1)/d^{u+1}]$.
As we noticed above, by construction, we have $z_a \in w_a +D$ for every $a\leq b$. Now we split
$\Lambda _u$ into nine equivalence classes using the relation $(v_1, v_2) \sim (v_3, v_4)$ if and only if
$d^u(v_1-v_3)$ and $d^u(v_2-v_4)$ are divisible by 3. Let $\Lambda$ be an equivalence class such that
$|\Lambda \cup \{w_a\}_{a\leq b}|\geq b/9$. Note, if $w_a, w_{\ell}\in \Lambda$ then
$\|z_a-z_{\ell} \|_\infty\geq 2/d^u$, in particular, $\ls_{j}(y^{u+1})\ne \emptyset$.
Let $\lambda_1$, ..., $\lambda_m$, $m\leq b/9-1$, be as in the construction of $\ls_{j}(y^{u+1})$.
Then for each $i\leq m$, $\lambda _i\in \Lambda _{u+1}$ and $\lambda _i\in \mu_i +D$
for some $\mu _i\in \Lambda _{u+1}$. Let $\bar \mu_i$  be the closest
(in $\ell_\infty$-metric) to $\mu_i$ point of $\Lambda$. Since $m\leq b/9-1$, there exists
$w_a\in \Lambda \setminus \{\mu_i\}_{i\leq m}$. Then for each $i\leq m$ we have
$$
 \| w_a- \mu_i\|_\infty\geq \| w_a-\bar \mu_i\|_\infty -\| \bar \mu _i- \mu_i\|_\infty \geq 2/d^u.
$$
Since $z_a\in w_a + D$, $\lambda _i \in \mu_i +D$, we observe
$$
 | z_a- \lambda_i| \geq \|z_a- \lambda_i\|_\infty  \geq 1/d^u.
$$
This shows that the sequence $\{\lambda_i\}_{i\leq m}$ can be continued.
Thus, $\ls_{j}(y^{u+1})$, the spread $\ell$-part of order $j$ with respect to $y^{u+1}$,
must comprise at least $b/9$ levels (i.e., its height is at least $b/9$).
Then, applying estimates for cardinalities of individual level sets (\ref{levset}),
we obtain
$$
  |\ls_j(y^{u+1})|\geq \frac{1}{4}\cdot\frac{1}{9}\,
  \Big|(\ls_{j}(y^{u+1})\cup\lr_{j}(y^{u+1}))\cap \bigcup_{J\in U^{u+1}}J\Big|.
$$
Taking the union over all $j\in V^{u+1}$ and choosing small enough $c_\kapset$,
we obtain the desired result.
\end{proof}

We are now ready to prove Theorem~\ref{kappa and rho}.

\begin{proof}[Proof of Theorem~\ref{kappa and rho}]
Fix a vector $x\in\mathcal S$, and
assume that $x\notin \bigcup_{u=4}^v\kapset_u$. We  show that $x\in\rhoset_v$.
For every $u\geq 4$, let $y^u$ be the $d^u$-approximation of $x$.
For $i\leq n$ and  $u\geq 4$ let
$$
 J^{u}(i):=\{j\leq n:\,y^u_j=y^u_i\}
\quad \mbox{ and } \quad
 I^u:=\big\{j\leq n:\,2|J^{u+1}(j)|\leq |J^u(j)|\big\}.
$$
The assumption that $x\notin \bigcup_{u=4}^v\kapset_u$, together with Lemma~\ref{l: dichotomy III},
implies that $|I^u|\geq \nn/192$ for $4\leq u<v$.
Define an auxiliary integer vector $a=(a_i)_{i=1}^n$ by setting for $i\leq n$,
$$
 a_i:=\big|\big\{4\leq u<v:\,i\in I^u\big\}\big|.
$$
The lower bound on cardinalities of sets $I^u$ implies that
$$\sum_{i=1}^n a_i\geq (v-4)\nn/192.$$
On the other hand, clearly $a_i\leq v-4$ for all $i\leq n$.
Recall $\nn=\lfloor \aaa n\rfloor$. Let
$$
  J:=\{i\leq n \, : \, a_i\geq (v-4)\nn/(384 n)\}.
$$
Then
$$
 (v-4)\nn/192\leq \sum_{i=1}^n a_i\leq |J|  (v-4) + (n-|J|) (v-4)\aaa/384,
$$
which implies
$$
 |J| \geq (\nn/192 - \aaa n/384)/(1-\aaa/384)\geq \nn/400.
$$
By the definitions of $I^u$ and $a_i$'s, we have for every $i\in J$,
$
 |J^v(i)|\leq 2^{-(v-4)\aaa/384 }\, n,
$
 hence, by Lemma~\ref{l: ell parts monotone}, in the $\ell$-decomposition of $y^v$,
 any regular or spread $\ell$-part of order
$$
 j > j_0 := \lfloor \log _2(2^{-(v-4)\aaa/384}n)\rfloor + 1
$$
does not have a non-empty intersection with $J$. Thus, we obtain
$$
  \Big|\bigcup_{j\geq 0}\ls_j(y^v)\cup\lr_j(y^v)\Big| =
  \Big|\bigcup_{j=0}^{j_0}\ls_j(y^v)\cup\lr_j(y^v)\Big| \geq |J|\geq \nn/400.
$$
Finally, since by (\ref{levsethei}) any regular or spread $\ell$-part of order $j$ and of height at most $h$ has cardinality at most
$2^{j+1} h$,  the last relation yields  for every positive integer
$h$,
\begin{align*}
 \Big|&\bigcup_{j\geq 0:\,h(\ls_j(y^v))\geq h}\ls_j(y^v)\;\cup
 \;\bigcup_{j\geq 0:\,h(\lr_j(y^v))\geq h}\lr_j(y^v)\Big|\\
 &\geq \nn/400 - \Big|\bigcup_{\substack{j\leq j_0:\, h(\ls_j(y^v))< h}}\ls_j(y^v)\;\cup
 \;\bigcup_{\substack{j\leq j_0:\, h(\lr_j(y^v))< h}}\lr_j(y^v)\Big|\\
 &\geq \nn/400 - 2 \cdot 2^{j_0+2} (h-1)
 \geq \nn/400 - h\cdot 2^{4-(v-4)\aaa/384}n.
\end{align*}
Choosing $h=2^{(v-4)\aaa/384 } \aaa/(400\cdot2^5)$, we get the result with $c_\rhoset= 1/(400\cdot2^5)$.
\end{proof}

\section{A small ball probability theorem}
\label{s:small ball}

Let $\KK\subset [n]$ and $M$ be the random matrix uniformly distributed on $\MSet_{n,d}$.
The purpose of this section is to study anti-concentration properties of a random vector
of the form $M^{\KK}y+\xyzv$, where $y$ is a fixed $k$-vector and $\xyzv$ is a fixed vector in $\C^{|\KK|}$.
The high-level idea is to replace the random vector $M^{\KK} y$, whose distribution is
difficult to describe due to
dependencies within $M^{\KK}$, by a ``simpler'' random vector $Z=(Z_i)_{i\in \KK}$
whose anti-concentration properties can be studied with the help of standard tools. The construction of $Z$
will be done in such a way that we will be able to pass from estimates for $Z$ back to $M^{\KK} y$ by conditioning on a certain event
of not too small probability.
The actual proof is technical, and even stating the main result of the section requires some preparatory work.
Instead of working with the probability space $\MSet_{n,d}$, we will split it into certain equivalence classes
(the splitting will depend on the structure of the vector $y$, more precisely, on the partition of $[n]$ given
by the $\ell$-decomposition of $y$),
and study the conditional anti-concentration.
The probability estimate will be given as a function of the $\ell$-decomposition.
As we mentioned in the introduction, this argument is related to the LCD-based method of Rudelson
and Vershynin \cite{RV} which in turn was strongly influenced by earlier works on singularity of
discrete random matrices
\cite{KKS95, TV bernoulli, TV ann math}.
A principal difference of our approach is that the $\ell$-decomposition, being a ``multidimensional'' characteristic
of a vector, provides much more structural information than LCD. This structural information
is heavily used in this part of the paper.

We start by introducing a structure on $\MSet_{n,d}$.
For each $m\leq n$, let $\RSet_{n,m,d}$ be the set of $n\times m$ matrices with  integers coefficients
from the set $\{0, 1, \dots, d\}$ such that \\
{\bf 1.} the sum in each row is $d$, and \\
{\bf 2.} the sum in every column is a non-negative integer multiple of $d$.

\smallskip

Now, for every $k$-vector $y$ with the
$\ell$-decomposition $(\lset^{(q)})_{q=1}^m$, where
$$
 m=m(y)\leq 3 \ln n,
$$
we define the mapping $\stackrel{(y)}{\longrightarrow}$ from $\MSet_{n,d}$ into $\RSet_{n,m,d}$,
which assigns to each matrix $M\in \MSet_{n,d}$ an $n\times m$ matrix $\RR=(\RR_{iq})_{iq}\in \RSet_{n,m,d}$ defined by
\begin{equation*}
 \forall i\leq n \,\, \, \forall q\leq m \, \, \, \,\, \,  \RR_{iq}:=\sum\limits_{j\in \lset^{(q)}}M_{ij},
\end{equation*}
that is, the matrix $Q$ is obtained from $M$ by summing up respective columns.
This mapping defines an equivalence relation $\stackrel{y}{\sim}$ on $\MSet_{n,d}$,
where $M\stackrel{y}{\sim}M'$ whenever both $M$ and $M'$ are mapped to the same element of $\RSet_{n,m,d}$.
Further, a given matrix $\RR\in \RSet_{n,m,d}$, we  denote by $\MSet_{n,d}(\RR,y)$ the equivalence
class of matrices in $\MSet_{n,d}$, which are mapped to $\RR$ via the correspondence
$\stackrel{(y)}{\longrightarrow}$.
If $\MSet_{n,d}(\RR,y)\ne \emptyset$ then the uniform probability measure on $\MSet_{n,d}(\RR,y)$
will be denoted by $\Prob_{\RR,y}$.

\smallskip

For the rest of this section we fix integers $k\geq 1$, $m\geq 1$, and a vector $y\in\kset_k$ with the
$\ell$-decomposition $(\lset^{(q)})_{q=1}^m$.
Let $\RR\in\RSet_{n,m,d}$ be such that there exists $M\in \Mc$ which is mapped to $\RR$ by $\stackrel{(y)}{\longrightarrow}$,
in particular, for every $q\leq m$ one has
\begin{equation}\label{sumqiq}
 \sum_{i=1}^n \RR_{iq} = d\, |\lset^{(q)}|.
\end{equation}
In what follows, such matrices $\RR$ will be called {\it $y$-admissible}.
Denote $\hq:=h(\lset^{(q)})$.
For all $i\leq n$ and $q\leq m$, we define {\it the weight} $w_{iq}$ by
\begin{equation}\label{weights}
 w_{iq}=w_{iq}(y,k,\RR):=
 \begin{cases} \hq{\RR_{iq}/d},
 &\mbox{if $\lset^{(q)}$ is regular},\\
 \hq\sqrt{\RR_{iq}},&\mbox{if $\lset^{(q)}$ is spread}.
 \end{cases}
\end{equation}
Now, given $i\leq n$, {\it the small ball probability estimator} $\est_i$ is
$$
  \est_i=\est_i(y,k,\RR):=\min\big(1,\min\limits_{q\leq m}w_{iq}^{-1}\big),
$$
where we adopt the convention $0^{-1}=\infty$. The estimators $\est_i$ are designed to
measure anti-concentration of inner products $\langle\row_i(M),y^\dagger\rangle$, for $M$ distributed in $\MSet_{n,d}(\RR,y)$.
We prove the following theorem.

\begin{theor}[Small ball probability]\label{sbp th}
Let $d,n$ be large enough integers such that $d^3\leq n$.   Let
 $\KK\subset [n]$ be such that $\vert \KK^c\vert \leq n/(50 \ln d)$
and assume
$$
 1\le k\le \min\big( \sqrt{n}/(8d^{3/2} \sqrt{\ln d}),\, d^{-10} e^{n/(5\vert \KK^c\vert)}\big).
$$
Let $y$ and $Q$ be as above. Then
for any non-random vector $\xyzv\in \C^{\vert \KK\vert}$ and any $\gamma\geq 1$ one has
$$\Prob_{\RR,y}\bigl\{M\in \MSet_{n,d}(\RR,y):\,
\|M^{\KK} y+\xyzv\|_2\leq \gamma \sqrt{d\vert \KK\vert }/k\bigr\}\leq C^n
\gamma^{2\vert \KK\vert}\prod_{i=1}^n\est_i,$$
where $C>0$ is a universal constant.
\end{theor}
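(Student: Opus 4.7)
\medskip

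\noindent\textbf{Proof sketch for Theorem~\ref{sbp th}.}
The plan is to reduce the conditional small ball probability for $M^\KK y+\xyzv$ on the class $\MSet_{n,d}(\RR,y)$ to tensorized L\'evy concentration estimates for an auxiliary vector with jointly independent coordinates. This follows the overall scheme outlined in the introduction.

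First I would set up the conditioning. Write $\la\row_i(M),y^\dagger\ra=\sum_{q=1}^m\sum_{j\in\lset^{(q)}}M_{ij}\bar y_j$. On $\MSet_{n,d}(\RR,y)$ we know exactly how many ones each row $i$ places in each block $\lset^{(q)}$ (namely $\RR_{iq}$ ones, by definition of $\stackrel{(y)}{\longrightarrow}$), and the column sums over each block are fixed. Thus under $\Prob_{\RR,y}$, the contributions from different $\ell$-parts $\lset^{(q)}$ decouple combinatorially, though rows are still globally constrained to be $0/1$ with row-sum $d$ and column-sum $d$.

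Next I would replace this constrained model by an independent one via the configuration model, as indicated in the introduction. Consider a random bipartite multigraph whose degree sequence is prescribed by $\RR$ on the block side (block $q$ has $d|\lset^{(q)}|$ half-edges, partitioned among rows according to the column $(\RR_{iq})_i$) and degree $d$ on the row side. Define a random $n$-vector $Z=(Z_i)_{i=1}^n$ as follows: independently for each $i\leq n$ and each $q\leq m$, sample a multiset of $\RR_{iq}$ values $\zeta^{(q)}_{i,1},\dots,\zeta^{(q)}_{i,\RR_{iq}}$ drawn i.i.d.\ uniformly from $\{\bar y_j:j\in\lset^{(q)}\}$ (weighted by the multiplicities of each level in $\lset^{(q)}$), and set $Z_i:=\sum_{q,s}\zeta^{(q)}_{i,s}$. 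The components of $Z$ are jointly independent. Using McKay's enumeration bounds \cite{McKay-simple}, the conditional law of $(\la\row_i(M),y^\dagger\ra)_{i=1}^n$ given $\Event_\RR$ coincides with the law of $Z$ conditioned on the event ``the underlying multigraph is simple and row-sums equal $d$,'' whose probability is bounded below by $e^{-Cn}$ under the assumptions on $d$ and the sparsity of the column sums (using the range of $k$ to control total degree).

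Then I would prove individual small ball estimates for each $Z_i+\xyzv_i$ using the $\ell$-decomposition. Write $Z_i=\sum_{q=1}^m Z_{iq}$ where $Z_{iq}=\sum_{s=1}^{\RR_{iq}}\zeta^{(q)}_{i,s}$ is a sum of $\RR_{iq}$ independent two-dimensional random variables taking values on the lattice $\frac{1}{k}(\Z^2)$. By independence across $q$, $\cf(Z_i+\xyzv_i,t)\le\min_q\cf(Z_{iq},t)$. For a \emph{regular} $\lset^{(q)}$, each $\zeta^{(q)}_{i,s}$ takes any fixed value with probability at most $C/\hq$ (since $\lset^{(q)}$ has $\hq$ levels of comparable size by (\ref{levset})), so Proposition~\ref{density} applied after smoothing (or Proposition~\ref{prop: esseen}) yields $\cf(Z_{iq},\sqrt{d}/k)\le C/(\hq\RR_{iq}/d)=C/w_{iq}$. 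For a \emph{spread} $\lset^{(q)}$, the $\hq\ge 2$ levels are $d/k$-separated, so Proposition~\ref{p:complexLevy} (applied with $\varepsilon=d/k$ and $u\le C/\hq$) gives $\cf(Z_{iq},\sqrt{d}/k)\le C\max\bigl(1/(\hq\RR_{iq}),1/\hq\bigr)\le C/(\hq\sqrt{\RR_{iq}})=C/w_{iq}$ (using $\RR_{iq}\le d$). Taking minimum over $q$ and capping at $1$ yields $\cf(Z_i+\xyzv_i,\sqrt{d}/k)\le C\,\est_i$. Rescaling the radius, for every $\varepsilon\ge \sqrt{d}/k$,
\[
 \Prob_{\RR,y}\bigl\{|\la \row_i(M),y^\dagger\ra+\xyzv_i|\le\varepsilon\bigr\}\le C\varepsilon^2k^2\,\est_i/d.
\]

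Finally I would tensorize with Lemma~\ref{lem-tensorization} applied to the $|\KK|$ random variables $\xi_i:=\la\row_i(M),y^\dagger\ra+\xyzv_i$, $i\in\KK$, with $p_i=Ck^2\est_i/d$ and $\varepsilon_0=\sqrt{d}/k$. This bounds the probability of $\|M^\KK y+\xyzv\|_2\le \gamma\sqrt{d|\KK|}/k$ by $(C'\gamma)^{2|\KK|}(k^2/d)^{|\KK|}\prod_{i\in\KK}\est_i\cdot(d/k^2)^{|\KK|}=(C'\gamma)^{2|\KK|}\prod_{i\in\KK}\est_i$; multiplying by the $e^{Cn}$ cost of the conditioning (simple-graph event) in the configuration model comparison and using $\est_i\le 1$ to insert the missing factors $\prod_{i\notin\KK}\est_i$ gives the claimed $C^n\gamma^{2|\KK|}\prod_{i=1}^n\est_i$.

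The main obstacle I expect is the rigorous passage from the conditional law on $\MSet_{n,d}(\RR,y)$ to the independent vector $Z$, specifically obtaining a comparison constant that is only exponential in $n$: this requires controlling both the multigraph-to-simple-graph loss via McKay \cite{McKay-simple} and the ``row-sum equals $d$'' conditioning in the configuration model, and it is where the upper bound on $k$ (the condition $k\le d^{-10}e^{n/(5|\KK^c|)}$, which caps the granularity of $y$) and on $|\KK^c|$ enter. A secondary subtlety is handling spread $\ell$-parts with $\RR_{iq}$ small: when $\RR_{iq}\le 1$ the sum has no genuine anti-concentration from Proposition~\ref{p:complexLevy}, so one must fall back on the trivial bound $\est_i\le 1$, which is exactly why the definition of $\est_i$ takes the outer minimum with $1$.
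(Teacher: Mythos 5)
Your overall scheme matches the paper's: replace $M^\KK y$ by an auxiliary vector $Z$ with jointly independent coordinates via a configuration-type (bipartite multigraph) model, prove individual L\'evy-concentration bounds for $Z_i$ from the $\ell$-decomposition, tensorize with Lemma~\ref{lem-tensorization}, and absorb the cost of the conditioning (multinomial and simple-graph events) into a factor $e^{Cn}$. However, two steps in your proposal are genuinely wrong as written.

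First, the individual bound for a spread $\ell$-part does not follow from Proposition~\ref{p:complexLevy}. With $m=\RR_{iq}$, $u\asymp 1/\hq$, $\varepsilon=d/k$ and $t=\sqrt{d}/k$ you get
\[
 \cf\Big(\sum_{s}\zeta^{(q)}_{i,s},t\Big)\leq C\max\Big(\frac{ut^2}{m\varepsilon^2},\,u\Big)
 =C\max\Big(\frac{1}{\hq\RR_{iq}d},\,\frac{1}{\hq}\Big)=\frac{C}{\hq},
\]
which is \emph{larger} than $C/(\hq\sqrt{\RR_{iq}})=C/w_{iq}$ whenever $\RR_{iq}>C^2$. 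Your inequality $\max(1/(\hq\RR_{iq}),1/\hq)\le C/(\hq\sqrt{\RR_{iq}})$ points in the wrong direction and is false for $\RR_{iq}$ of order $d$. To obtain the $1/\sqrt{\RR_{iq}}$ gain from the spread part one must instead invoke a multidimensional Kesten-type bound --- Proposition~\ref{prop: esseen} when $\hq$ is bounded and Proposition~\ref{prop: complex-kesten} when $\hq$ is large --- which is what the paper does in Lemma~\ref{single row lemma}; Proposition~\ref{p:complexLevy} cannot produce the factor $1/\sqrt{m}$ once the target radius is as small as the separation scale.

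Second, the final ``using $\est_i\le 1$ to insert the missing factors $\prod_{i\notin\KK}\est_i$'' step is backwards. Tensorization gives you an upper bound $C^n\gamma^{2|\KK|}\prod_{i\in\KK}\est_i$, and since each $\est_i\le 1$ one has $\prod_{i\in\KK}\est_i\geq\prod_{i=1}^n\est_i$; so inserting the factors $\est_i$ for $i\notin\KK$ \emph{decreases} the right-hand side and the desired inequality does not follow. What one actually needs is the reverse control $\prod_{i\in\KK^c}\est_i^{-1}\le e^{n}$, which is a nontrivial structural estimate (Lemma~\ref{lem: est-K^c}). It is precisely here --- not in the multigraph-to-simple-graph comparison, as you suggest --- that the hypothesis $k\le d^{-10}e^{n/(5|\KK^c|)}$ enters: if $y$ had $\ell$-parts of very large weight concentrated on a few rows in $\KK^c$, removing those rows could genuinely raise the small ball probability, so the upper bound on $k$ (which limits the heights of the $\ell$-parts) is what makes the extrapolation from $\prod_{i\in\KK}$ to $\prod_{i=1}^n$ cost only a factor $e^n$.
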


\smallskip

The main difficulty in proving Theorem~\ref{sbp th} lies in the fact that the
rows of the random matrix uniformly disctributed on $\Mc$ are dependent.
To deal with this issue, we construct a special random vector $Z$ in $\C^{\vert \KK\vert}$
with {\it independent} coordinates
having a property that, conditioned on a certain event of not too small probability, it
has the same distribution as $M^\KK y$.

Let $y$ and $K$ be as in the theorem.
Recall that by the definition each $\ell$-part $\lset^{(q)}$ is representable
as the union of level sets of $y$,
\begin{equation} \label{hoflq}
 \lset^{(q)}=\bigcup\limits_{p=1}^{\hq}L^q_p,
\end{equation}
 where we  assume for concreteness
that $y(L^q_{p+1})<y(L^q_{p})$ (in lexicographical order) for all $p<\hq$.
For each $q\leq m$, we define the set of pairs
$$
  \Delta_q:=\bigl\{(i,w)\,\, :\,\, 1\leq i\leq n,\, 1\leq \RR_{iq}, \, 1\leq w\leq \RR_{iq}\bigr\}.
$$
Then $|\Delta_q|=\sum\limits_{i=1}^n \RR_{iq}=d|\lset^{(q)}|$. Further, let
$$\bigl\{\xi_{\delta}^q\, :\, 1\leq q\leq m,\, \delta\in\Delta_q\bigr\}$$
be a collection of jointly independent random variables, where
each $\xi_{\delta}^{q}$ is distributed in the set $\{1,2,\dots,\hq\}$ in such a way that
 for all $p\leq \hq$,
$$
  \Prob\bigl\{\xi_{\delta}^{q}=p\bigr\}=\frac{|L^q_{p}|}{|\lset^{(q)}|}.
$$
Define random variables $Z_{i}$, $i\in \KK$,  as
\begin{equation}\label{eq-defi-Zi}
 Z_i:=\sum\limits_{q=1}^m\sum\limits_{w=1}^{\RR_{iq}}y\Bigl(L^q_{\xi^q_{(i,w)}}\Bigr)
\end{equation}
and set $Z:=(Z_{i})_{i\in \KK}$. Note that each variable $Z_i$ is a function of
$$
 \bigl\{\xi^q_{(i,w)}\, :\, 1\leq q\leq m,\, 1\leq w\leq \RR_{iq}\bigr\},
$$
and those sets of variables are clearly disjoint for distinct $i$'s, hence $(Z_{i})_{i\in \KK}$
are jointly independent. Since each $Z_i$ is a sum of discrete complex-valued random variables,
we can apply Proposition~\ref{p:complexLevy} to study its
anti-concentration properties.
As we  show below, the conditional distribution of $Z$ given a certain event of not too small probability,
coincides with the distribution of $A^Ky$, where $A^K$ is a ``multigraph'' version of $M^K$ in which
we allow entries greater than one (i.e., multiple edges). This correspondence will be made precise later,
as the first step we define and estimate the probability of the event to be conditioned on.

\begin{claim}\label{c:multinomial}
Let $h,\, N, N_1,\,\ldots,\,N_h$ be positive integers satisfying
$\sum_{p=1}^h N_p=N$. Let $\xi_1$, $\xi_2,\dots$, $\xi_N$ be i.i.d.\ random variables taking values in the set
$\{1,2,\dots,h\}$ with probabilities
$\Prob\{\xi_i=p\}=N_p/N$ for all $p\leq h$. Then
$$
  \Prob\big\{ \forall p\leq h: \, \, \, \,  |\{i\le N\,:\,\xi_i=p\}|=N_p \big\}\geq (h/(Ne^2))^{h/2}.
$$
\end{claim}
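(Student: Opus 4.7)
The plan is to compute the probability in question exactly (it is a multinomial coefficient weighted by the point probabilities) and then apply Stirling-type bounds together with AM--GM to derive the stated lower bound.

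First I would observe that by definition of the multinomial distribution,
$$
\Prob\big\{\forall p\leq h:\;|\{i\le N\,:\,\xi_i=p\}|=N_p\big\}
= \frac{N!}{N_1!\,N_2!\cdots N_h!}\,\prod_{p=1}^{h}\Big(\frac{N_p}{N}\Big)^{N_p}
= \frac{N!}{N^{N}}\prod_{p=1}^{h}\frac{N_p^{N_p}}{N_p!}.
$$
Thus the question reduces to a purely combinatorial lower bound.

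Next I would plug in the standard Stirling estimates, namely
$N!\geq \sqrt{2\pi N}\,(N/e)^{N}$ and $N_p!\leq e\sqrt{N_p}\,(N_p/e)^{N_p}$ (valid for all $N_p\geq 1$). The first gives $N!/N^{N}\geq \sqrt{2\pi N}\,e^{-N}$, while the second yields $N_p^{N_p}/N_p!\geq e^{N_p-1}/\sqrt{N_p}$. Multiplying these bounds and using $\sum_{p=1}^{h}N_p=N$, the exponentials collapse and I obtain
$$
\Prob\big\{\forall p\leq h:\;|\{i\le N\,:\,\xi_i=p\}|=N_p\big\}\geq \sqrt{2\pi N}\,e^{-h}\,\prod_{p=1}^{h}\frac{1}{\sqrt{N_p}}.
$$

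The final step is to control the remaining product $\prod_p N_p^{-1/2}$. By the AM--GM inequality applied to $N_1,\dots,N_h$ (whose sum is $N$), one has $\prod_{p=1}^{h}N_p\leq (N/h)^{h}$, so $\prod_{p=1}^{h}N_p^{-1/2}\geq (h/N)^{h/2}$. Combining this with the previous display and discarding the factor $\sqrt{2\pi N}\geq 1$ produces
$$
\Prob\big\{\forall p\leq h:\;|\{i\le N\,:\,\xi_i=p\}|=N_p\big\}\geq e^{-h}\,(h/N)^{h/2}=\bigl(h/(Ne^2)\bigr)^{h/2},
$$
which is the desired inequality. No step here is a genuine obstacle; the only thing to be careful about is the direction in which Stirling and AM--GM are applied (we need a \emph{lower} bound on $N!$ and on $\prod N_p^{-1/2}$, i.e.\ an \emph{upper} bound on $\prod N_p$), but these are the standard forms of both inequalities.
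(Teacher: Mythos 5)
Your proof is correct and follows essentially the same route as the paper: write the probability as a multinomial coefficient times the product of point probabilities, bound $N!$ below and each $N_p!$ above via Stirling so that the exponentials cancel, and finish with AM--GM on $\prod N_p \leq (N/h)^h$. The only cosmetic difference is that you keep the $\sqrt{2\pi N}$ factor from Stirling before discarding it, whereas the paper uses the cleaner bound $N! > (N/e)^N$ directly.
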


\begin{proof}
Denote the event $\big\{ \forall p\leq h: |\{i\le N:\xi_i=p\}|=N_p \big\}$ by $\Event$. Note that the random variables
$$
\eta_p :=|\{i\le N\,:\,\xi_i=p\}|, \, \,  p\leq h,
$$
have a multinomial distribution. Since
$(n/e)^n< n!\le e\sqrt{n}(n/e)^n$, we have
$$
\Prob(\Event)=\frac{N!}{N_1!\cdots N_h!}\prod_{p=1}^h\Big(\frac{N_p}{N}\Big)^{N_p}
> \frac{(N/e)^N\prod_{p=1}^h{N_p}^{N_p}}{N^{N}\prod_{p=1}^h e\sqrt{N_p}(N_p/e)^{N_p}}
=1/\prod_{p=1}^h e\sqrt{N_p}.
$$
The arithmetic-geometric mean inequality $\prod_{p=1}^h {N_p}\le (N/h)^h$ implies the bound.
\end{proof}

\begin{lemma}\label{lem-multinomial}
Let $d\leq n$ be large enough positive integers, $k\le  \sqrt{n}/(8d^{3/2} \sqrt{\ln d})$, $y\in\kset_k$, and  $\{\xi^q_\delta\}$ be as above.
Define the event
$$
 \Event_{\ref{lem-multinomial}}:=\bigcap_{q\leq m}\big\{  \forall p\leq \hq: \, \, \, \,  \vert\{\delta\in\Delta_q:\,
 \xi^q_\delta =p\}\vert =d\vert L^q_p\vert \big\}.
$$
Then $\,\Prob(\Event_{\ref{lem-multinomial}}) \geq e^{-n}$.
\end{lemma}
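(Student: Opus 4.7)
The plan is to exploit the independence of the variables $\xi^q_\delta$ across different $q$'s (by construction, the collections $\{\xi^q_\delta\}_{\delta\in\Delta_q}$ are defined on disjoint index sets, so they are jointly independent), and then apply Claim~\ref{c:multinomial} to each $q$ separately. Specifically, for fixed $q$, the event
$$\Event_q := \bigl\{\forall p\leq \hq:\,|\{\delta\in\Delta_q:\,\xi^q_\delta=p\}|=d|L^q_p|\bigr\}$$
depends only on the $|\Delta_q|=d|\lset^{(q)}|$ i.i.d.\ variables $\xi^q_\delta$, $\delta\in\Delta_q$, and these variables satisfy $\Prob\{\xi^q_\delta=p\}=|L^q_p|/|\lset^{(q)}|$. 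Writing $\Event_{\ref{lem-multinomial}}=\bigcap_q \Event_q$ and using independence,
$$\Prob(\Event_{\ref{lem-multinomial}})=\prod_{q=1}^{m(y)}\Prob(\Event_q).$$

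Applying Claim~\ref{c:multinomial} with $N=d|\lset^{(q)}|$, $h=\hq$, and $N_p=d|L^q_p|$ (the hypotheses match perfectly), I would obtain
$$\Prob(\Event_q)\geq \left(\frac{\hq}{d|\lset^{(q)}|e^2}\right)^{\hq/2}.$$
Taking $-\ln$ of the product, using the elementary inequality $x\ln(y/x)\leq y/e$ for $0<x\leq y$, and the fact that $\sum_q|\lset^{(q)}|=n$, yields
$$-\ln\Prob(\Event_{\ref{lem-multinomial}})\leq \frac{1}{2}\sum_{q=1}^m \hq\ln\!\left(\frac{de^2|\lset^{(q)}|}{\hq}\right)\leq \frac{H(2+\ln d)}{2}+\frac{n}{2e},$$
where $H:=\sum_q \hq$ is the total number of level sets in the $\ell$-decomposition. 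It therefore suffices to show that $H(2+\ln d)\leq (2-1/e)n$.

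The main obstacle, and the step requiring the most care, is controlling $H$. The key leverage is that $y\in\kset_k$ with $k\leq \sqrt{n}/(8d^{3/2}\sqrt{\ln d})$: combined with the normalization $y^*_{\nn}\in[1-\sqrt{2}/k,\,1+\sqrt{2}/k]$ and the gradual decay bounds (in particular $y^*_{n_2}\lesssim d^{3/2}$, $y^*_{\nn}\lesssim 1$, inherited from $y\notin\st$ and Lemma~\ref{l:decay}), the set $\{y_i\}_i\subset \Z^2/k$ can attain only $V$ distinct values, with $V$ bounded by a sum of lattice point counts over the blocks $I_0,\dots,I_{r+3}$ from the proof of Lemma~\ref{l:equiv classes}. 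The dominant contribution comes from $I_{r+3}$ where $|y_i|\lesssim d^{3/2}$, giving at most $O(k^2d^3)\leq O(n/\ln d)$ distinct values; the other blocks (either constrained by their lattice volume or by the small block size $p^i$, $n_1$, $n_2$) contribute lower-order terms. Hence $V\lesssim n/\ln d$.

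Finally, Lemma~\ref{l: level sets basic} gives $H\leq V+\sum_\lambda\log_2 N_\lambda$, and concavity of $\log$ together with $\sum_\lambda N_\lambda\leq n$ yields $H\leq V\log_2(2n/V)$. Substituting the bound on $V$ and using that $d$ is sufficiently large (so $2+\ln d\leq 2\ln d$), a direct computation shows $H(2+\ln d)\leq (2-1/e)n$, so that
$$-\ln \Prob(\Event_{\ref{lem-multinomial}})\leq n,$$
as required. The crux of the argument is the sharp lattice-counting step for $V$; without the constraint on $k$ the bound would not hold, since, for example, $y$ could have up to $n$ distinct values.
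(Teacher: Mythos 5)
Your proof mirrors the paper's through the estimate
$$
-\ln\Prob(\Event_{\ref{lem-multinomial}})\;\leq\;\tfrac12\,H\,(2+\ln d)+\tfrac{n}{2e},
$$
where $H=\sum_q\hq$ is the total number of level sets; both you and the paper reduce the lemma to showing $H(2+\ln d)\leq(2-1/e)n$. The gap is the "direct computation" claimed at the very end. Your bound $H\leq V\log_2(2n/V)$ with $V\lesssim n/\ln d$ does not give this. Since $V\mapsto V\log_2(2n/V)$ is increasing on the relevant range, substituting $V\lesssim n/\ln d$ yields
$$
H(2+\ln d)\;\lesssim\;\frac{n}{\ln d}\,\log_2\!\big(\ln d\big)\cdot(2+\ln d)\;\approx\;n\log_2\ln d,
$$
which is already well above $(2-1/e)n\approx 1.63n$ for moderate $d$ (plugging in the actual constant $V\leq 0.6\,n/\ln d$ produces bounds like $3n$--$5n$ over the whole range), and in fact diverges as $d\to\infty$; note that the only constraint $k\geq 1$ forces $d^3\ln d\lesssim n$, so $d$ may grow as a power of $n$ and $\log_2\ln d$ is unbounded. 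So the final inequality is simply false under your bound on $H$.

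The paper closes the argument with the sharper claim $H\leq n_2+\big((2k+1)\cdot 2d^{3/2}\big)^2\leq n/(2\ln d)$, i.e., $H$ is bounded by $n_2$ plus the lattice-point count of admissible values \emph{without} your $\log_2(2n/V)$ multiplier, and that is exactly what makes $H(2+\ln d)\leq n/2+n/\ln d<(2-1/e)n$ work. So the two proofs diverge at precisely the spot you identified as the crux: the numerical bound on $H$. You should compare your estimate against the paper's very carefully here, because the bound $H\leq V\log_2(2n/V)$ you derived from Lemma~\ref{l: level sets basic} does look like the correct order of magnitude (a value of multiplicity $N_\lambda$ generates about $\log_2N_\lambda+2$ level sets, $\sum_\lambda N_\lambda=n$, and the number of values is $\lesssim n/\ln d$, so the typical multiplicity is $\approx\ln d$ and each value contributes $\Theta(\log\ln d)$ level sets). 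The paper offers no justification for dropping this logarithmic factor in the bound $H\leq n_2+V$. Either there is a separate argument controlling the multiplicities that keeps $H$ down to $O(V)$ (in which case you need to find and include it), or the conclusion of the lemma should be weakened to something like $e^{-Cn\log\log d}$ --- which, on inspection of how the $e^n$ factor is used in Theorem~\ref{sbp th} and absorbed by the $d^{-cn}$ gains in Section~\ref{s:proof}, would still be strong enough downstream.
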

\begin{proof}
Let $H=\sum_{q=1}^m \hq$ be the total number of level sets of $y$.
Since $y$ is the $k$-approximation of a gradual vector, we have
$y_{n_2}^*\leq 2d^{\frac{3}{2}}$, with $n_2$
defined in Section~\ref{steep}.
Therefore, using the assumption on $k$,
$$
 H\leq n_2+ ((2k+1)\, 2d^{\frac{3}{2}})^2\leq n/(2 \ln d).
$$

By the independence of $\xi^q_\delta$, $q\leq m$, and by Claim~\ref{c:multinomial} applied
for every $q\leq m$ with  $N= d|\lset^{(q)}|$ and $N_p= d |L^q_{p}|$, $p\leq \hq$, we get
\begin{align*}
\Prob(\Event_{\ref{lem-multinomial}}) &\geq \prod_{q=1}^m\Big(\frac{\hq}{d|\lset^{(q)}| e^2}\Big)^{\hq/2}
=(e^2 d)^{-H/2}\prod_{q=1}^m\Big(\frac{\hq}{|\lset^{(q)}|}\Big)^{\hq/2}\\
&\ge (e^2 d)^{-n/(4\ln d)}\prod_{q=1}^m e^{-|\lset^{(q)}|/2e}
= \exp\left(-n/(2\ln d) -n/4 -n/(2e)\r),
\end{align*}
where in the last inequality we used the bound on $H$ and that $x^x\geq e^{-1/e}$ for all $x>0$.
This completes the proof for   large enough $d$.
\end{proof}

\smallskip

As the next step, we study anti-concentration properties of a single variable $Z_i$.

\begin{lemma}\label{single row lemma}
Given $k\ge 1$, let $y\in \kset_k$ and let vector $Z$ be defined as above.
Then for every $i\in \KK$ and every  $\tau \geq 1$ one has
$$
\cf(Z_i,\sqrt{d}\tau/k)\leq C\tau ^2\,\est_i,
$$
where $C\ge 1$ is a universal constant.
\end{lemma}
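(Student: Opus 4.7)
The plan is to use the independence structure built into the definition (\ref{eq-defi-Zi}) to reduce the small ball estimate for $Z_i$ to individual anti-concentration bounds, one per $\ell$-part. Writing $S_q:=\sum_{w=1}^{\RR_{iq}} y(L^q_{\xi^q_{(i,w)}})$, we have $Z_i=\sum_{q=1}^m S_q$, and since different $S_q$ depend on disjoint subsets of the jointly independent variables $\{\xi^q_{(i,w)}\}$, the summands $S_q$ are mutually independent. Hence $\cf(Z_i,t)\leq \min_q \cf(S_q,t)$, and combining with the trivial $\cf(Z_i,t)\leq 1$ it suffices to prove, for each $q$ with $w_{iq}>0$, that $\cf(S_q,\sqrt{d}\tau/k)\leq C\tau^2/w_{iq}$; taking the minimum over $q$ together with $1$ then recovers $\cf(Z_i,\sqrt d\tau/k)\leq C\tau^2\est_i$.

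For each fixed $q$, $S_q$ is a sum of $\RR_{iq}$ i.i.d.\ discrete complex random variables $\zeta^q_w$ taking values in $\{y(L^q_p):p\leq h_q\}$ with probabilities $|L^q_p|/|\lset^{(q)}|$. By Lemma~\ref{l: level sets basic} and (\ref{levset}), all level sets inside a single $\ell$-part have cardinalities within a factor of four of one another, so the maximum atom probability satisfies $u:=\max_p |L^q_p|/|\lset^{(q)}|\leq 4/h_q$. Moreover the atoms lie in $\Z^2/k$ and therefore are $(1/k)$-separated in general; by the very construction of the spread part in Subsection~\ref{subs: ell decomposition} they are in fact $(d/k)$-separated whenever $\lset^{(q)}$ is spread.

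If $\lset^{(q)}$ is regular, I apply Proposition~\ref{p:complexLevy} with $\eps=1/k$, $u=4/h_q$, $m=\RR_{iq}$, $t=\sqrt{d}\tau/k$, obtaining $\cf(S_q,t)\leq C\max(4\tau^2 d/(h_q \RR_{iq}),\,4/h_q)$; since $\tau\geq 1$ and $\RR_{iq}\leq d$, the first term dominates and equals $4C\tau^2/w_{iq}$. If $\lset^{(q)}$ is spread, Proposition~\ref{p:complexLevy} alone is too crude, because the spread estimator carries $\sqrt{\RR_{iq}}$ rather than $\RR_{iq}/d$ and hence one must extract the CLT-type square-root gain from the i.i.d.\ sum. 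I would invoke Miroshnikov's Proposition~\ref{prop: complex-kesten} with $t_0=d/(3k)$: since $\sqrt{2}\,t_0<d/k$, the disk of radius $\sqrt{2}\,t_0$ contains at most one atom, so $\cf(\zeta^q_w,t_0)\leq 4/h_q\leq 1/2$ provided $h_q\geq 8$; a packing estimate then gives $\cf(\zeta^q_w,\sqrt{2}\,t)\leq C(\tau^2/d+1)\cdot 4/h_q$, and substitution yields a bound of order $\tau(\tau^2/d+1)/(h_q\sqrt{d\RR_{iq}})$ which is at most $C\tau^2/w_{iq}$ for $\tau\geq 1$ and $d$ large. The residual small-$h_q$ range in the spread case ($2\leq h_q\leq 7$) is handled by Proposition~\ref{prop: esseen} with $t_0=d/(2k)$, which yields $\cf(S_q,t)\leq C\tau^2/(d\sqrt{\RR_{iq}})$, absorbed into the target $C\tau^2/w_{iq}$ because $h_q\leq 7<d$; likewise if $\tau$ is so large that $C\tau^2/w_{iq}\geq 1$, then $\cf\leq 1$ is automatic.

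The main obstacle is to handle the spread case uniformly: one must choose $t_0$ so as to simultaneously satisfy the one-half concentration hypothesis of the Kesten--Miroshnikov bound and to exploit the geometric packing of $(d/k)$-separated atoms, and then glue together the regimes of small vs.\ large $h_q$ and of moderate vs.\ large $\tau$ without losing constants. The regular case, by contrast, is essentially a direct application of Proposition~\ref{p:complexLevy}.
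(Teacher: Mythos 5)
Your overall scheme---reducing to one anti-concentration estimate per $\ell$-part via the joint independence of the $\xi^q_{(i,w)}$, and then arguing separately for regular parts (direct application of Proposition~\ref{p:complexLevy}), spread parts with small height (Esseen), and spread parts with large height (Miroshnikov)---is the same as the paper's, and the regular and small-height cases go through essentially as you write them (modulo that the atom bound should be $4/(h_q+3)$ rather than $4/h_q$, which only matters for $h_q\le 3$). However, your treatment of the spread, large-height case has a genuine gap.

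The problem is the fixed choice $t_0=d/(3k)$. With this choice Proposition~\ref{prop: complex-kesten} yields an extra factor $t/t_0=3\tau/\sqrt{d}$ multiplying the packing term, giving a bound of order
$$
 \frac{\tau}{\sqrt{d}}\cdot\frac{(1+\tau/\sqrt{d})^2}{h_q\sqrt{\RR_{iq}}}\;\sim\;\frac{\tau^3}{d^{3/2}\,w_{iq}}
 \quad\text{for large }\tau,
$$
which only dominates the target $C\tau^2/w_{iq}$ when $\tau\lesssim d^{3/2}$. You quietly invoke the reduction ``WLOG $\tau^2\lesssim w_{iq}$,'' but this gives at best $\tau\lesssim\sqrt{h_q}\,\RR_{iq}^{1/4}\le\sqrt{h_q}\,d^{1/4}$. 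Since $h_q$ can be as large as $n$ and $n\ge d^3$, there is an open window (e.g.\ $h_q\gg d^{5/2}$, $d^{3/2}\ll\tau\ll\sqrt{h_q}\,d^{1/4}$) in which the claimed inequality is non-trivial and your bound fails. Also note that Miroshnikov requires $t\ge t_0$, and your $t_0=d/(3k)$ exceeds $t=\sqrt d\,\tau/k$ whenever $\tau<\sqrt d/3$, so you would also have to split on $\tau$ just to apply the proposition---a complication that never arises in the paper.

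The paper's fix is to take $t_0=t=\sqrt d\,\tau/k$ (and similarly $t_0=\sqrt d/k$ in the Esseen application). Then $t/t_0=1$ disappears, the hypothesis $\max_w \cf(\psi_w,\sqrt 2\,t_0)\le 1/2$ is verified using only $\tau^2\le w_{iq}\le h_q\sqrt d$ (which gives $\frac{8+64\tau^2/d}{h_q}\le\frac{8}{h_q}+\frac{64}{\sqrt d}\le\frac12$ for $h_q>32$ and $d$ large), and the final packing factor is bounded by $8+64\tau^2/d\le 72\tau^2$ for all $\tau\ge 1$---uniformly, with no upper constraint on $\tau$ or $h_q$. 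So the lesson is: make $t_0$ scale with $\tau$ rather than fixing it at a multiple of $d/k$.
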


\begin{proof}
 A simple estimate $\cf(\eta_1+\eta_2,\gamma)\leq \min\bigl(\cf(\eta_1,\gamma),\cf(\eta_2,\gamma)\bigr)$,
which is valid for any pair $\eta_1,\eta_2$ of independent random variables and all $\gamma>0$, together
with the definitions of $Z_i$'s and  $\est_i$'s, implies that it is sufficient to prove the relations
\begin{equation*}\label{up b sin row}
\cf\Bigl(\sum\limits_{w=1}^{\RR_{iq}}
y\bigl(L^q_{\xi^q_{(i,w)}}\bigr), \frac{\sqrt{d}\tau }{k}\Bigr)\le \frac{C \tau ^2}{w_{iq}}.
\end{equation*}
for all $i\in K$ and $q\leq m$.

Fix $i\in K$ and  $q\leq m$ such that $w_{iq}\neq 0$, and
denote the variables $y\bigl(L^q_{\xi^q_{(i,w)}}\bigr)$ by $\psi_{w}$, $1\leq w\leq \RR_{iq}$.
Note that each $\psi_w$ is a discrete random variable taking values in the set
$$
  B:=\bigl\{y(L^q_p)\, : \, p\leq \hq\bigr\}.
$$
By (\ref{hoflq}) and by (\ref{levset}) one has
$$
  |\lset^{(q)}|= \sum_{p=1}^{\hq} |L^q_p| \quad  \quad \mbox{ and } \quad  \quad
  \max_{p \leq \hq} |L_{p}^q|\leq 4 \min_{p \leq \hq} |L_{p}^q|.
$$
Hence, for any $b\in B$
$$
  \Prob\{\psi_w=b\}\leq \max\limits_{p\leq \hq}\frac{|L^q_p|}{|\lset^{(q)}|}
  \leq \frac{4}{\hq+3}.
$$

 If the part $\lset^{(q)}$ is regular then the set $B$ is a $(1/k)$-separated subset of the complex plane.
Applying Proposition~\ref{p:complexLevy} and using that $\tau \geq 1$ and $\RR_{iq}\leq d$,
we obtain
$$
\cf\Bigl(\sum\limits_{w=1}^{\RR_{iq}}\psi_w,\frac{\sqrt{d}\tau }{k}\Bigr)\leq
C_0\, \max\Big(\frac{d\tau ^2}{\hq\RR_{iq}},\frac{1}{\hq}\Big)= \frac{C_0\, d\tau ^2}{\hq\RR_{iq}}
= \frac{C_0\, \tau ^2}{w_{iq}} ,
$$
where $C_0>0$ is a universal constant.

If the part  $\lset^{(q)}$ is spread then the set $B$ is a
$(d/k)$-separated subset of the complex plane. Note that
$w_{iq}= \hq\sqrt{Q_{iq}}\le \hq\sqrt{d}$.
Without loss of generality, we can also assume that
$1\le \tau^2\le w_{iq}$ (otherwise the probability estimate is trivial).
Using that the number of $(d/k)$-separated points in a ball of radius $\lambda$
is smaller than $\big(1+2\lambda k/d\big)^2$, we obtain for all
 $\lambda>0$ and  $w\leq \RR_{iq}$,
$$
\cf(\psi_w,\lambda)\leq \frac{4}{\hq+3}\, \big(1+2\lambda k/d\big)^2.
$$
Assume first that $\hq\le 32$. Using that the heights of non-empty spread parts
$\hq$ are at least $2$, we have
$$
\cf(\psi_w,\sqrt{d} /k)\leq \frac{4}{\hq+3}\, \big(1+2/\sqrt{d}\big)^2\leq \frac56,
$$
provided $d$ is large enough. Therefore, applying Proposition~\ref{prop: esseen}
with $t=\sqrt{d} \tau/k$ and $t_0=\sqrt{d}/k$, we get
$$
 \cf\Bigl(\sum\limits_{w=1}^{\RR_{iq}}\psi_w,\frac{\sqrt{d}\tau}{k}\Bigr)\leq
 \frac{C_1 \, \tau^2}{\sqrt{\RR_{iq}}}\leq \frac{32 C_1 \, \tau^2}{\hq\sqrt{\RR_{iq}}}
 =\frac{32 C_1 \, \tau^2}{w_{iq}},
$$
where $C_1>0$ is an absolute constant.
Let now $\hq>32$. Then,  using $\tau^2\le \hq\sqrt{d}$ and that $d$ is large enough,
$$
  \cf(\psi_w, \sqrt{ d}\, \tau/k)\leq \cf(\psi_w, \sqrt{2 d}\, \tau/k)\leq \frac{4}{\hq+3}\, \big(1+2\sqrt{2}\tau/\sqrt{d}\big)^2\leq
  \frac{8 + 64 \tau^2/d}{\hq}
  \leq 1/2.
$$
Therefore, applying Proposition~\ref{prop: complex-kesten} with $t=t_0=\sqrt{d}\tau/k$, we obtain
$$
  \cf\Bigl(\sum\limits_{w=1}^{\RR_{iq}}\psi_w,\frac{\sqrt{d}\tau}{k}\Bigr)\leq C_2
   \frac{8 + 64 \tau^2/d}{\hq \sqrt{\RR_{iq}}} \leq 72  C_2
   \frac{\tau^2}{\hq \sqrt{\RR_{iq}}} = 72 C_2  \frac{\tau^2}{w_{iq}},
$$
where $C_2>0$ is an absolute constant.
This completes the proof.
\end{proof}

\smallskip

To complete the proof of Theorem~\ref{sbp th}
we  tensorize the last lemma,
i.e., we pass from the anti-concentration estimates for individual $Z_i$'s to the vector $Z$, and
then we tie the obtained estimates for $Z$ with anti-concentration properties of $M^\KK y$.
At this point, it will be convenient to introduce a new random object -- a multigraph on $[n]$
which, in a certain sense, will correspond to the vector $Z$. This way, a direct relation between
$Z$ and $M^\KK y$ can be defined by conditioning on the event that the multigraph is simple, i.e.,
does not contain multiple edges.

\smallskip

Let $y$, $(\lset^{(q)})_{q=1}^m$, and $\RR$ be as above.
We construct the  multigraph $\widehat{G}_{\RR}$ on $[n]$ as the union  of certain independent
bipartite multigraphs $\widehat{G}_q$ on $([n], \lset^{(q)})$, that is
$$
\widehat{G}_{\RR}=\bigcup_{q=1}^m\widehat{G}_q,
$$
where to form $\widehat{G}_q$ we adapt the configuration model in the following way.
For every $q\le m$, define
$$
  \Delta_q':=\lset^{(q)} \times [d] = \bigl\{(j,w'):\,j\in \lset^{(q)}, 1\leq w'\leq d\bigr\}.
$$
Clearly, $\vert \Delta_q'\vert =d \vert \lset^{(q)}\vert= \vert \Delta_q\vert$.
Let $g_q$ be a (fixed) bijection from $\Delta_q$ to $\Delta_q'$, and let $\sigma_q$ be
a random uniform permutation on $\Delta_q'$
($\sigma_q$ does not respect the two-dimensional structure of $\Delta_{q}'$ and  can be viewed as a uniform random element of the permutation group $\Pi_{\vert \Delta_q'\vert}$, also we suppose that $\sigma_1$, ..., $\sigma_m$ are jointly independent).
We  define $\widehat{G}_q$ as a bipartite multigraph
on $([n], \lset^{(q)})$ with the edge multiset
\begin{align*}
E_q:= \Big\{(i,j)\in [n]\times\lset^{(q)} \, :&\,\,  \exists\, 1\leq w\leq \RR_{iq}\;,1\leq w'\leq d \text{ such that } \\
&(i,w)\in \Delta_q,\,
(j, w')\in \Delta_q' \text{ and } g_q(i,w)=\sigma_q(j, w')\Big\},
\end{align*}
where the multiplicity $r_q(i,j)$ of each edge $(i,j)$ in $E_q$ is equal to the cardinality of the set
$$g_q(\{i\}\times [\RR_{iq}])\cap \sigma_q(\{j\}\times [d]).$$
Note that by construction, $\widehat{G}_q$
has degree sequence $(\RR_{iq})_{i\leq n}$ for vertices in $[n]$ and a constant degree
$d$ for vertices in $\lset^{(q)}$.
We define $\widehat{G}_{\RR}$ as the union of $\widehat{G}_{q}$'s, $q\leq m$, in particular,
the edge multisets $E$ of $\widehat{G}_{\RR}$ is
$$
  E=\bigcup_{q=1}^m E_q.
$$
 Denote
$$
  p_q=p_q(i,j): = \sum_{(w,w')\in [\RR_{iq}]\times [d]} \Prob\{ g_q(i,w)=\sigma_q(j,w')\}\le \RR_{iq}/\vert \lset^{(q)}\vert.
$$
Then  $ \Exp\, \{r_q(i,j)\mid (i,j)\in E_q\}= p_q$, and  by the union bound $\Prob\{(i,j)\in E_q\} \leq  p_q$. Therefore,
using that each $\RR_{iq}$ is at most $d$, we observe
\begin{align*}
\Exp\, r_q(i,j) = \Exp\, \{r_q(i,j)\mid (i,j)\in E_q\}\,\, \,   \Prob\{(i,j)\in E_q\} \leq  p_q^2\le \RR_{iq}^2/\vert \lset^{(q)}\vert^2 \le d \, \RR_{iq}/\vert \lset^{(q)}\vert^2.
\end{align*}
Let $N_q$ be the total number of multiple edges produced in the random bipartite multigraph
$\widehat{G}_q$. Using Markov's inequality and  \eqref{sumqiq}, we obtain
\begin{align}\label{eq: multiple-edges}
     \Prob\big\{ N_q\geq 2d^2\big\} &\leq \frac{1}{2d^2}\, \Exp\, N_q\leq
     \frac{1}{2d^2}\,  \sum_{i\leq n}\, \sum_{ j\leq \vert \lset^{(q)}\vert} \Exp\, r_q(i,j) \leq
     \frac{1}{2d}\,  \sum_{i\leq n}\frac{\RR_{iq}}{\vert \lset^{(q)}\vert } \leq
     \frac12.
\end{align}
Thus for every $q\le m$ at least half of realizations of the random bipartite multigraph $\widehat{G}_q$ have the number of multiple edges at most $2d^2$.
 In the sequel we will see that  for every $q\le m$ a non-negligible part of realizations of $\widehat{G}_q$  have no multiple edges,
 so that a non-negligible proportion of realizations of  $\widehat{G}_{\RR}$ are simple.

One can check that any realization of $\widehat{G}_{\RR}$
occurs with probability
$$
\prod_{q=1}^m \frac{(d!)^{\vert\lset^{(q)}\vert}\prod_{i=1}^n \RR_{iq}!}{(d\vert \lset^{(q)}\vert)! }.
$$
Moreover, the realizations of $\widehat{G}_{\RR}$ which are simple precisely correspond
to the graphs whose adjacency matrices belong to
$\MSet_{n,d}(\RR,y)$. Therefore,
\begin{equation}\label{eq: card-simple}
\Prob\{\widehat{G}_{\RR} \text{ is simple}\}= \vert\MSet_{n,d}(\RR,y)\vert \prod_{q=1}^m \frac{(d!)^{\vert\lset^{(q)}\vert}\prod_{i=1}^n \RR_{iq}!}{(d\vert \lset^{(q)}\vert)! }.
\end{equation}
Below we denote the adjacency matrix of $\widehat{G}_{\RR}$ by $A$
(with the entries of $A$ respecting multiplicities).
Then for any $M\in \MSet_{n,d}(\RR,y)$ one has
\begin{equation}\label{eq: conf-model}
  \Prob\{ A=M\mid \widehat{G}_{\RR} \text{ is simple}\}=
  \Big(\Prob\{\widehat{G}_{\RR} \text{ is simple}\}\Big)^{-1}\,\,
  \displaystyle\prod_{q=1}^m \frac{(d!)^{\vert\lset^{(q)}\vert}\prod_{i=1}^n
  \RR_{iq}!}{(d\vert \lset^{(q)}\vert)! }= \frac{1}{\vert\MSet_{n,d}(\RR,y)\vert},
\end{equation}
which means that conditioned on the event that $\widehat{G}_{\RR}$ is simple, the matrix $A$
is uniformly distributed on $\MSet_{n,d}(\RR,y)$.

In the next proposition, we provide a lower bound on the  probability that $\widehat{G}_{\RR}$ is simple.
Note that by our construction, this probability is equal to  the product of the probabilities that each of the
bipartite graphs $\widehat{G}_q$ is simple.

\begin{prop}\label{prop-simple}
Let $d\leq n$ be large enough and
denote by $\Event_{\ref{prop-simple}}$ the event that $\widehat{G}_{\RR}$ is simple. Then
$$
\Prob\big(\Event_{\ref{prop-simple}}\big)\geq \exp(-33 d^2 \ln^2 n).
$$
\end{prop}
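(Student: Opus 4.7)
The plan is to exploit independence of the permutations $\sigma_q$ to factor the probability of simplicity over $q$, and then bound each factor via a switching argument.

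Since $\sigma_1,\ldots,\sigma_m$ are jointly independent, so are the bipartite multigraphs $\widehat{G}_1,\ldots,\widehat{G}_m$, whence
$$
\Prob(\Event_{\ref{prop-simple}}) \;=\; \prod_{q=1}^m \Prob\{\widehat{G}_q\text{ is simple}\}.
$$
Recalling $m=m(y)\leq 3\ln n$, to obtain the target bound it suffices to show, for each $q\leq m$, an estimate of the form $\Prob\{\widehat{G}_q\text{ is simple}\}\geq\exp(-C\,d^2)$ for some universal constant $C$; the slack between $\ln n$ and $\ln^2 n$ in the target then allows the conclusion
$$
\prod_{q=1}^m \Prob\{\widehat{G}_q\text{ is simple}\}\;\geq\;\exp(-3C\,d^2\ln n)\;\geq\;\exp(-33\,d^2\ln^2 n)
$$
as soon as $\ln n\geq C/11$.

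For a fixed $q$, one views $\widehat{G}_q$ as produced by the uniform measure on the $N_q!$ configurations (permutations $\sigma_q$ on $\Delta_q'$), and partitions them into classes $\mathcal{C}_t$ according to the number $t$ of parallel-edge pairs in the resulting multigraph. Apply the switching method: from a configuration in $\mathcal{C}_t$ having a parallel pair at some $(i,j)$, select an auxiliary edge $(i',j')$ of the configuration with $i'\neq i$, $j'\neq j$, and $(i,j'),(i',j)$ absent, then rewire $(i,j),(i,j),(i',j')$ into $(i,j),(i,j'),(i',j)$, obtaining a configuration in $\mathcal{C}_{t-1}$. A routine count shows that there are at least $c\,t\,(N_q-O(d^2))$ forward switches per configuration in $\mathcal{C}_t$ and at most $C\,d^2\,N_q$ reverse switches per configuration in $\mathcal{C}_{t-1}$; since $N_q=d|\lset^{(q)}|$, provided $|\lset^{(q)}|\gtrsim d$ one deduces $t\,|\mathcal{C}_t|\leq C'd^2\,|\mathcal{C}_{t-1}|$ for $t\geq 1$. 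Iterating gives $|\mathcal{C}_t|\leq(C'd^2)^t/t!\cdot|\mathcal{C}_0|$, and summing yields $\sum_{t\geq 0}|\mathcal{C}_t|\leq\exp(C'd^2)|\mathcal{C}_0|$, from which
$$
\Prob\{\widehat{G}_q\text{ is simple}\}\;=\;\frac{|\mathcal{C}_0|}{N_q!}\;\geq\;\exp(-C'd^2).
$$
The remaining $\ell$-parts with $|\lset^{(q)}|\lesssim d$ involve only $O(d^2)$ half-edges and must be handled by a separate crude estimate (e.g., lower-bounding $|\MSet_{n,d}(Q,y)|\geq 1$ by the existence of some admissible realization and then invoking \eqref{eq: card-simple} together with Stirling to produce a bound of at worst $\exp(-C''d^2\ln d)$; this is absorbed easily by the $\ln^2 n$ slack in the target).

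The main technical obstacle is the careful bookkeeping for the switching counts: one must verify that the lower bound $c\,t\,(N_q-O(d^2))$ on forward switches survives for large $t$ (so that the $O(d^2)$ correction for forbidden auxiliary edges does not destroy the inequality), that the rewiring cannot inadvertently create additional multi-edges (handled by restricting the allowed $(i',j')$), and that weights are tracked consistently. It is cleaner to argue on configurations directly—where the uniform measure has no weight correction—rather than on multigraphs, for which the configuration-model preimage count is $\prod_i\RR_{iq}!\,(d!)^{|\lset^{(q)}|}/\prod r(i,j)!$ and would have to be tracked class-by-class. Once the per-$q$ bound is established, the combination over $q\leq m\leq 3\ln n$ and the passage from $\exp(-C'd^2\ln n)$ to $\exp(-33\,d^2\ln^2 n)$ are immediate.
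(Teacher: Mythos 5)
Your high-level plan coincides with the paper's: factor the simplicity probability over $q$ by independence of the $\sigma_q$, handle small $\ell$-parts ($|\lset^{(q)}|\lesssim d$) with a crude count, and run a switching argument for the large ones. Where you diverge is the form of the switching argument. The paper defines a single relation between adjacency matrices of multigraphs with at most $2d^2$ multiple edges (a set carrying at least half the mass by \eqref{eq: multiple-edges}) and simple matrices in $\MSet_{n,d}(\RR,y,\lset^{(q)})$, connected through an entire chain of up to $2d^2$ switchings; the coarse preimage count $\bigl(d^2|\lset^{(q)}|^2/2\bigr)^{2d^2}$ already costs $\exp(-Cd^2\ln n)$ per factor, and the $\ln^2 n$ slack in the target absorbs this. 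You instead run the classical one-step iteration $\mathcal C_t\to\mathcal C_{t-1}$ at the level of configurations, which is the standard argument in the random-graph literature, gives the sharper per-factor bound $\exp(-Cd^2)$ (matching the true Poisson order of the simplicity probability), and avoids having to track composite switching sequences. That is a genuine alternative and arguably cleaner.

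However, as stated your iteration has a gap at sites of multiplicity $r\geq 3$. If $t$ counts parallel-edge pairs, then deleting one configuration edge from an $(i,j)$ carrying $r$ of them reduces the pair count there from $\binom{r}{2}$ to $\binom{r-1}{2}$, a drop of $r-1>1$ when $r>2$, so the forward switch lands in some $\mathcal C_{t'}$ with $t'<t-1$, not in $\mathcal C_{t-1}$. The double-counting relation (Claim~\ref{multi-al}) then does not directly give $t\,|\mathcal C_t|\leq C'd^2|\mathcal C_{t-1}|$, and the neat factorial telescoping $|\mathcal C_t|\leq (C'd^2)^t/t!\cdot|\mathcal C_0|$ does not follow. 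The standard repair is to parametrize by the excess $t:=\sum_{(i,j)}\max(r_q(i,j)-1,0)$ instead; then, after restricting the auxiliary edge $(i',j')$ to avoid creating a fresh multi-edge, each forward switch decreases $t$ by exactly $1$, the forward count $\geq t\,(N_q-O(d^2))$ and reverse count $\leq d^2 N_q$ hold, and the iteration closes. You explicitly defer this bookkeeping and the absorption of small parts; those details are routine once the parameter is chosen correctly, but with $t$ as ``number of parallel-edge pairs'' the per-factor bound $\exp(-Cd^2)$ is not yet established.
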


\begin{proof}
For every  $q\leq m$  such that $\vert \lset^{(q)}\vert\leq 5 d$, we bound from below
the probability that $\widehat{G}_q$ is simple by one over the number of realizations
of such multigraphs, that is, by
$$
  \big(d\vert\lset^{(q)}\vert)^{-d\vert\lset^{(q)}\vert}\geq \exp(-5 d^2 \ln (5d)^2)
  \geq \exp(-11 d^2 \ln n).
$$

Now we treat $q\leq m$ such that $\vert \lset^{(q)}\vert> d\ln n$. For such $q$  we could use
precise asymptotics obtained in \cite{McKay-simple} (see also \cite[Theorem~1.1]{GMckayW}),
however, for the readers' convenience, we prefer to provide a simple self-contained argument
(which leads also to a better bound).
For every $q\leq m$, denote by $\MSet_{n,d}(\RR,y,\lset^{(q)})$ the set of $n\times \vert \lset^{(q)}\vert$
matrices corresponding to blocks of columns indexed by $\lset^{(q)}$
of matrices from the equivalence class $\MSet_{n,d}(\RR,y)$. With this notation, we have
$$
 \vert\MSet_{n,d}(\RR,y)\vert= \prod_{q=1}^m \vert \MSet_{n,d}(\RR,y,\lset^{(q)})\vert.
$$
Similarly to \eqref{eq: card-simple}, the probability that the random multigraph $\widehat{G}_q$
on $([n],\lset^{(q)})$ is simple is given by
$$
\vert\MSet_{n,d}(\RR,y, \lset^{(q)})\vert \, \frac{(d!)^{\vert\lset^{(q)}\vert}\prod_{i=1}^n \RR_{iq}!}{(d\vert \lset^{(q)}\vert)! }.
$$
Therefore it is sufficient to estimate the cardinality of $\MSet_{n,d}(\RR,y, \lset^{(q)})$ for each $q\leq m$.
Let $\mathcal{\widehat{M}}_q$ be the set of all adjacency matrices corresponding to realizations of
$\widehat{G}_q$ (with the entries respecting multiplicities). Moreover, let $\mathcal{\widehat{M''}}$
be the subset of $\mathcal{\widehat{M}}_q$ given by matrices such that the sum over entries exceeding
$1$ is bounded above  by $2d^2$.
The latter corresponds to multigraphs having at most $2d^2$ multiple edges. By  \eqref{eq: multiple-edges}, we have
\begin{equation}\label{eq: size-multiple-edges}
\vert \mathcal{\widehat{M''}}\vert \geq \frac12 \vert\mathcal{\widehat{M}}_q \vert.
\end{equation}
To estimate the cardinality of $\MSet_{n,d}(\RR,y, \lset^{(q)})$, we define a relation $R\in \mathcal{\widehat{M''}}\times \MSet_{n,d}(\RR,y, \lset^{(q)})$ as follows.
We let a pair $(M,M')$ belong to $R$ if $M'$ can be obtained from $M$ by a sequence (of maximal length $2d^2$)
of simple switching operations in the following way: for every $(i,j)$ such that $M_{ij}>1$, choose first $(i',j')$
such that $M_{ij'}=M_{i'j}=0$ and $M_{i'j'}\geq 1$, then operate the simple switching on $i, j, i', j'$.
By regularity of our matrices and by the definition of $\mathcal{\widehat{M''}}$, the number of pairs $(i',j')$
with $M_{i'j'}\geq 1$ is at least
$$
  \sum _{s, t} M_{s,t} - \sum _{s, t: \,M_{s,t}\geq 1 } (M_{s,t}-1) \geq d\vert \lset^{(q)}\vert  - 2d^2.
$$
Moreover, the number of $s$ and $t$ such that either $M_{is}\ne 0$ or $M_{tj}\ne 0$ is at most $2 d (d-1)$.
Therefore, using that $\vert \lset^{(q)}\vert \geq 5 d$, we observe that there are at least
$d\vert \lset^{(q)}\vert - 4d^2\geq d\vert\lset^{(q)}\vert/5 $ choices for a ``good"  pair $(i',j')$, hence
$$
 \vert R(M)\vert \geq d\vert\lset^{(q)}\vert/5.
$$
Note that after such a switching, the sum over entries exceeding $1$ must decrease.
Then we reiterate this procedure until the all entries becomes less than or equal to $1$.
Note that we do not need more than $2d^2$ steps (since the sum over entries exceeding $1$ is bounded above
by $2d^2$).
Now we revert the procedure and start with $M'\in R(\mathcal{\widehat{M''}})$.
Since at each step the number of non-zero elements is at most $d\vert \lset^{(q)}\vert $,
the number of possible switching operations is smaller than
$d^2\vert \lset^{(q)}\vert^2 /2$. Since the number of steps is at most $2d^2$, we have
$$
\vert R^{-1}(M')\vert\leq  \big(d^2\vert \lset^{(q)}\vert^2 /2\big)^{2d^2}.
$$
 Claim~\ref{multi-al} and the  bound $d\vert\lset^{(q)}\vert\leq n^2$ imply  that
$$
\vert\mathcal{\widehat{M''}}\vert \leq
\big(5/d\vert \lset^{(q)}\vert\big)\, \, \big(d^2\vert \lset^{(q)}\vert^2 /2\big)^{2d^2}\, \,
\vert \MSet_{n,d}(\RR,y, \lset^{(q)})\vert
\leq \big(1/2\big)\, \, \exp \big( 8 d^2 \ln n\big)\, \,
\vert \MSet_{n,d}(\RR,y, \lset^{(q)})\vert.
$$
By  $\eqref{eq: size-multiple-edges}$ this yields
$$
 \vert \MSet_{n,d}(\RR,y, \lset^{(q)})\vert \geq
 \big(d\vert \lset^{(q)}\vert/4\big)\, \, \big(\sqrt{2} /(d\vert \lset^{(q)}\vert) \big)^{4 d^2}\, \,
 \geq
 \exp \big(- 8 d^2 \ln n\big)\, \, \vert\mathcal{\widehat{M}}_q \vert,
$$
hence, the probability that the $\widehat{G}_q$ is simple is at least $\exp(- 8 d^2 \ln n)$.

Finally, as we mentioned above, the probability that $\widehat{G}_{\RR}$ is simple is equal to the
product of the probabilities that each $\widehat{G}_q$, $q\leq m$, is simple. Thus, the probability
that $\widehat{G}_{\RR}$ is simple is at least $\exp\big(- 11 d^2\, m  \ln n\big)$. Since by the
construction of the $\ell$-decomposition, $m\leq 3\ln n$, we obtain
the desired estimate.
\end{proof}

We now verify that the adjacency matrix $A$ of  $\widehat{G}_{\RR}$
 satisfies a condition similar to that of Theorem~\ref{sbp th}.

\begin{lemma}\label{lem-norm of Ay}
Let $d\leq n$ be large enough,  $m\geq 1$, and $k\le  \sqrt{n}/(8d^{3/2} \sqrt{\ln d})$.
Let  $y\in\kset_k$, $(\lset^{(q)})_{q=1}^m$ be its $\ell$-decomposition,
$\RR$ be a $y$-admissible matrix, and  $\widehat{G}_{\RR}$ be the random multigraph constructed above with the
 adjacency matrix $A$. Then for any $\KK\subset [n]$, any non-random vector
 $\xyzv\in \C^{\vert \KK\vert}$, and any $\gamma\geq 1$ one has
$$
\Prob\Big\{\Vert A^\KK y+\xyzv\Vert_2\leq \gamma\, \frac{\sqrt{d\vert \KK\vert}}{k}\Big\}\leq C_{\ref{lem-norm of Ay}}^n \gamma^{2\vert \KK\vert}\prod_{i\in \KK} \est_i.
$$
\end{lemma}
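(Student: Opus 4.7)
The plan is to transfer the problem from the dependent random object $A^{\KK} y$ to the jointly independent vector $Z=(Z_i)_{i\in \KK}$ defined in \eqref{eq-defi-Zi}, via a conditioning argument, and then tensorize the single-row estimate of Lemma~\ref{single row lemma} using Lemma~\ref{lem-tensorization}.

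The key distributional identity to establish is
\[
\big((Z_i)_{i\in \KK}\;\big\vert\;\Event_{\ref{lem-multinomial}}\big)\;\stackrel{d}{=}\; A^{\KK} y.
\]
To see this, fix $q\leq m$ and observe that the random permutation $\sigma_q$, together with the bijection $g_q$, assigns to each ``endpoint label'' $(i,w)\in\Delta_q$ an index $j(q,i,w)\in\lset^{(q)}$, namely the first coordinate of $\sigma_q^{-1}(g_q(i,w))$. Writing $\tilde p(q,i,w)$ for the unique $p\leq h_q$ with $j(q,i,w)\in L_p^q$, the $i$-th coordinate of $A^\KK y$ equals $\sum_{q}\sum_{w=1}^{\RR_{iq}} y(L^q_{\tilde p(q,i,w)})$, which has exactly the form of $Z_i$ but with the $\xi^q_{(i,w)}$'s replaced by the $\tilde p(q,i,w)$'s. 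Now the family $(\tilde p(q,i,w))_{(i,w)\in \Delta_q}$ is a uniformly random arrangement of the multiset containing each $p\leq h_q$ with multiplicity $d\vert L^q_p\vert$ (since $\sigma_q$ is uniform on $\Delta_q'$, and $\vert \lset^{(q)}\cap L_p^q\vert\cdot d = d\vert L_p^q\vert$ positions on the $\Delta_q'$ side sit above level $p$). On the other hand, the $\xi^q_{(i,w)}$ were defined to be i.i.d.\ with $\Prob\{\xi^q_\delta=p\}=\vert L^q_p\vert/\vert\lset^{(q)}\vert$, and conditioning on $\Event_{\ref{lem-multinomial}}$ forces precisely this multiset; a uniform i.i.d.\ sample conditioned on its empirical multiset is the uniform random arrangement of that multiset. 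Independence across different $q$'s holds on both sides, which yields the claimed identity (the coordinates $i\notin\KK$ simply marginalize away).

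Consequently
\[
\Prob\Big\{\|A^\KK y+\xyzv\|_2\leq \gamma\sqrt{d\vert \KK\vert}/k\Big\}
=\Prob\Big\{\|Z+\xyzv\|_2\leq \gamma\sqrt{d\vert \KK\vert}/k\;\Big|\;\Event_{\ref{lem-multinomial}}\Big\}
\leq e^{n}\,\Prob\Big\{\|Z+\xyzv\|_2\leq \gamma\sqrt{d\vert \KK\vert}/k\Big\},
\]
where the last step invokes Lemma~\ref{lem-multinomial}. To bound the unconditional probability, note that the $Z_i$, $i\in \KK$, are independent, hence so are $Z_i+\xyzv_i$. By Lemma~\ref{single row lemma}, for any $\varepsilon\geq \sqrt{d}/k$ and any $i\in\KK$,
\[
\Prob\big\{\vert Z_i+\xyzv_i\vert\leq \varepsilon\big\}\leq \cf\big(Z_i,\varepsilon\big)\leq C\,\frac{k^2}{d}\,\varepsilon^{2}\,\est_i.
\]
Apply Lemma~\ref{lem-tensorization} on the index set $\KK$ with $p_i=C(k^2/d)\est_i$ and $\varepsilon_0=\sqrt{d}/k$, then specialize to $\varepsilon=\gamma\sqrt{d}/k\geq \varepsilon_0$; the factors of $d/k^2$ and $k^2/d$ cancel, yielding a bound of the form $(C'\gamma)^{2\vert\KK\vert}\prod_{i\in\KK}\est_i$. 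Multiplying by $e^n$ absorbs the constant into the base $C_{\ref{lem-norm of Ay}}^n$ and gives the desired estimate.

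The main obstacle is the distributional identification in the first paragraph: one has to be careful that conditioning on $\Event_{\ref{lem-multinomial}}$ (which is defined in terms of {\it all} $i\in[n]$) correctly reproduces the joint law of $A^\KK y$ (which only sees rows indexed by $\KK$), and that the correspondence respects independence across distinct $\ell$-parts $\lset^{(q)}$. Once this is in place, the remaining steps are a routine application of Lemma~\ref{lem-tensorization} to the single-row bound of Lemma~\ref{single row lemma}, with the lower bound on $\Prob(\Event_{\ref{lem-multinomial}})$ from Lemma~\ref{lem-multinomial} furnishing the $e^n$ loss in the conditioning.
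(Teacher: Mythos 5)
Your proposal is correct and follows essentially the same route as the paper: it establishes the distributional identity $(Z_i)_{i\in K}\mid\Event_{\ref{lem-multinomial}}\stackrel{d}{=}A^K y$ via the observation that both $(f^q(j^q_\delta))_\delta$ and $(\xi^q_\delta)_\delta$ conditioned on $\Event_{\ref{lem-multinomial}}$ are uniform arrangements of the same multiset, pays the $e^n$ factor from Lemma~\ref{lem-multinomial} to remove the conditioning, and tensorizes Lemma~\ref{single row lemma} via Lemma~\ref{lem-tensorization}. Your ``uniform i.i.d.\ sample conditioned on its empirical multiset'' phrasing is a transparent restatement of the paper's transitivity-of-permutations argument, and the bookkeeping in the final step is accurate.
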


\begin{proof}
As before, for every $q\leq m$
we represent the $\ell$-set $\lset^{(q)}$ as the union $\bigcup_{p\leq h_q} L^q_p$.
For every $j\in\lset^{(q)}$ let $f^q(j)$ denote the index
of the level set in $\lset^{(q)}$ containing $j$, i.e., $f^q(j)=p$ whenever $j\in L^q_{p}$.
It is convenient  to have a representation for the multiset $E_q$ in the form
$$
E_q=\big\{e_\delta:\,\delta\in \Delta_q\big\},
$$
where for every $\delta=(i,j)\in \Delta_q$ we have
$e_\delta=(i,j_\delta^q)$ with $j_\delta^q\in\lset^{(q)}$
being equal to the first component of the pair
$$
 \sigma_q^{-1}(g_q(\delta))\in\Delta_q'.
$$
Recall that random variables $\{\xi^q_\delta\}$, $q\leq m$, $\delta\in\Delta_q$,  were introduced in
the first part of this section.
Observe that for any fixed $q\leq m$, the joint distribution of the variables
$\{\xi^q_\delta\}$,  $\delta\in\Delta_q$,
conditioned on the event $\Event_{\ref{lem-multinomial}}$,
coincides with the joint distribution of
$$
 \{f^q(j_\delta^q):\,\delta\in\Delta_q\}.
$$
Indeed, by the construction of the multigraph $\widehat{G}_{\RR}$, the variables
$\big(f^q(j_\delta^q)\big)_{\delta\in\Delta_q}$
take values in the set of sequences
\begin{equation}\label{eq: aux325y}
\left\{(a_\delta)_{\delta\in \Delta_q}\in\N^{\Delta_q}:\quad\quad \quad \forall p\leq \hq \quad \quad
\vert \{\delta:\, a_\delta =p\}\vert =d\vert L_p^q\vert \r\} .
\end{equation}
Note that the set of permutations of $\Delta_q$ acts transitively on the set in \eqref{eq: aux325y}.
Hence, taking into account that the distribution of $\big(f^q(j_\delta^q)\big)_{\delta\in\Delta_q}$
is invariant under permutations of $\Delta_q$, we get that all realizations of the sequence are equi-probable.
On the other hand, conditioned on $\Event_{\ref{lem-multinomial}}$, the sequence
$(\xi^q_\delta)_{\delta\in\Delta_q}$ is distributed over \eqref{eq: aux325y}, and it is not difficult to see
that the conditional distribution is uniform. Thus, the distribution of $\big(f^q(j_\delta^q)\big)_{\delta\in\Delta_q}$
and $(\xi^q_\delta)_{\delta\in\Delta_q}$ given $\Event_{\ref{lem-multinomial}}$ must coincide.

We now relate the coordinates of the vector $A^\KK y$ to the variables $Z_i$.
Denoting the entries of $A$ by $a_{ij}$, note that for every pair $(i, j)$
the entry $a_{ij}$ is
the multiplicity of the edge $(i,j)$ in $E_q$ (which can be zero if the edge
does not belong to $E_q$). Therefore for all $i\in \KK$, we have
\begin{align*}
  (Ay)_i= \sum_{j=1}^n a_{ij}y_j =\sum_{q=1}^m\sum_{j\in \lset^{(q)}} a_{ij}y_j
  &=\sum_{q=1}^m\sum_{j\in \lset^{(q)}}  y\big(L^q_{f^q(j)}\big)\,a_{ij}
  =\sum_{q=1}^m\sum_{w=1}^{Q_{iq}} y\bigl(L^q_{f^q(j_{(i,w)}^q)}\big).
\end{align*}
Hence,
$$A^\KK y\stackrel{d}{\sim} Z \mbox{ conditioned on $\Event_{\ref{lem-multinomial}}$}.$$
Applying Lemma~\ref{lem-multinomial}, we get
\begin{align*}
\Prob\Big\{\Vert A^\KK y+\xyzv\Vert_2\leq \gamma\, \frac{\sqrt{d\vert \KK\vert}}{k}\Big\}
&=\Prob\Big\{\Vert Z+\xyzv\Vert_2\leq \gamma\, \frac{\sqrt{d\vert \KK\vert}}{k}\,\, \Big|\, \,  \Event_{\ref{lem-multinomial}}\Big\}\nonumber\\
&\leq e^n{\Prob\Big\{\Vert Z+\xyzv\Vert_2\leq \gamma\, \frac{\sqrt{d\vert \KK\vert}}{k}\Big\}}.
\label{eq-from A to Z}
\end{align*}
By Lemma~\ref{single row lemma} there is an absolute constant $C\geq 1$ such that for all $\gamma\geq 1$,
$$
 \cf( Z_i, \gamma\sqrt{d}/k) \leq C \gamma^2\,\est_i.
$$
Therefore, the random vector $Z+v$ satisfies the assumptions
of Lemma~\ref{lem-tensorization} with $\varepsilon_0= \sqrt{d}/k$ and
$p_i=C k^2\,\est_i/d$.
Applying this lemma and taking
$\varepsilon=\gamma\varepsilon_0$, we obtain
$$
\Prob\Big\{\Vert Z+\xyzv\Vert_2\leq \gamma\, \frac{\sqrt{d\vert \KK\vert}}{k}  \Big\}
\leq
C_1^{|K|}\gamma^{2|K|} \prod_{i\in \KK}  \est_i,$$
where $C_1\geq 1$ is an absolute constant.
This completes the proof.
\end{proof}

Before we complete the proof of Theorem~\ref{sbp th}, we show that
$\prod_{i=1}^n \est_i$ and $\prod_{i\in \KK} \est_i$ are comparable
whenever $\KK$ is not too small.

\begin{lemma}\label{lem: est-K^c}
Let $d,n$ be large enough integers with $d^3\leq n$. Let $k\geq 1$  and $\KK\subset [n]$ be such that
$$
 \vert \KK^c\vert \leq  n/(50 \ln d) \quad \mbox{ and } \quad  k\leq d^{-10} e^{n/(5\vert \KK^c\vert)}
$$
and let  $y\in\kset_k$.  Then
$$
\prod_{i\in \KK^c} \est_i^{-1} \leq e^n.
$$
\end{lemma}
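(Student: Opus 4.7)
The goal is to show $\sum_{i\in \KK^c}\ln\est_i^{-1}\le n$, which will be achieved by a coarse pointwise bound on $\est_i^{-1}$ combined with a careful case analysis exploiting both hypotheses on $|\KK^c|$ and $k$.

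First I would observe that $\est_i^{-1}=\max(1,\max_q w_{iq})$ by definition, and that $w_{iq}\le h_q Q_{iq}/d\le h_q$ for regular $q$, while $w_{iq}\le h_q\sqrt{Q_{iq}}\le h_q\sqrt d$ for spread $q$. Hence $\est_i^{-1}\le \sqrt d\,\max_q h_q\le \sqrt d\, H$, where $H=\sum_q h_q$. Next I would reprise the argument from the proof of Lemma~\ref{lem-multinomial}: since $y$ is a $k$-approximation of a gradual vector, we have $y_{n_2}^*\le 2d^{3/2}$, so the number of distinct values of $y$ in the disc of radius $2d^{3/2}$ is bounded by $((2k+1)\cdot 2d^{3/2})^2\le 36k^2 d^3$, while at most $n_2$ additional values arise from coordinates outside this disc. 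Therefore $H\le n_2+36k^2d^3\le n/d^{2/3}+36k^2d^3$, and in particular $\sqrt d\,H\le n/d^{1/6}+36k^2d^{7/2}$.

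I would then split into two cases according to which term dominates in the bound on $\sqrt d\,H$.

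\textbf{Case B: $36k^2d^{11/3}\ge n$.} Here $\sqrt d\,H\le 72k^2 d^{7/2}$, so $\ln\est_i^{-1}\le 2\ln k+\tfrac72\ln d+O(1)$. Summing and plugging in the constraint $|\KK^c|(\ln k+10\ln d)\le n/5$ coming from $k\le d^{-10}e^{n/(5|\KK^c|)}$:
\[
\sum_{i\in \KK^c}\ln\est_i^{-1}\le 2|\KK^c|\ln k+\tfrac72|\KK^c|\ln d+O(|\KK^c|)\le \tfrac{2n}{5}-\tfrac{33}{2}|\KK^c|\ln d+O(|\KK^c|)\le\tfrac{n}{2},
\]
using $|\KK^c|\le n/(50\ln d)$ to absorb the $O(|\KK^c|)$ term for $d$ large enough.

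\textbf{Case A: $36k^2d^{11/3}<n$.} Here $\sqrt d\,H\le 2n/d^{1/6}$, so $\ln\est_i^{-1}\le \ln n-\tfrac16\ln d+O(1)$. Summing gives $\sum_{i\in \KK^c}\ln\est_i^{-1}\le |\KK^c|\ln n+O(|\KK^c|)$, and the key is to bound $|\KK^c|\ln n$. Rewriting the $k$-constraint as $|\KK^c|\ln(kd^{10})\le n/5$: if $kd^{10}\ge n^{1/5}$ then $\tfrac15|\KK^c|\ln n\le n/5$, giving $|\KK^c|\ln n\le n$ immediately. Otherwise $kd^{10}<n^{1/5}$, which forces $k^2d^3<n^{2/5}/d^{17}$, so the second term in the bound on $H$ is negligible compared with $n_2$, and combining $|\KK^c|\le n/(50\ln d)$ with the ``$d$ large enough'' assumption, which ensures $\ln d$ is comparable to $\ln n/50$ in the remaining parameter range, closes the estimate.

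The main obstacle is to verify the last sub-case: ensuring that $|\KK^c|\ln n\le n$ when both $k$ and $\ln d$ are relatively small. This requires carefully interpolating between the two constraints $|\KK^c|\ln d\le n/50$ and $|\KK^c|\ln k\le n/5-10|\KK^c|\ln d$, and using the trivial bounds $k\le \sqrt n/(8d^{3/2}\sqrt{\ln d})$ and $d^3\le n$ from the hypotheses to show that the ``bad'' region where both terms fail to control $|\KK^c|\ln n$ is incompatible with $k\ge 1$ once $d$ is sufficiently large. Once this geometric bookkeeping is done, both cases yield $\sum_{i\in \KK^c}\ln\est_i^{-1}\le n$, completing the proof.
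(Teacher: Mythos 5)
The central difficulty is that your pointwise bound $\est_i^{-1}\le\sqrt d\,\max_q h_q$, applied uniformly to all $i\in\KK^c$, is too crude, and the parameter bookkeeping in your Case~A does not close. Consider the extremal situation $|\KK^c|\approx n/(50\ln d)$: then the constraint $k\le d^{-10}e^{n/(5|\KK^c|)}$ forces $k=1$, so you are in Case~A with $H\lesssim n_2\approx n/d^{2/3}$, and your bound gives $\sum_{i\in\KK^c}\ln\est_i^{-1}\le|\KK^c|\big(\ln n+O(\ln d)\big)\approx n\ln n/(50\ln d)$. This exceeds $n$ whenever $\ln n\gg\ln d$, which is allowed: the hypotheses only impose $d^3\le n$ and ``$d$ large enough'' (meaning $d\ge C$ for an absolute constant), with $n$ unbounded above. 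Your claim that ``$d$ large enough ensures $\ln d$ is comparable to $\ln n/50$ in the remaining parameter range'' is not a consequence of any hypothesis — take $d$ fixed and $n\to\infty$ — and the auxiliary bound $k\le\sqrt n/(8d^{3/2}\sqrt{\ln d})$ belongs to Theorem~\ref{sbp th}, not to this lemma's hypotheses, and in any case does not rule out the regime $k=1$, $n\gg d^{50}$.

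The paper's proof succeeds precisely because it does not apply the same worst-case bound to every $i\in\KK^c$. It groups the $\ell$-parts by the dyadic scale $2^b$ of their truncated weights, and uses the $d$-regularity identity $\sum_i Q_{iq}=d|\lset^{(q)}|$ to observe that at most $d|\hset_b|$ indices $i$ can have $Q_{iq}\ne 0$ for some $q\in I(b)$ — so only those $i$'s can have $\max_q w_{iq}$ on scale $2^b\sqrt d$, and all others contribute a factor $\le e^{n/(2|\KK^c|)}$. It then shows, using the $1/k$-separation of the levels together with the gradual decay $x_s^*\le d^3(n/s)^6$, that $|\hset_b|\lesssim n\sqrt d\,k^{1/6}2^{-b/12}$, i.e.\ the number of ``offending'' rows decays geometrically in $b$. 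The resulting sum $\sum_{b\ge b_0}d|\hset_b|\ln(2^b\sqrt d)$ is then $\ll n$ once $2^{b_0}\sqrt d\ge e^{n/(2|\KK^c|)}$, and the remaining $b<b_0$ contribute the harmless factor $e^{n/2}$. This ``few rows see big heights'' dichotomy is the idea your argument is missing; without it, the uniform bound $(\sqrt d\,H)^{|\KK^c|}$ simply does not give $e^n$ across the whole admissible parameter range.
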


\begin{proof}
Since $\RR_{iq}\leq d$, we have $w_{iq}\leq \hq\, \sqrt{d}$ for all $i\leq n$ and  $q\leq m$.
For each $b\geq 1$, denote
$$
 I_b:=\{q\le m:\, \hq\in[2^{b-1},\,2^b)\}    \quad \quad \mbox{ and } \quad \quad
 \hset_b:= \bigcup_{q\in I_b} \lset^{(q)}.
$$
 Let $b_0$ be such that
$$
  2^{b_0-1}\sqrt{d}\leq e^{n/(2\vert \KK^c\vert)}<2^{b_0}\sqrt{d}.
$$
Note that the assumption on the cardinality of $K$ implies that $2^{b_0}\geq d^{24}$.
Since for every
$$
   q\in I_0 :=\bigcup_{b<b_0}  I_b
$$
 we have
$w_{iq}\leq \hq \sqrt{d} \le e^{n/(2\vert \KK^c\vert)}$,
then
\begin{equation}\label{eq1: lem-est-K^c}
 S:=\prod_{i\in \KK^c} \est_i^{-1}
 \leq  \prod_{i\in \KK^c}e^{n/(2\vert \KK^c\vert)}   \max(1,\max_{q\not\in I_0} w_{iq})
 \leq e^{n/2} \prod_{b\geq b_0}  \prod_{i\in \KK^c} \max(1,\max_{q\in I_b} w_{iq}).
\end{equation}
Fix $b\geq b_0$. By the construction of the matrix $Q$ (from a $d$-regular matrix $M$),
there are at most $d\vert \hset_b\vert$ indices $i$ for which $\RR_{iq}\neq 0$ for some
$q\in I_b$. Therefore,
\begin{equation}\label{eq2: lem-est-K^c}
\prod_{i\in \KK^c} \max(1,\max_{q\in I_b} w_{iq})\leq (2^b\sqrt{d})^{d\vert \hset_b\vert}.
\end{equation}
Let $j$ be the maximal order of $\ell$-parts  in the definition of $\hset_b$
and $\lset$ be a corresponding $\ell$-part (if there are two of them, spread and regular, we choose and fix one).
Denote $h:=h(\lset)$.
By the definition of $\hset_b$, the construction of the $\ell$-decomposition, (\ref{levset}), and
(\ref{levsethei}), we have
$$
  2^{b_0-1}\leq 2^{b-1}\leq  h< 2^b
 \quad \mbox{ and } \quad \vert \hset_b\vert \leq
  2\max_{q\in I_b} h_q\,  \sum _{i=0}^{j} 2^{i+1} \leq 2^{j+b+3}.
$$
Moreover, the size of each level set in $\lset$ is in the interval $[2^{j-1},2^{j+1}]$,
in particular, $|\lset|\in [2^{j-1} h,\,  2^{j+1} h]$.
Since $y$ is a $k$-vector, its levels are $1/k$-separated. Thus, using that
for any $p>0$ there are at most $(2p+1)^2$ integer complex numbers of absolute value less or equal
$p$, we obtain for $s= \lceil 2^{j-3}h\rceil$:
$$
  y_{s}^*> \frac{ \sqrt{h/8} }{ k }\geq \frac{ 2^{b/2}}{4k} \geq \frac{ 2^{b_0/2}}{4k} \geq
   \frac{ e^{n/(4\vert \KK^c\vert)}}{4 k d^{1/4}} \geq 2,
$$
provided that $k\leq e^{n/(4\vert \KK^c\vert)}/(8 k d^{1/4})$. Now we use that $y$ is the
$k$-approximation of a vector $x\in \mathcal S$. Since $x_{\nn}^{*}=1$, we observe
$s< n_3$. On the other hand,  applying Lemma~\ref{l:decay} to $x$,
$$
  y_{s}^*\leq 2 x_{s}^*\leq d^3 (n/s)^6 ,
$$
which implies
$$
  2^{j+b}\leq 2^{j+1} h \leq 16s \le  16n \sqrt{d}\, (4k)^{1/6}\, 2^{-b/12}.
$$
 Therefore,
$$
  \vert \hset_b\vert \leq 2^{j+b+3} \leq C' n \sqrt{d}\, k^{1/6}\, 2^{-b/12}
$$
for a universal constant $C'>0$.
This, together with \eqref{eq1: lem-est-K^c}, \eqref{eq2: lem-est-K^c}, and
$2^b\geq 2^{b_0}\geq  e^{n/(2\vert \KK^c\vert)}/\sqrt{d}\geq d$, implies
for an appropriate absolute positive constant $C$,
\begin{align*}
 S &\leq e^{n/2}\, \exp\Big(
\sum_{b\geq b_0} C' n d^{3/2} k^{1/6}2^{-b/12}\ln (2^b\sqrt{d})\Big)
\leq e^{n/2}\, \exp\Big(C n d^{3/2}  k^{1/6}  b_0 \, 2^{-b_0/12}\Big)
\\&
\leq  e^{n/2}\, \exp\big((n/2)  d^{3/2}  k^{1/6}
\big(\sqrt{d} e^{-n/(2\vert \KK^c\vert)}\big)^{1/13}\big)\leq e^{n},
\end{align*}
provided that $k\leq d^{-10} e^{n/5\vert \KK^c\vert}$ and that $d$ is large enough.
\end{proof}

\begin{proof}[Proof of Theorem~\ref{sbp th}]
Following our configuration-type model construction, the law of the adjacency matrix $A$ of the
multigraph $\widehat{G}_{\RR}$, conditioned on the event that $\widehat{G}_{\RR}$ is simple,
coincides with the uniform distribution on $\MSet_{n,d}(\RR,y)$ (see \eqref{eq: conf-model}).
Using this and applying Proposition~\ref{prop-simple} and Lemma~\ref{lem-norm of Ay}, we obtain
\begin{align*}
\Prob_{\RR,y}\bigl\{&M\in \MSet_{n,d}(\RR,y):\,
\|M^\KK y+\xyzv\|_2\leq \gamma\sqrt{d\vert \KK\vert}/k\bigr\}\\
&=\Prob\Big\{\Vert A^\KK y+\xyzv\Vert_2\leq \gamma\, \sqrt{d\vert \KK\vert}/k \, \big|\,  \Event_{\ref{prop-simple}}\Big\}
\\
&\leq {\Prob\Big\{\Vert A^\KK y+\xyzv\Vert_2\leq\gamma \, \sqrt{d\vert \KK\vert}/k\Big\}}/
{\Prob(\Event_{\ref{prop-simple}})}\\
&\leq \exp(33 d^2 \ln^2 n)\, C_{\ref{lem-norm of Ay}}^n\gamma^{2\vert \KK\vert}\prod_{i\in \KK} \est_i.
\end{align*}
Lemma~\ref{lem: est-K^c} implies the desired result.
\end{proof}

\section{Proof of Theorem \ref{ker th}}
\label{s:proof}

This section is devoted to the proof of the main result of the paper, obtained by a combination of the estimates for
steep and almost constant vectors from Section~\ref{steep}, the structural information on the set of gradual vectors
from Section~\ref{s:k-vektors} and the small ball probability theorem of Section~\ref{s:small ball}.
Setting aside technical details, the principal idea of the proof is to define a discrete structure on the set of gradual
vectors with ``not small'' subsets of almost equal coordinates and, by a combination of small ball probability estimates
for individual vectors (Lemmas~\ref{kapset lemma} and~\ref{rhoset lemma}), the union bound and an approximation argument (Proposition~\ref{prop: approx}),
to eliminate those vectors from the set of ``potential'' null vectors
of our matrix. Construction of the discrete subset (which can be viewed as a collection of nets with respect to
$\ell_\infty$-metric in $\C^n$) is quite involved -- it uses rather complex information about the
structure of a gradual vector (the $\ell$-decompositions of its $k$-approximations) which affects both cardinalities
of the nets and the probability estimates. As for the latter, to simplify analysis of the product $\prod_{i=1}^n\est_i$,
we introduce another set of estimators $\{\triv_i\}_{i=1}^n$, which we call {\it trivial estimators}
(see Subsection~\ref{s:rouh estimators} for definitions). The product $\prod_{i=1}^n\est_i$ is then estimated in terms of
$\prod_{i=1}^n\triv_i$ and an auxiliary functional $\eta$ (also defined in Subsection~\ref{s:rouh estimators}).
Note that, by the definition, the probability estimators $\est_i$ depend both on the structure of the underlying $k$-vector
and on statistics of the corresponding matrix $\RR$.
By introducing the estimators $\triv_i$ and the functional $\eta$, we ``separate'' these dependencies:
the trivial estimators are entirely determined by the $\ell$-decomposition of the related $k$-vector
while $\eta$ carries information about the matrix $\RR$ and the $\ell$-decomposition in a much more convenient
form compared to $\est_i$'s.

The next informal argument,
following the universality paradigm of the random matrix theory,
may be useful as an illustration of our approach.
Given the random matrix $M$, we may think that null vectors of $(M-z\,\idmat)^K$
behave essentially like Gaussian vectors (up to rescaling). In particular, this would imply that
the $\ell$-decompositions of $k$-approximations
of the null vectors are comprised of $\ell$-parts which are ``mostly'' regular and, moreover,
there are very few $\ell$-sets of comparable (up to a constant multiple) cardinalities.
Accordingly, vectors whose $\ell$-decompositions contain ``many'' spread $\ell$-parts
or many $\ell$-parts of approximately equal cardinalities, should be typically in the complement of the matrix kernel.
This imprecise observation can in fact be rigorously verified, in particular, we show that
vectors with large spread $\ell$-parts (from the sets $\kapset_u$) are not in the kernel with high probability.

\smallskip

Before we pass to the probability estimators and computing the union bound over a discrete subset
of $\mathcal S$, we reformulate the main statement of Section~\ref{s:small ball}. We first construct the following subset $\RSet^{ST}_{n,m,d}$ (``ST'' stands for ``standard'') of  $\RSet_{n,m,d}$.  First consider
the subset of matrices $\RR=(\RR_{iq})\in \RSet_{n,m,d}$ such that

\medskip

\noindent
{\bf 1. }
For every $q\leq m$ with $\|\col_q(\RR)\|_1=\sum_{i=1}^n \RR_{iq}\geq \sqrt{d}n$ one has
$$\bigl|\bigl\{i\leq n:\,\RR_{iq}<c_{\ref{graph th to prove}} \|\col_q(\RR)\|_1/n\bigr\}\bigr|\leq n/\sqrt{d};$$

\smallskip

\noindent
{\bf 2. }
For every non-empty subset $J\subset [m]$
and $\kappa:=\sum\limits_{q\in J}\|\col_q(\RR)\|_1$ one has
\begin{align*}
\Bigl|\Bigl\{&i\leq n:\,\sum\limits_{q\in J}\RR_{iq}\geq c_{\ref{graph prop}}\frac{\kappa}{n}\mbox{ and }
\sum\limits_{q\in J^c}\RR_{iq}\geq c_{\ref{graph prop}} \frac{dn-\kappa}{n}\Bigr\}\Bigr|
\geq c_{\ref{graph prop}}\min\bigl(\kappa,(dn-\kappa),n\bigr).
\end{align*}

\smallskip

\noindent
We denote this subset by $\RSet^{ST_0}_{n,m,d}$.
Note that in Corollary~\ref{graph th to prove} and Proposition~\ref{graph prop},
we showed that the event
\begin{align*}
\Event=\Bigl\{&M\in\MSet_{n,d}:\,\forall J\subset[n]\mbox{ one has}\\
&\Bigl|\Bigl\{i\leq n:\,|\supp\,\row_i(M)\cap J|\geq c_{\ref{graph prop}}\frac{d|J|}{n}\mbox{ and }
|\supp\,\row_i(M)\cap J^c|\geq c_{\ref{graph prop}}\frac{d|J^c|}{n}\Bigr\}\Bigr|\\
&\geq c_{\ref{graph prop}}\min(d|J|,d(|J^c|),n)\,\,\,\mbox{ AND}
\\
&\mbox{if} \,\, \vert J\vert\geq n/\sqrt{d} \text{ one has }
\Bigl|\Bigl\{i\le n :\,|\supp\,\row_i(M)\cap J|<\frac{c_{\ref{graph th to prove}}d|J|}{n}\Bigl\}\Bigl|
\leq n/\sqrt{d}\Bigl\}
\end{align*}
has probability very close to one.
Now, if $M\in\Event$ and  $y$ is a $k$-vector with $m$ non-empty
$\ell$-parts in its $\ell$-decomposition
then the correspondence $\stackrel{(y)}{\longrightarrow}$ necessarily maps $M$
into $\RSet^{ST_0}_{n,m,d}$. The image of $\Event$ we denote by $\RSet^{ST}_{n,m,d}$.
Thus, the preimage of $\RSet^{ST}_{n,m,d}$
with respect to $\stackrel{(y)}{\longrightarrow}$ is almost the entire set $\MSet_{n,d}$.
This fact combined with Theorem~\ref{sbp th} gives the following proposition.

\begin{prop}\label{prop sbp}
Let $d, n$ be large enough integers such that $d^3\leq n$.  Let
 $\KK\subset [n]$ be such that $\vert \KK^c\vert \leq n/(50 \ln d)$
and assume
$$
 1\le k\le \min\big( \sqrt{n}/(8d^{3/2} \sqrt{\ln d}),\, d^{-10} e^{n/(5\vert \KK^c\vert)}\big).
$$
Let  $y\in \kset_k$ and  $A>0$ be such that
$
\prod\limits_{i=1}^n\est_i\leq A
$
for every $Q\in \RSet^{ST}_{n,m,d}$. Then for any non-random vector
$\xyzv\in \C^{\vert \KK\vert}$ and any $\gamma \geq 1$ we have
$$
  \Prob\bigl\{M\in \Mc:\, \| M^\KK y+\xyzv\|_2\leq \gamma \sqrt{d \vert \KK\vert }/k\,\,\big|\,\,
  \Event \bigr\}\leq 2C^n \gamma^{2\vert \KK\vert} A,
$$
where $C>0$ is an absolute constant.
\end{prop}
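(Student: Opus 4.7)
The plan is to combine Theorem~\ref{sbp th}, which gives a class-wise small ball estimate on each equivalence class $\MSet_{n,d}(\RR,y)$, with the defining property of $\RSet^{ST}_{n,m,d}$ as the $\stackrel{(y)}{\longrightarrow}$-image of $\Event$. First observe that the classes $\MSet_{n,d}(\RR,y)$, $\RR\in \RSet_{n,m,d}$, partition $\Mc$, and by the very construction of $\RSet^{ST}_{n,m,d}$ the intersection $\MSet_{n,d}(\RR,y)\cap\Event$ is empty unless $\RR\in \RSet^{ST}_{n,m,d}$.

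I would then apply the total probability formula and invoke Theorem~\ref{sbp th} class by class:
\begin{align*}
\Prob\bigl\{&\|M^\KK y+\xyzv\|_2\leq \gamma\sqrt{d\vert\KK\vert}/k,\;M\in\Event\bigr\}\\
&=\sum_{\RR\in \RSet^{ST}_{n,m,d}}\Prob\bigl(\MSet_{n,d}(\RR,y)\bigr)\,\Prob_{\RR,y}\bigl\{\|M^\KK y+\xyzv\|_2\leq \gamma\sqrt{d\vert\KK\vert}/k,\;M\in\Event\bigr\}\\
&\leq \sum_{\RR\in \RSet^{ST}_{n,m,d}}\Prob\bigl(\MSet_{n,d}(\RR,y)\bigr)\,\Prob_{\RR,y}\bigl\{\|M^\KK y+\xyzv\|_2\leq \gamma\sqrt{d\vert\KK\vert}/k\bigr\}\\
&\leq C^n\gamma^{2\vert\KK\vert}A\sum_{\RR\in \RSet^{ST}_{n,m,d}}\Prob\bigl(\MSet_{n,d}(\RR,y)\bigr)\leq C^n\gamma^{2\vert\KK\vert}A,
\end{align*}
where the penultimate inequality uses the hypothesis $\prod_{i=1}^n\est_i\leq A$, which is assumed to hold for every $\RR\in \RSet^{ST}_{n,m,d}$.

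Finally, Corollary~\ref{graph th to prove} together with Proposition~\ref{graph prop} and a union bound give $\Prob(\Event)\geq 1-4\exp(-n/\sqrt{d})-n^{-c_{\ref{graph prop}}d}\geq 1/2$ for $d,n$ large enough (which is ensured by $d^3\leq n$ and both parameters large). Dividing the displayed estimate by $\Prob(\Event)$ converts it into a conditional bound and absorbs the factor $1/\Prob(\Event)\leq 2$ into the constant, producing the claimed bound $2C^n\gamma^{2\vert\KK\vert}A$. There is no substantive obstacle at this stage: the analytic heart of the estimate is already packaged inside Theorem~\ref{sbp th}, and the construction of $\RSet^{ST}_{n,m,d}$ in Section~\ref{s:graphs} has been set up precisely so that the class-wise bound integrates cleanly against the partition of $\Mc$; the present proposition is the clean reformulation needed for the subsequent union-bound arguments over nets of $k$-vectors.
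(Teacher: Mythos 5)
Your proposal is correct and follows essentially the same route as the paper: condition on the partition of $\Mc$ into classes $\MSet_{n,d}(\RR,y)$, observe that only $\RR\in\RSet^{ST}_{n,m,d}$ can meet $\Event$, apply Theorem~\ref{sbp th} class by class, and pay a factor $1/\Prob(\Event)\leq 2$ when passing to the conditional probability. The paper packages the same idea slightly more compactly by introducing the preimage $\Event_0\supset\Event$ and writing $\Prob(\cdot\mid\Event)\leq 2\Prob(\cdot\mid\Event_0)$, but the underlying decomposition and use of Theorem~\ref{sbp th} are identical.
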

\begin{proof}
As we already noted, the event $\Event$ is contained inside
$$
 \Event_0:=\{M:\mbox{ $M$ is mapped into $\RSet_{n,m,d}^{ST}$ via
$\stackrel{(y)}{\longrightarrow}$}\}.
$$
By Corollary~\ref{graph th to prove} and Proposition~\ref{graph prop}, we have
$\Prob(\Event_0)\geq \Prob(\Event)\geq 1/2$, in particular,
$\Prob(\Event_0)/\Prob(\Event)\leq 2$.
This and Theorem~\ref{sbp th} imply
\begin{align*}
\Prob&\bigl\{M\in \Mc:\,
\|M^\KK y+\xyzv\|_2\leq \gamma\sqrt{d\vert \KK\vert}/k\,\,\big|\,\,\Event\bigr\}\\
&\leq 2\Prob\bigl\{M\in \Mc:\,
\|M^\KK y+\xyzv\|_2\leq \gamma\sqrt{d\vert \KK\vert}/k\,\,\big|\,\,\Event_0 \bigr\}
\leq 2C^n \gamma^{2\vert \KK\vert} A.
\end{align*}
\end{proof}

\subsection{Rough estimators for small ball probability}
\label{s:rouh estimators}

We now introduce a ``rougher'' estimator than $\est_i$, which is easier to study. Note that conditioned on the event $\Event$ defined above, if
$|\lset^{(q)}|\ge n/\sqrt{d}$ then for most rows, $\RR_{iq}$ is of the same order of magnitude as
$d|\lset^{(q)}|/n$. Having this in mind, we
replace non-zero $\RR_{iq}$ in the definition of weights $w_{iq}$ in (\ref{weights}) with $d|\lset^{(q)}|/n$ if $|\lset^{(q)}|\ge
d^{-1/3}n$ and with $1$
otherwise and come to the following definition. Given a vector $y\in\kset_k$ with the corresponding
$\ell$-decomposition $(\lset^{(q)})_{q=1}^m$, let
$$
 \widetilde w_{q}=\widetilde w_{q}(y,k):=
\begin{cases}\hq |\lset^{(q)}|/n,
&\mbox{if $|\lset^{(q)}|\geq d^{-1/3}n$ and $\lset^{(q)}$ is regular},\\
\hq/d,
&\mbox{if $|\lset^{(q)}|< d^{-1/3}n$ and $\lset^{(q)}$ is regular},\\
\hq\, \sqrt{d\, |\lset^{(q)}|/n} ,
&\mbox{if $|\lset^{(q)}|\geq d^{-1/3}n$ and $\lset^{(q)}$ is spread},\\
\hq
&\mbox{if $|\lset^{(q)}|< d^{-1/3}n$ and $\lset^{(q)}$ is spread}
\end{cases}
$$
be {\it the truncated weight} (``truncated" because for small non-zero $\RR_{iq}$ we replace it with its minimal possible value $1$).
The reason why we truncate at level $d^{-1/3}n$ instead of $n/\sqrt{d}$ is that
we want the weights $\widetilde w_{q}$ to be significantly weaker than $w_{iq}$.
Specifically, this definition of the truncated weights will allow us to  estimate the product $\prod_{i=1}^n\est_i$
from above in terms of {\it trivial estimators} defined with respect to the truncated weights (see below).
Note that for any $q\leq m$, we have
\begin{equation}\label{eq: trivial-truncated-weight}
\widetilde w_{q}\geq {\hq}/{d}\quad  \text{ if $\lset^{(q)}$ is regular} \quad \quad
\text{ and }\quad \quad  \quad \widetilde w_{q}\geq \hq \quad  \text{ if $\lset^{(q)}$ is spread}.
\end{equation}
The principal difference between the weights $\widetilde w_{q}$ and $w_{iq}$
is that $\widetilde w_{q}$ does not depend  on $\RR_{iq}$, which makes its analysis easier.
We define also {\it the trivial estimator} $\triv_i$, $i\leq n$, as a weighted geometric mean of $\widetilde w_q$, $q\le m$,
$$
\triv_i:=\triv_i(y,k,\RR):=\prod\limits_{q\leq m} ({\widetilde w_q})^{-\RR_{iq}/d}.
$$
Note that by (\ref{sumqiq}) and (\ref{eq: trivial-truncated-weight}),
\begin{equation}\label{eq: trivial-prod}
 \prod_{i=1}^n\triv_i=\prod\limits_{q\leq m} ({\widetilde w_q})^{-|\lset^{(q)}|}\le d^n
 \prod\limits_{q\leq m} {\hq}^{-|\lset^{(q)}|}.
\end{equation}
In what follows, we usually do not mention explicitly dependency of the weights and the estimators
on the vector $y$, which is assumed to be fixed throughout most of the subsection.
To compare $ \est_i$ and $\triv_i$, we introduce  more notations.
For each $b\in\Z$, let $\wset_b$ be the union of all spread and regular $\ell$-parts $\lset^{(q)}$,
whose truncated weights
$\widetilde w_q$ lie in the interval $[2^b,2^{b+1})$, that is, we set
$$
 I(b):=\{ q\, : \, 2^b\le \widetilde w_q<2^{b+1}\}\quad \quad \mbox{ and } \quad \quad
 \wset_b:=\bigcup_{I(b)} \lset^{(q)}
$$
 (we will call $\wset_b$ {\it the $w$-set of order $b$}).
    For a fixed $i\le n$, define
   $$
   b(i):=\max\{b\in \Z\, :\,\, \exists q\in I(b) \, \, \mbox{ such that }\, \,  \RR_{iq}\neq 0\} .
$$
Put $b_{\min}:=\lfloor\log_2 1/d\rfloor$, $b_{\max}:=\max\limits_{i\leq n} b(i)$
and define for all $b_{\min}\leq b\leq b_{\max}$,
 \begin{align*}
    \wbl&:=\bigcup_{s=b_{\min}}^{b}\wset_s=\bigcup_{I_{\min}(b)}\lset^{(q)},\quad
    I_{\min}(b)=\{q\, : \,  2^{b_{\min}}\le \widetilde w_q<2^{b+1} \}
   \\
  \wbg&:=\bigcup_{s=b+1}^{b_{\max}}\wset_s=\bigcup_{I^c_{\min}(b)}\lset^{(q)},\quad
    I^c_{\min}(b)=\{q\, : \,  2^{b+1}\le \widetilde w_q<2^{b_{\max}+1} \}.
 \end{align*}
 Note that for every $b_{\min}\leq b\leq b_{\max}$ one has
 $$
   I_{\min}(b)  = \bigcup_{s=b_{\min}}^{b} I(b), \quad
    I^c_{\min}(b)=\bigcup_{s=b+1}^{b_{\max}} I(b), \quad \mbox{ and } \quad \wbl \cup\wbg =[n].
 $$
The following quantities will play an important role below,
\begin{equation*}\label{of}
 \of_i:= \frac{1}{d} \sum_{b=b_{\min}}^{b_{\max}}\, \, \min\Big(\sum\limits_{q\in I_{\min}(b)}\RR_{iq}, \,\sum\limits_{q\in I^c_{\min}(b)}\RR_{iq}\Big)
 \quad \mbox{ and }  \quad \of=\sum_{i=1}^n\of_i.
\end{equation*}

We start with a useful bound on cardinalities of $\ell$-parts $\lset^{(q)}$ inside $I(b)$ for a given $b$,
in which we also use that for all positive integers $N$, $N_i$, $i\leq \ell$, with $N=N_1+\ldots + N_\ell$
one has
\begin{equation} \label{factorial}
 N!/\prod_{i=1}^{\ell} N_i! \leq \prod_{i=1}^{\ell} (eN/N_i)^{N_i}
\leq (eN)^N\prod_{i=1}^{\ell}1/(N_i)^{N_i}
\end{equation}
(this follows by the standard inequality ${N \choose \ell} \leq (eN/\ell)^\ell$).

\begin{lemma}\label{cardlqpart}
Let $b_{\min}\leq b\leq b_{\max}$. Then the multiset
$\{|\lset^{(q)}|:\,q\in I(b)\}$, when arranged in the  non-increasing order,
can be majorized by the geometric sequence $\bigl(C|\wset_b|\exp(- s/C)\bigr)_{s\geq 0}$
for a sufficiently large absolute constant $C>0$. In particular,
$$
 \prod\limits_{I(b)}|\lset^{(q)}|!\geq |\wset_b|!\, \exp(-C' |\wset_b|),
$$
where $C'>0$ is another absolute constant.
\end{lemma}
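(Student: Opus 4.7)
The plan has two parts: first establish the geometric majorization of the sizes, then derive the factorial bound via Stirling's formula.

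For the majorization, I would classify the $\ell$-parts $\lset^{(q)}$ with $q\in I(b)$ by the four cases in the definition of $\widetilde w_q$: regular versus spread combined with $|\lset^{(q)}|<d^{-1/3}n$ versus $|\lset^{(q)}|\geq d^{-1/3}n$. In each case, the constraint $\widetilde w_q\in[2^b,2^{b+1})$ together with $|\lset^{(q)}|\in[2^{j_q-1}h_q,2^{j_q+1}h_q]$ from (\ref{levsethei}) determines $|\lset^{(q)}|$ up to a universal multiplicative constant as an explicit exponential function of the order $j_q$: for example $|\lset^{(q)}|\asymp d\,2^{b+j_q}$ in the regular small case, $\sqrt{n\,2^{b+j_q}}$ in the regular large case, $2^{b+j_q}$ in the spread small case, and $(n/d)^{1/3}2^{2(b+j_q)/3}$ in the spread large case. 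In all four cases the growth ratio in $j_q$ is at least $\sqrt{2}$, and since at each order there is at most one regular and at most one spread $\ell$-part, each of the four case-collections contributes at most one $\ell$-part per order.

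Next, for any threshold $t>0$, I would count the parts $q\in I(b)$ with $|\lset^{(q)}|\geq t$. Within each case, these correspond to orders $j_q$ lying in an interval $[j^{\min},j^{\max}]$, where $j^{\min}$ is determined by requiring $|\lset^{(q)}|\gtrsim t$ and $j^{\max}$ by $|\lset^{(q)}|\leq|\wset_b|$ (since the largest size cannot exceed the total sum of sizes). The exponential growth of sizes in $j_q$ forces this interval to have length $O(\log(|\wset_b|/t))$, yielding $O(\log(|\wset_b|/t))$ parts per case and hence overall. Equivalently, sorting the sizes as $c_1\geq c_2\geq\ldots$, one obtains $c_s\leq C|\wset_b|e^{-s/C}$ for an absolute constant $C$, which is the desired majorization.

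For the ``in particular'' inequality, applying Stirling's formula $\log N!=N\log N-N+O(\log N)$ reduces the task to bounding the entropy-like quantity
$$
|\wset_b|\log|\wset_b|-\sum_{s}c_s\log c_s\leq C''|\wset_b|.
$$
Writing $p_s=c_s/|\wset_b|$, the left-hand side equals $|\wset_b|\cdot H(p)$, where $H(p)=-\sum p_s\log p_s$ is the Shannon entropy of the probability distribution $(p_s)$. Since $(p_s)$ is dominated by the exponentially decaying sequence $Ce^{-s/C}$, a direct computation that splits the sum at $s_0=O(\log C)$ (using $-x\log x\leq 1/e$ for the initial terms and the monotonicity of $-x\log x$ on $(0,1/e)$ to compare $-p_s\log p_s$ with $-Ce^{-s/C}\log(Ce^{-s/C})$ for the tail) will yield $H(p)=O(1)$, as required. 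The main obstacle will be the case-by-case bookkeeping in the majorization step, particularly the spread large case, in which $h_q\sqrt{d|\lset^{(q)}|/n}\asymp 2^b$ couples $h_q$ and $|\lset^{(q)}|$ nonlinearly; solving the resulting system together with $|\lset^{(q)}|\asymp 2^{j_q}h_q$ will require care to verify that all implicit constants are universal.
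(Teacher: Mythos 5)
Your proposal is correct and takes essentially the same approach as the paper: the same four-way split of $I(b)$ by regular/spread and small/large cardinality, the same use of the relation $|\lset^{(q)}|\asymp 2^{j_q}h_q$ from \eqref{levsethei} together with $\widetilde w_q\asymp 2^b$ to solve for $|\lset^{(q)}|$ as an exponential function of $j_q$ with growth ratio at least $\sqrt{2}$ in every case, and the same entropy-type bound on the factorial ratio (you invoke Stirling where the paper uses the combinatorial bound \eqref{factorial}, but these yield the same estimate). Your explicit per-case formulas and the threshold-counting formalization of ``combine four geometric sequences into one'' are slightly more detailed than the paper's sketch, but this is a difference in exposition, not in argument.
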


\begin{proof}
We apply (\ref{levsethei}), which roughly speaking says that an $\ell$-part obtained at step $j$ satisfies
$|\lset_q|\approx 2^j h_q$. Note also that at most two $\ell$-parts can be obtained on a given step $j$.

Split the set $\{\lset^{(q)}:\,q\in I(b)\}$ into four subsets
$U_1,U_2,U_3,U_4$ determined by whether an $\ell$-part is spread or regular and whether its cardinality
is greater than $d^{-1/3}n$ or not. For concreteness, assume that $U_1,U_2$ contain regular $\ell$-parts,
with larger $\ell$-parts in $U_1$, and $U_3,U_4$ include spread $\ell$-parts, with larger ones in $U_3$.
Within the set $U_2$, the heights of the respective $\ell$-parts are equivalent up to multiple $2$. Therefore
their cardinalities, when arranged in non-increasing order, are majorized by an appropriate geometric sequence.
The same argument works for $\ell$-parts in $U_4$. For the $\ell$-parts in $U_1$, the quantities
$h_q |\lset_q|$ are equivalent to each other, say to a number $a$.  This means that for an $\ell$-part
obtained at the step $j$ we have $h_q^2\approx a 2^{-j}$. This in turn implies $|\lset_q|\approx 2^{j/2} \sqrt{a}$.
Thus $|\lset_q|$ (after a rearrangement) are geometrically decreasing.
Finally, for set $U_3$, the quantities $h_q\sqrt{|\lset_q|}$ are  stable and a similar argument works.
Combining the four decreasing sequences into one, we obtain a sequence that can be also majorized by a
geometric series (with worse constants).

To prove the ``in particular" part, let $N_s$, $s\geq 0$, corresponds to cardinalities of $\lset^{(q)}$, $q\in I(b)$.
Then $N= \sum _s N_s = |\wset_b|$ and, by the first part, $N_s\leq C N \exp(-s/C)$. Therefore, by (\ref{factorial}),
$$
 \ln \bigg(|\wset_b|!/\prod\limits_{I(b)}|\lset^{(q)}|!\bigg) \leq
 \sum _{s\geq 0} N_s \ln (e N/N_s) \leq  N  \sum _{s\geq 0} (s+1) e^{-s/C} \leq C'N,
$$
where $C'>0$ is an absolute constant. This completes the proof.
\end{proof}

In the next lemma we relate $\est_i$ and $\triv_i$ estimators using the parameter $\of$ introduced above.

\begin{lemma}\label{lem: precise-bound-triv-est}
Let $y\in\kset_k$, $(\lset^{(q)})_{q=1}^m$ be its  $\ell$-decomposition, and  $Q$ be a $y$-admissible matrix in $\RSet_{n,m,d}^{ST}$.
Then
$$
 \prod_{i=1}^n\est_i\leq C^n_{\ref{lem: precise-bound-triv-est}}\, 2^{-\of}\prod_{i=1}^n\triv_i,
$$
where $C_{\ref{lem: precise-bound-triv-est}}\ge 1$ is a universal constant.
\end{lemma}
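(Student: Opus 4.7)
The plan is to prove the row-by-row inequality $\est_i\le C\cdot 2^{-\of_i}\,\triv_i$ for each $i\le n$ and then multiply over $i$. Since $\of=\sum_i\of_i$, this immediately gives the lemma with $C_{\ref{lem: precise-bound-triv-est}}=C$. Fix $i$ and set $\alpha_q:=Q_{iq}/d$; because the row-sums of $Q$ equal $d$, the numbers $\alpha_q$ form a probability distribution on $[m]$. Let $P_i(b):=\sum_{q\in I(b)}\alpha_q$, $b^{*}:=\max\{b:P_i(b)>0\}$, and $E_i:=\sum_b b\,P_i(b)$.

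The starting point is a dyadic ``max versus weighted geometric mean'' inequality applied to the $\widetilde w_q$'s. Because $\widetilde w_q\in[2^b,2^{b+1})$ for $q\in I(b)$, we have $\prod_q\widetilde w_q^{\alpha_q}<2^{E_i+1}$ and $\max_q\widetilde w_q\ge 2^{b^{*}}$, so
$$
\max_q\widetilde w_q\ \ge\ \tfrac12\cdot 2^{b^{*}-E_i}\,\prod_q\widetilde w_q^{\alpha_q}.
$$
Abel summation gives $b^{*}-E_i=\sum_{b<b^{*}}P_i(\le b)$, and since $\min(P_i(\le b),P_i(>b))\le P_i(\le b)$, the right-hand sum dominates $\of_i$. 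Hence $\max_q\widetilde w_q\ge (1/2)\cdot 2^{\of_i}/\triv_i$.

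It then remains to upgrade $\max_q\widetilde w_q$ to $\max_q w_{iq}$ at the cost of an absolute constant, which is the main content of the proof. Comparing the two definitions, $w_{iq}\ge\widetilde w_q$ holds whenever $\lset^{(q)}$ is small (i.e., $|\lset^{(q)}|<d^{-1/3}n$) and $Q_{iq}\ge 1$. When $\lset^{(q)}$ is large, the ratio $w_{iq}/\widetilde w_q$ equals either $\beta$ (regular case) or $\sqrt\beta$ (spread case) with $\beta=Q_{iq}n/(d|\lset^{(q)}|)$. This is where the hypothesis $Q\in\RSet^{ST}_{n,m,d}$ enters: condition 1 states that for each sufficiently large column $q$, $\beta\ge c_{\ref{graph th to prove}}$ holds for all but at most $n/\sqrt d$ rows. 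Consequently, picking $q^{*}\in I(b^{*})$ with $Q_{iq^{*}}>0$ and satisfying this averaging condition yields $w_{iq^{*}}\ge c\widetilde w_{q^{*}}$ for typical rows, which combined with the earlier display gives $\est_i\le C\cdot 2^{-\of_i}\triv_i$.

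The main technical obstacle is precisely this final replacement. Because $w_{iq}$ couples the row index $i$ and the column index $q$ through $Q_{iq}$ while $\of_i$ and $\triv_i$ depend only on the row-independent weights $\widetilde w_q$, a naive pointwise comparison could lose a factor of order $d$ per row, which would overwhelm the $C^n$ slack. The control supplied by $\RSet^{ST}_{n,m,d}$ restricts the ``bad'' pairs $(i,q)$ to at most $n/\sqrt d$ per large column, and since the total number of $\ell$-parts with $|\lset^{(q)}|\ge d^{-1/3}n$ is at most $d^{1/3}$, the cumulative loss over all exceptional pairs is at most $O(n)$ in log scale---exactly the content of the final $C^n$ factor. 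A subsidiary issue is the intermediate regime $d^{-1/3}n\le|\lset^{(q)}|<n/\sqrt d$ not directly covered by condition 1; here one uses that the number of such $\ell$-parts is small (at most $d^{1/3}$) and a separate rough bound on $w_{iq}/\widetilde w_q$ suffices.
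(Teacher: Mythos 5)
Your proposal is essentially correct and follows the same two-step architecture as the paper's proof: a telescoping (Abel) argument linking $\max_q\widetilde w_q$ (or rather the quantity $\widetilde\est_i:=\min\{\widetilde w_q^{-1}:\RR_{iq}\neq 0\}$ that the paper introduces) to $\of_i$ and $\triv_i$, plus an exceptional-row argument using condition~1 of $\RSet^{ST}_{n,m,d}$ to upgrade $\widetilde w_q$ to $w_{iq}$ at the cost of $C^n$. Your geometric-mean formulation of the telescoping is equivalent to the paper's direct manipulation, and your count of the exceptional set (at most $d^{1/3}$ large $\ell$-parts times $n/\sqrt{d}$ bad rows each, hence cumulative log-loss $d^{-1/6}n\ln d=O(n)$) matches the paper's argument exactly.

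One correction: the ``intermediate regime $d^{-1/3}n\le|\lset^{(q)}|<n/\sqrt{d}$'' you flag as requiring separate treatment is empty, since $d^{-1/3}>d^{-1/2}$ for $d>1$. The truncation threshold $d^{-1/3}n$ in the definition of $\widetilde w_q$ was chosen to dominate $n/\sqrt{d}$ precisely so that condition~1 applies to \emph{every} $\ell$-part with $|\lset^{(q)}|\geq d^{-1/3}n$; no subsidiary rough bound is needed. Also, your opening sentence overstates the case: the inequality $\est_i\le C\cdot 2^{-\of_i}\triv_i$ does not hold row by row (your own discussion makes this clear), and the paper organizes the argument to avoid suggesting it does — it proves the clean per-row inequality $\widetilde\est_i\leq 2^{-\of_i+1}\triv_i$ for all $i$, and separately bounds the global ratio $\prod_i\widetilde\est_i/\prod_i\est_i\geq\exp(-Cn)$ via the exceptional set. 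Your version works but mingles the two roles.
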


\begin{proof}
For every $i\le n$, set
$$
  \widetilde\est_i = \widetilde\est_i (y, k, Q) :=\min\bigl\{{\widetilde w_{q}}^{-1}:\,q\le m\mbox{ and }\RR_{iq}\neq 0\bigr\}.
$$
Since $\RR_{iq}\geq 1$ in the above minimum, by the definition of the weights, we get
for all $i\leq n$,
\begin{equation}\label{simpleb}
 \widetilde\est_i\geq \est_i/d.
\end{equation}

We first show that
\begin{equation}\label{sti}
\prod\limits_{i=1}^n \widetilde\est_i\geq \exp(-C n)\prod\limits_{i=1}^n \est_i,
\end{equation}
where $C$ is a positive universal constant.
Let $\lset^{(q)}$ be one of $\ell$-parts from the  $\ell$-decomposition (no matter whether spread or regular),
having cardinality at least $d^{-1/3}n$.
Then, by the definition of $\RSet_{n,m,d}^{ST}$ (part $1$), $d$-regularity of matrices in $\Mc$, and
(\ref{sumqiq}),
there are at most $n/\sqrt{d}$ indices $i\leq n$ such that
$\RR_{iq}<c_{\ref{graph th to prove}} d|\lset^{(q)}|/n$.
For all other $i$'s, we have $\RR_{iq}\geq c_{\ref{graph th to prove}} d|\lset^{(q)}|/n$,
implying together with the definitions of $w_{iq}$ and $\widetilde w_q$ that
$w_{iq}\geq c_{\ref{graph th to prove}}\, \widetilde w_q$.
On the other hand, if the cardinality of $\lset^{(q)}$ is less than $d^{-1/3}n$
and $\RR_{iq}\neq 0$ (hence greater or equal to $1$) then necessarily $w_{iq}\geq\widetilde w_q$.
Denote
$$
 I:=\{ i\in [n]:\, w_{iq}< c_{\ref{graph th to prove}}\,\widetilde w_q\, \, \, \, \text{ for some $\, \, \, \,
 q\leq m\, \, \, \, $ with $\, \, \, \, \RR_{iq}\neq 0$}\}.
$$
By definitions of $\est_i$ and $\widetilde\est_i$, we have
 $\widetilde\est_i\geq c_{\ref{graph th to prove}}\, \est_i$ for all $i\in I^c$.
Since there are at most $d^{1/3}$
$\ell$-parts of cardinality at least $d^{-1/3}n$ each, then from the above we also have
 that $\vert I\vert \leq  d^{1/3} d^{-1/2}n= d^{-1/6}n$.
Therefore, using (\ref{simpleb}) for $i\in I$, we obtain
$$
\prod\limits_{i=1}^n \widetilde\est_i\geq (c_{\ref{graph th to prove}})^{\vert I^c\vert}\, d^{-\vert I\vert}\, \prod\limits_{i=1}^n \est_i
\geq c_{\ref{graph th to prove}}^{n}d^{-d^{-1/6}n}\prod\limits_{i=1}^n \est_i,
$$
which leads to (\ref{sti}).

To complete the proof, it is sufficient to show that for every $i\le n$
$$
\widetilde\est_i\leq 2^{-\of_i+1}\, \triv_i.
$$
Fix $i\leq n$.  By the definition of $\widetilde\est_i$ and $b(i)$ we have
$\widetilde\est_i\leq 2^{-b(i)}.$
Since
\begin{equation}\label{idenone}
 \sum_{b=b_{\min}}^{b(i)} \, \, \sum\limits_{q\in I(b)}
 \RR_{iq}/d=\sum_{q=1}^m \RR_{iq}/d=1,
\end{equation}
the definition of $\triv_i$  implies
\begin{align*}
   \triv_i&=\prod_{b=b_{\min}}^{b(i)}\, \prod\limits_{q\in I(b)}
   (\widetilde{w}_q)^{-{\RR_{iq}}/{d}}
   >\prod_{b=b_{\min}}^{b(i)}\, \prod\limits_{q\in I(b)}
 2^{-{(b+1)\RR_{iq}}/{d}}
\\
&=\frac{1}{2}\, \exp\Bigl(-\ln 2\,\sum_{b=b_{\min}}^{b(i)} b
\sum\limits_{q\in I(b)}
\frac{\RR_{iq}}{d}\Bigr).
\end{align*}
Using (\ref{idenone}) again and applying the simple identity
$$
 \sum_{j=j_0}^{j_1}(j_1-j)a_j=\sum_{j=j_0}^{j_1-1}\sum_{k=j_0}^ja_k,
$$
 valid for any
integers $j_0<j_1$ and any
numbers $a_{j_0}$,..., $a_{j_1}$,   we get
\begin{align*}
    d b(i)-\sum_{b=b_{\min}}^{b(i)} &b
      \sum\limits_{q\in I(b)}
        {\RR_{iq}} =
     \sum_{b=b_{\min}}^{b(i)} (b(i)-b) \sum\limits_{q\in I(b)}
           {\RR_{iq}}
 =
        \sum_{b=b_{\min}}^{b(i)-1}\, \, \sum_{a=b_{\min}}^b\, \,
         \sum\limits_{q\in I(a)}                {\RR_{iq}}
 \\&=
        \sum_{b=b_{\min}}^{b(i)-1}\, \,
         \sum\limits_{q\in I_{\min}(b)}                {\RR_{iq}}
  \ge
   \sum_{b=b_{\min}}^{b_{\max}}\, \, \min\Big(\sum\limits_{q\in I_{\min}(b)}\RR_{iq}, \,\sum\limits_{q\in I^c_{\min}(b)}\RR_{iq}\Big)
       = d\of_i.
\end{align*}
where we used that if $b>b(i)$ then for every $q\in I(b)$ one has $\RR_{iq}=0$.
Therefore,
\begin{align*}
    \widetilde\est_i\leq 2^{-b(i)} &\leq
       2\, \triv_i\exp\Bigl(-\ln 2\,\Big(b(i)-\sum_{b=b_{\min}}^{b(i)} b
      \sum\limits_{q\in I(b)}        {\RR_{iq}}/{d}\Big)\Bigr)
\leq 2^{-\of_i+1}\triv_i,
\end{align*}
This completes the proof.
\end{proof}

In the next two lemmas, we estimate $\of$  in the case when $\RR$ belongs to $\RSet_{n,m,d}^{ST}$.

\begin{lemma}\label{simple offset}
Let $y\in\kset_k$,  $(\lset^{(q)})_{q=1}^m$ be its $\ell$-decomposition
and  $\wset_b$, $b\in\Z$, be its corresponding $w$-sets.
Further, let $\RR=(\RR_{iq})$ be a $y$-admissible matrix in $\RSet_{n,m,d}^{ST}$.
Then
$$
 \of\geq c_{\ref{simple offset}}\, \sum_{b=b_{\min}}^{b_{\max}} \min \big(|\wbl|,\,|\wbg|\big),
$$
where $c_{\ref{simple offset}}>0$ is a universal constant.
\end{lemma}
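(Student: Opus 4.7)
The plan is to apply property 2 of $\RSet^{ST}_{n,m,d}$ separately for each order $b$ with $b_{\min}\leq b\leq b_{\max}$, exploiting the partition of $[m]$ into $I_{\min}(b)$ and $I^c_{\min}(b)$. The statement is basically a direct structural consequence of that property together with the fact that the entries of $\RR$ are non-negative integers (so any positive real lower bound forces the value to be at least $1$).

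Fix $b$ and set $A_i:=\sum_{q\in I_{\min}(b)}\RR_{iq}$ and $B_i:=\sum_{q\in I^c_{\min}(b)}\RR_{iq}$, so that
$$
 \of_i=\frac{1}{d}\sum_{b=b_{\min}}^{b_{\max}}\min(A_i,B_i)\quad\text{and}\quad A_i+B_i=d.
$$
Since $\wbl$ is the disjoint union $\bigcup_{q\in I_{\min}(b)}\lset^{(q)}$, the row-sum identity~\eqref{sumqiq} gives $\sum_i A_i=\sum_{q\in I_{\min}(b)}d|\lset^{(q)}|=d|\wbl|$ and analogously $\sum_i B_i=d|\wbg|$. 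I may assume $|\wbl|,|\wbg|>0$, otherwise the contribution of $b$ to both sides is zero. Taking $J=I_{\min}(b)$ in property 2 of $\RSet^{ST}_{n,m,d}$, we get $\kappa=d|\wbl|$ and $dn-\kappa=d|\wbg|$, hence
$$
  \bigl|\{i\leq n:\;A_i\geq c_{\ref{graph prop}}d|\wbl|/n\text{ and }B_i\geq c_{\ref{graph prop}}d|\wbg|/n\}\bigr|
  \;\geq\; c_{\ref{graph prop}}\min\bigl(d|\wbl|,\,d|\wbg|,\,n\bigr).
$$

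Now I would split on the size of $S:=\min(|\wbl|,|\wbg|)$. If $S\leq n/d$, the minimum on the right equals $dS$, so the set of ``good'' rows has cardinality at least $c_{\ref{graph prop}}dS$; on such rows, $A_i$ and $B_i$ are non-negative integers bounded below by strictly positive reals, so each is at least $1$, giving $\min(A_i,B_i)\geq 1$ and thus $\sum_i\min(A_i,B_i)\geq c_{\ref{graph prop}}dS$. If instead $S>n/d$, the minimum equals $n$, so there are at least $c_{\ref{graph prop}}n$ good rows, and on each of them $\min(A_i,B_i)\geq c_{\ref{graph prop}}dS/n$, yielding $\sum_i\min(A_i,B_i)\geq c_{\ref{graph prop}}^{2}dS$. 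In either case,
$$
 \frac{1}{d}\sum_{i=1}^n\min(A_i,B_i)\;\geq\; c_{\ref{graph prop}}^{2}\min(|\wbl|,|\wbg|).
$$
Summing over $b$ gives the result with $c_{\ref{simple offset}}=c_{\ref{graph prop}}^{2}$.

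No genuine obstacle is expected; the only subtlety worth recording is the integrality trick in the ``small $S$'' regime, which is what lets us recover a bound linear in $S$ even when the real-valued lower bound furnished by property 2 is below $1$. The rest is bookkeeping of $\kappa$ in terms of $|\wbl|$ via~\eqref{sumqiq} and a two-case analysis.
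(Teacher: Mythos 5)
Your proof is correct and follows essentially the same two-case structure as the paper: apply property 2 of $\RSet^{ST}_{n,m,d}$ with $J=I_{\min}(b)$, handle the regime $\min(|\wbl|,|\wbg|)\geq n/d$ by summing the real-valued lower bounds over the $c_{\ref{graph prop}}n$ good rows, and handle the small regime via the integrality of $\sum_q\RR_{iq}$, obtaining $c_{\ref{simple offset}}=c_{\ref{graph prop}}^2$. The only cosmetic difference is your case split uses $S\leq n/d$ versus $S>n/d$ while the paper uses $\geq n/d$ versus $\in[1,n/d)$, which changes nothing.
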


\begin{proof}
By the definition of $\of$  we have
$$
 \of =\frac{1}{d}\sum_{b=b_{\min}}^{b_{\max}} \sum_{i=1}^n
  \min\Big(\sum\limits_{q\in I_{\min}(b)}\RR_{iq}, \,\sum\limits_{q\in I^c_{\min}(b)}\RR_{iq}\Big) .
$$
To prove the lemma we prove the corresponding inequality for each summand in the first sum.
To this end, for every (fixed) $b_{\min}\le b<b_{\max}$ we apply the definition of $\RSet_{n,m,d}^{ST}$ (more precisely, part~{\bf 2} of the definition of $\RSet_{n,m,d}^{ST_0}$),
with
$$
 J=J(b):=I_{\min}(b) \quad \mbox{ and } \quad
\kappa=\kappa(b) :=d|\wbl|.
$$
 Note that by (\ref{sumqiq}) and $d$-regularity we have
$$
 \kappa=d|\wbl|= \sum\limits_{q\in J}\|\col_q(\RR)\|_1 \quad \mbox{ and } \quad
  dn - \kappa = d|\wbg|.
$$
We distinguish two cases.

\smallskip

\noindent
{\it Case 1.}   $\min\bigl(|\wbl|,|\wbg|\bigr)\geq n/d$.
In this case $\min\bigl(\kappa, dn-\kappa, n )=n$. Thus,
the definition of $\RSet_{n,m,d}^{ST}$ yields that
 the cardinality of the set
\begin{align*}
 I:=\Bigl\{i\leq n:\,&\sum\limits_{q\in I_{\min}(b)} \RR_{iq}\geq c_{\ref{graph prop}}\frac{d|\wbl|}{n}\;
\mbox{ and }
\sum\limits_{q\in I^c_{\min}(b)} \RR_{iq}\geq c_{\ref{graph prop}}\frac{d|\wbg|}{n}\Bigr\}
\end{align*}
is at least $c_{\ref{graph prop}} n$. Therefore,
\begin{align*}
\sum_{i=1}^n
\min\Bigl(\sum\limits_{q\in I_{\min}(b)} {\RR_{iq}}
,\sum\limits_{q\in I^c_{\min}(b)} {\RR_{iq}}\Bigr)
&\geq\sum_{i\in I}
\min\Bigl(\sum\limits_{q\in I_{\min}(b)} {\RR_{iq}}
,\sum\limits_{q\in I^c_{\min}(b)} {\RR_{iq}}\Bigr)\\
&\geq
c_{\ref{graph prop}}^2d\min\bigl(|\wbl|,|\wbg|\bigr).
\end{align*}

\smallskip

\noindent
{\it Case 2.}  $1\leq \min\bigl(|\wbl|,|\wbg|\bigr)< n/d$. In this case  $\kappa/n<1$  or $(dn-\kappa)/n<1$.
Using that $\RR_{iq}$ are non-negative integers, that $c_{\ref{graph prop}}<1$, and the definition
of $\RSet_{n,m,d}^{ST}$, we observe
\begin{align*}
\Bigl|\Bigl\{i&\leq n:\,\sum\limits_{q\in I_{\min}(b)} \RR_{iq}\geq 1\;\mbox{ and }
\sum\limits_{q\in I^c_{\min}(b)} \RR_{iq}\geq 1\Bigr\}\Bigr|
\\&\geq
 \Bigl|\Bigl\{i\leq n:\, \sum\limits_{q\in I_{\min}(b)} \RR_{iq}\geq c_{\ref{graph prop}}\,\frac{\kappa}{n}\;
\mbox{ and }
\sum\limits_{q\in I^c_{\min}(b)} \RR_{iq}\geq c_{\ref{graph prop}}\, \frac{dn-\kappa}{n}\Bigr\}\Bigl|
\\&
\geq c_{\ref{graph prop}}\, d\min\bigl(|\wbl|,|\wbg|\bigr).
\end{align*}
Therefore,
\begin{align*}
\sum_{i=1}^n &
\min\Bigl(\sum\limits_{q\in I_{\min}(b)} {\RR_{iq}}
,\sum\limits_{q\in I^c_{\min}(b)} {\RR_{iq}}\Bigr)\geq  c_{\ref{graph prop}}d\min\bigl(|\wbl|,|\wbg|\bigr).
\end{align*}
Since the case $\min\bigl(|\wbl|,|\wbg|\bigr)=0$ is trivial,
this completes the proof.
\end{proof}

\begin{lemma}\label{offset lemma}
Let $y\in\kset_k$, $(\lset^{(q)})_{q=1}^m$ and $\wset_b$, $b\in\Z$,
be as above, and $\RR=(\RR_{iq})$ be a $y$-admissible matrix
from $\RSet_{n,m,d}^{ST}$.
Then
$$
 n!\prod_{q\leq m} \frac{1}{|\lset^{(q)}|!}\le C^n\, 2^{\of/2},
$$
where $C$
is a positive universal constant.
\end{lemma}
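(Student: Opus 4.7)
My plan is to first reduce to a bound on the ``coarser'' multinomial coefficient $n!/\prod_b |\wset_b|!$, then to a combinatorial entropy-type inequality, and finally to verify the latter through a case analysis.

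The first step invokes Lemma~\ref{cardlqpart}: for each $b$ with $|\wset_b|>0$ we have $\prod_{q\in I(b)}|\lset^{(q)}|!\geq |\wset_b|!\,e^{-C'|\wset_b|}$. Taking the product over $b$ and using $\sum_b |\wset_b|=n$ yields
$\prod_q |\lset^{(q)}|!\geq e^{-C'n}\prod_b |\wset_b|!$, so it suffices to prove
\[
\frac{n!}{\prod_b |\wset_b|!}\leq C_1^n\cdot 2^{\of/2}
\]
for an absolute constant $C_1$. By Lemma~\ref{simple offset}, $\of\geq c_{\ref{simple offset}}\sum_b \min(|\wbl|,|\wbg|)$, so the task further reduces to the purely combinatorial inequality
\[
\binom{n}{|\wset_{b_{\min}}|,\dots,|\wset_{b_{\max}}|}\leq C_1^n\cdot 2^{(c_{\ref{simple offset}}/2)\,\sum_b \min(|\wbl|,|\wbg|)}.
\]

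Writing $a_b=|\wset_b|$ and applying Stirling in the form $n!\leq en(n/e)^n$ and $a_b!\geq (a_b/e)^{a_b}$, one has $\binom{n}{a_{b_{\min}},\dots,a_{b_{\max}}}\leq en\cdot \prod_b (n/a_b)^{a_b}$. Dividing through by $n$ in the exponent and setting $p_b=a_b/n$, $F_b=|\wbl|/n$, the desired inequality becomes
\[
H(p):=-\sum_b p_b\ln p_b \;\leq\; \ln C_1 +\tfrac{c_{\ref{simple offset}}\ln 2}{2}\,\sum_b \min(F_b,1-F_b)\;+\;o(1),
\]
where the $o(1)$ term comes from the $en$ factor and is absorbed into $\ln C_1$ once $n$ is large enough.

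To establish this entropy inequality I would split into cases according to concentration. If $\max_b p_b\geq 1-\delta$ for a small $\delta$ (concentrated regime), then $H(p)\leq h_2(\delta)+\delta\ln r$ while a direct computation gives $\sum_b \min(F_b,1-F_b)\geq c\,\delta$, so that both sides are $O(1)$, controlled by $\ln C_1$. In the spread regime, I would use the bound $H(p)\leq \ln r$ together with the elementary fact that a distribution on $r$ non-empty ordered parts satisfies $\sum_b \min(F_b,1-F_b)\geq c\,r$ (for example, by lower-bounding each $G_b=\min(F_b,1-F_b)$ via the dyadic level structure of the $F_b$'s); since $r\leq 3\ln n$ by the $\ell$-decomposition bound $m(y)\leq 3\ln n$, one has $\ln r\leq O(\ln\ln n)\ll r$, so the right-hand side absorbs $H(p)$. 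Combining the two regimes with an interpolation argument (or by separately treating the median index to handle the transition) yields the entropy bound with a universal $C_1$ depending only on $c_{\ref{simple offset}}$.

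The main obstacle is the ``intermediate'' regime, where the distribution is neither sharply peaked nor essentially uniform: in this range one must exploit the monotonicity of $F_b$ together with the fact that $r=O(\ln(nd))$ to argue that $H(p)=O(1)+O(\sum_b \min(F_b,1-F_b))$ with a constant that does not depend on $n$ or $d$. Here the cleanest route is to use the chain-rule decomposition $H(X)=\sum_j (1-F_{j-1})h_2(p_j/(1-F_{j-1}))$ together with the pointwise estimate $h_2(x)\leq 2\min(x,1-x)(1+\ln(1/\min(x,1-x)))$, which, after an Abel summation, converts $H(p)$ into a sum controlled by the $G_b=\min(F_b,1-F_b)$ up to a universal constant.
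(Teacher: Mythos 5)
Your reduction steps agree with the paper's: Lemma~\ref{cardlqpart} replaces $\prod_q|\lset^{(q)}|!$ by $\prod_b|\wset_b|!$ up to a factor $e^{O(n)}$, Lemma~\ref{simple offset} converts $\of$ into $\sum_b\min(|\wbl|,|\wbg|)$, and Stirling turns $n!/\prod_b|\wset_b|!$ into $e^{O(n)}\prod_b(n/|\wset_b|)^{|\wset_b|}$. Your reformulation as the entropy inequality $H(p)\leq\ln C_1+\tfrac{c_{\ref{simple offset}}\ln 2}{2}\sum_b\min(F_b,1-F_b)$ is a valid target, and the inequality is in fact true: with $b^*$ the median index one has $\sum_b\min(F_b,1-F_b)=\sum_s p_s\,|s-b^*|$ up to boundary terms, so $H(p)\leq H(|X-b^*|)+\ln 2\leq 1+\ln 2+\ln\bigl(1+\sum_b\min(F_b,1-F_b)\bigr)$ by the maximum-entropy-at-fixed-mean property of the geometric distribution, and $\ln(1+x)\leq\epsilon x+C_\epsilon$ for every $\epsilon>0$ finishes. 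So the route is viable.

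However, your proposed proof of the entropy inequality has concrete gaps. In the concentrated regime, the claim that ``both sides are $O(1)$'' is not right: with $\max_b p_b=1-\delta$ the entropy can be as large as $\Theta(\delta\ln(r/\delta))$, which is unbounded in $r$. In the spread regime, the asserted ``elementary fact'' that $r$ non-empty parts forces $\sum_b\min(F_b,1-F_b)\geq cr$ is false: take $p_1=1-(r-1)\varepsilon$ and $p_b=\varepsilon$ for $b>1$ with $\varepsilon$ tiny; then $\sum_b\min(F_b,1-F_b)\approx\varepsilon r(r-1)/2$ can be made arbitrarily small while $r$ stays large. And in the intermediate regime, plugging the bound $h_2(x)\leq 2\min(x,1-x)(1+\ln(1/\min(x,1-x)))$ into the chain rule with the branch $\min=x$ for every $j$ gives $H(p)\leq 2+2\sum_j p_j\ln\tfrac{1-F_{j-1}}{p_j}=2+2H(p)+2\sum_j p_j\ln(1-F_{j-1})$, which is vacuous; to get something useful one must split each $j$ according to whether $\tfrac{p_j}{1-F_{j-1}}$ or $\tfrac{1-F_j}{1-F_{j-1}}$ is the smaller and then carry out the Abel summation on the second branch, which your sketch does not do. You should either carry out that chain-rule argument carefully or switch to the median/geometric-distribution argument above.

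For contrast, the paper's proof does not pass through entropy at all. Starting from $\tfrac{n!}{\prod_b|\wset_b|!}\leq e^n\prod_b(n/|\wset_b|)^{|\wset_b|}$, it isolates the set $I$ of indices $b$ where $|\wset_b|\ln(n/|\wset_b|)\geq c\min(|\wbl|,|\wbg|)$ (the complementary indices are absorbed by the factor $\exp(-c\sum_b\min)$), partitions $I$ into dyadic shells $I_p=\{b\in I:\,|\wset_b|\in(n2^{-p-1},n2^{-p}]\}$, and exploits the monotonicity of $b\mapsto|\wbl|$ to prove $|I_p|\leq 6(p+1)/c$; summing $(p+1)n2^{-p}|I_p|$ over $p$ then gives an $O(n)$ bound. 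That cardinality bound for each dyadic shell is the real content, and it is the step where the cumulative structure of $|\wbl|$, $|\wbg|$ is used — it has no counterpart in your sketch.
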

\begin{proof}
Denote $c:=(c_{\ref{simple offset}}\ln 2)/2$. By Lemma~\ref{simple offset}, it is enough to show that
$$
  n!\prod_{q=1}^m \frac{1}{|\lset^{(q)}|!}\exp\Big(-c
 \sum_{b=b_{\min}}^{b_{\max}} \min \big(|\wbl|,\,|\wbg|\big)
 \Big)\leq C^n
$$
for a universal constant $C>0$.
Denote
$$
  I:=\Bigl\{b\in\Z\, :\, \, \wset_b\neq\emptyset\, \,  \, \mbox{ and } \, \, \,
  |\wset_b| \ln (n/|\wset_b|)\geq c \min \big(|\wbl|,\,|\wbg|\big)\Bigr\}
$$
and for every  integer $p\geq 0$,
$$
  I_p:=\bigl\{b\in\Z:\,|\wset_b|\in (n2^{-p-1},n2^{-p}]\bigr\}\cap I.
$$
Note that if $b\in I_p$ then $\min\bigl(|\wbl|,|\wbg|\bigr)
\leq n(p+1) 2^{-p}/c$. Hence,
\begin{align*}
\vert I_p\vert &\leq \Big\vert \Bigl\{b\in\Z:\,|\wset_b|\in (n2^{-p-1},n2^{-p}]
\, \, \, \text{ and }\, \, \,
\min\bigl(|\wbl|,|\wbg|\bigr)
\leq n(p+1) 2^{-p}/c\Bigr\}\Big\vert
\\
&\leq \Big\vert \Bigl\{b\in\Z:\,|\wset_b|\in (n2^{-p-1},n2^{-p}] \, \, \, \text{ and }\, \, \, |\wbl|
\leq n(p+1) 2^{-p}/c\Bigr\}\Big\vert \\
&\quad +\Big\vert \Bigl\{b\in\Z:\,|\wset_b|\in (n2^{-p-1},n2^{-p}] \, \, \, \text{ and }\, \, \, |\wbg|
\leq n(p+1) 2^{-p}/c\Bigr\}\Big\vert.
\end{align*}
Denote the cardinalities of the sets in the last inequality by $\alpha=\alpha(p)$ and
$\beta=\beta(p)$ correspondingly. Let $b_1< \ldots <b_{\alpha}$ and $b_1'< \ldots <b_{\beta}'$
be the elements of those set.
Then
$$
 \alpha\, n2^{-p-1}\leq \Big |\bigcup_{i=1}^{\alpha} \wset_{b_i}\Big|\leq \Big| \wset^1_{b_{\alpha}}
 \Big|\leq  n(p+1) 2^{-p}/c
$$
and
$$
 \beta\, n2^{-p-1}\leq \Big |\bigcup_{i=1}^{\beta} \wset_{b_i'}\Big|\leq \Big| \wset_{b_{1}'} \Big| +
 \Big| \wset^2_{b_{1}'} \Big|\leq n 2^{-p} +  n(p+1) 2^{-p}/c.
$$
This implies that
$$\vert I_p\vert \leq \alpha+\beta \leq  6(p+1)/c.$$
Therefore using that $\sum _b |\wset_b| =n$ and (\ref{factorial}) with $N=n$ and $N_b= |\wset_b|$, we obtain
\begin{align*}
&n!\, \bigg(\prod_{b=b_{\min}}^{b_{\max}}  \frac{1}{|\wset_b|!}\bigg)\,
\exp\Bigl(-c\sum_{b=b_{\min}}^{b_{\max}}
\min\bigl(|\wbl|,|\wbg|\bigr)\Bigr)\\
&\leq e^n\, \prod_{b:\wset_b\neq\emptyset}\bigg( \Bigl(\frac{n}{|\wset_b|}\Bigr)^{|\wset_b|}
\exp\Bigl(-c
\min\bigl(|\wbl|,|\wbg|\bigr)\Bigr)\bigg)\\
&\leq  e^n\, \prod_{b\in I}\Bigl(\frac{n}{|\wset_b|}\Bigr)^{|\wset_b|}=  e^n\, \prod_{p=0}^{\log_2 n} \prod_{b\in I_p} \Bigl(\frac{n}{|\wset_b|}\Bigr)^{|\wset_b|}\\
&\leq   e^n\, \prod_{p=0}^{\log_2 n} 2^{(p+1)n2^{-p}\,\vert I_p\vert}\leq   e^n\, \prod_{p=0}^{\log_2 n} 2^{6(p+1)^2n 2^{-p}/c}\leq \exp(\widetilde C n),
\end{align*}
where $\widetilde C>0$ is a sufficiently large absolute constant. Applying Lemma~\ref{cardlqpart}
and using $\sum _b |\wset_b|=n$ again, we get
$$
 \biggl(n!\prod_{q\leq m} \frac{1}{|\lset^{(q)}|!}\biggr)\exp\Bigl(-c\sum_{b=b_{\min}}^{b_{\max}}
\min\bigl(|\wbl|,|\wbg|\bigr)\Bigr)
\leq \exp\bigl((\widetilde C+C'') n\bigr),
$$
which completes the proof.
\end{proof}

\subsection{Completion of the proof}

  In the proof of Theorem~\ref{ker th} we use the results established in the previous sections. Recall our decomposition of $\C^n$ into three types of vectors: almost constant vectors, steep vectors, and gradual vectors. We treat each of these types separately. The former two
   types are treated in Section~\ref{steep}, where a lower bound on $\Vert (M-\ww\idmat)^\KK x\Vert_2$ is given. This leaves us with the case of gradual vectors which we approximate by $k$-vectors. First, one needs to check that it is sufficient to establish a lower bound for the action of $M^\KK$ on the $k$-approximation of gradual vectors in order to deduce a similar bound for all such vectors.
   The next proposition provides such an approximation argument.

\begin{prop}\label{prop: approx}
Let $d\geq 1$ be large enough,  $n\geq d^3$, and $1\leq \LL\leq n/d^3$.
Let $\KK\subset [n]$ be such that $\vert \KK^c\vert \leq L$
and $\ww$ be such that $\vert \ww\vert \leq r \sqrt{d}$ for some $r\geq 1$.
Let
$X$ be a subset of the set of normalized  gradual vectors
$X\subset \mathcal S$ and $ \kset_k(X)$
be the set of $k$-approximations of vectors from $X$. Then
\begin{align*}
\Prob&\Big\{M\in \Mc:\, \exists x\in X,\, \|(M-\ww \idmat)^\KK x\|_2\leq \frac{L^3}{k n^{5.5}\, d}\Vert x\Vert_2\Big\}\\
&\leq 2 d^2\,  \sum_{ y\in \kset_k(X)}\, \,\sup_{\xyzv\in \C^{\vert \KK\vert}}\Prob\Big\{M\in \Mc:\, \, \| M^\KK y+\xyzv\|_2\leq C_{\ref{prop: approx}}\,  r \frac{\sqrt{dn}}{k}\Big\}+ n^{-100},
\end{align*}
where $C_{\ref{prop: approx}}\geq 1$ is a universal constant.
\end{prop}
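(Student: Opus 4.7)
The plan is to approximate each $x\in X$ by its $k$-approximation $y\in\kset_k(X)$, absorb the rank-one part of the approximation error $x-y$ into a deterministic shift vector $v$, and then take a union bound over both $y$ and a discretization of the absorbed shift.  To begin, I would condition on the event $\Event_{\ref{norm lemma}}$ from Theorem~\ref{norm lemma}, which has probability at least $1-n^{-100}$ and on which the matrix $M_0:=M-(d/n){\bf 1}{\bf 1}^t$ satisfies $\|M_0\|\leq C_{\ref{norm lemma}}\sqrt{d}$; the complementary event contributes the additive $n^{-100}$ in the conclusion.

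For fixed $x\in X$ with $k$-approximation $y$, I would split the approximation error as $x-y=c\,{\bf 1}+u$, where $c:=\frac{1}{n}{\bf 1}^t(x-y)\in\C$ satisfies $|c|\le \sqrt{2}/k$, and $u:=(x-y)-c\,{\bf 1}$ satisfies ${\bf 1}^tu=0$ together with $\|u\|_2\leq 2\sqrt{2n}/k$.  Since $M{\bf 1}=d{\bf 1}$ and ${\bf 1}^tu=0$ forces $(d/n){\bf 1}{\bf 1}^tu=0$, the key algebraic identity
\begin{equation*}
(M-\ww\idmat)(x-y)\;=\;c(d-\ww)\,{\bf 1}\;+\;(M_0-\ww\idmat)u
\end{equation*}
holds.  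On $\Event_{\ref{norm lemma}}$ the ``fluctuating'' contribution $(M_0-\ww\idmat)^\KK u$ has Euclidean norm at most $(C_{\ref{norm lemma}}+r)\sqrt{d}\cdot 2\sqrt{2n}/k=O(r\sqrt{dn}/k)$, matching the target; whereas the rank-one piece $c(d-\ww){\bf 1}^\KK$ has norm of order $rd\sqrt{n}/k$, larger by a factor $\sqrt{d}$, so it must be eliminated by an appropriate choice of shift.

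To absorb the rank-one term, I would construct a net $\Net_c\subset\{\xi\in\C\,:\,|\xi|\le \sqrt{2}/k\}$ of precision $\delta=\Theta(1/(dk))$ in Euclidean distance, whose cardinality is at most $2d^2$.  For each pair $(y,\tilde c)\in\kset_k(X)\times \Net_c$ I would define the deterministic vector $v(y,\tilde c):=-\ww y^\KK+\tilde c(d-\ww){\bf 1}^\KK\in\C^{|\KK|}$.  If $\tilde c\in\Net_c$ is chosen with $|c-\tilde c|\le\delta$, then a direct computation using the identity above yields
\begin{equation*}
M^\KK y+v(y,\tilde c)\;=\;(M-\ww\idmat)^\KK x+(\tilde c-c)(d-\ww){\bf 1}^\KK-(M_0-\ww\idmat)^\KK u,
\end{equation*}
whose Euclidean norm on $\Event_{\ref{norm lemma}}$ is bounded by $\varepsilon_0\|x\|_2+\delta\cdot 2rd\sqrt{n}+O(r\sqrt{dn}/k)\leq C_{\ref{prop: approx}}\,r\sqrt{dn}/k$, where $\varepsilon_0=L^3/(kn^{5.5}d)$.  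The final union bound over $y\in\kset_k(X)$ and $\tilde c\in\Net_c$, together with the trivial upper bound of the supremum over $\tilde c\in\Net_c$ by the supremum over arbitrary $v\in\C^{|\KK|}$, produces the factor $2d^2$ in front of the sum.

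The main obstacle will be to verify that the ``genuine'' summand $\varepsilon_0\|x\|_2$ is itself bounded by $Cr\sqrt{dn}/k$; this rests on a careful Euclidean-norm estimate for normalized gradual vectors obtained by integrating the pointwise bounds in Lemma~\ref{l:decay} and combining them with the constraint $L\leq n/d^3$.  The exponent $n^{5.5}$ in the denominator of $\varepsilon_0$ is calibrated precisely to encode the conversion factor $\sqrt{n}$ between sup-norm and Euclidean bounds on $x$, which is what ultimately makes this final estimate go through.
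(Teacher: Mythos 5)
Your proposal is correct and follows essentially the same route as the paper: condition on $\Event_{\ref{norm lemma}}$, split the approximation error $x-y$ into a constant-vector part and a part orthogonal to $\mathbf 1$, absorb the constant part into a deterministic shift that is then discretized to a set of size $O(d^2)$, and take a union bound. One small remark: the Euclidean-norm bound on normalized gradual vectors that makes $\varepsilon_0\|x\|_2\lesssim\sqrt{dn}/k$ work is already packaged as Lemma~\ref{l:norma} (used with $x_{n_1}^*\leq d^3$ for $x\in\mathcal S$), so you can invoke it directly rather than re-deriving it from Lemma~\ref{l:decay}; the paper also rounds the shift parameter onto the explicit grid $\{0,\ldots,d\}^2/(kd)$ (of size $(d+1)^2\leq 2d^2$) rather than an abstract $\delta$-net, but these are cosmetic differences.
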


\begin{proof}
By definitions we have $x_{n_1}^*\leq d^3$ for $x\in \mathcal S$. Therefore,
 Lemma~\ref{l:norma} implies
\begin{align*}
\Prob\Big\{M\in \Mc:\, &\exists \, x\in X,\,\, \|(M-\ww \idmat)^\KK x\|_2\leq \frac{ L^3}{k n^{5.5}\, d}\, \Vert x\Vert_2\Big\}\\
&\leq \Prob\Big\{M\in \Mc:\, \exists \, x\in X,\,\, \|(M-\ww \idmat)^\KK x\|_2\leq \frac{\sqrt{dn}}{k}\Big\}.
\end{align*}
Suppose that $M\in \Event_{\ref{norm lemma}}$.
Let $x\in X$ and let  $y\in\kset_k(X)$ be its $k$-approximation. Define
$a=(a_1, a_2), \, \widetilde a=(\widetilde a_1, \widetilde a_2) \in \C$ by
\begin{align*}
 a =\frac{1}{n}\sum_{i=1}^n  (x_i-y_i) \quad  \quad \mbox{ and } \quad  \quad
 \widetilde a_1= \Re\,\widetilde a= \frac{\lceil kda_1\rceil}{kd},\quad
 \widetilde a_2=\Im\,\widetilde a= \frac{\lceil kda_2\rceil}{kd}.
\end{align*}
   Below we use the same notation ${\bf 1}$ for the vectors in $\C^\ell$ for every $\ell\geq 1$.
   Note that since $M\in \Event_{\ref{norm lemma}}$ we have that $\|M w \|_2 \leq C_{\ref{norm lemma}} \sqrt{d}\|v\|_2$
   for every $v\in C^n$ which is orthogonal to ${\bf 1}$.
Denote
$$
 \xyzv(y,\widetilde a):=-\ww y^{\KK}+\widetilde a (d-\ww){\bf 1}\in\C^{|\KK|}.
$$
Since $y$ is the $k$-approximation of $x$,  then for $i=1,2$ we have $0\leq ka_i\leq 1$ and thus $\widetilde a_i\in {\{0,\ldots,d\}}/{kd}$.
Using the triangle inequality, $d$-regularity, $\vert a-\widetilde a\vert \leq{\sqrt{2}}/(kd)$, and that $x-y-a\, {\bf 1}$ is orthogonal to ${\bf 1}$, we observe
\begin{align*}
\| &M^\KK y+\xyzv(y,\widetilde a)\|_2
\\
&\leq \|(M-\ww \idmat)^\KK x\|_2 + \|(M-\ww \idmat)^\KK (x-y-a\, {\bf 1})\|_2+ \vert a-\widetilde a\vert\, \|(M-\ww \idmat)^\KK {\bf 1}\|_2\\
&\leq \|(M-\ww \idmat)^\KK x\|_2 + C_{\ref{norm lemma}} \Big(\sqrt{d}+\vert \ww\vert\Big) \Vert x-y-a\, {\bf 1}\|_2+ \frac{\sqrt{2} \vert d-\ww\vert}{kd}\sqrt{\vert \KK\vert}.
\end{align*}
Using that
$$
  \Vert x-y-a\, {\bf 1}\|_2 \leq  \Vert x-y\|_2+ \vert a\vert \sqrt{n}\leq {2\sqrt{2n}}/{k}
$$
 together with $\vert \ww\vert \leq r\sqrt{d}$ we get
$$
\|M^\KK y+\xyzv(y,\widetilde a)\|_2 \leq \|(M-\ww \idmat)^\KK x\|_2+ \left(2C_{\ref{norm lemma}}+ 1\right)(r+1){\sqrt{2dn}}/{k}.
$$
 Theorem~\ref{norm lemma} and the union bound imply
\begin{align*}
\Prob&\Big\{M\in \Mc:\, \exists\, x\in X,\,\, \|(M-\ww \idmat)^\KK x\|_2\leq  {\sqrt{dn}}/{k}\Big\}\\
&\leq \Prob\Big\{M\in \Mc:\,  \Event_{\ref{norm lemma}} \quad \mbox{ and }\quad \exists\, x\in X,\,\, \|(M-\ww \idmat)^\KK x\|_2\leq  {\sqrt{dn}}/{k}\Big\}+ n^{-100}
\\
&\leq \Prob\Big\{M\in \Mc:\, \Event_{\ref{norm lemma}} \quad \mbox{ and }\quad  \exists \, y\in \kset_k(X), \exists\, \widetilde a \in {\{0,\ldots,d\}^2}/{(kd)},
\\
&\quad\quad\quad\quad\quad\quad\quad\quad\quad\quad\quad\quad\quad\quad\quad\quad\,\,
\|M^\KK y+\xyzv(y,\widetilde a)\|_2\leq C\, r{\sqrt{dn}}/{k}\Big\}+ n^{-100}
\\
&\leq 2 d^2\, \sum_{ y\in \kset_k(X)}\, \, \sup_{\xyzv\in \C^{\vert \KK\vert}}\Prob\Big\{M\in \Mc:\,
\| M^\KK y+\xyzv\|_2\leq C\, r{\sqrt{dn}}/{k}\Big\}+ n^{-100},
\end{align*}
where $C>0$ is an appropriate large universal constant.
\end{proof}

Applying Theorem~\ref{kappa and rho}, we further decompose gradual vectors into two types,
$\kapset_u$ and $\rhoset_u$, depending on some properties satisfied by their $k$-approximation.
Recall that the set of $k$-vectors is partitioned into equivalence classes and bounds on
the cardinality
of each class and on their total number are established in Lemmas \ref{l:equiv classes} and
\ref{equiv cardinality}. Therefore, in view of the previous proposition, we can concentrate
our effort on bounding the probability that $\Vert M^\KK y+\xyzv\Vert_2$ is small for a
fixed $k$-vector $y$ satisfying the properties given in $\kapset_u$ or $\rhoset_u$ and any
vector $\xyzv\in \C^{\vert \KK\vert}$. Theorem~\ref{sbp th} and
Proposition~\ref{prop sbp} establish such bound in terms of the small ball estimators
$\est_i$ of the vector $y$. Therefore, it remains to estimate $\est_i$ for these two types
of vectors using all the tools developed in Section~\ref{s:rouh estimators}. We start
with  vectors in $\kapset_u$. Recall that for $x\in\kapset_u$, the total
cardinality of the spread $\ell$-parts in the $\ell$-decomposition with respect to
the $d^u$-approximation of $x$  is at least $c_\kapset \nn$, where $c_\kapset \in (0,1)$
is an absolute constant.

\begin{lemma}\label{kapset lemma}
Let  $u\geq 2$
be an integer, $y$ be the $d^u$-approximation with respect to a vector in $\kapset_u$,
 $(\lset^{(q)})_{q=1}^m$ be its $\ell$-decomposition, and $\RR=(\RR_{iq})$
be a $y$-admissible matrix in $\RSet_{n,m,d}^{ST}$. Then there exists a universal constant $c_{\ref{kapset lemma}}>0$ such that
$$\prod\limits_{i=1}^n\est_i(y,k,\RR)
\leq d^{-c_{\ref{kapset lemma}}n}
(n!)^{-1}\prod_{q \leq m} \frac{|\lset^{(q)}|!}{\hq^{|\lset^{(q)}|}}.
$$
\end{lemma}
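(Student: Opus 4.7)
The plan is to combine Lemma~\ref{lem: precise-bound-triv-est} (which compares $\prod \est_i$ with $\prod \triv_i$ via the offset), the trivial-estimator formula \eqref{eq: trivial-prod} sharpened by the spread structure, and Lemma~\ref{offset lemma} (which turns the multinomial $n!/\prod|\lset^{(q)}|!$ into a power of $2^{\of/2}$). After rearranging, the target inequality is equivalent to
$$
 C^n \cdot 2^{-\of} \cdot \prod_{q\le m}\Big(\frac{\hq}{\widetilde w_q}\Big)^{|\lset^{(q)}|} \cdot \frac{n!}{\prod_q|\lset^{(q)}|!} \;\le\; d^{-c_{\ref{kapset lemma}} n},
$$
so the whole proof reduces to bounding the product of the weight ratios $\hq/\widetilde w_q$ along with a balanced use of the offset $\of$.

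First I would split the $\ell$-parts into four classes according to spread/regular and large/small (the boundary being $|\lset^{(q)}|\ge d^{-1/3}n$) and compute $\hq/\widetilde w_q$ in each case, noting in particular that $\hq/\widetilde w_q \le d^{-1/3}$ for spread large parts (equivalently $\widetilde w_q\ge \hq d^{1/3}$), $\hq/\widetilde w_q=1$ for spread small, $\hq/\widetilde w_q\le d^{1/3}$ for regular large and $\hq/\widetilde w_q=d$ for regular small. The $\kapset_u$ hypothesis forces the total cardinality of spread $\ell$-parts to be at least $c_\kapset\nn$, which already improves the naive bound $\prod \widetilde w_q^{-|\lset^{(q)}|}\le d^n\prod\hq^{-|\lset^{(q)}|}$ to $d^{(1-c_\kapset\aaa)n}\prod\hq^{-|\lset^{(q)}|}$, and even more when a positive fraction of spread parts is large.

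The main technical obstacle is that the direct gain $d^{-c_\kapset\aaa n}$ coming from the $\kapset_u$ hypothesis alone is too weak to dominate the factor $d^n$ produced by regular small parts: we must also extract power from the offset. To handle this, I would split $2^{-\of}=2^{-\of/2}\cdot 2^{-\of/2}$ and use one half to absorb the multinomial $n!/\prod |\lset^{(q)}|!$ via Lemma~\ref{offset lemma}. What remains after Stirling is a factor comparable to $2^{-\of/2}\prod_q(n/|\lset^{(q)}|)^{|\lset^{(q)}|}\prod_q(\hq/\widetilde w_q)^{|\lset^{(q)}|}$, which must still be dominated by $d^{-c_{\ref{kapset lemma}}n}$; the balancing of the Stirling contribution against the offset (via Lemma~\ref{simple offset}) is where the $\RSet_{n,m,d}^{ST}$ assumption enters crucially.

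I expect the remaining argument to proceed by a case split. When the total cardinality $A$ of spread large parts is at least a constant fraction of $n$, the extra $d^{-|\lset^{(q)}|/3}$ per spread large part (coming from $\widetilde w_q\ge \hq d^{1/3}$) supplies the gain directly, even with $\of=0$. When instead most $\ell$-parts are small, so their cardinalities are forced to spread over many dyadic scales by the structure of the $\ell$-decomposition procedure (at most one spread and one regular part at each step $j$), the multinomial $n!/\prod|\lset^{(q)}|!$ is large enough, by Stirling, so that the lower bound on $\of$ from Lemma~\ref{simple offset} (applied to the $w$-sets arising from the spread of truncated weights $\widetilde w_q$) gives $2^{-\of/2}$ small enough to absorb the residual $d^{(1-c_\kapset\aaa+c_{\ref{kapset lemma}})n}$. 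Making this case analysis quantitatively tight, with a universal positive $c_{\ref{kapset lemma}}$, is the delicate point; once it is settled, collecting the bounds gives the conclusion.
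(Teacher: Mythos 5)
Your plan captures the right high-level strategy -- feed Lemma~\ref{lem: precise-bound-triv-est}, \eqref{eq: trivial-prod}, and Lemma~\ref{offset lemma} into a case split, and invoke Lemma~\ref{simple offset} for the offset -- and your four computations of $\hq/\widetilde w_q$ are correct. However, there is a genuine gap in the case analysis, caused by using the threshold $|\lset^{(q)}|\ge d^{-1/3}n$ built into the definition of $\widetilde w_q$, rather than a second, tunable threshold.

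With the boundary $d^{-1/3}n$, a regular ``large'' part contributes a loss factor $(\hq/\widetilde w_q)^{|\lset^{(q)}|}=(n/|\lset^{(q)}|)^{|\lset^{(q)}|}$, which can reach $d^{|\lset^{(q)}|/3}$ near the boundary. If regular parts with cardinality close to $d^{-1/3}n$ cover almost all of $[n]$, the accumulated loss is as large as $d^{n/3}$, while the gain the $\kapset_u$ hypothesis buys from spread parts is only $d^{-O(c_\kapset\aaa n)}$ with $c_\kapset\aaa$ a small constant $<1/3$. Such a configuration is not covered by your Case~1 (spread large mass $A$ is a small constant fraction, not most of $n$), nor by your Case~2 (those regular parts are ``large'' in your sense, so ``most $\ell$-parts are small'' fails). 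The offset is also of no help there, since small parts may be essentially absent.

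The paper resolves this by introducing a \emph{second, smaller} threshold $c:=c_\kapset\aaa/16<1/3$ and setting $I:=\{q:\,|\lset^{(q)}|\ge d^{-c}n\}$; the dichotomy is then whether $|\bigcup_{q\in I}\lset^{(q)}|\ge n-c_\kapset\nn/4$ or not. In the first case every $q\in I$ has $n/|\lset^{(q)}|\le d^c$, so the total ratio loss from $I$ is at most $d^{cn}$; this is dominated by the spread gain $d^{-3c_\kapset\nn/8}$ (the spread parts in $I$ still have total mass $\ge 3c_\kapset\nn/4$), leaving a net $d^{-c_\kapset\aaa n/16}$ after subtracting the crude $d^{c_\kapset\nn/4}$ from $I^c$. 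In the second case $I^c$ has mass $\ge c_\kapset\nn/4$, and by the Lemma~\ref{cardlqpart}-type geometric-decay argument each $w$-set contains at most $C'd^{-c}n$ of that mass, so the $I^c$-parts must occupy $\gtrsim d^c$ distinct $w$-sets; Lemma~\ref{simple offset} then gives $\of\gtrsim d^c n$, and $2^{-\of/2}$ overwhelms the crude bound $d^n$. The key move you are missing is decoupling the case-split boundary from the $d^{-1/3}n$ threshold appearing in $\widetilde w_q$: it is the freedom to make $c$ as small as $c_\kapset\aaa/16$ that makes the $(n/|\lset^{(q)}|)^{|\lset^{(q)}|}$ factors harmless and renders the two cases exhaustive.
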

\begin{proof}
To prove the lemma, it is enough to show that
\begin{equation}\label{beta}
\beta:=n!\prod_{q\le m} \frac{1}{|\lset^{(q)}|!}\prod\limits_{i=1}^n \est_i
\leq d^{-c_{\ref{kapset lemma}}n}
\prod_{q \leq m} \hq^{-|\lset^{(q)}|}.
\end{equation}
Applying Lemmas~\ref{lem: precise-bound-triv-est} and  \ref{offset lemma} we get
\begin{equation*}\label{eq1: est-triv}
\beta\leq C^n 2^{-\of/2}\, \prod\limits_{i=1}^n \triv_i\le C^n  \prod\limits_{i=1}^n \triv_i,
\end{equation*}
   where  $C$ is a positive absolute constant.
   Let $c:= c_\kapset\aaa/16<1/3$, where $\aaa$ comes from the definition of $\nn$ (recall $\nn =\lfloor \aaa n\rfloor$), and
   $I:=\{q\, : \, |\lset^{(q)}|\geq  d^{-c}n\}$.
   We consider two cases.

\medskip

\noindent
{\it Case 1.} $\big|\bigcup_{q\in I}\lset^{(q)}\big|\geq n-c_\kapset\nn/4$.
Denote by $I_1$ the set of all indices $q$ corresponding to spread $\ell$-parts
of cardinality at least $d^{-c}n$ and by $I_2$ be the
set of indices corresponding to regular $\ell$-parts of cardinality at least $d^{-c}n$.
Let $I_3$ be all the remaining indices, that is $I_3=[m]\setminus (I_1\cup I_2)$. Note that
$\bigl|\bigcup_{q\in I_3}\lset^{(q)}\bigr|\le
c_\kapset\nn/4$.
By (\ref{eq: trivial-prod}) we have
\begin{equation*}
\beta\leq
C^n \prod\limits_{q\in I_1} {\widetilde w_q}^{-|\lset^{(q)}|}
\prod\limits_{q\in I_2} {\widetilde w_q}^{-|\lset^{(q)}|}
\prod\limits_{q\in I_3} {\widetilde w_q}^{-|\lset^{(q)}|}.
\end{equation*}
By the definition of $\kapset_u$,
the total cardinality of $\ell$-parts with indices from $I_1$ is at least
$$
 c_\kapset\nn -c_\kapset \nn/4=3c_\kapset \nn/4.
$$
This together with the definition of the truncated weights for
$$
 |\lset^{(q)}|\geq d^{-c}n\geq d^{-1/3}n,
$$
 implies that
$$
 \prod\limits_{q\in I_1} ({\widetilde w_q})^{-|\lset^{(q)}|}\leq \prod\limits_{q\in I_1} \Big(\frac{n}{d\vert \lset^{(q)}\vert}\Big)^{|\lset^{(q)}|/2}\, \hq^{-\vert \lset^{(q)}\vert}
 \leq d^{-3c_\kapset \nn/8}\prod\limits_{q\in I_1} \Big(\frac{n}{\vert \lset^{(q)}\vert}\Big)^{|\lset^{(q)}|}\,
 \hq^{-\vert \lset^{(q)}\vert}.
$$
Hence, using the definition of the truncated weights for $q\in I_2$ and the bounds
\eqref{eq: trivial-truncated-weight} for $q\in I_3$, we get
\begin{align*}
\beta&\leq C^n d^{-3c_\kapset \nn/8}\prod\limits_{q\in I_1\cup I_2}(n/|\lset^{(q)}|)^{|\lset^{(q)}|}\, \prod\limits_{q\in I_3} d^{\vert \lset^{(q)}\vert}\, \prod\limits_{q\leq m} \hq^{-|\lset^{(q)}|}\\
&\leq C^n d^{-3c_\kapset \nn/8}d^{cn}d^{c_\kapset \nn/4}
\prod\limits_{q\le m} \hq^{-|\lset^{(q)}|} \le d^{-c_\kapset \nn/16}\prod_{q \leq m} \hq^{-|\lset^{(q)}|},
\end{align*}
which leads to (\ref{beta}).

\medskip

\noindent
{\it Case 2.} $\big|\bigcup_{q\in I}\lset^{(q)}\big|< n-c_\kapset\nn/4$. In this case
$\big|\bigcup_{q\in I^c}\lset^{(q)}\big|\geq c_\kapset\nn/4$.
Using (\ref{eq: trivial-prod}), we have
$$
  \beta\leq C^n d^{n} 2^{-\of/2}\prod\limits_{q\le m} \hq^{-|\lset^{(q)}|}.
$$
Denote
$$
 J(b):=I^c\cap I(b)=\{q\, : \, |\lset^{(q)}|< d^{-c}n\, \,  \mbox{ and }\, \,  2^b\le \widetilde w_q<2^{b+1}\}.
$$
Arguing as in the proof of Lemma~\ref{cardlqpart}, we have
$$
  \bigg|\bigcup_{q\in J(b)} \lset^{(q)} \bigg|\leq C'd^{-c}n,
$$
for a universal constant $C'>0$. Define two integer numbers $b_1$ and $b_2$ by
\begin{align*}
b_1:=\min\Big\{b\in \Z:
|\wbl|\geq \frac{c_\kapset \nn}{16}\Big\} \,   \mbox{ and } \,
b_2:=\max\Big\{b\in\Z:|\wset_b\cup\wbg|\geq \frac{c_\kapset \nn}{16}\Big\}.
\end{align*}
Clearly, $b_2\geq b_1$. Denoting
$J:=I^c\cap \bigcup_{b_1\leq b\leq b_2}I(b)$,
we observe
$$
  \bigg|\bigcup_{q\in J}\lset^{(q)}\bigg| \leq (b_2-b_1+1)\, C'd^{-c}n.
$$
On the other hand, using the definition of $b_1$, $b_2$ together with  the condition of this case, we have
\begin{align*}
   \bigg|\bigcup_{q\in J}\lset^{(q)}\bigg|
   &\geq \big|\bigcup_{q\in I^c}\lset^{(q)}\big|
   -    \big| \wset^1_{b_1-1} \big| - \big| \wset^2_{b_2} \big|
  \geq c_\kapset \nn/4- c_\kapset \nn/8 =c_\kapset \nn/8.
\end{align*}
Thus  $b_2-b_1+1\geq {c_\kapset \aaa d^c}/{8C'}$.
Now applying Lemma~\ref{simple offset}, we get
\begin{align*}
\of\geq
c_{\ref{simple offset}}\sum\limits_{b=b_1}^{b_2-1}
\min\bigl(|\wbl|,|\wbg|\bigr)
\geq c_{\ref{simple offset}}(b_2-b_1) c_\kapset \nn/16
\geq  c'\, d^{c}n,
\end{align*}
where $c'>0$ is an absolute constant.
Since $d$ is large enough, we obtain that
$C^n d^{n}2^{-\of/2} \leq d^{-n},$
which completes the proof.
\end{proof}

We turn now to a particular class of vectors in $\rhoset_v$ for some $v\geq 5$.
Recall that for $x\in\rhoset_u$, the total cardinality of spread and regular
$\ell$-parts in the $\ell$-decomposition with respect to the $d^u$-approximation
of $x$ with heights not smaller than  $c_\rhoset 2^{c_\rhoset (u-4)\aaa} \aaa$
is at least $c_\rhoset \nn$, where  $c_\rhoset<1$ is a universal constant.
For every integer $v\geq 5$ and every positive numbers $\delta, \rho$ we define
$$
\rhoset_{v,\rho,\delta}:=\bigl\{x\in \rhoset_v:\,
\exists \lambda\in\C\mbox{ such that }|\{i\leq n:\,\vert x_i-\lambda\vert \leq \rho\}|\geq \delta n\bigr\}.
$$
We start with a useful property of vectors in this set.

\begin{lemma}\label{lem: basic-rhoset}
Let $v\geq 5$, $\rho\leq d^{-v}$, and $\delta \in (0,1)$.
Let $x\in \rhoset_{v,\rho,\delta}$ and $y$ be its $d^v$-approximation.
Then in the $\ell$-decomposition of $y$,
there exists a $w$-set $\wset_{b}$ of order  $b\leq \log_2 ({72\sqrt{d}}/{\delta})$
and of cardinality at least $\delta n/36$.
\end{lemma}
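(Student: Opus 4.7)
The plan is to isolate a single $\ell$-part $\lset^{(q)}$ of the $\ell$-decomposition of $y$ that is simultaneously large (cardinality at least $\delta n/36$) and of small truncated weight (at most $72\sqrt{d}/\delta$), so that the $w$-set $\wset_b$ containing it witnesses the claim.

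First I would perform a pigeonhole on lattice values. By definition of $\rhoset_{v,\rho,\delta}$, there exist $\lambda \in \C$ and $I \subset [n]$ with $|I| \geq \delta n$ such that $|x_i - \lambda| \leq \rho \leq d^{-v} = 1/k$ for every $i \in I$. For each such $i$, the bound $|\Re x_i - \Re \lambda| \leq 1/k$ combined with $\Re y_i = \lfloor k\Re x_i\rfloor/k$ forces $\Re y_i \in (\Re \lambda - 2/k,\, \Re \lambda + 1/k]$, a half-open interval of length $3/k$ that contains at most three points of $(1/k)\Z$; the same bound applies to $\Im y_i$. Thus $\{y_i : i \in I\}$ has cardinality at most $9$, and some $\mu \in \Z^2/k$ is achieved by at least $\delta n/9$ coordinates of $I$. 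Setting $\widetilde J := \{i \leq n : y_i = \mu\}$, we have $|\widetilde J| \geq \delta n/9$.

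Next I would apply Lemma~\ref{l: level sets basic} to $\widetilde J$. With $u := \lfloor\log_2((|\widetilde J|+1)/3)\rfloor$, the inequalities $2^u \leq (|\widetilde J|+1)/3 < 2^{u+1}$ yield both $|L(u+1,\mu)| = |\widetilde J| - 2^{u+1} + 1 \geq (|\widetilde J|+1)/3 \geq \delta n/27$ and $2^u > (|\widetilde J|+1)/6$. Let $\lset^{(q)}$ be the (unique) $\ell$-part of order $u+1$ containing $L(u+1,\mu)$; then $|\lset^{(q)}| \geq |L(u+1,\mu)| \geq \delta n/27 \geq \delta n/36$. Furthermore, every level set of order $u+1$ has cardinality at least $2^u$ by~\eqref{levset}, so the height is controlled as $h_q \leq |\lset^{(q)}|/2^u \leq n/2^u < 6n/(|\widetilde J|+1) \leq 54/\delta$.

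It remains to estimate the truncated weight $\widetilde w_q$. Inspecting the four cases of its definition, in the two regular cases one has $\widetilde w_q \leq h_q$ (using $|\lset^{(q)}|/n \leq 1$ and $1/d \leq 1$), while in the two spread cases $\widetilde w_q \leq h_q\sqrt{d}$ (using $d|\lset^{(q)}|/n \leq d$). Hence in all cases $\widetilde w_q \leq h_q\sqrt{d} < 54\sqrt{d}/\delta \leq 72\sqrt{d}/\delta$, and the unique integer $b$ with $\widetilde w_q \in [2^b,2^{b+1})$ satisfies $b \leq \log_2(72\sqrt{d}/\delta)$. The $w$-set $\wset_b$ then contains $\lset^{(q)}$ and so has cardinality at least $\delta n/36$, as required. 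The main delicate point is matching the three constants (the pigeonhole factor $9$, the factor $6$ inherent in Lemma~\ref{l: level sets basic}, and the $\sqrt{d}$ loss from spread parts) to the prescribed $36$ and $72$; no individual step seems a serious obstacle.
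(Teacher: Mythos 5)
Your proposal is correct and follows essentially the same route as the paper's proof: pigeonhole the $9$ possible lattice points to find a value of $y$ achieved on a set of size at least $\delta n/9$, locate a large level set of order $u+1$ (the paper uses the equivalent index $a=\lfloor\log_2(1+\delta n/9)\rfloor-1$) via Lemma~\ref{l: level sets basic} or the construction directly, and then bound the height and the truncated weight using $\widetilde w_q\leq h_q\sqrt{d}$. Your constant bookkeeping is marginally tighter (you get $54\sqrt d/\delta$ where the paper settles for $72\sqrt d/\delta$), but the argument is the same in substance, so there is nothing further to add.
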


\begin{proof}
Let $x\in \rhoset_{v,\rho,\delta}$ and $y$ be its $d^v$-approximation.
Let $\lambda=\lambda(x)\in \C$ be such that
$$
 |\{i\leq n:\,\vert x_i-\lambda\vert \leq \rho\}|\geq \delta n.
$$
Note that if $\vert x_i-\lambda\vert \leq\rho$, then since $d^v\rho<1$ we have
$$
 \Re (d^v y_i)\in \{\lfloor \Re(d^v\lambda)\rfloor-1, \lfloor \Re(d^v\lambda)\rfloor, \lfloor \Re(d^v\lambda)\rfloor+1\}
$$
and
$$
 \Im (d^v y_i)\in \{\lfloor \Im(d^v\lambda)\rceil-1, \lfloor \Im(d^v\lambda)\rfloor, \lfloor \Im(d^v\lambda)\rfloor+1\},
$$
which means that $d^vy_i$  can take at most $9$ possible values. This implies the existence of a set
$I\subset [n]$ of size at least $\delta n/9$ such that $y_i=y_j$ for all $i,j\in I$.
Let $a=\lfloor\log_2 (1+\delta n/9)\rfloor-1$ so that
$$\sum_{i=0}^a2^{i}\leq \vert I\vert. $$
From the construction of the $\ell$-decomposition of $y$, at the step $j=a$
the level set $L(j, y(I))$ is of size at least $2^a\geq \delta n/36$.
This implies  the existence of an $\ell$-part of size at least $\delta n/36$ and, by (\ref{levsethei}),
of height
at most $n/2^{a-1} \leq 72/\delta$. Since for every $q$, $\widetilde w_q\leq \hq\sqrt{d}$,
there exists a $w$-set of order at most $\log_2 (72\sqrt{d}/{\delta})$ and of cardinality
at least $\delta n/36$.
\end{proof}

Next we estimate  the product of $\est_i$ for
approximations of vectors from $\rhoset_{v,\rho,\delta}$.
\begin{lemma}\label{rhoset lemma}
Let $v\geq 5$ be an integer, $0\leq \rho\leq d^{-v}$ and $0\leq \delta\leq 36c_\rhoset \aaa$ be such that
$$
c_{\rhoset}\aaa (v-4) \geq 2\log_2  d+ 2\log_2({72\sqrt{d}}/{\delta})+ 2-\log_2 ({c_{\rhoset}\aaa}).
$$
Further, let $y$ be the $d^v$-approximation of a vector in $\rhoset_{v,\rho,\delta}$ and
$(\lset^{(q)})_{q=1}^m$ be its $\ell$-decomposition.
Finally, let $\RR=(\RR_{iq})$ be a $y$-admissible matrix in $\RSet_{n,m,d}^{ST}$. Then
$$\prod\limits_{i=1}^n\est_i(y,k,\RR)
\leq (C d)^n 2^{-c \delta n v}
(n!)^{-1}\prod_{q\leq m} \frac{|\lset^{(q)}|!}{\hq^{|\lset^{(q)}|}},$$
where $C>c>0$ are universal constants.
\end{lemma}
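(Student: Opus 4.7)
My plan is to deduce this bound by combining the three estimator identities already at our disposal — Lemma~\ref{lem: precise-bound-triv-est} relating $\est_i$ to $\triv_i$, the inequality \eqref{eq: trivial-prod} for $\prod \triv_i$, and Lemma~\ref{offset lemma} controlling $n!/\prod |\lset^{(q)}|!$ by $2^{\of/2}$. Multiplying these three estimates gives
\begin{equation*}
\prod_{i=1}^n \est_i \;\leq\; (Cd)^n \, 2^{-\of/2}\, (n!)^{-1}\prod_{q\leq m}|\lset^{(q)}|!\,\hq^{-|\lset^{(q)}|}.
\end{equation*}
Hence the entire lemma reduces to establishing the offset bound $\of \geq c\,\delta\, n\, v$ for a universal constant $c>0$, from which the conclusion follows after absorbing the factor $2^{-\of/2}$ into $2^{-c\delta n v}$.

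To get such a lower bound on $\of$ I would use Lemma~\ref{simple offset}, which reads $\of \geq c_{\ref{simple offset}}\sum_b \min(|\wbl|,|\wbg|)$, and so it suffices to exhibit an interval $[b^*,b^{**}-1]$ of integers $b$ of length proportional to $v$ on which both $|\wbl|$ and $|\wbg|$ are comparable to $\delta n$. The lower anchor $b^*$ is provided by Lemma~\ref{lem: basic-rhoset}: since $x\in\rhoset_{v,\rho,\delta}$ (with $\rho\leq d^{-v}$), the $d^v$-approximation $y$ contains a $w$-set of order $b^*\leq \log_2(72\sqrt{d}/\delta)$ and of cardinality at least $\delta n/36$, so $|\wbl|\geq \delta n/36$ for all $b\geq b^*$. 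The upper anchor $b^{**}$ comes from the defining property of $\rhoset_v$: $\ell$-parts accounting for at least $c_\rhoset \nn$ coordinates have heights $\hq \geq c_\rhoset 2^{c_\rhoset (v-4)\aaa}\,\aaa$. Since in all four cases the truncated weight satisfies $\widetilde w_q \geq \hq/d$, these $\ell$-parts land in $w$-sets of order at least
\begin{equation*}
b^{**} \;:=\; \lfloor \log_2(c_\rhoset 2^{c_\rhoset(v-4)\aaa}\,\aaa/d)\rfloor \;=\; c_\rhoset(v-4)\aaa - \log_2(d/c_\rhoset\aaa) + O(1),
\end{equation*}
so that $|\wbg|\geq c_\rhoset \nn$ for all $b\leq b^{**}-1$. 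The assumption $\delta\leq 36 c_\rhoset\aaa$ guarantees $\min(\delta n/36,\,c_\rhoset\nn)\geq \delta n/72$ on this interval.

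It then remains to verify that under the numerical hypothesis of the lemma, the length $b^{**}-b^*$ is at least a positive absolute multiple of $v$. Setting $A := \log_2(d/c_\rhoset\aaa) + 1 + \log_2(72\sqrt{d}/\delta)$, we have $b^{**}-b^* \geq c_\rhoset\aaa(v-4) - A$, while the hypothesis rewrites exactly as $c_\rhoset\aaa(v-4) \geq 2A + \log_2(c_\rhoset\aaa)$. Since $A\geq \log_2(1/c_\rhoset\aaa)$ (as $d\geq 1$ and $\delta$ is not too large), subtracting half of the hypothesis from itself yields
\begin{equation*}
b^{**}-b^* \;\geq\; \tfrac{1}{2}\,c_\rhoset\aaa(v-4) - \tfrac{1}{2}\log_2(1/c_\rhoset\aaa) \;\geq\; c' v
\end{equation*}
for an absolute $c'>0$ (using once more that the hypothesis forces $v$ to dominate the logarithmic terms). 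Multiplying by $\delta n/72$ and by $c_{\ref{simple offset}}$ gives the desired bound $\of \geq c''\,\delta\, n\, v$, completing the proof after the reduction noted above.

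The main technical subtlety I expect is the bookkeeping in verifying $b^{**}-b^*\geq c'v$ from the explicit algebraic hypothesis — the lemma's hypothesis is tailored precisely so that the two anchor orders are comparable to $\log_2(1/\delta)$ (at the bottom) and $c_\rhoset\aaa v$ (at the top), with enough slack between them to force the overlap window to scale linearly in $v$. Apart from this, the argument is purely a careful combination of the already proved estimator lemmas.
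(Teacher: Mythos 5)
Your proposal is correct and follows essentially the same route as the paper's proof: reduce to a lower bound on $\of$ via Lemma~\ref{lem: precise-bound-triv-est}, \eqref{eq: trivial-prod}, and Lemma~\ref{offset lemma}, then use Lemma~\ref{simple offset} with a lower anchor from Lemma~\ref{lem: basic-rhoset} and an upper anchor from the defining height condition of $\rhoset_v$ combined with $\widetilde w_q\geq\hq/d$. The paper's choice of the anchor orders $b_1=\log_2(72\sqrt{d}/\delta)$ and $b_2=\log_2(c_{\rhoset}\aaa 2^{(v-4)c_{\rhoset}\aaa})-\log_2 d$ matches your $b^*$ and $b^{**}$ up to rounding, and your bookkeeping showing $b^{**}-b^*\gtrsim v$ from the numeric hypothesis is the same calculation.
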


\begin{proof}
Applying Lemmas~\ref{lem: precise-bound-triv-est} and~\ref{offset lemma} together with \eqref{eq: trivial-prod}, we get
$$
\prod\limits_{i=1}^n\est_i(y,k,\RR)\leq (C\, d)^n 2^{-\of/2}(n!)^{-1}\prod_{q\le m} \frac{|\lset^{(q)}|!}{\hq^{|\lset^{(q)}|}},
$$
for a positive absolute constant $C$. Let
$$
b_1:=\log_2 ({72\sqrt{d}}/{\delta})\quad \text{ and } \quad b_2:=\log_2\left(c_{\rhoset}\aaa2^{(v-4)c_{\rhoset}\aaa}\right)-\log_2 d.
$$
By the assumptions of the lemma,  $b_2-b_1\geq {c_{\rhoset}\aaa (v-4)}/{2}$.

By Lemma~\ref{lem: basic-rhoset}, there exists a $w$-set of order at most $b_1$ and
of cardinality at least $\delta n/36$. On the other hand,
using the definition of $\rhoset_v$ and
 \eqref{eq: trivial-truncated-weight}, the total cardinality
of $w$-sets of order at least $b_2$, is at least $c_\rhoset \aaa n$.
Therefore, for every integer $b$ in the range $[b_1,b_2)$ we have
$$
  \min\bigl(|\wbl|,|\wbg|\bigr)\geq \delta n/36.
$$
Now, we apply Lemma~\ref{simple offset} to deduce that
\begin{align*}
\of&\geq c_{\ref{simple offset}}\sum\limits_{b_1\leq b< b_2}
\min\bigl(|\wbl|,|\wbg|\bigr)
\geq  c_{\ref{simple offset}}\delta n  (b_2-b_1)/36 \geq (c_{\rhoset} c_{\ref{simple offset}}\aaa/72) \, \delta\, n\, (v-4),
\end{align*}
which implies the desired bound.
\end{proof}

We are now ready to state and complete the proof of a generalization of Theorem~\ref{ker th}.
\begin{theor}[Structural theorem]\label{ker th gen}
There exist absolute positive constants $c$, $c'$, and  $C$ such that the following holds.
Let $d, n$ be a large enough integers satisfying $d\leq \exp(\sqrt{c'\ln n})$.
Let  $z\in\C$ be such that $\vert z\vert \leq \sqrt{d}\, \ln d$.
Let $1\leq L\leq n/d^3$ and let $r_0$ be the smallest integer such that
$p^{r_0}\geq 20L/d$, where $p=\lfloor (1/5)\sqrt{d/\ln d}\rfloor$.
Let $\KK\subset[n]$ satisfy $\vert \KK^c\vert\leq L$ and assume that
$$
  \max( n^{-c}, e^{-c n/\vert \KK^c\vert})\leq \rho\leq e^{-C \ln^2 d}\quad
  \text{ and }\quad \delta=C \frac{\ln^2 d}{\ln (1/\rho)}.
$$
Then with probability at least $1-1/n$  any non-zero vector $x\in\C^n$
with the property that
$$
 \Vert (M-\ww \idmat)^\KK x\Vert_2 \leq  {L^3}n^{-6} \Vert x\Vert_2
$$
satisfies one of the two conditions:
\begin{itemize}

\item (Gradual with many levels) One has 
$$
 x_i^*\leq \left\{
\begin{array}{ll}
\big(n/i\big)^{3}\,\, x_{p^{r_0}}^* & \mbox{ if }\,\, i\leq p^{r_0},
\\
 d\big(n/i\big)^3 x_{\nn}^* & \mbox{ if }\,\,  p^{r_0}\leq i\leq n_1,
\\
  d^3 x_{\nn}^* & \mbox{ if }\,\, n_1\leq i\leq \nn,
\end{array}
\right.
$$
and
$$
\Big|\Big\{i\leq n:\,|x_i-\lambda |\leq \rho x_{\nn}^* \Big\}\Big|\leq \delta n \quad \quad
\mbox{ for all } \quad \lambda\in\C.
$$
\item (Very steep) $x_i^* > 0.9 (n/i)^3 x_{p^{r_0}}^*$ for some  $i\le p^{r_0}$.
\end{itemize}
\end{theor}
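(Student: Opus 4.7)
The plan is to combine the three main strands developed in the paper: the deterministic and probabilistic bounds from Section~\ref{steep} that eliminate steep and almost constant vectors, the decomposition of the gradual set $\mathcal{S}$ from Theorem~\ref{kappa and rho}, and the small ball probability machinery of Sections~\ref{s:small ball}--\ref{s:rouh estimators}. The target is to produce one ``bad'' event of probability $\leq 1/n$ outside which every vector $x$ satisfying the near-null condition either falls into $\st_3^\CC$ (which by (\ref{t3shift}) is the ``very steep'' alternative) or is gradual in the sense of $\mathcal{S}$ and, additionally, has no large cluster of approximately equal coordinates.

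First I would apply Theorem~\ref{th: steep and ac} with $\theta_0 = 10/d^3$: with probability at least $1 - \exp(-(\ln d)(\ln n)/20)$, no vector in $(\BB(\theta_0)\setminus \st_3^\CC)\cup \st_\CC$ can satisfy $\|(M-z\,\idmat)^\KK x\|_2 \leq L^3 n^{-6}\|x\|_2$. Combined with Lemma~\ref{lem: splitt}, this leaves two alternatives for a near-null $x$: either $x\in\st_3^\CC$, which matches the very steep alternative, or $x\notin\st\cup \BB(\theta_0)$, in which case, after normalizing by $x_{\nn}^*$, we may assume $x\in\mathcal{S}$. Lemma~\ref{l:decay} then already delivers the three pointwise decay bounds in the ``gradual with many levels'' conclusion, so the remaining task is to rule out concentration of the form $|\{i:|x_i-\lambda|\leq\rho\}|>\delta n$ for any $\lambda\in\C$.

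For this last step I choose the integer $v$ as the smallest one making the hypothesis $c_\rhoset\aaa(v-4)\geq 2\log_2 d + 2\log_2(72\sqrt{d}/\delta) + 2 - \log_2(c_\rhoset\aaa)$ of Lemma~\ref{rhoset lemma} valid; the hypotheses $\rho\geq \max(n^{-c}, e^{-cn/|K^c|})$ and $d\leq \exp(\sqrt{c'\ln n})$ of the theorem are tuned exactly so that $k=d^v$ stays within the admissible range of Proposition~\ref{prop sbp}. Apply Theorem~\ref{kappa and rho} to write $\mathcal{S}=\bigcup_{u=4}^v \kapset_u\cup\rhoset_v$. For each $u\leq v$, I use Proposition~\ref{prop: approx} to reduce the event ``some $x\in\kapset_u$ is near-null'' to a union bound over the $d^u$-approximations of $\kapset_u$; I partition those approximations by $\stackrel{\sharp}{\sim}$ and then for a representative $y$ of each equivalence class $\mathcal{C}$ apply Proposition~\ref{prop sbp}. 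The key cancellation is that the class-size bound of Lemma~\ref{equiv cardinality},
\[
  |\mathcal{C}|\leq n!\prod_j \frac{\hq^{\cards_j+\cardr_j}}{\cards_j!\,\cardr_j!},
\]
exactly absorbs the factor $(n!)^{-1}\prod_{q}\frac{|\lset^{(q)}|!}{\hq^{|\lset^{(q)}|}}$ that appears in the upper bound on $\prod_i\est_i$ provided by Lemma~\ref{kapset lemma}, leaving a surplus of $d^{-c_{\ref{kapset lemma}}n}$ per class. Since Lemma~\ref{l:equiv classes} bounds the number of classes by $e^n$ and Proposition~\ref{prop: approx} adds only $d^2$, the overall contribution of $\kapset_u$ is $\exp(-c n\ln d)$; summing over $u\in\{4,\ldots,v\}$ costs another logarithmic factor. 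The contribution of $\rhoset_v$ is handled identically, except that vectors not yet excluded must lie in $\rhoset_{v,\rho,\delta}$ (for some $\lambda$ in a polynomial-size net on a disc of radius $O(d^3 n^6)$), for which Lemma~\ref{rhoset lemma} provides the sharper bound with the extra factor $2^{-c\delta n v}$; the choice of $v$ is precisely what makes $2^{-c\delta n v}$ absorb the cardinality of the $\lambda$-net.

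The main obstacle is the parameter bookkeeping between $\rho$, $\delta$, $v$, $d$ and $|K^c|$. One needs to verify simultaneously: (i) $v$ is large enough for Lemma~\ref{rhoset lemma} to give the prescribed $(\rho,\delta)$ relation (forcing $v\gtrsim \ln(1/\rho)/\ln d$ and hence $\delta\gtrsim \ln^2 d/\ln(1/\rho)$, which matches the theorem); (ii) $k=d^v$ satisfies the upper restriction $k\leq d^{-10}\exp(n/(5|K^c|))$ of Proposition~\ref{prop sbp}, which is exactly where the lower bound on $\rho$ enters; (iii) the total probability loss from Theorem~\ref{th: steep and ac}, from the graph events $\Event_{\ref{graph th to prove}}$ and $\Event_{\ref{graph prop}}$ underlying Proposition~\ref{prop sbp}, and from Theorem~\ref{norm lemma} used in the approximation step, sums to at most $1/n$. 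Once these constraints are checked, the three estimates (steep/almost constant; $\kapset_u$; $\rhoset_{v,\rho,\delta}$) assemble into the stated dichotomy.
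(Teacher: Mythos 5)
Your proposal follows the paper's own proof in all essential respects: eliminate steep and almost-constant vectors via Theorem~\ref{th: steep and ac}, use the dichotomy of Theorem~\ref{kappa and rho} with $v$ chosen so that $\rho=d^{-v}$, reduce to $k$-approximations via Proposition~\ref{prop: approx}, and for each class $\kapset_u$ or $\rhoset_{v,\rho,\delta}$ run the small-ball estimate of Proposition~\ref{prop sbp} against the cardinality bounds of Lemmas~\ref{equiv cardinality} and~\ref{l:equiv classes} (with Lemma~\ref{kapset lemma} or~\ref{rhoset lemma} producing the surplus factor). The parameter bookkeeping you list as the main obstacle is exactly what the paper checks.

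One small inaccuracy worth noting: you do not need a separate net over $\lambda$. The set $\rhoset_{v,\rho,\delta}$ already quantifies ``$\exists\,\lambda\in\C$,'' and Lemma~\ref{lem: basic-rhoset} converts that existence into a purely structural statement about the $\ell$-decomposition of the $d^v$-approximation $y$ (a large $w$-set of low order), with $\lambda$ quantified away before the small-ball estimate is applied. Hence the factor $2^{-c\delta n v}$ in Lemma~\ref{rhoset lemma} is there to beat the $(Cd)^n$ and the $e^n$ class count, not to absorb a $\lambda$-net. Adding a polynomial-size $\lambda$-net would be harmless but is a misreading of the mechanism; the paper never performs that union bound. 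Also, the first decay inequality $x_i^*\leq (n/i)^3 x_{p^{r_0}}^*$ for $i\leq p^{r_0}$ is simply the negation of $x\in\st_3$ rather than a consequence of Lemma~\ref{l:decay}, but this is a cosmetic point since you already note that $x\notin\st$.
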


\begin{proof}
Choose $v$ from $\rho=d^{-v}$. Without loss of generality we assume that $v$ is
an integer. Then $\delta = Cv^{-1}\log_2  d$ and
$$
 C\ln d \leq v\leq  c\,  \min\big(\log _d n, \frac{n}{\vert \KK^c\vert \ln d}\big),
$$
where $C$ is a large enough absolute constant and $c\in(0, 1/2)$
is a small enough absolute constant ($c=1/6$ works). Note that the left hand side of this
inequality is always smaller than the right hand side, provided that $d\leq \exp(\sqrt{c'\ln n})$
with $c'=c/C$. Then, using that $d$ is large enough we have
$$
  d^v\leq \min\big( \sqrt{n}/(8d^{3/2} \sqrt{\ln d}),
  d^{-10} e^{n/5\vert \KK^c\vert}\big),
$$
in particular, we may apply
Lemma~\ref{l:equiv classes} and Proposition~\ref{prop sbp} with $k\leq d^v$.
Note also that the assumptions of
Lemma~\ref{rhoset lemma} are satisfied as well.

Let $\Gamma_{\rho,\delta}$ be the set of non-zero vectors  satisfying none of the two
conditions in Theorem~\ref{ker th gen}. For every $x\in \Gamma_{\rho,\delta}$, define
$$
\Event_x:=\Big\{M\in\Mc:\,
\, \Vert (M-\ww \idmat)^\KK x\Vert_2\leq \frac{L^3}{n^6}\Vert x\Vert_2\Big\}.
$$
Since the event $\Event_x$ is homogeneous in $x$, we may restrict $\Gamma_{\rho,\delta}$
to vectors satisfying $x_{\nn}^*=1$ (if $x_{\nn}^*=0$, we consider a slight perturbation of $x$).

Our goal is to show that $\Prob(\bigcup_{x\in \Gamma_{\rho,\delta}} \Event_x)\leq 1/n$.
Recall that we decomposed  $\C^n$ into the set of almost constant vectors,
denoted by $\BB:=\BB(\theta_0)$ with
$\theta _0 = 10/d^3$, the set of steep vectors,
denoted by $\st$ (note, an almost constant vector can be also steep), and the set of gradual vectors,
denoted by $\mathcal{S}= \C^n\setminus(\BB \cup \st)$.
If $x$ doesn't satisfy the second condition of the theorem then, by (\ref{t3shift}),
$x\not\in \st_{3}^\CC$, that is $\Gamma_{\rho,\delta}\cap \st_{3}^\CC=\emptyset$.
Note also that
$$
 \mathcal{S}^c \setminus \st_{3}^\CC \subset \BB_0 := (\BB\setminus \st_{3}^\CC) \cup \st _\CC.
$$
Therefore, applying Theorem~\ref{th: steep and ac} we obtain
\begin{align*}
  \Prob\Big(\bigcup_{x\in \Gamma_{\rho,\delta}} \Event_x\Big)
  & \leq \Prob\Big(\bigcup_{x\in \BB_0\cap \Gamma_{\rho,\delta}} \Event_x\Big) +
  \Prob\Big(\bigcup_{x\in  \mathcal S \cap \Gamma_{\rho,\delta}} \Event_x\Big)
\\&
   \leq \exp\big(-(\ln{d})\, (\ln n)/20\big)+ \Prob\Big(\bigcup_{x\in
           \mathcal S\cap \Gamma_{\rho,\delta}} \Event_x\Big).
\end{align*}

We now show that $\Gamma_{\rho,\delta}\cap \rhoset_v\subset\rhoset_{v,\rho,\delta}$.
Indeed, a vector in $\Gamma_{\rho,\delta}$ does not belong in particular to $\st_{3}$.
Moreover, since $\rhoset_v \subset \mathcal S\subset \st^c$,  by Lemma~\ref{l:decay} every $x\in\rhoset_v$
satisfies $x_i^*\leq d \big(n/i)^3 x_{\nn}^*$ for all $p^{r_0}\leq i\leq n_1$ and
$x_{i}^*\leq d^3 x_{\nn}^*$ for all $n_1\leq i\leq \nn$.
Therefore, if $x\in\Gamma_{\rho,\delta}\cap \rhoset_v$ then it cannot satisfy the last condition
in ``gradual with many levels," which means that $x\in \rhoset_{v,\rho,\delta}$

This together with Theorem~\ref{kappa and rho} and the union bound gives
\begin{equation*}\label{eq2: ker th}
\Prob\Big(\bigcup_{x\in  \mathcal S\cap \Gamma_{\rho,\delta}} \Event_x\Big)
\leq \sum_{u=4}^v \Prob\Big(\bigcup_{x\in \kapset_u} \Event_x\Big) + \Prob\Big(\bigcup_{x\in \rhoset_{v,\rho,\delta}} \Event_x\Big).
\end{equation*}
Applying Proposition~\ref{prop: approx} with  $r=\ln d$ and $k=d^u$ (or $k=d^v$) and using
$d^{1+u}\leq d^{1+v}\leq \sqrt{n}$,  we get that there exists an
absolute constant $C'>0$ such that for any $5\leq u\leq v$,
\begin{align*}
\Prob\Big(\bigcup_{x\in \kapset_u} \Event_x\Big)\leq  2 d^2 \, \sum_{ y\in \kset_{d^u}(\kapset_u)}\, \,
\sup_{\xyzv\in \C^{\vert \KK\vert}}\Prob\Big\{&M\in \Mc:\,
\| M^\KK y+\xyzv\|_2\leq C'\, \ln d \, \frac{\sqrt{dn}}{d^u}\Big\}+n^{-100},
\end{align*}
and
\begin{align*}
 \Prob\Big(\bigcup_{x\in \rhoset_{v,\rho,\delta}} \Event_x\Big)
 \leq  2 d^2  \, \sum_{y\in \kset_{d^v}(\rhoset_{v,\rho,\delta})}\, \,
 \sup_{\omega\in \C^{\vert \KK\vert }}\Prob\Big\{&M\in \Mc:\,
\| M^\KK y+\xyzv\|_2\leq C'\, \ln d\,  \frac{\sqrt{dn}}{d^v}\Big\}+n^{-100}.
\end{align*}
Take any $4\leq u\leq v$ and
fix for a moment $y\in \kset_{d^u}(\kapset_u)$, the set of $k$-approximations of vectors in $\kapset_u$.
Assume that its $\ell$-decomposition consists of $m$ sets $(\lset^{(q)})_{q=1}^m$.
 Proposition~\ref{prop sbp} applied with $\gamma = C \ln d \sqrt{n/|K|}$ and  Lemma~\ref{kapset lemma}
 imply
$$
\sup_{\xyzv\in \C^{\vert \KK\vert}}\Prob\bigl\{M\in \Mc:\,
\| M^\KK y+\xyzv\|_2\leq C'\, (\ln d)\,  \sqrt{dn}/d^u\,|\,\Event_{\ref{graph prop}}\bigr\}
\leq  \frac{e^{-2n}}{n!}\prod_{q\le m} \frac{|\lset^{(q)}|!}{\hq^{|\lset^{(q)}|}},
$$
provided that $d$ is large enough.

Let $\mathcal C$ be the equivalence class in $\kset_{d^u}$ generated by $y$.
By Lemma~\ref{equiv cardinality} we have
\begin{align*}
 \sum_{ \widetilde y\in \mathcal C}\,\, \sup_{\xyzv\in \C^{\vert \KK\vert}}\Prob\bigl\{M\in \Mc:\, \,
 \|M^\KK\widetilde y+\xyzv\|_2\leq C'\, (\ln d)\, \sqrt{dn}/d^u
 \,|\,\Event_{\ref{graph prop}}\bigr\}&\leq \exp(-2n).
\end{align*}
Finally, Lemma~\ref{l:equiv classes} implies
\begin{align*}
\sum_{ y\in \kset_{d^u}(\kapset_u)}\, \,
\sup_{\xyzv\in \C^{\vert \KK\vert}}\Prob\bigl\{M\in \Mc:\, \,
\| M^\KK y+\xyzv\|_2&\leq C'\, (\ln d)\, \sqrt{dn}/d^u \,|\,\Event_{\ref{graph prop}}\bigr\}\leq e^{-n}.
\end{align*}
Repeating the above argument for vectors in ${\kset}_{d^v}(\rhoset_{r,v,\rho})$ with
Lemma~\ref{rhoset lemma} instead of  Lemma~\ref{kapset lemma} and using that $\delta =C (\log _2 d)/v$
with large enough $C$,  we get
\begin{align*}
  \sum_{y\in \kset_{d^v}(\rhoset_{v,\rho,\delta})}\, \,
\sup_{\xyzv\in \C^{\vert \KK\vert}}\Prob\bigl\{M\in \Mc:\,  \,
\| M^\KK y+\xyzv\|_2&\leq  C\, (\ln d)\, \sqrt{dn}/d^v
 \,|\,\Event_{\ref{graph prop}}\bigr\}\leq e^{-n}.
\end{align*}
Applying Proposition~\ref{graph prop} to remove the conditioning from the
two previous estimates and combining bounds, we obtain
$$
\Prob\Big(\bigcup_{x\in  \mathcal S\cap \Gamma_{\rho,\delta}} \Event_x\Big)
\leq 2 v d^2e^{-n}+\frac{v}{n^{100}}.
$$
The proof is finished by the choice of $v$.
\end{proof}
Note once more that the probability bound can be made $1-n^{-\kappa}$ for any fixed $\kappa\geq 1$ at the expense of having worse constants.

\bigskip

Finally we prove  Theorem~\ref{ker th}   and  Corollary~\ref{cor: deloc}.

\begin{proof}[{Proof of Theorem~\ref{ker th}}]
Set $L=\max(1, \vert \KK^c\vert)$ and
 $$
 \rho= \max\Big(n^{-c}, \exp\Big(-\big(n/(1+\vert \KK^c\vert)\big)^{c\ln\ln d/\ln d}\Big)\Big)
 $$
 for an appropriate positive constant $c$. Given $d^{-1/2}\leq a\leq 1$, let $q=\max(1, a\vert \KK^c\vert)$.
 Then $p^{r_0}\leq q$ and  $ a\vert \KK^c\vert \leq n_1$. Let $x$ satisfy the dichotomy from Theorem~\ref{ker th gen}.
 If $x$ is very steep in the sense of Theorem~\ref{ker th gen} then, using $p^{r_0}\leq q$, we get that
 $x$ is very steep in the sense of Theorem~\ref{ker th}. Assume now that $x$ is not very steep in the sense
 of Theorem~\ref{ker th}, i.e., assume that $x_i^*\leq (n/i)^{3} x_{q}^*$ for all $i\leq q$. If in addition,
 $x$ is gradual with many levels in
  the sense of Theorem~\ref{ker th gen} then it is not difficult to see that is gradual with many levels in
  the sense of Theorem~\ref{ker th} provided that $c'=\aaa$. This proves the desired result.
\end{proof}

\bigskip

\begin{proof}[{Proof of Corollary~\ref{cor: deloc}}]
Let $n$, $d$ be as in Theorem~\ref{ker th gen}.
By Theorem~\ref{norm lemma}, there is a universal constant $C>0$ such that
the event
$$\Event:=\Big\{M\in\Mc:\;\Big\|M- \frac{d}{n} {\bf 1}{\bf 1}^t \Big\|\geq C\sqrt{d}\Big\}$$
has probability less than $n^{-2}$. Denote by $V$ the set of all vectors in $\C^n$ having
sum of coordinates equal $0$. Clearly, $V$ is an invariant subspace of $M$.
Let $\lambda_i$, $i\leq n$, be eigenvalues of $M$ arranged so that
$|\lambda_1|\geq|\lambda_2|\geq\dots\geq|\lambda_n|$.
Since $\lambda _1 (M)=d$ corresponds to the eigenvector  ${\bf 1}$, we observe that
all eigenvectors corresponding to $\lambda _i$, $i\geq 2$, belong to $V$. This implies that,
conditioned on $\Event^c$,
we have $|\lambda_i(M)|<C\sqrt{d}$, $i\geq 2$.
Now, let $\Net$ be a fixed $(1/(C\sqrt{d}n^6)$-net in the disk of radius $C\sqrt{d}$
of the complex plane (we assume the usual Euclidean metric on $\C$).
Clearly, $\Net$ can be chosen so that $|\Net|\leq n^{13}$.
For any point $z'\in\Net$, applying Theorem~\ref{ker th gen} with
$K:=[n]$, $L=1$, $\rho:=n^{-c}$, $\delta:=\ln^2 d/\ln n$, we get
that with probability at least $1-n^{-15}$ every unit complex vector $x$ satisfying $\|(M-z'\,\idmat)x\|_{2}<n^{-6}$,
is ``gradual with many levels'' (since $q=1$, there are no ``very steep'' vectors).  Now, observe that for any matrix $M\in\Mc$, any eigenvalue $\lambda$ of $M$
satisfying $|\lambda|\leq C\sqrt{d}$ and a corresponding normalized eigenvector $x$, we necessarily have
$\|(M-z'\,\idmat)x\|_{2}<n^{-6}$ for some $z'=z'(\lambda)\in\Net$. Combining
this with the last remark, we get that the event
\begin{align*}
\Event':=\Big\{&M\in\Mc: \mbox{any normalized eigenvector of $M$ with eigenvalue}\\
&\mbox{in the disk of radius $C\sqrt{d}$ is ``gradual with many levels''}
\Big\}
\end{align*}
has probability at least $1-n^{-2}$. Here, ``gradual with many levels''
means that the vector satisfies the conditions listed in the corollary.
Finally note, that conditioned on $\Event^c$ all eigenvectors except for $(1/\sqrt{n},\dots,1/\sqrt{n})$,
have corresponding eigenvalues in the disk of radius $C\sqrt{d}$.
Thus, all matrices in $\Event^c\cap \Event'$ satisfy the assertion of the statement. The result follows.
\end{proof}

\subsection*{Acknowledgments}

We are grateful for  referees for careful reading and many suggestions, which
help us to improve presentation. In particular, for showing us relatively short
direct proofs of Propositions~\ref{p:complexLevy} and \ref{prop-simple}.
A significant part of this work was completed while the last three named authors
were in residence at the Mathematical Sciences Research Institute in Berkeley, California,
supported by NSF grant DMS-1440140, and the first two named authors visited the institute.
The hospitality of MSRI and of the organizers of the program on Geometric Functional Analysis
and Applications is gratefully acknowledged. The research of the last named author was
partially  supported by  grant  ANR-16-CE40-0024-01.

\nocite{*}

\address

\end{document}